\title{Real Heegaard Floer Homology}
\author[Gary Guth]{Gary Guth}
\address{Department of Mathematics\\Stanford University\\
		Building 380\\
		Stanford, California 94305}
\email{gmguth@stanford.edu}
\author[Ciprian Manolescu]{Ciprian Manolescu}
\address{Department of Mathematics\\Stanford University\\
		Building 380\\
		Stanford, California 94305}
\email{cm5@stanford.edu}
\begin{document}
\begin{abstract}
	We define an invariant of three-manifolds with an involution with non-empty fixed point set of codimension $2$; in particular, this applies to double branched covers over knots. Our construction gives the Heegaard Floer analogue of  Li's real monopole Floer homology. It is a special case of a real version of Lagrangian Floer homology, which may be of independent interest to symplectic geometers. The Euler characteristic of the real Heegaard Floer homology is the analogue of Miyazawa's invariant, and can be computed combinatorially for all knots.
\end{abstract}
\maketitle

\section{Introduction}
In recent years, there has been a burst of interest in gauge theoretic invariants of 3- and 4-manifolds equipped with an involution; such manifold-involution pairs have come to be called \emph{real manifolds}. This direction goes back to work of Tian and Wang who introduced and studied \emph{real Seiberg-Witten invariants} for Hermitian almost complex 4-manifolds \cite{tian-wang}. Work of Nakamura considered $\Pin^-(2)$-equivariant homotopy refinements of the Seiberg-Witten equations and gave applications concerning the intersection form with local coefficients of 4-manifolds and established some nonvanishing results \cite{nakamura1,nakamura2}. More recently, Kato used this theory to detect non-smoothable actions of $\Z/2\times \Z/2$ on spin 4-manifolds \cite{kato}. Konno, Miyazawa, and Taniguchi developed a Floer K-theory for links via their branched covers \cite{KMT:Ktheory,KMT:homology}. Li has been developing a real monopole Floer homology theory for real 3- and 4-manifolds, as well as for webs and foams, and has produced a spectral sequence from Khovanov homology to his theory \cite{li:HMR,li:skein,li:spectral}. Baraglia and Hekmati \cite{baraglia-hekmati} have also studied equivariant Seiberg-Witten theory of rational homology spheres with an involution. Striking recent applications include Kang, Park, and Taniguchi's use of real Fr{\o}yshov invariants to prove that the $(4n+2,1)$-cables of the figure eight knot are not slice \cite{KPT:cables}, and Miyazawa's proof of the existence of an infinite family of exotic $\RP^2$s in $S^4$ \cite{miyazawa}. The latter led to a proof of the existence of exotic involutions on $\mathbb{CP}^2$ \cite{hugheskimmiller_branched}, and further generalizations were explored by Baraglia \cite{baraglia_miyazawa}. 

Seiberg-Witten theory has a more computable symplectic counterpart, Heegaard Floer homology \cite{os_holodisks, os_properties_apps, os_holotri}. In light of the interesting results coming from real Seiberg-Witten theory, it seems natural to formulate a Heegaard Floer theoretic analogue. In the Seiberg-Witten setting, a choice of an anti-linear involution on the spinor bundle which covers the involution of the underlying manifold induces a real structure on the associated Seiberg-Witten configuration space. In the Heegaard Floer setting, the analogue is to work with \emph{real Heegaard diagrams}, in which the alpha and beta curves are switched by the involution; these have been studied by Nagase \cite{nagase}. In this situation, we have an involution on the symmetric product of the Heegaard surface which switches the alpha and beta tori. Roughly, the goal is then to   count \emph{invariant} pseudo-holomorphic strips. 

Our theory is a special case of what one might call \emph{real Lagrangian Floer homology}. While real Gromov-Witten theory is well-developed (see, for instance, \cite{welschinger}, \cite{solomon}, \cite{cho}, \cite{tehrani}, \cite{Georgieva}, \cite{GeorgievaZinger}), the focus in the literature has primarily been on counting real $J$-holomorphic spheres (or higher genus curves), some of which correspond to open curves with boundary on the fixed point set $M^R$. Our approach is slightly different. Given a symplectic manifold $(M, \omega)$ equipped with an anti-symplectic involution, $R$, and a spherically monotone Lagrangian submanifold, $L$, we consider the usual Lagrangian Floer homology $\HF(L, M^R)$. Strips with one boundary on $L$ and the other on $M^R$ are in correspondence with $R$-symmetric strips with one boundary on $L$ and with the other boundary on its reflection $R(L)$. 
 
In the present article, we consider closed, oriented 3-manifolds equipped with an involution, such that the fixed point locus is non-empty and of codimension 2. We represent such a manifold by a real pointed Heegaard diagram $(\cH, w)$. To this data, we associate complexes:
\begin{align*}
    \CFR^-(\cH, w), \quad \CFR^\infty(\cH, w), \quad \CFR^+(\cH, w), \quad \widehat{\CFR}(\cH, w).\end{align*}
over the polynomial ring $\F[U]$ where $\F$ is the field of two elements. We denote the respective homologies by $\HFR^\circ(Y, \tau, w)$, for $\circ \in \{-, \infty, +, \hat{\phantom{o}} \}$. These can be equipped with an action of $\Lambda^*(H_1(Y,\Z)^{-\tau_*}/\tors)$.

\begin{theorem}\label{thm:invariant}
    The isomorphism classes of the $\F[U]\otimes \Lambda^*(H_1(Y,\Z)^{-\tau_*}/\tors)$-modules $\HFR^\circ(Y, \tau, w)$ are topological invariants of the underlying pointed real 3-manifold $(Y, \tau, w)$. 
\end{theorem}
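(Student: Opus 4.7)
The plan is to adapt Ozsv\'ath--Szab\'o's invariance proof for Heegaard Floer homology to the real setting. The argument splits into two tasks: (i) proving a Reidemeister--Singer type theorem stating that any two real pointed Heegaard diagrams for $(Y,\tau,w)$ are connected by a finite sequence of \emph{equivariant moves}; and (ii) showing that each such move induces a chain homotopy equivalence of $\F[U]\otimes\Lambda^*(H_1(Y,\Z)^{-\tau_*}/\tors)$-modules on the complexes $\CFR^\circ$.

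For (i), I would start with a real self-indexing Morse function $f:Y\to\R$ satisfying $f\circ\tau=f$, together with a $\tau$-equivariant gradient-like vector field, so that the level set $f^{-1}(3/2)$ is a $\tau$-invariant Heegaard surface on which $\alpha$- and $\beta$-curves are interchanged by $\tau$. Any two such real Morse data can be joined by a generic one-parameter family, during which one encounters equivariant analogues of isotopies, handle slides, and (de)stabilizations. Moves taking place away from the fixed locus of $\tau$ come in $\tau$-related pairs and are performed simultaneously, while moves on the fixed locus appear as single equivariant moves. This step should be a fairly direct adaptation, following the work of Nagase.

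For (ii), I would exploit the Lagrangian Floer interpretation described in the introduction: since $\CFR$ may be identified with the ordinary Floer complex $CF(T_\alpha, M^R)$ inside $\mathrm{Sym}^g(\Sigma)$, invariance under equivariant isotopies of the $\alpha$-curves (and correspondingly $\beta$-curves) follows from standard continuation maps in Lagrangian Floer homology. Equivariant handle slides are handled by a real version of the triangle maps, either via $\tau$-invariant triangles on $(\Sigma,\alpha,\alpha',\beta,\beta')$ or, equivalently, via ordinary holomorphic triangles with boundary on $T_\alpha$, $T_{\alpha'}$, and $M^R$; the usual associativity and small-triangle arguments then produce the required chain homotopies. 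Compatibility with the $\Lambda^*(H_1(Y,\Z)^{-\tau_*}/\tors)$-action is checked by representing the action using $\tau$-antiinvariant $1$-cycles, which commute up to homotopy with each continuation and triangle map.

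The main obstacle will be equivariant stabilization, as this move changes both the ambient symmetric product $\mathrm{Sym}^g(\Sigma)$ and the Lagrangian $M^R$, so there is no single symplectic manifold in which to run a continuation argument. The classical Ozsv\'ath--Szab\'o stabilization proof identifies the stabilized complex with a tensor product by tracking holomorphic strips through the new intersection point; in the real setting one must carry out an analogous local model computation and verify that the contributing holomorphic strips can be taken $R$-symmetric (or equivalently can be realised as strips on $(T_\alpha, M^R)$ in the stabilized symmetric product). A secondary concern is equivariant transversality for $R$-symmetric curves approaching the fixed locus of $R$, but the Lagrangian Floer reformulation largely absorbs this into standard transversality for $CF(T_\alpha, M^R)$ under the spherical monotonicity hypothesis. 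The exclusive use of $\F$-coefficients removes any orientation issue arising from the involution and is presumably essential to the real Floer-theoretic counts throughout.
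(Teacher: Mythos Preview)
Your overall two-step strategy matches the paper's, but there are a few concrete discrepancies worth noting.

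First, a genuine slip in part (i): the correct condition on the Morse function is $f\circ\tau = 3 - f$, not $f\circ\tau = f$. With your condition, $\tau$ would preserve each handlebody rather than swap them, contradicting your own requirement that $\tau$ exchange the $\alpha$- and $\beta$-curves. The paper uses the anti-invariant Morse function throughout (see Section~\ref{sec:HDSpinC}), and the Reidemeister--Singer step relies on Nagase's work plus an observation that the real moves come in three flavours of stabilization: a \emph{free stabilization} (adding two handles exchanged by $\tau$) and two kinds of \emph{fixed point stabilization} (adding a single handle meeting the fixed set, which changes the framing of $C$ by $\pm 1$). Your sketch lumps these together; in practice one must check invariance under each type separately, and the fixed point stabilizations are what allow one to connect diagrams inducing different framings of $C$.

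Second, for handleslides the paper takes a different route from the one you propose. Rather than developing real holomorphic triangle maps, the paper invokes Perutz's theorem that $\mathbb{T}_\alpha$ and $\mathbb{T}_{\alpha'}$ are Hamiltonian isotopic in $\Sym^m(\Sigma)$ for a suitable symplectic form, and then runs an ordinary continuation argument for $CF(\mathbb{T}_\alpha, M^R)$ with dynamic boundary conditions. This sidesteps the need to set up real triangle maps (or triangles with a boundary arc on $M^R$) and the attendant admissibility and associativity arguments. Your triangle approach is plausible in principle but would require substantially more infrastructure than the paper builds; the Perutz shortcut is both cleaner and what is actually used.

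Your remarks on stabilization, transversality, and the role of $\F$-coefficients are accurate; the paper handles stabilization by the Ozsv\'ath--Szab\'o neck-stretching argument, done once for free stabilization (splicing at a pair of points $p,\tau(p)$) and once for fixed point stabilization (splicing at a single point on $C$).
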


As in the monopole setting, these groups split over real $\SpinC$-structures on $(Y, \tau)$. There is a real Maslov index, which gives rise to relative gradings. The variable $U$ has degree $-1$ (and corresponds to $v_i$ from \cite{li:HMR}).

For a knot  $K \subset S^3$, one can consider the double branched cover $\Sigma_2(K)$ with the involution $\tau_K$ given by the deck transformation. For double branched covers over knots in $S^3$, every $\SpinC$-structure carries a unique real $\SpinC$-structure, and their total number is the determinant of the knot, $\det(K)$. 
In the case that $|\Delta_{K}(-1)| = 1$, the manifold $\Sigma_2(K)$ has a unique $\SpinC$-structure, and Miyazawa defined an invariant $|\deg(K)|$ which is the absolute value of the signed count of points in the framed moduli space of solutions of the real Seiberg-Witten equations; see \cite{miyazawa} and \cite[Theorem 1.3]{li:spectral}. Miyazawa also associated invariants to surfaces inside $4$-manifolds, and showed that the invariant of the roll spun knot of $K$ (which is a $2$-knot in $S^4$) equals $\absdeg$. Using this, he showed that by taking the connected sum of a standard $\RP^2$ in $S^4$ with several roll spins of the pretzel knot $P(-2,3,7)$, one gets an infinite family of exotic $\RP^2$s in $S^4$. The Miyazawa invariant is applicable to surfaces in $S^4$ of small genus, and there is hope that it could be used to detect an exotic $2$-knot. 

In the case that $|\Delta_{K}(-1)| = 1$, Miyazawa's degree invariant ought to correspond to the Euler characteristic of $\widehat{\HFR}(\Sigma_2(K), \tau_K)$. More generally, for any knot $K$ and any $\s \in \SpinC(\Sigma_2(K))$, we  can consider the Euler characteristic
$$\chi_\s(K):=\chi(\HFRh(\Sigma_2(K),\tau_K,\s).$$ 

 We have not developed absolute $\Z/2$-gradings in our theory, so a priori $\chi_\s(K)$ is well-defined only up to sign. However, in our setting we can talk about a global change in signs, for all $\SpinC$ structures at the same time. This gives more information, which we do not know how to see from Seiberg-Witten theory. Precisely, we make the global change so that $$\chitot(K) := \sum_\s \chi_\s(K)$$
is positive. This fixes the signs of all  $\chi_\s(K) \in \Z$. (Further, one can show that the invariants $\chi_\s(K)$ are always odd integers.)

Work of Kang, Park, and Taniguchi computed the Miyazawa invariant for Montesinos knots \cite[Corollary 1.6]{KPT:cables}, via the techniques of \cite{dai2023latticehomologyseibergwittenfloerspectra}. The Heegaard Floer analogue is much more straightforward to compute. 

\begin{theorem}
    The invariants $\chi_\s(K)$ (the Heegaard Floer analogues of Miyazawa's invariant) can be computed algorithmically for any knot $K \subset S^3$. 
\end{theorem}

Typically, the Euler characteristic of a Floer homology is a more classical invariant (e.g., the Casson invariant for instanton homology, Turaev torsion for monopole or Heegaard Floer homology, the Alexander polynomial for knot Floer homology). The Miyazawa-type invariants appear to be new, and have unusual properties. For all knots with up to $8$ crossings, we have $\chi_\s(K) \in \{\pm 1\}$ for all $\s$. However, this stops being true for larger knots: there exist $10$-crossing knots which have $\chi_s(K) = 3$ for some $\s$. For the pretzel knot $P(-2,3,7)=12n_{242}$, which has determinant $1$ (so a single $\s$), we have $\chitot(K) = 3$, in agreement with the calculation of $\absdeg$ from Seiberg-Witten theory \cite{miyazawa}. Further, even for small knots such as the trefoil, summing over $\SpinC$ structures yields a value of $\chitot(K)$ that is different from $\det(K) = \chi(\widehat{\HF}(\Sigma_2(K))$. 

Apart from the Euler characteristic, the groups $\widehat{\HFR}(\Sigma_2(K), \tau_K)$ themselves can be computed effectively in many cases, by counting holomorphic representatives of invariant domains. As an example, in \Cref{sec:exs} we will do this explicitly for the knot $9_{46}$. 

Nevertheless, the current paper only sets up the definition and the basic properties of real Heegaard Floer theory; there is much more work to be done. We expect the invariants $\HFR^\circ$ to be natural, and functorial under four-dimensional cobordisms equipped with involutions. When applied to double branched covers, they should produce invariants of surfaces in four-manifolds that are more easily computable than via Seiberg-Witten theory. Moreover, we should get absolute gradings,  and Fr{\o}yshov invariants similar to those in \cite{KMT:homology} and \cite{li:skein}. In some cases, the theory could be extended to $\Z$ instead of $\F$ coefficients. We also expect connections to Khovanov homology and to the usual Heegaard Floer groups $\HF^\circ(\Sigma_2(K))$.

An orientation-preserving involution on a connected $3$-manifold is either the identity or has a (possibly empty) link $C$ as its fixed point set. In this paper we only consider the case where $C$ is nonempty; then, we can place the basepoint on a component of $C$. In the case of free actions, basepoints must be placed in pairs, and the invariant appears to depend not only on the pair $(Y, \tau)$, but also on the link determined by the basepoints.

\subsection*{Outline.} In  \Cref{sec:real_lf}, we study the general case of real Lagrangian Floer homology; we establish transversality for generic symmetric almost complex structures, compute the real Maslov index in terms of the usual Maslov index, and consider gradings for the theory. In \Cref{sec: real HD}, we discuss real Heegaard diagrams, real Heegaard moves, as well as real $\SpinC$-structures. In \Cref{sec:def}, we study bubbling and define our invariants. In \Cref{sec:invariance} we prove invariance of the theory. In \Cref{sec:exs} we compute some examples. Finally, in \Cref{sec:euler}, we consider the Euler characteristic of the hat theory for knots in $S^3$. 

\bigskip
\noindent {\bf Acknowledgements.} We thank Mohammed Abouzaid, Ciprian Bonciocat, Kristen Hendricks, Eleny Ionel, Sungkyung Kang, Judson Kuhrman, Jiakai Li, Robert Lipshitz, Anubhav Mukherjee, Eha Srivastava, Mohan Swaminathan, Masaki Taniguchi, Yonghan Xiao for helpful conversations. 

The authors were supported by the Simons Collaboration Grant on New Structures in Low-Dimensional Topology. CM was also supported by a Simons Investigator Award.

\section{Real Lagrangian Floer homology}\label{sec:real_lf}

\subsection{The set-up}
\label{sec:setup}
Let $(M, \omega)$ be a  symplectic manifold. Suppose we are given an anti-symplectic involution $R$, i.e. a smooth map $R: M \to M$ such that $R^2 = \id$ and $R_*(\omega) = -\omega$. We let $M^R$ denote the fixed point set of $R$, which is necessarily a Lagrangian submanifold of $M$.

Let $L \subset M$ be a compact Lagrangian that is {\em monotone on disks}; i.e., there exists $\lambda > 0$ such that
\begin{equation}
    \label{eq:monotonedisks}
 [\omega]|_{\pi_2(M, L)} = \lambda \cdot \mu|_{\pi_2(M, L)},
 \end{equation}
where $\mu$ is the Maslov class. Note that this implies that $M$ itself is {\em spherically monotone}:
 \begin{equation}
    \label{eq:monotonespheres}[\omega]|_{\pi_2(M)} = \frac{\lambda}{2} \cdot  c_2|_{\pi_2(M)}.
\end{equation}
In turn, this implies that the Lagrangian $M^R$ is monotone on disks, because every disk with boundary on $M^R$ can be combined with its reflection to give a sphere in $M$, and we can apply the relation \eqref{eq:monotonespheres}.

The {\em minimal Maslov number} of a Lagrangian $L$ is defined as the positive integer $N_L$ such that $\mu(\pi_2(M, L)) = N_L \Z$. If $L$ is orientable, then $N_L$ is even (in particular, at least $2$). In the non-orientable case, the minimal Maslov number could be $1$.

We assume that $M$ is either compact or convex at infinity, so that Gromov compactness applies. We will also assume that the counts of index $2$ disk bubbles (through a given point) on $L$ and $M^R$ are zero, or that they cancel out. (When $L$ or $M^R$ has minimal Maslov number $1$, we also need to assume that there are no contributions from index $1$ disk bubbles. This makes the index $2$ disk bubble counts well-defined.)

Under these hypotheses, there is a well-defined Lagrangian Floer complex
$$\CFR_*(L) := \CF_*(L, M^R),$$
which we call the {\em real Lagrangian Floer complex} associated to $L$. We will be interested in interpreting it in terms of certain symmetric $J$-holomorphic strips between $L$ and its reflection $R(L)$, instead of ordinary $J$-holomorphic strips between $L$ and $M^R$. 

A generator of $\CF_*(L, M^R)$ is an intersection point between $L$ and $M_R$. This  can instead be viewed as an element of $L \cap R(L)$ which is fixed by the action of $R$:
$$ L \cap M^R = (L \cap R(L))^R.$$
Thus, the generators of $\CF_*(L, M^R)$ are precisely the generators of $\CF_*(L, R(L))$ that are fixed by the involution. 

 Let $\D = \R \times [0,1]$ be the complex strip with coordinates $(s,t)$ and let $\rho: \D \ra \D$ be the involution 
\begin{align*}
    \rho(s, t) = \rho(s, 1-t).
\end{align*}
Under the identification of $\D$ with the unit disk in $\C$, the map $\rho$ corresponds to complex conjugation. Let $\cJ$ be the space of time-dependent almost complex structures on $M$ which are compatible with $\omega$ and let $\cJ_R$ be the subset of $\cJ$ which satisfies the additional symmetry condition:
\begin{equation}
\label{eq:JR}
    J_t \circ R_* = - R_* \circ J_{1-t}.
\end{equation}
This space is nonempty and contractible; see \cite[Proposition 1.1]{welschinger} for a proof in the time-independent case, which can be easily adapted to our setting. 

For $\x$ and $\y$ in $L \cap M^R$, define $\Pi_2(\x, \y)$ to be the set of homotopy classes of maps 
\begin{align*}
    \Pi_2(\x, \y) = 
    \left\{ u: \R \times [0,1] \ra M \bigg | 
    \substack{u(s,0) \in L,\, u(s,1) \in M^R\\
    \lim_{s\ra -\infty} u(s,t) = \x \\
    \lim_{s\ra\infty} u(s,t) = \y 
    } \right\}.
\end{align*}
Given $\psi \in \Pi_2(\x, \y)$ and $J \in \cJ$, let $\cM(\psi)$ be the space of strips $u$ in the class $\psi$ which satisfy Floer's equation: 
\begin{equation}
\label{eq:floer}
    \frac{\del u}{\del s} + J_t(u(s, t))  \frac{\del u}{\del t} = 0.
\end{equation}
This space has an action by $\R$ translation on the domain. We let $\widehat{\cM}(\psi) = \cM(\psi)/\R$ be the quotient. 

Given $\x,\y \in (L \cap R(L))^R$, define $\pi_2^R(\x,\y)$ to be the set of homotopy classes of maps 
\begin{align*}
    \pi_2^R(\x,\y) = 
    \left\{ u: \R \times [0,1] \ra M \bigg | 
    \substack{u(s,0) \in L,\, u(s,1) \in R(L)\\
    \lim_{s\ra -\infty} u(s,t) = \x \\
    \lim_{s\ra\infty} u(s,t) = \y \\
    u = R \circ u \circ \rho
    } \right\}.
\end{align*}
Elements of $\pi_2^R(\x,\y)$ will be called \emph{real invariant strips}. Given $\phi\in \pi_2^R(\x,\y)$ and $J \in \cJ^R$, define $\cM_R(\phi)$ to be the moduli space of real invariant strips $u$ which satisfy \Cref{eq:floer}. We let $\widehat{\cM}_R(\phi)$ be its quotient by $\R$.

Given $\x,\y \in (L\cap R(L))^R$ and $J \in \cJ_R$, there is a map
\begin{align}\label{eqn:real_correspondence}
    \cR: \Pi_2(\x,\y) \ra \pi_2^R(\x,\y)
\end{align}
given by gluing together $\psi$ and $R\circ \psi \circ \rho$. This map has a clear inverse, given by restriction. Furthermore, given a $J$-holomorphic representative $u: (\D, \del \D)  \ra (M, L\cup M^R)$, the strips $u$ and $R\circ u\circ \rho$ can be glued together to form a $J$-holomorphic map $(\D, \del \D) \ra (L, R(L))$. Hence, there is a natural correspondence between elements of $\widehat{\cM}(\psi)$ and elements of $\widehat{\cM}_R(\phi)$ where $\phi = \cR(\psi)$.

In light of this correspondence, we will often conflate $\Pi_2(\x, \y)$ with $\pi_2^R(\x, \y)$, and identify the corresponding moduli spaces. We will mostly work with $\pi_2^R(\x, \y)$ and $\widehat{\cM}_R(\phi)$.

\subsection{Transversality}\label{subsec:transversality}
Given a class $\phi \in \pi_2^R(\x, \y)$, the moduli spaces $\widehat{\cM}_R(\phi)$ are transversely cut out for a generic $J \in \cJ$. If $J$ lies in $\JR$, we may identify $\widehat{\cM}_R(\phi)$ with the moduli space of real invariant strips $\widehat{\cM}_R(\phi)$. However, a priori,  it is not clear whether $\widehat{\cM}_R(\phi)$ is transversely cut our for a generic $J$ in $\cJ_R$. 

For every (not necessarily $J$-holomorphic) strip $u$ in the class $\phi$, the linearization of Floer's equation \eqref{eq:floer} is an operator
$$ D_u^R : \Gamma(u^*TM)^R \to \Gamma(\Lambda^{0,1}T^*\D \otimes_{J} u^*TM)^R,$$
where the superscript $R$ refers to the following requirement: We are considering sections of bundles on $\D$, and we have a bundle map $R_*$ that covers the conjugation $\rho$ on $\D$. We ask that the sections be invariant with respect to $R_*$.

The operator $D_u^R$ is an elliptic Real operator in the sense of \cite{ASReal}. As such, it is Fredholm when extended between appropriate Banach completions. Its index is denoted
$$\indR(\phi) = \dim \ker (D^R_u) - \dim \coker(D^R_u).$$
The index is invariant under Fredholm deformation, so it depends only on the class $\phi$ (rather than on the specific $u$). This justifies the notation $\indR(\phi)$.

\begin{remark}
The real index is the same as the ordinary index of the corresponding class $\mathcal{R}^{-1}(\phi) \in \Pi_2(\x, \y) $ between $\Ta$ and $M^R$.    
\end{remark}

\begin{proposition}
For generic $J \in \JR$, the moduli space $\cMR(\phi)$ is a smooth manifold of dimension $\indR(\phi)$.
\end{proposition}

\begin{proof}
The proof is similar to that of the usual transversality result in Lagrangian Floer homology; see \cite{FHS}, \cite{oh} or \cite{McDuffSalamonBig}. Let us sketch the argument and point out the necessary modifications. We will work with Banach completions throughout but we ignore them from notation for simplicity. 

Consider the bundle $\End(TM, J, \omega)$ whose fiber at $p \in M$ consists of a smooth family of linear maps $Y_t: T_pM \to T_pM$, $t \in [0,1]$ such that
$$Y_tJ_t + J_tY_t = 0, \ \ \omega(Y_tv, w) + \omega(v, Y_tw)=0.$$

 A variation  $Y=\delta J$ of the data $J \in \JR$ is a section of $\End(TM, J, \omega)$ that satisfies 
\begin{equation}
\label{eq:Ycondition}
    Y_t \circ R_* = - R_* \circ Y_{1-t}
\end{equation}
for all $t \in [0,1]$.  The space of such variations is the tangent space $T_J \JR$.

Denote by $\bar\del_J (u)$ the left hand side of Floer's equation \eqref{eq:floer}. By the Sard-Smale theorem, to ensure transversality for generic $J$, it suffices to prove that the map $(u, J) \to \bar\del_J (u)$ has surjective linearization whenever its value is $0$. A standard argument shows that if this were not the case, we would find $\eta$ in the cokernel of $D_u^*$, such that 
\begin{equation}
\label{eq:intY}
    \int_{\D} \langle \eta, Y(u) \circ du \circ j \rangle =0
\end{equation}
for all $Y \in T_J \JR$. (Here, $j$ is the complex conjugation on $T\D$.)

By \cite[Theorem 5.1]{oh}, there is a dense set of points $(s, t) \in \D$ such that
$$du(s, t) \neq 0, \ \ u(s, t) \cap u(\R -\{s\}, t) =\emptyset.$$
Given such an $(s, t)$, we can find $Z$ in the fiber of $\End(TM, J, \omega)$ over $(s, t)$ such that
$$ \langle \eta(s, t), Z \circ du(s, t) \circ j(s, t) \rangle > 0.$$

Using a smooth cut-off function, we can extend $Z$ to a section $Y$ supported in a neighborhood of $(s, t)$, such that the integral in \eqref{eq:intY} is positive. This section may not satisfy \eqref{eq:Ycondition}. To make it so, we average $Y$ with its conjugate $\bar Y$ given by
$$\bar Y_t = -R_* \circ Y_{1-t} \circ R_*.$$  
Replacing $Y$ by this average does not change the integral, because of the symmetry property of $u$. Thus, we obtain a contradiction with \eqref{eq:intY}.
\end{proof}

\subsection{The real Maslov index}\label{subsec:real_maslov}
For $\x, \y \in (L \cap R(L))^R$, there is a forgetful map
$$ \pi^R_2(\x, \y) \to \pi_2(\x, \y), \ \ \phi \to \tphi$$
where $\pi_2(\x, \y)$ denotes the space of homotopy classes of all strips from $\x$ to $\y$ between $L$ and $R(L)$ (not necessarily $R$-invariant).

The ordinary moduli space $\cM(\tphi)$ of $J$-holomorphic curves (without the $R$-invariance condition) has virtual dimension equal to the Maslov index 
$$ \ind(\tphi) = \dim \ker (D_u) - \dim \coker(D_u),$$
where $u$ is any strip in the class  $\tphi$ ($J$-holomoprhic or not). Similarly, the virtual dimension of the moduli space $\cMR(\phi)$ is given by the real index
$$\indR(\phi) = \dim \ker (D^R_u) - \dim \coker(D^R_u),$$
where now $u$ is $R$-invariant. 

There is a formula which enables us to compute the real index from the usual one.
Before stating it, we define a quantity $\sigma(L, \x)$ associated to the Lagrangian $L$ and each intersection point $\x \in (L \cap R(L))^R$. Since $L$ and $R(L)$ intersect transversely at $\x$, they must also intersect transversely with $M^R$ there. Thus, inside $T_{\x}M$ we have three linear Lagrangian subspaces that are pairwise intersecting transversely: $T_\x L$, $T_\x (R(L))$, and $T_\x(M^R)=(T_\x M)^R$. 
We define $\sigma(L, \x)$ to be the Kashiwara-Wall index (also known as the triple Maslov index) associated to these three subspaces:
$$\sigma(L, \x) = s(T_\x L, T_\x (R(L)), T_\x M^R).$$
For the definition of the triple index $s$, see \cite{wall:nas,lion-vergne,CLM,deGosson}. In our setting, it can be interpreted as follows: we let $T_xL$ be the graph of a  symmetric linear function $$F: T_\x M^R \to J \cdot T_\x M^R $$
and let $\sigma(L, \x)$ be the signature of $F$. Note that $\sigma(L, \x) \equiv n \pmod{2}$.

\begin{prop}
\label{prop:indR}
    Let $\phi \in \pi_2^R(\x, \y)$ and $\tphi$ its image in $\pi_2(\x, \y)$. Then:
    \begin{equation}
    \label{eq:indR}
        \ind(\tphi) = 2 \indR(\phi) + \frac{\sigma(L, \x) - \sigma(L, \y)}{2}.
    \end{equation}
\end{prop}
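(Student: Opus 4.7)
The plan is to decompose $\ind(\tphi)$ using the $\Z/2$-symmetry $\xi \mapsto R_* \circ \xi \circ \rho$ on sections of $u^*TM$, then compute the difference between the invariant and anti-invariant indices by a spectral flow argument. First, the compatibility condition $J_t R_* = -R_* J_{1-t}$ from \eqref{eq:JR} ensures that this is an $\R$-linear involution commuting with the linearized Floer operator $D_u$; accordingly the real kernel and cokernel split into $(\pm 1)$-eigenspaces, giving
$$\ind(\tphi) = \ind_\R(D_u^R) + \ind_\R(D_u^{-R}) = \indR(\phi) + \ind_\R(D_u^{-R}).$$
So the proof reduces to showing $\ind_\R(D_u^{-R}) - \indR(\phi) = \frac{\sigma(L,\x) - \sigma(L,\y)}{2}$.

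Next, I would identify both $D_u^R$ and $D_u^{-R}$ with $\bar\partial$-operators on the quotient half-strip $\R \times [0,1/2]$: both carry the boundary condition $TL$ at $t = 0$, while at the fixed axis $t = 1/2$ the $(\pm 1)$-eigenspace of $R_*$ imposes the Lagrangian boundary condition $TM^R$ (resp.\ $J \cdot TM^R$). The subspace $J \cdot TM^R$ is Lagrangian since $J$ is $\omega$-compatible, and the graph description $T_\x L = \{v + F_\x(v)\}$ with symmetric nondegenerate $F_\x$ shows that $T_\x L$ is automatically transverse to both $T_\x M^R$ and $J \cdot T_\x M^R$, so both operators are Fredholm. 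The difference of the two indices is then computed by interpolating the boundary condition at $t = 1/2$ through a continuous path of Lagrangian subbundles $\Lambda_\tau$, $\tau \in [0,1]$, from $TM^R$ to $J \cdot TM^R$. The Fredholm index changes only at $\tau$-values where $(T_\x L, \Lambda_\tau)$ or $(T_\y L, \Lambda_\tau)$ fails to be transverse, by a signed count of such crossings with opposite signs at the two asymptotic ends.

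Choosing $\Lambda_\tau$ near each end to be the $J$-rotation by angle $\tau \pi/2$ in a basis diagonalizing $F_\x$ (resp.\ $F_\y$), the crossings correspond exactly to the positive eigenvalues of the symmetric form, contributing $(n + \sigma(L,\x))/2$ at $\x$ and $(n + \sigma(L,\y))/2$ at $\y$. Summing with opposite signs, the $n/2$ terms cancel and one obtains $\frac{\sigma(L,\x) - \sigma(L,\y)}{2}$, as required.

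The main obstacle is executing this spectral flow computation rigorously: matching the Robbin–Salamon crossing signs to the Kashiwara–Wall convention defining $\sigma$, and patching the two asymptotic choices of $\Lambda_\tau$ into a globally defined path of Lagrangian subbundles over the full half-strip. The latter can be done via a path in $U(n)$ connecting the two diagonalizing frames, which contributes nothing to the spectral flow since it does not affect transversality at the ends.
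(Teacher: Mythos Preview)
Your approach is correct and genuinely different from the paper's. The paper argues ``axiomatically'': it sets $\delta(\lambda) = 2\mu_R(\lambda) - \mu(\lambda)$, shows that concatenating with a generator of $\pi_1(U(n)/O(n))$ adds $2$ to $\mu$ and $1$ to $\mu_R$ (hence $\delta$ depends only on the endpoints), uses additivity under concatenation to write $\delta(\Lambda_0,\Lambda_1) = f(\Lambda_0) - f(\Lambda_1)$, and then computes $f(\Lambda) = (n-\sigma(\Lambda))/2$ by reducing via direct sums and deformation invariance to the case $n=1$, where an explicit bigon does the job. Your route instead splits $D_u$ into its $(\pm 1)$-eigenspaces under $\xi \mapsto R_*\xi\circ\rho$, identifies both summands with $\bar\partial$-operators on the half-strip with boundary conditions $TL$ and $TM^R$ (resp.\ $J\!\cdot\! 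TM^R$), and computes their index difference as a spectral flow along the rotation from $TM^R$ to $J\!\cdot\! TM^R$. The paper's argument is more self-contained (no Robbin--Salamon machinery, just one concrete bigon); yours is more conceptual and explains directly why the signature appears, but as you note, the honest work lies in matching the crossing-form signs to the Kashiwara--Wall convention and in globalizing the interpolation $\Lambda_\tau$ along the fixed axis --- both doable, but neither free.
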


\begin{proof}
    Observe that $\indR$ is invariant under perturbations of the Fredholm operator. Thus, after trivializing $u^*TM$ over the disk, we can think of $\indR$ as associated to a path $\lambda = (\Lambda_t)_{t \in [0,1]}$ of Lagrangian subspaces of $\C^n$, starting and ending at Lagrangians transverse to their conjugates; that is,
    $$ \Lambda_i \pitchfork \bar{\Lambda}_i, \ \ i=0,1.$$
    (In our case, $\Lambda_0 = T_{\x}L$ and  $\Lambda_1 = T_{\y} L$.) 

We have both an ordinary Maslov index $\mu(\lambda)=\ind(\tphi)$ and a real Maslov index $\mu_R(\lambda)=\indR(\phi)$ associated to the path $\lambda$. Consider the quantity
$$ \delta(\lambda)= 2\mu_R(\lambda)-\mu(\lambda).$$

If the endpoints $\Lambda_0$ and $\Lambda_1$ are kept constant, there is a $\Z$ choice of possible paths $\lambda$, corresponding to the fundamental group of the Lagrangian Grassmannian $\pi_1(U(n)/O(n))=\Z$. Concatenating with a loop in the generator of $\Z$ adds $2$ to $\mu(\lambda)$, because we get a contribution of $1$ to the Maslov index for both paths $(\Lambda_t)$ and $(\bar{\Lambda}_t)$. (For the definition and properties of the Maslov index for paths, we refer to \cite{viterbo-index}, \cite{robbin-salamon}, \cite{robbin-salamon2}.) On the other hand, the contribution to the real index $\mu_R(\lambda)$ is $1$, because we divide out by the symmetry. Therefore, $\delta(\lambda)$ stays constant under concatenation with a loop. It follows that $\delta$ depends only on the endpoints, so we can write:
$$\delta(\lambda) = \delta(\Lambda_0, \Lambda_1).$$

Furthermore, both $\ind$ and $\indR$ are additive under concatenation. Therefore,
\begin{equation}
    \label{eq:concat}
    \delta(\Lambda_0,\Lambda_2) = \delta(\Lambda_0,\Lambda_1) + \delta(\Lambda_1,\Lambda_2).
\end{equation}

If $\Lambda \subset \C^n$ is a Lagrangian transverse to $\bar \Lambda$, it must be of the form 
$$\Lambda=\{(x, iAx) \mid x \in \R^n\} \subset \C^n= \R^n \oplus i\R^n,$$
where $A: \R^n \to \R^n$ is self-adjoint, i.e. representable by a symmetric matrix. We let $\sigma(L)$ be the signature of that matrix, and write $\Lambda = \Gamma(A)$.

In particular, let $\Delta = \Gamma(I)$ be the diagonal (associated to the identity matrix). 
Setting $f(\Lambda) = \delta(\Lambda, \Delta)$, we deduce from \eqref{eq:concat} that $\delta$ is of the form
$$\delta(\Lambda_0, \Lambda_1) = f(\Lambda_0) - f(\Lambda_1).$$

We claim that
\begin{equation}
\label{eq:fLambda}
 f(\Lambda) = \frac{n-\sigma(\Lambda)}{2}.
\end{equation}

To prove this, note that the ordinary and real indices, and hence $f$, are invariant under deformations. The connected components of the space of symmetric $n$-by-$n$ matrices are characterized by the signature, so it suffices to compute $f(\Lambda)$ when $\Lambda=\Gamma(A)$ and $A$ is diagonal with entries only $1$ and $-1$. 

Furthermore, the indices and hence $f$ are additive under direct sums; so is the right hand side of \eqref{eq:fLambda}. Therefore, it suffices to check \eqref{eq:fLambda} for the graphs of $I$ and $-I$ when $n=1$. This can be done explicitly. To compute $f(\Delta)=\delta(\Delta, \Delta)$, we can take $\lambda$ to be the constant path, in which case both the index and the real index are zero; we get $f(\Delta)=0$. To compute $f(\nabla)=\delta(\nabla, \Delta)$ where $\nabla =\Gamma(-I)$ is the anti-diagonal, we let the path $\lambda=(\Lambda_t)$ consist of the lines of slope $-1+2t$ in $\R^2$, for $t \in [0,1]$. Then, the class $\phi$ is the bigon shown in Figure~\ref{fig:bigon}. The bigon has a unique holomorphic representative   modulo translation by $\R$, and the moduli space is transversely cut out; thus, $\ind(\tilde \phi)=1$. The same goes for the invariant moduli space, so we also have $\indR(\phi)=1$. We get that $f(\nabla)= \delta(\lambda) = 1$, and therefore 
 Equation~\eqref{eq:fLambda} holds in this case, too. We deduce that it holds in general.

    The formula \eqref{eq:indR} easily follows from \eqref{eq:fLambda}, because we have $\sigma(L, \x) = \sigma(T_{\x} L_0)$ and $\sigma(L, \y) = \sigma(T_{\y} L_0)$.
\end{proof}

\begin{figure}[h]
\def\svgwidth{.3\linewidth}
\begingroup%
  \makeatletter%
  \providecommand\color[2][]{%
    \errmessage{(Inkscape) Color is used for the text in Inkscape, but the package 'color.sty' is not loaded}%
    \renewcommand\color[2][]{}%
  }%
  \providecommand\transparent[1]{%
    \errmessage{(Inkscape) Transparency is used (non-zero) for the text in Inkscape, but the package 'transparent.sty' is not loaded}%
    \renewcommand\transparent[1]{}%
  }%
  \providecommand\rotatebox[2]{#2}%
  \newcommand*\fsize{\dimexpr\f@size pt\relax}%
  \newcommand*\lineheight[1]{\fontsize{\fsize}{#1\fsize}\selectfont}%
  \ifx\svgwidth\undefined%
    \setlength{\unitlength}{226.77165354bp}%
    \ifx\svgscale\undefined%
      \relax%
    \else%
      \setlength{\unitlength}{\unitlength * \real{\svgscale}}%
    \fi%
  \else%
    \setlength{\unitlength}{\svgwidth}%
  \fi%
  \global\let\svgwidth\undefined%
  \global\let\svgscale\undefined%
  \makeatother%
  \begin{picture}(1,0.7)%
    \lineheight{1}%
    \setlength\tabcolsep{0pt}%
    \put(0,0){\includegraphics[width=\unitlength,page=1]{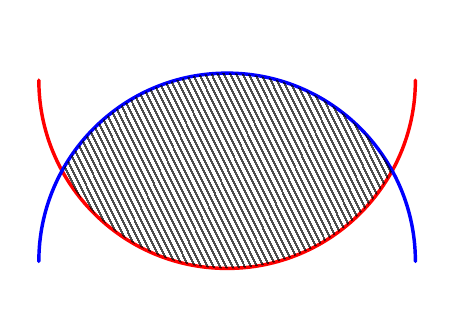}}%
    \put(0.43656178,0.06534767){\color[rgb]{0,0,0}\makebox(0,0)[lt]{\smash{\begin{tabular}[t]{l}{\small$\Lambda_t$}\end{tabular}}}}%
    \put(0.43656178,0.58128519){\color[rgb]{0,0,0}\makebox(0,0)[lt]{\smash{\begin{tabular}[t]{l}{\small$\overline{\Lambda_t}$}\end{tabular}}}}%
    \put(0.87947311,0.32829354){\color[rgb]{0,0,0}\makebox(0,0)[lt]{\smash{\begin{tabular}[t]{l}{\small$\y$}\end{tabular}}}}%
    \put(0.04603557,0.32829354){\color[rgb]{0,0,0}\makebox(0,0)[lt]{\smash{\begin{tabular}[t]{l}{\small$\x$}\end{tabular}}}}%
  \end{picture}%
\endgroup%

\caption{A bigon domain from a path from $\Lambda_0 = \Gamma(-I)$ to $\Lambda_1=\Gamma(I)$.}\label{fig:bigon}
\end{figure}

\begin{remark}
\label{rem:et}
    In some situations we may have $\indR(\phi) > \ind(\phi)$ (see Example~\ref{ex:annulus}). In such a case, note that there is no $J \in \JR(\phi)$ for which we can obtain transversality for both the space of ordinary $J$-holomorphic curves between $L$ and $R(L)$, and the space of $R$-invariant ones. Indeed, if such a $J$ existed, the cokernels of the two linearized operators would be zero, and we would have
    $$ \indR(\phi)=\dim \ker (D^R_u) \leq \dim \ker(D_u) = \ind(\phi).$$
\end{remark}

\subsection{The real Lagrangian Floer complex}
We can now define the Floer complex $\CFR_*(L)=\CF_*(L, M^R)$ in terms of invariant holomorphic strips. For simplicity, we work with coefficients in $\F=\Z/2\Z$.

We let $\CFR_*(L)$ be generated over $\F$ by intersection points in $(L\cap R(L))^R$, and equipped with the differential
$$\del \x = \sum_{\y \in (L\cap R(L))^R} \sum_{\substack{\phi \in \pi_2^R(\x, \y) \\ \indR(\phi)=1}} \# \widehat{\cM}_R(\phi) \cdot \y.$$
Here, $\# \widehat{\cM}_R(\phi) \in \F$ is the count of real invariant holomorphic strips, with respect to a generic $J \in \JR$. When we want to emphasize the dependence on $J$, we write $\CFR_*(L, J)$.

We denote the homology of $\CFR_*(L)$ by $\HFR_*(L)$.

\begin{lem}\label{lem:independence of acs}
    The real Lagrangian Floer homology $\HFR_*(L)$ is independent of the choice of invariant almost complex structure (up to isomorphism). 
\end{lem}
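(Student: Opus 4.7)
The plan is to adapt the standard continuation-map argument from Lagrangian Floer homology to the invariant setting, using the fact that the space $\JR$ is nonempty and contractible (as noted in Section~\ref{sec:setup}). Given two generic $J_0, J_1 \in \JR$, I would choose a smooth path $(J_s)_{s \in \R}$ in $\JR$ that equals $J_0$ for $s \ll 0$ and $J_1$ for $s \gg 0$. Existence of such a path is immediate from the contractibility of $\JR$. For each class $\phi \in \pi_2^R(\x,\y)$, consider the moduli space $\cM_R^{J_s}(\phi)$ of real invariant solutions $u$ to the $s$-dependent Floer equation $\partial_s u + J_{s,t}(u)\partial_t u = 0$; these are no longer $\R$-translation invariant.

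First I would establish transversality for these parametrized moduli spaces for generic $R$-invariant paths $(J_s)$. The proof follows the same template as Section~\ref{subsec:transversality}: the only modification is that variations $Y = \delta J_s$ now depend on $s$ as well as $t$, and must satisfy the symmetry $Y_{s,t}\circ R_* = -R_*\circ Y_{-s,1-t}$ inherited from \eqref{eq:JR} together with the reparametrization $\rho(s,t) = (s,1-t)$ of the domain. The averaging trick from the end of Section~\ref{subsec:transversality} (replacing $Y$ by the average of $Y$ with its conjugate) still produces a valid variation, so the Sard–Smale argument goes through. Then I define the continuation map
\[
\Phi_{01}\colon \CFR_*(L, J_0) \to \CFR_*(L, J_1), \qquad \Phi_{01}(\x) = \sum_{\y} \sum_{\substack{\phi \in \pi_2^R(\x,\y) \\ \indR(\phi)=0}} \#\cM_R^{J_s}(\phi)\cdot \y.
\]

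Next, I would verify that $\Phi_{01}$ is a chain map by analyzing the boundary of the one-dimensional parametrized moduli spaces. The compactification picks up broken configurations of two types: a $J_0$-holomorphic strip followed by a continuation strip, or a continuation strip followed by a $J_1$-holomorphic strip. The monotonicity hypothesis \eqref{eq:monotonedisks} together with the assumption on index $2$ disk bubble counts (imposed in Section~\ref{sec:setup}) rules out contributions from sphere and disk bubbles, exactly as in the definition of $\partial^2 = 0$ for the complex itself. Because all curves in sight are $R$-invariant, bubbles come in $R$-paired configurations or on the fixed locus, but in both cases they are covered by the standing monotonicity/bubble-count assumptions. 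To show independence of the path, I would run the usual one-parameter homotopy-of-homotopies argument, now inside $\JR$ (contractibility gives a path of paths), and produce a chain homotopy $H$ between $\Phi_{01}$ and any other continuation map built from a different path.

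Finally, applying the same construction with the reverse path yields $\Phi_{10}\colon \CFR_*(L,J_1) \to \CFR_*(L,J_0)$, and the composition argument (catenating the path with its reverse and homotoping to the constant path in $\JR$) shows that $\Phi_{10}\circ \Phi_{01}$ is chain homotopic to the identity continuation map for the constant path, which in turn is chain homotopic to $\id$. Hence $\Phi_{01}$ induces an isomorphism on homology. The main obstacle is verifying real transversality for the parametrized family $\cM_R^{J_s}(\phi)$ and checking that no new bubbling phenomena specific to the real setting (e.g., half-disks on $M^R$ that do not double to a sphere) can obstruct the standard compactness argument; however, as observed in Section~\ref{sec:setup}, such half-disks on $M^R$ double to spheres in $M$ and are controlled by the spherical monotonicity assumption \eqref{eq:monotonespheres}, so no genuinely new difficulty arises.
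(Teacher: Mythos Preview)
Your proposal is correct and follows essentially the same continuation-map argument as the paper's proof, which explicitly says ``the usual continuation arguments from Floer theory apply'' and sketches the construction of $\Phi_{J_s}$ and the homotopy $H$ via a two-parameter family. One small slip: since $\rho(s,t)=(s,1-t)$ does not reflect in $s$, the symmetry condition on the variation should read $Y_{s,t}\circ R_* = -R_*\circ Y_{s,1-t}$ rather than $Y_{-s,1-t}$; this does not affect the argument.
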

\begin{proof}
    The usual continuation arguments from Floer theory apply. We sketch them here for convenience.
    
    Since $\JR$ is contractible, any two choices of symmetric almost complex structures $J_0$ and $J_1$ can be connected by a path $J_s$ (which is a two-parameter family of almost complex structures.) Pick a generic such path. There is a map 
    \begin{align*}
        \Phi_{J_s}: \CFR_*(L, J_0) \ra \CFR_*(L, J_1), 
    \end{align*}
    defined by 
    \begin{align*}
        \Phi_{J_s}(\x) = \sum_{\y} \sum_{\substack{\phi \in \pi_2(\x,\y)\\ \ind_R(\phi) = 0}} \# \cM_R^{J_s}(\phi) \y,
    \end{align*}
    where $\cM_R^{J_s}(\phi)$ is the moduli space of holomorphic strips with time-dependent almost complex structures on the codomain. By considering the ends of one-dimensional moduli spaces, one can deduce that $\Phi_{J_s}$ is a chain map. 

    To see that $\Phi_{J_s}$ is a homotopy equivalence, choose a path $J_{s, T}$ of 2-parameter families of almost complex structures, so that $J_{s,0}$ is $J_s*J_{1-s}$ and $J_{s,1}$ is independent of $s$. We may assemble the spaces $\cM^{J_{s,T}}_R(\phi)$ for $T \in [0,1]$ into a single moduli space, and define 
    \begin{align*}
        H(\x) = \sum_{\y} \sum_{\substack{\phi \in \pi_2(\x,\y)\\ \ind_R(\phi) = -1}} \# \cM_R^{J_{s,T}}(\phi) \y.
    \end{align*}
    By once again examining the ends of $\cM_R^{J_{s,T}}(\phi)$ for curves with $\ind_R(\phi) = 0$, it follows that $H$ defines a homotopy between $\Phi_{J_s}\circ \Phi_{J_{1-s}}$ and the identity. 
\end{proof}

\subsection{Gradings} \label{sec:gradings}
If the original Lagrangian Floer complex $\CF_*(L, R(L))$ has a (relative or absolute) $\Z/2N$-grading $\gr$, then the real Lagrangian Floer complex $\CFR_*(L)$ has a (relative or absolute) $\Z/N$-grading $\gr_R$. Indeed, in view of Proposition~\ref{prop:indR}, we can set
\begin{align}\label{eqn:CFR_grading}
    \gr_R(\x) := \frac{1}{2} \Bigl( \gr(\x) + \frac{\sigma(L, \x) - n}{2} \Bigr).
\end{align}

In the case when the grading is absolute and $N$ is even (so that the original Floer complex is $\Z/4$-graded), the real Lagrangian Floer complex is at least $\Z/2$-graded and we can define its real Euler characteristic 
$$\chiR(L) := \chi(\HFR_*(L)).$$
This is reminiscent of an invariant defined in \cite{FKM}, in the context of symmetric periodic orbits. 

When we only have a relative $\Z/2$-grading $\gr_R$, the Euler characteristic $\chiR(L)$ is well-defined only up to sign.

Another way to induce a $\Z/2$-grading on the real Lagrangian Floer complex is by choosing orientations on the Lagrangians $L$ and $M^R$ (when such orientations exist). In that case, the intersection points acquire signs, and the resulting Euler characteristic is simply the intersection product:
$$ \chihat_R(L) := [L] \cdot [M^R].$$
We use the notation $\chihat_R$ to distinguish it from $\chiR$. The two Euler characteristics agree (at least up to sign) when any two intersection points $\x$ and $\y$ are related by a homotopy class of strips, i.e. $\pi_2(\x, \y)\neq \emptyset$. However, there exist situations when $\chiR$ and $\chihat_R$ are different; see Example~\ref{ex: different euler characteristics}.

When $L$ and $M^R$ are orientable  but we do not choose orientations, the Euler characteristic $\chihat_R(L)$ is well-defined only up to sign.

\section{Real Heegaard diagrams}\label{sec: real HD}
In this section, we adapt some of the basic notions of classical Heegaard Floer theory to the invariant setting. Throughout, $Y$ is taken to be a closed, connected, oriented 3-manifold equipped with an involution $\tau$ whose fixed set, which will be denoted $C$, is non-empty and of codimension 2. Note that this ensures that the involution is orientation-preserving. 

We refer to the pair $(Y,\tau)$ as a \emph{real 3-manifold}.

\subsection{Real Heegaard Diagrams} \label{sec:rhd} Recall that a genus $g$ Heegaard splitting of $Y$ is a decomposition of $Y$ as $U_\alpha \cup_\Sigma U_\beta$ where $\Sigma$ is an oriented surface of genus $g$ and $U_\alpha$ and $U_\beta$ are handlebodies. A handle decomposition for a handlebody $U$ can be specified by choosing a collection of $g$ disjoint, simple closed curves $\{\alpha_1, \hdots, \alpha_g\}$ (which are usually called \emph{attaching circles}) which are linearly independent in $\Sigma$; then, $U$ is built by attaching 2-handles along these curves followed by a single 3-handle. Therefore, by fixing two sets of attaching circles $\bm \alpha = \{\alpha_1, \hdots, \alpha_g\}$ and $\bm \beta = \{\beta_1, \hdots, \beta_g\}$, we specify handle decompositions for $U_\alpha$ and $U_\beta$, and we can reconstruct $Y$. The data $(\Sigma, \bm \alpha, \bm \beta)$ is called a \emph{Heegaard diagram} for $Y$.

In the presence of an involution $\tau$ on $Y$, we may consider Heegaard splittings which respect the symmetry in some way. We will be interested in \emph{real Heegaard splittings}, in which the two handlebodies are swapped by the involution, i.e. $\tau(U_\alpha) = U_\beta$. Since $\tau$ preserves the orientation on $Y$, observe that it must reverse the orientation on $\Sigma = \del U_{\alpha} = -\del U_{\beta}$. 

Real Heegaard splittings can be specified by \emph{real Heegaard diagrams}.  

\begin{defn}
    A \emph{real Heegaard diagram} is a pair $(\cH, R)$, where $\cH$ is a Heegaard diagram 
    \begin{align*}
        \cH = (\Sigma,\{\alpha_1, \hdots, \alpha_g\}, \{\beta_1, \hdots, \beta_g\})
    \end{align*}
    and $R$ is an orientation-reversing involution $R:\Sigma \ra \Sigma$ with the property that $R(\alpha_i) = \beta_i$ for all $i$. 
\end{defn}

Clearly $(\cH, R)$ specifies a real 3-manifold: we attach 2-handles along the attaching curves and fill the remaining boundary with a pair of 3-balls to obtain a 3-manifold $Y(\cH)$. We extend $R$ over $Y(\cH)$ by mapping the 2-handle attached along $\alpha_i$ to the 2-handle attached along $\beta_i = R(\alpha_i)$ and by mapping the 3-ball filling the alpha handlebody to the 3-ball filling the beta handlebody. We say that $(\cH, R)$ is a real Heegaard diagram for $(Y, \tau)$ if there is a diffeomorphism $f: Y(\cH) \ra Y$ so that $\tau \circ f = f \circ R.$ 
\begin{prop}
    Every real 3-manifold $(Y, \tau)$ has a real Heegaard diagram.
\end{prop}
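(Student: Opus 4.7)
The plan is to obtain the real Heegaard diagram from an anti-invariant Morse function on $Y$, exploiting the classical correspondence between self-indexing Morse functions and Heegaard splittings. The key object is a Morse function $f: Y \to \R$ satisfying $f \circ \tau = -f$, from which $\Sigma = f^{-1}(0)$ will automatically be $\tau$-invariant while the two handlebodies get swapped.

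First I would construct such an $f$. Starting from any Morse function $g$, the averaged function $f_0 = \tfrac{1}{2}(g - g \circ \tau)$ lies in the vector space $V = \{h \in C^\infty(Y) \mid h \circ \tau = -h\}$. A Sard--Smale argument inside $V$ perturbs $f_0$ to a Morse function $f \in V$. Along the fixed link $C$ I would work in a $\tau$-equivariant tubular neighborhood $C \times D^2$ with $\tau(c, z) = (c, -z)$; elements of $V$ have Taylor expansion odd in $z$, so generically the linear-in-$z$ term is nonzero along $C$. This guarantees that $C$ is disjoint from the critical set, that $0$ is a regular value, and that $C \subset f^{-1}(0)$. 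The critical points of $f$ then occur in $\tau$-paired couples $(p, \tau(p))$ of complementary indices $(k, 3-k)$ with opposite critical values $\pm c$.

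Next, using a $\tau$-invariant Riemannian metric (obtained by averaging, so that $\tau_* \nabla f = -\nabla f$), I would run the classical Morse-theoretic rearrangement and handle cancellation arguments equivariantly: every move is applied simultaneously to a critical point and its $\tau$-image. This places the critical values symmetrically about $0$ in the order index $0 < 1 < 2 < 3$ and reduces to a single index $0$ and single index $3$ critical point. Setting $\Sigma = f^{-1}(0)$, $U_\alpha = f^{-1}((-\infty, 0])$, $U_\beta = f^{-1}([0, \infty))$, the involution $\tau$ swaps $U_\alpha$ with $U_\beta$ and restricts to an orientation-reversing involution $R$ on $\Sigma$. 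For each index $1$ critical point $p_i$, the belt circle on $\Sigma$ of the corresponding $1$-handle is an $\alpha$-curve $\alpha_i$. Since $\tau_* \nabla f = -\nabla f$, the ascending manifold of $p_i$ is carried by $\tau$ to the descending manifold of $\tau(p_i)$, whose intersection with $\Sigma$ is exactly the attaching circle of the $2$-handle at $\tau(p_i)$; defining $\beta_i := R(\alpha_i)$ produces the required $\beta$-curves, yielding a real Heegaard diagram for $(Y, \tau)$.

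The main obstacle I anticipate is in the middle step: the standard one-at-a-time Morse rearrangement and handle cancellation moves must be upgraded to simultaneous moves on $\tau$-paired critical points while remaining inside the anti-invariant class $V$. The equivariant metric ensures that all flow-theoretic data used in these moves admit $\tau$-symmetric partners, so no pair is ever broken; verifying that each classical move has such an equivariant enhancement is the real content of the argument.
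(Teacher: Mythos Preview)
Your approach is correct and is essentially a direct proof of Nagase's theorem, which the paper simply cites: the paper's own proof of this proposition is a two-line appeal to \cite[Proposition 2.4]{nagase} for the existence of a real Heegaard splitting, followed by the observation that attaching curves for one handlebody determine those for the other by applying $\tau$. So you are unpacking the black box rather than invoking it.

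It is worth noting two things. First, the paper does give a second, more explicit construction immediately afterward (Proposition~\ref{prop:constructing real HDs}), but via a completely different mechanism: it works in the quotient $X = Y/\tau$, finds a free spanning surface $F$ for the branch link $L$, and builds the splitting as two copies of the handlebody $X \smallsetminus \nu(F)$ glued along the natural involution of $\partial\nu(F)$. That route has the advantage of being concrete enough to draw diagrams for specific knots, which is how all the examples in the paper are produced. Your Morse-theoretic route, by contrast, is cleaner conceptually and dovetails with Section~\ref{sec:HDSpinC}, where the paper runs your construction in reverse (starting from a real Heegaard diagram and producing $f$ with $f\circ\tau = 3-f$) to assign real $\SpinC$-structures to intersection points. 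Second, the technical point you flag---doing rearrangement and cancellation equivariantly---is genuinely the content, but it goes through for the reason you indicate: critical points off $C$ come in $\tau$-exchanged pairs living in disjoint neighborhoods, so the standard local modifications can be performed on both members simultaneously without interference, and the resulting function stays in $V$.
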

\begin{proof}
     According to a theorem of Nagase \cite[Proposition 2.4]{nagase}, every real 3-manifold admits a real Heegaard splitting. A handle decomposition for either of the resulting handlebodies determines a handle decomposition for the other by symmetry. We obtain an involution of the splitting surface by restricting $\tau$. This data specifies a real Heegaard diagram.
\end{proof}

Since we will need to construct Heegaard diagrams for specific real 3-manifolds, we present below a proof of Nagase's theorem that can be used in practice. Note that real 3-manifolds $(Y, \tau)$ are in one-to-one correspondence with double branched covers (of the quotient $X:=Y/\tau$ over the projection $L$ of $C$). The idea is to build the real Heegaard splitting by starting from the quotient.
 
Let $L$ be a link in a closed 3-manifold $X$.  It is well-known that $L$ admits a branched double cover if and only if $[L] = 0 \in H_1(X;\Z/2)$; and if it does, then it admits $|H_2(X;\Z/2)|$ many. (Compare \cite[Remark 3.15]{li:HMR}.) 
 
\begin{definition}
A surface $F$ smoothly embedded in $X$ is a \emph{spanning surface for $L$} if $\partial F = L$. (Note that $F$ is not necessarily a Seifert surface, as we do not require that $F$ be orientable.) 
\end{definition}

\begin{definition}
We say that a spanning surface $F$ is \emph{free} if $X \smallsetminus \nu(F)$ is a handlebody.
\end{definition}
 
We begin with the observation that any link which admits a spanning surface in fact admits a free one. This argument was suggested to us by Robert Lipshitz.

\begin{lem}\label{lem:free spanning surface}
    Let $L$ be a link in a 3-manifold $X$ such that $[L] = 0 \in H_1(X;\Z/2)$. Then, $L$ admits a free spanning surface $F \subset X$.
\end{lem}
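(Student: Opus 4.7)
My plan is to start with any spanning surface and modify it, by tubing along carefully chosen arcs in the complement, so that the complement becomes a handlebody.

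The existence of some (not necessarily orientable) spanning surface $F_0$ with $\partial F_0 = L$ follows from $[L]=0 \in H_1(X;\Z/2)$ by a standard Poincar\'e duality argument (a mod-2 cohomology class dual to $L$ can be realized by a smooth hypersurface). Without loss of generality, by tubing components together in $X$, I may assume $F_0$ is connected, so that $\partial \nu(F_0)$ is a single closed orientable surface.

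Set $W = X \smallsetminus \operatorname{int}\nu(F_0)$, a compact 3-manifold with connected boundary. Every such $W$ admits a Heegaard-type splitting $W = H \cup_\Sigma C$, in which $H$ is a handlebody and $C$ is a compression body with $\partial_- C = \partial W$ and $\partial_+ C = \Sigma = \partial H$. The compression body $C$ is built from $\partial W \times [0,1]$ by attaching finitely many 1-handles to $\partial W \times \{1\}$; let $\gamma_1, \dots, \gamma_m$ be arcs in $W$ obtained by taking the cores of these 1-handles and extending them through the collar so that their endpoints lie on $\partial W = \partial \nu(F_0)$.

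I then define $F_1$ by tubing $F_0$ along the arcs $\gamma_1, \dots, \gamma_m$: for each $i$, remove two small disks from $F_0$ at the projections of the endpoints of $\gamma_i$ along the collar, and attach the cylindrical annulus $\partial\nu(\gamma_i) \smallsetminus (\text{two disks})$ in their place. The result $F_1$ is a (higher genus) connected spanning surface with $\partial F_1 = L$, and the key claim is that $W_1 := X \smallsetminus \operatorname{int}\nu(F_1)$ is a handlebody. Heuristically, the tubing absorbs the 1-handles of the compression body $C$ into the surface, so that the complement reduces to (something built from) the handlebody $H$.

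The main obstacle will be making this heuristic rigorous, since the thin tubular neighborhood $\nu(F_1)$ is not literally equal to $\nu(F_0) \cup \bigcup_i \nu(\gamma_i)$ and so $W_1$ is not simply $W$ with the $\gamma_i$ drilled out. I expect the cleanest way forward is to verify that $\pi_1(W_1)$ is free: the Heegaard-type splitting of $W$ together with the simplification performed by the tubing should reduce the fundamental group of the complement to a free group (the generators coming from the handlebody $H$ plus the tubes). For a compact orientable 3-manifold with non-empty boundary and free fundamental group, a classical result in 3-manifold topology (which can be proved using Stallings's theorem together with irreducibility considerations) guarantees that the manifold is a handlebody; this would conclude the proof. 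An alternative, more hands-on route is to construct an explicit system of compressing disks for $W_1$, obtained from the meridian disks of $H$ together with "cocore" disks of the $\gamma_i$-tubes, and check that cutting along them produces a 3-ball.
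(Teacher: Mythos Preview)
Your compression-body approach is genuinely different from the paper's and is correct in outline, though the gap you flagged is real. The paper works combinatorially: it fixes a triangulation $\cT$ of $X$ adapted to a handle decomposition of $F_0$, then enlarges $F_0$ by adding a disk at each vertex and a band along each edge of $\cT$ not already in $F_0$. The resulting surface $F_1$ has regular neighborhood equal to a regular neighborhood of the entire 1-skeleton of $\cT$, so $X\smallsetminus\nu(F_1)$ is a neighborhood of the dual graph and is visibly a handlebody. Since now $\partial F_1\neq L$, the paper attaches a dual band $B_i^*$ to each new band $B_i$; a band together with its dual forms a tube, so the boundary returns to $L$, while on the complement each dual band only stabilizes the handlebody. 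The virtue of this route is that the handlebody property is manifest at an intermediate stage, with no need to analyse how tubing a surface alters its complement.

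Your route (b) is the right completion. After isotoping the feet of all $\gamma_i$ onto one side of $\partial\nu(F_0)$ (say $F_0^+$), a local analysis shows
\[
W_1\;\cong\;\Bigl(W\smallsetminus\textstyle\bigcup_i\nu(\gamma_i)\Bigr)\cup(\text{one 3-dimensional 1-handle per }i),
\]
the 1-handle being the solid cylinder \emph{inside} the $i$th tube, which is glued along two disks on the $F_0^-$ side of the boundary. Drilling the cores $\gamma_i$ from $W$ undoes the 1-handles of $C$, so $W\smallsetminus\bigcup_i\nu(\gamma_i)\cong H$; attaching 1-handles to a handlebody yields a handlebody. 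Your candidate compressing disks (meridians of $H$ and cocores of the tubes) are exactly the right ones once this picture is in place. Route (a) is less clean: the free-$\pi_1$ characterization of handlebodies requires irreducibility, and while $W_1$ does turn out to be irreducible, verifying that a priori for general $X$ is essentially the same work as carrying out route (b).
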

\begin{proof}
    Let $X_L := X \smallsetminus \nu(L)$. Since $L$ represents a trivial mod 2 homology class, there is a class $\alpha \in H_2(X, L;\Z/2)$ with $\partial \alpha = [L]$. By excision and Poincar\'e duality, we have $H_2(X, L;\Z/2) \cong H_2(X_L, \partial X_L;Z/2) \cong H^1(X_L;\Z/2)$, so we may identify $\alpha$ with a class in $[X_L, \RP^\infty]$. By cellular and smooth approximation, $\alpha$ can be represented by a smooth map $f_\alpha: X_L \ra \RP^3$ which is transverse to $\RP^2 \sub \RP^3$. Hence, $F_0 := f_{\alpha}^{-1}(\RP^2)$ is our desired spanning surface for $L$. We note that the natural map $H_2(X;\Z/2) \ra H_2(X, L;\Z/2)$ is injective; hence, $H_2(X;\Z/2)$ acts freely on the set of relative homology classes of spanning surfaces for $L$. Clearly, the action is transitive (if $A$, $B$ are spanning surfaces, their classes $[A],[B] \in H_2(X,L;\Z/2)$ differ by the image of $[A\cup_L B] \in H_2(X;\Z/2)$).
    

    We may modify $F_0$ within its homology class to obtain a free spanning surface for $L$. Fix a handle decomposition of $F_0$ into disks and bands. Fix a triangulation $\cT$ of $X$ which is adapted to $F_0$ in the following sense: we choose vertices so that the cores of the disks of $F_0$ are vertices, and the cores of the bands of $F_0$ are unions of edges of $\cT$.  We shall now modify $F_0$ using $\cT$. For each vertex of $\cT$ not contained in $F_0$ add an additional disk and for each edge not in $F_0$, perform a band sum. The resulting surface, $F_1$, is isotopic to the union of $F_0$ together with a collection of bands, $B_i$. Note, the complement of $F_1$ is clearly a handlebody -- it is a neighborhood of the 2- and 3-simplices of $\cT$ -- though its boundary is no longer isotopic to $L$. To amend this, for each new band $B_i$ in $F_1$, we attach a dual band $B_i^*$; the resulting surface $F_2$ is still free (the dual bands change the handlebodies by a stabilization) and has boundary isotopic to $L$, as the bands $B_i$ with their duals form tubes, which do not change the isotopy class of the boundary. See \Cref{fig:make_free}. 
\end{proof}

\begin{figure}[h]
\def\svgwidth{.5\linewidth}
\begingroup%
  \makeatletter%
  \providecommand\color[2][]{%
    \errmessage{(Inkscape) Color is used for the text in Inkscape, but the package 'color.sty' is not loaded}%
    \renewcommand\color[2][]{}%
  }%
  \providecommand\transparent[1]{%
    \errmessage{(Inkscape) Transparency is used (non-zero) for the text in Inkscape, but the package 'transparent.sty' is not loaded}%
    \renewcommand\transparent[1]{}%
  }%
  \providecommand\rotatebox[2]{#2}%
  \newcommand*\fsize{\dimexpr\f@size pt\relax}%
  \newcommand*\lineheight[1]{\fontsize{\fsize}{#1\fsize}\selectfont}%
  \ifx\svgwidth\undefined%
    \setlength{\unitlength}{541.41732283bp}%
    \ifx\svgscale\undefined%
      \relax%
    \else%
      \setlength{\unitlength}{\unitlength * \real{\svgscale}}%
    \fi%
  \else%
    \setlength{\unitlength}{\svgwidth}%
  \fi%
  \global\let\svgwidth\undefined%
  \global\let\svgscale\undefined%
  \makeatother%
  \begin{picture}(1,0.48691099)%
    \lineheight{1}%
    \setlength\tabcolsep{0pt}%
    \put(0,0){\includegraphics[width=\unitlength,page=1]{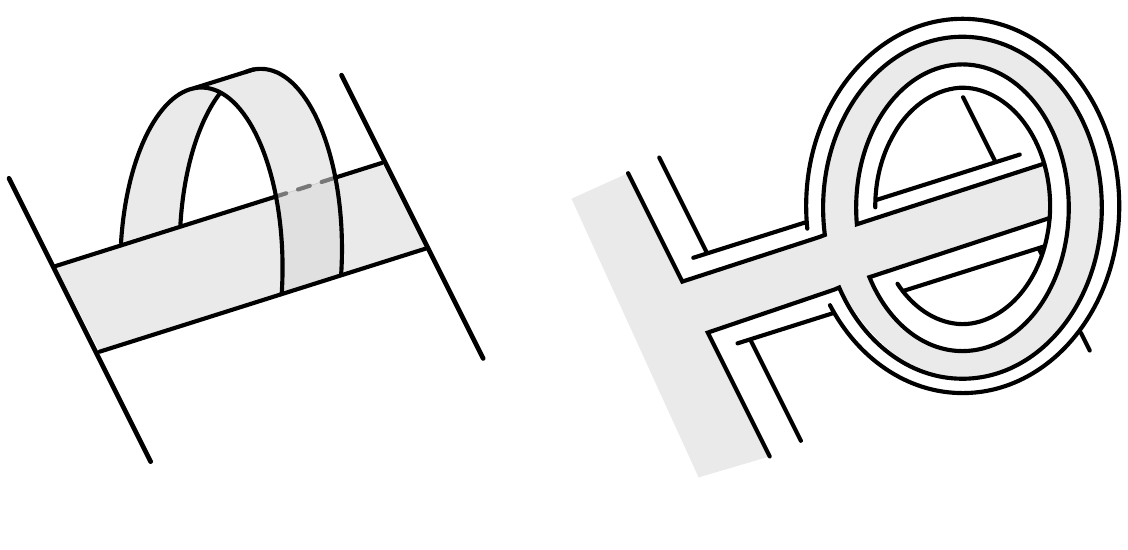}}%
    \put(0.22467608,0.02920856){\color[rgb]{0,0,0}\makebox(0,0)[lt]{\smash{\begin{tabular}[t]{l}{\small$(a)$}\end{tabular}}}}%
    \put(0.77877728,0.02920856){\color[rgb]{0,0,0}\makebox(0,0)[lt]{\smash{\begin{tabular}[t]{l}{\small$(b)$}\end{tabular}}}}%
    \put(0.15615412,0.4333842){\color[rgb]{0,0,0}\makebox(0,0)[lt]{\smash{\begin{tabular}[t]{l}{\small$B^*$}\end{tabular}}}}%
    \put(0.11260258,0.23222994){\color[rgb]{0,0,0}\makebox(0,0)[lt]{\smash{\begin{tabular}[t]{l}{\small$B$}\end{tabular}}}}%
  \end{picture}%
\endgroup%

\caption{(a) A band $B$ and its dual band $B^*$ attached to the surface $F_0$; (b) The resulting thickened surface.}
    \label{fig:make_free}
\end{figure}

\begin{prop}\label{prop:constructing real HDs}
    Let $L \sub X$ be a link such that $[L] = 0 \in H_1(X;\Z/2)$. Then, for any branched double cover $Y$ of $X$ along $L$, there is a real Heegaard splitting of $X$ with respect to the branching involution.
\end{prop}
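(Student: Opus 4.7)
The plan is to apply \Cref{lem:free spanning surface} to obtain a free spanning surface $F \sub X$ whose relative homology class determines the given cover $Y$, and then show that the complementary handlebody $U := X \smallsetminus \nu(F)$ lifts to two copies forming a real Heegaard splitting of $Y$.

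First, I would invoke \Cref{lem:free spanning surface} to produce a free spanning surface $F \sub X$ for $L$ whose class in $H_2(X,L;\Z/2)$ corresponds to the specified cover $Y$. Let $U := X \smallsetminus \nu(F)$ be the resulting handlebody; since $U$ has connected boundary, $F$ is itself connected.

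Next, I would observe that the restriction of $\pi \colon Y \to X$ to $X_L := X \smallsetminus \nu(L)$ is the unbranched double cover associated to the class in $H^1(X_L; \Z/2)$ Poincar\'e--Lefschetz dual to $F$. Because $U \sub X_L$ is disjoint from $F$, this class restricts to zero on $U$, so $\pi^{-1}(U) = U_\alpha \sqcup U_\beta$ is a disjoint union of two copies of $U$, swapped by the branching involution $\tau$. The complement $Y \smallsetminus (U_\alpha \cup U_\beta) = \pi^{-1}(\nu(F))$ is then a regular neighborhood of a closed, connected surface $\Sigma$, namely the ``double'' of $F$ along $L$; it is connected because $F$ is and $L$ is nonempty. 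Thus $Y = U_\alpha \cup_\Sigma U_\beta$ is a Heegaard splitting exchanged by $\tau$, and $R := \tau|_\Sigma$ is orientation-reversing, since $\tau$ preserves the orientation of $Y$ while swapping the two sides of $\Sigma$.

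Finally, to record this as a real Heegaard diagram, choose any complete system of attaching curves $\alpha_1, \dots, \alpha_g$ on $\Sigma$ bounding compressing disks in $U_\alpha$, and set $\beta_i := R(\alpha_i)$. These bound compressing disks in $\tau(U_\alpha) = U_\beta$, and $R(\alpha_i) = \beta_i$ by construction, yielding the required real Heegaard diagram $(\Sigma, \{\alpha_i\}, \{\beta_i\}, R)$ for $(Y,\tau)$.

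The main obstacle I anticipate is the identification of $\pi^{-1}(\nu(F))$ as a regular neighborhood of a \emph{connected} closed surface $\Sigma$. This is most transparent via the cut-and-double model of the branched cover: take two copies of the cut-open manifold $X|F$, and identify the two sides of $F$ across the copies, so that $\Sigma$ appears manifestly as the double of $F$ along $L$, and the two copies of $U$ sitting in the two copies of $X|F$ become $U_\alpha$ and $U_\beta$. The non-orientable case is handled by the analogous construction with the orientation double cover of $F$ playing the role of the two-sided splitting.
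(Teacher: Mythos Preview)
Your proof is correct and uses the same key input (\Cref{lem:free spanning surface}) and the same underlying picture as the paper, but the logic is organized differently. The paper builds the branched double cover abstractly from a free spanning surface as $Y_{[F]} = H \cup_R H$, where $H = X \smallsetminus \nu(F)$ and $R$ is the natural involution on $\partial\nu(F)$; it then shows that the assignment $[F] \mapsto Y_{[F]}$ is an $H_2(X;\Z/2)$-equivariant map between torsors, hence a bijection, so that every cover arises this way. You instead start from the given cover $Y$, choose $F$ in the relative homology class dual to the classifying cohomology class of $Y$, and read off the splitting directly by observing that the class vanishes on $U = X \smallsetminus \nu(F)$, forcing $\pi^{-1}(U)$ to be two copies of $U$.

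Your route is a bit more economical for the statement at hand, but it leans on two things the paper makes explicit: the correspondence between relative homology classes of spanning surfaces and branched double covers, and the fact that \Cref{lem:free spanning surface} actually produces a free spanning surface in \emph{any} prescribed class (this is in the proof of that lemma, not in its statement). The paper's version, by contrast, verifies the equivariance of the map \eqref{eq:spansurf} and thereby proves that correspondence along the way.
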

\begin{proof}
    According to \Cref{lem:free spanning surface}, there exists a free spanning surface $F$ for $L$. Let us explain how such a surface naturally give rise to a real Heegaard splitting for some branched double cover along $L$.  Since $F$ is free, its complement is a handlebody $H$. The boundary of $\nu(F)$ has a natural involution. If $F$ is orientable, then $\partial (\nu F) \cong F \times \partial I \cup L \times I$; in this case, let $R$ be the involution which swaps the two components of $F \times \partial I$ and reflects $L\times I$ through $L$. If $F$ is nonorientable, then $\partial (\nu F) \cong \partial (F \Tilde{\times} I) \cong \tilde{F} \cup L \times I$, where $\tilde{f}$ is the orientation double cover of $F$. In this case, we define $R$ to be the involution which acts by deck transformations on $\tilde{F}$ and by reflection through $L$ on $L \times I$. In either case, we may form the space
    \begin{align*}
        Y_{[F]} = H \cup_\phi H,
    \end{align*}
    which clearly is a branched double cover of $X$ along $L$. We note that up to diffeomorphism, this cover only depends on the homology class of $F$; if $F$ and $F'$ are homologous, the sequence of surgeries taking $F$ to $F'$ lifts to a sequence of real Heegaard moves relating the two splittings (in the sense of Section~\ref{sec:moves} below).
    
    In order to see that this construction realizes all branched covers along $L$, we note that it defines a map 
    \begin{equation}
    \label{eq:spansurf}
        \left\{\substack{\text{Mod 2 homology classes}\\ \text{of spanning surfaces for $L$}} \right\} \ra \left\{\substack{\text{Branched double}\\ \text{covers along $L$}}\right\}.
    \end{equation}
    Both the domain and the codomain are affinely identified with $H_2(X;\Z/2)$ as $H_2(X;\Z/2)$-spaces. If $F$ is a spanning surface for $L$ and $\alpha$ be a class in $H_2(X;\Z/2)$, the branched covers $Y_{[F]}$ and $Y_{[F] + \alpha}$ restrict to honest covers of $X_L$, and therefore determine classes in $H^1(X_L;\Z/2)$; these covers restrict to the same cover of $\partial \nu(L)$, they must differ by an element $x$ of $H^1(X;\Z/2)$. Indeed, they differ by $x = PD(\alpha)$. It follows that the map \eqref{eq:spansurf} respects the group actions on the two sides, and is therefore bijective.
\end{proof}

\begin{remark}\label{rem:orientable quotients}
    We note that when $[L] = 0 \in H_1(X;\Z)$, we can produce a real Heegaard diagram for its branched double cover whose quotient is orientable. Indeed,  we can choose $\alpha \in H_2(X,L;\Z) \cong H^1(X; \Z)$ as above, represent it by a map $f_\alpha: X \ra S^1$ and choose $F_0 = f_\alpha^{-1}(\pt)$, which will be orientable.
\end{remark}

\subsection{Real Heegaard Moves}
\label{sec:moves}
As in the classical setting, real Heegaard diagrams are unique up to the appropriate notion of stable equivalence. Things are slightly more subtle in the real setting, as we not only need to track pairs of attaching circles, but also involutions on the Heegaard surface. Therefore, before addressing the uniqueness of real Heegaard diagrams, it will be helpful to briefly describe the kinds of surfaces with involutions we will encounter. 
 
Surfaces with involutions have been classified. (For a modern perspective on the matter, see \cite{dugger_invol}.) Since we will assume throughout that our involutions have fixed sets of codimension 2, there are only two types of $\Z/2$-surfaces which can arise up to isomorphism. 

The first class of involutions on a genus $g$ surface $\Sigma_g$ is given by embedding $\Sigma_g$ in $\R^3$ and reflecting through the $xy$-plane as on the left frame of Figure \ref{fig:z2_surfaces}. In this case, the fixed set $C$ is a separating submanifold, and the quotient is an orientable surface with $|C|$ boundary components. 

The second class of involutions can be built from the first. Fix a pair of points $p, \tau(p) \in \Sigma_g$ and attach a tube $S^1 \times I$. By restricting the antipodal map on $S^2$ to $S^2 \smallsetminus \nu(S^0) \cong S^1 \times I$, we may extend the involution on $\Sigma_g\smallsetminus (\nu(p)\cup \nu(\tau(p))$ over the newly added handle. The underlying surface is of course $\Sigma_{g+1}$, but the quotient is now nonorientable and the fixed set $C$ is no longer separating. By attaching $r$ tubes in this fashion, we obtain an involution on a surface of genus $g + r$ with nonorientable quotient and $|C|$ boundary components. See the right frame of Figure \ref{fig:z2_surfaces}.

\begin{figure}[h]
\def\svgwidth{.8\linewidth}
\begingroup%
  \makeatletter%
  \providecommand\color[2][]{%
    \errmessage{(Inkscape) Color is used for the text in Inkscape, but the package 'color.sty' is not loaded}%
    \renewcommand\color[2][]{}%
  }%
  \providecommand\transparent[1]{%
    \errmessage{(Inkscape) Transparency is used (non-zero) for the text in Inkscape, but the package 'transparent.sty' is not loaded}%
    \renewcommand\transparent[1]{}%
  }%
  \providecommand\rotatebox[2]{#2}%
  \newcommand*\fsize{\dimexpr\f@size pt\relax}%
  \newcommand*\lineheight[1]{\fontsize{\fsize}{#1\fsize}\selectfont}%
  \ifx\svgwidth\undefined%
    \setlength{\unitlength}{841.88976378bp}%
    \ifx\svgscale\undefined%
      \relax%
    \else%
      \setlength{\unitlength}{\unitlength * \real{\svgscale}}%
    \fi%
  \else%
    \setlength{\unitlength}{\svgwidth}%
  \fi%
  \global\let\svgwidth\undefined%
  \global\let\svgscale\undefined%
  \makeatother%
  \begin{picture}(1,0.3030303)%
    \lineheight{1}%
    \setlength\tabcolsep{0pt}%
    \put(0.23935242,0.2715508){\color[rgb]{0,0,0}\makebox(0,0)[lt]{\smash{\begin{tabular}[t]{l}{$\tau$}\end{tabular}}}}%
    \put(0,0){\includegraphics[width=\unitlength,page=1]{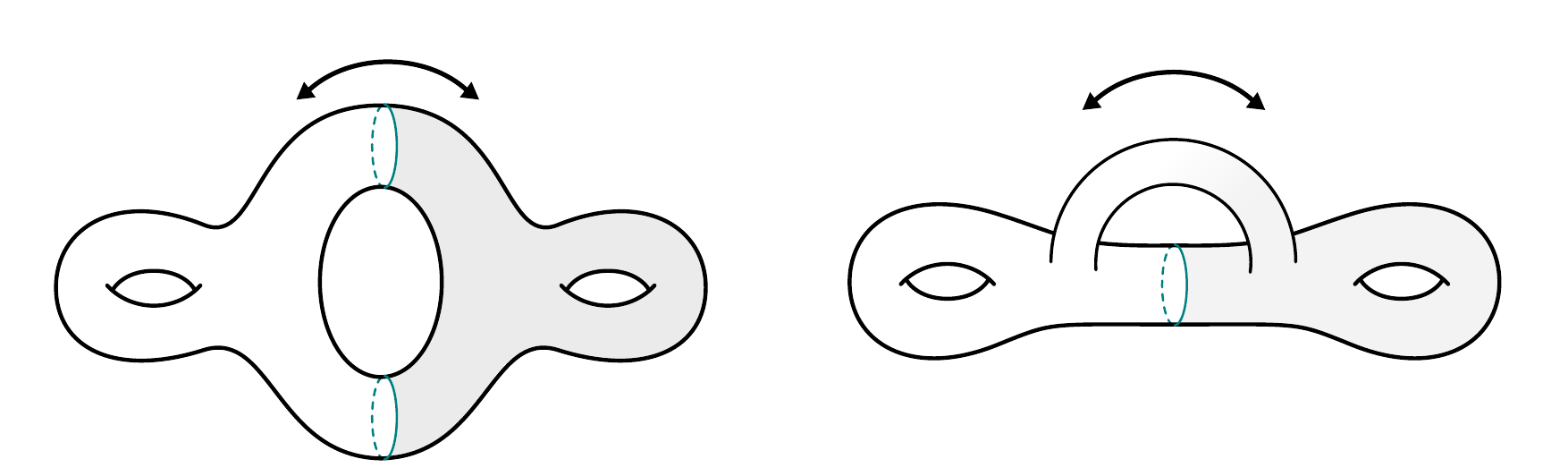}}%
    \put(0.74160217,0.265941){\color[rgb]{0,0,0}\makebox(0,0)[lt]{\smash{\begin{tabular}[t]{l}{$\tau$}\end{tabular}}}}%
  \end{picture}%
\endgroup%

\caption{Left: a $\Z/2$-surface with orientable quotient. The involution is given by reflection. Right: A  $\Z/2$-surface with nonorientable quotient; the involution is reflection on the complement of the handle $h$ and the antipodal map on $h$.}
    \label{fig:z2_surfaces}
\end{figure}

A given real 3-manifold can admit Heegaard diagrams with either type of underlying $\Z/2$-surface. 

\begin{example}
\label{ex:stab}
Consider the diagrams in \Cref{fig:s3_diagrams}. The fixed sets are drawn in green. It is clear that each diagram represents $S^3$. The leftmost diagram has an orientable quotient and the involution is given by reflection. The remaining diagrams have nonorientable quotients.  Near the fixed set, the involution is given by reflection. The complements of regular neighborhoods of the fixed sets are annuli, and the involution is the antipodal map. Since there is a unique involution on $S^3$ with fixed set $S^1$ up to conjugation, each of these diagrams must represent the same real 3-manifold.
\end{example}

\begin{figure}[h]
\def\svgwidth{.8\linewidth}
\begingroup%
  \makeatletter%
  \providecommand\color[2][]{%
    \errmessage{(Inkscape) Color is used for the text in Inkscape, but the package 'color.sty' is not loaded}%
    \renewcommand\color[2][]{}%
  }%
  \providecommand\transparent[1]{%
    \errmessage{(Inkscape) Transparency is used (non-zero) for the text in Inkscape, but the package 'transparent.sty' is not loaded}%
    \renewcommand\transparent[1]{}%
  }%
  \providecommand\rotatebox[2]{#2}%
  \newcommand*\fsize{\dimexpr\f@size pt\relax}%
  \newcommand*\lineheight[1]{\fontsize{\fsize}{#1\fsize}\selectfont}%
  \ifx\svgwidth\undefined%
    \setlength{\unitlength}{841.88976378bp}%
    \ifx\svgscale\undefined%
      \relax%
    \else%
      \setlength{\unitlength}{\unitlength * \real{\svgscale}}%
    \fi%
  \else%
    \setlength{\unitlength}{\svgwidth}%
  \fi%
  \global\let\svgwidth\undefined%
  \global\let\svgscale\undefined%
  \makeatother%
  \begin{picture}(1,0.3030303)%
    \lineheight{1}%
    \setlength\tabcolsep{0pt}%
    \put(0,0){\includegraphics[width=\unitlength,page=1]{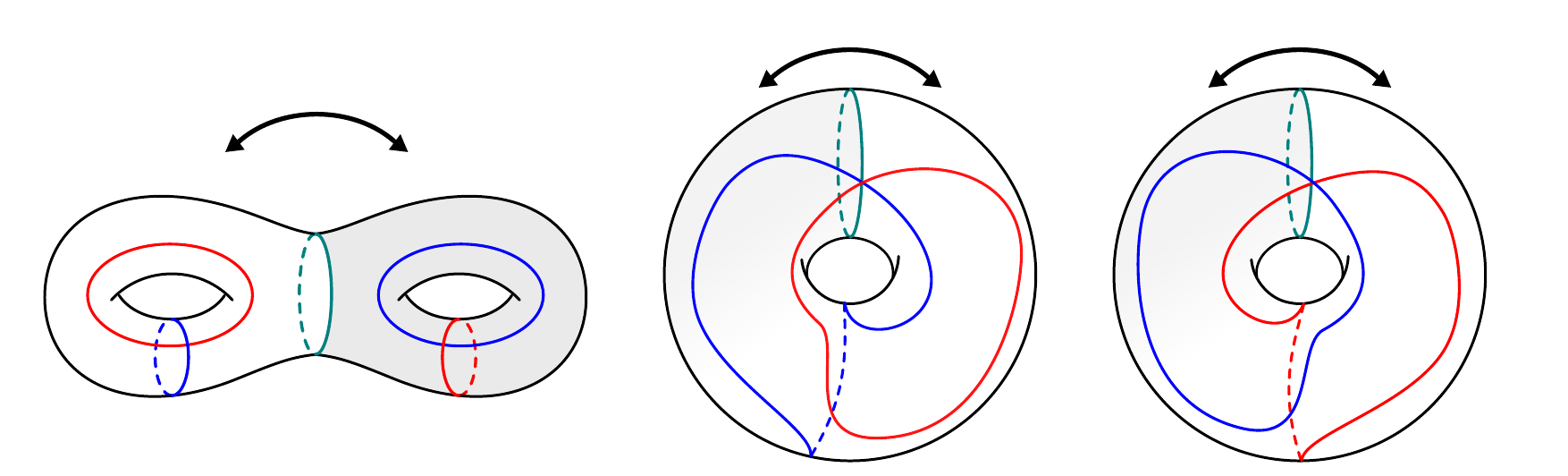}}%
    \put(0.19800783,0.24026667){\color[rgb]{0,0,0}\makebox(0,0)[lt]{\smash{\begin{tabular}[t]{l}{$\tau$}\end{tabular}}}}%
    \put(0.54095557,0.28088177){\color[rgb]{0,0,0}\makebox(0,0)[lt]{\smash{\begin{tabular}[t]{l}{$\tau$}\end{tabular}}}}%
    \put(0.82543581,0.27987399){\color[rgb]{0,0,0}\makebox(0,0)[lt]{\smash{\begin{tabular}[t]{l}{$\tau$}\end{tabular}}}}%
  \end{picture}%
\endgroup%

\caption{Three real Heegaard diagrams for $(S^3, \tau)$. The fixed sets of the various involutions are shown in green.}
    \label{fig:s3_diagrams}
\end{figure}

Now we move to the question of the uniqueness of real Heegaard diagrams. As expected, any two real Heegaard diagrams should be related by a finite sequence of \emph{real Heegaard moves}. The first two moves are quite simple:

\begin{definition}
A \emph{real isotopy} is an isotopy  $\phi_t$ of the attaching curves such that for all $t$ we have $\tau(\phi_t(\alpha_i)) = \phi_t(\beta_i)$. 
\end{definition}

\begin{definition} If $\alpha_i'$ is the curve obtained by doing a handleslide of  $\alpha_i$ over $\alpha_j$, there is a corresponding beta curve, $\beta_i'$, which can be obtained by applying $\tau$ to $\alpha_i'$. A \emph{real handleslide} is the move which replaces $(\Sigma, \alpha_1, \hdots, \alpha_i, \hdots, \alpha_g,\beta_1, \hdots, \beta_i, \hdots, \beta_g)$ with $(\Sigma, \alpha_1, \hdots, \alpha_i', \hdots, \alpha_g,\beta_1, \hdots, \beta_i', \hdots, \beta_g)$. We think of this move as simultaneously sliding $\alpha_i$ and $\beta_i$ over $\alpha_j$ and $\beta_j$ respectively. 
\end{definition}

Finally, there are stabilizations. In light of Example~\ref{ex:stab}, it is probably unsurprising that there are several kinds of stabilization operations that can be performed on a real Heegaard diagram. 

\begin{definition}
Let $\cH = (\Sigma, \bm \alpha, \bm \beta, \tau)$ be a real Heegaard diagram. Let $E$ the standard genus 1 Heegaard diagram for $S^3$ with attaching curves $\alpha_0$ and $\beta_0$ that intersect in single point. Let $E'$ be the diagram obtained from $E$ by swapping the alpha and beta curves (which we call $\alpha_1$ and $\beta_1$). Fix a point $p \in \Sigma \smallsetminus \nu(C)$. Perform a connected sum with $E$ at the point $p$ and a connected sum with $E'$ at $\tau(p)$. Let $\cH' = (\Sigma' = T^2\# \Sigma\# \tau T^2, \bm \alpha \cup \{\alpha_0, \alpha_1\}, \bm \beta\cup \{\beta_0, \beta_1\})$ be the resulting Heegaard diagram, and extend $\tau$ over the new handles so that $\alpha_0$ and $\beta_1$ are swapped, as are $\beta_0$ and $\alpha_1$. See \Cref{fig:stabilizations}. We call this move  a \emph{free stabilization}.
\end{definition}

\begin{figure}[h]
\def\svgwidth{.8\linewidth}
\begingroup%
  \makeatletter%
  \providecommand\color[2][]{%
    \errmessage{(Inkscape) Color is used for the text in Inkscape, but the package 'color.sty' is not loaded}%
    \renewcommand\color[2][]{}%
  }%
  \providecommand\transparent[1]{%
    \errmessage{(Inkscape) Transparency is used (non-zero) for the text in Inkscape, but the package 'transparent.sty' is not loaded}%
    \renewcommand\transparent[1]{}%
  }%
  \providecommand\rotatebox[2]{#2}%
  \newcommand*\fsize{\dimexpr\f@size pt\relax}%
  \newcommand*\lineheight[1]{\fontsize{\fsize}{#1\fsize}\selectfont}%
  \ifx\svgwidth\undefined%
    \setlength{\unitlength}{867.4015748bp}%
    \ifx\svgscale\undefined%
      \relax%
    \else%
      \setlength{\unitlength}{\unitlength * \real{\svgscale}}%
    \fi%
  \else%
    \setlength{\unitlength}{\svgwidth}%
  \fi%
  \global\let\svgwidth\undefined%
  \global\let\svgscale\undefined%
  \makeatother%
  \begin{picture}(1,0.55555556)%
    \lineheight{1}%
    \setlength\tabcolsep{0pt}%
    \put(0,0){\includegraphics[width=\unitlength,page=1]{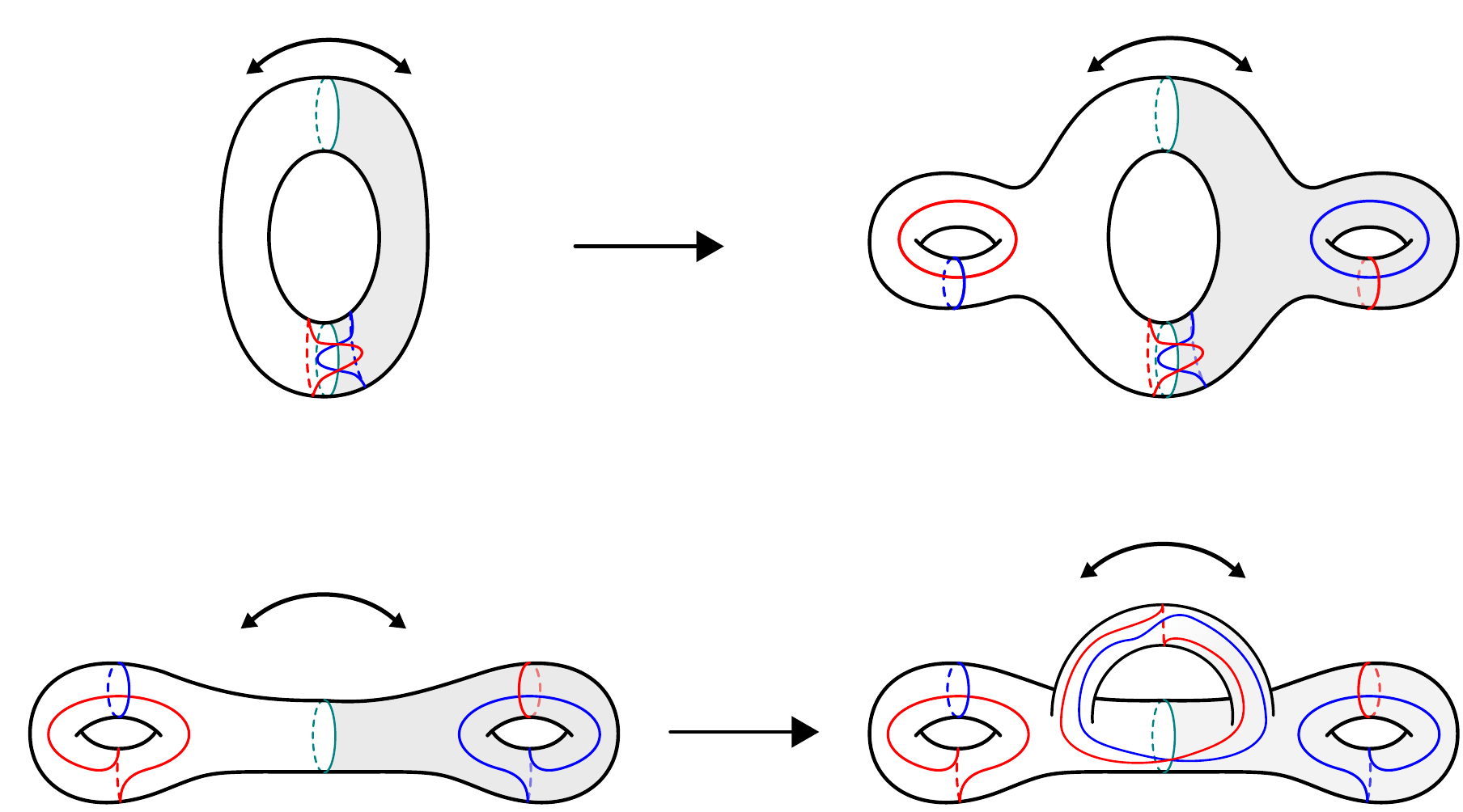}}%
    \put(0.22013708,0.53515179){\color[rgb]{0,0,0}\makebox(0,0)[lt]{\smash{\begin{tabular}[t]{l}{$\tau$}\end{tabular}}}}%
    \put(0.21723753,0.15615506){\color[rgb]{0,0,0}\makebox(0,0)[lt]{\smash{\begin{tabular}[t]{l}{$\tau$}\end{tabular}}}}%
    \put(0.79443175,0.53651865){\color[rgb]{0,0,0}\makebox(0,0)[lt]{\smash{\begin{tabular}[t]{l}{$\tau$}\end{tabular}}}}%
    \put(0.79116971,0.19210793){\color[rgb]{0,0,0}\makebox(0,0)[lt]{\smash{\begin{tabular}[t]{l}{$\tau$}\end{tabular}}}}%
  \end{picture}%
\endgroup%

\caption{Top: A free stabilization. Bottom: A fixed point stabilization.}
    \label{fig:stabilizations}
\end{figure}

\begin{definition}
Let $(F^+, R)$ be the real Heegaard diagram for $(S^3, \tau)$ shown in the center of \Cref{fig:s3_diagrams}, where $F^+= (T^2, \alpha_+, \beta_+)$. Let $c_+$ be a point in the fixed set of $F^+$ and let $c$ be a point in the fixed set of $\cH$. Let $\cH'' = (\Sigma'' = \Sigma\# T^2, \bm \alpha \cup \{\alpha_+\}, \bm \beta\cup \{\beta_+\})$ be the diagram obtained by taking a connected sum of $\cH$ with $F^+$ at the points $c$ and $c_+$. We call this move a \emph{positive fixed point stabilization.} Let $(F^-, R)$ be the real Heegaard diagram shown on the right of \Cref{fig:s3_diagrams}. A \emph{negative fixed point stabilization} is given by taking a connected sum with $(F^-, R)$. 
\end{definition}

We will refer to free stabilization, and positive and negative fixed point stabilization collectively as \emph{real stabilizations.}

For the time being, we distinguish positive and negative fixed point stabilizations. We do so for the following reason. A choice of real Heegaard splitting for $(Y, \tau)$ gives rise to a preferred framing for the fixed set, $C$. When $\Sigma$ has orientable quotient, $C$ is a separating curve, so cutting $\Sigma$ in half along $C$ yields a Seifert surface for $C$. It follows that $C$ is nullhomologous and the framing determined by $C$ agrees with the Seifert framing. If $\Sigma$ has a nonorientable quotient, the fixed set will no longer be separating, and $C$ may no longer be nullhomologous (though its image in the quotient $X=Y/\tau$ will represent a trivial class in $H_1(X;\Z/2)$), so does not have a canonical framing. Nevertheless, we can consider the effect our various stabilization operations have on the framing of $C$. It is clear that free stabilization preserves the framing. From \Cref{fig:s3_diagrams}, it is also apparent that the fixed set in $F^+$ has framing $+1$, and therefore, when a positive fixed point stabilization is performed, the framing will be increased by 1. Similarly, a negative fixed point stabilization will decrease the framing by 1. 

\begin{definition}
We say that two real Heegaard diagrams $(\cH, R)$ and $(\cH, R')$ are \emph{real stably equivalent} if they become diffeomorphic after a finite sequence of real isotopies, handleslides, and stabilizations. 
\end{definition}

\begin{prop}
\label{prop:sequiv}
    Any two real Heegaard diagrams for $(Y, \tau)$ are real stably equivalent.
\end{prop}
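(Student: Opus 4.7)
The plan is to adapt the classical Reidemeister-Singer theorem to the equivariant setting via $\tau$-equivariant Cerf theory, based on a correspondence between real Heegaard splittings and appropriately symmetric Morse-theoretic data on $Y$.

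First I would establish this correspondence: a real Heegaard splitting of $(Y,\tau)$ is equivalent to a self-indexing Morse function $f \colon Y \to [0,3]$ together with a gradient-like vector field $\xi$, both anti-equivariant under $\tau$ in the sense that $f\circ\tau = 3-f$ and $\tau_*\xi = -\xi$, subject to the additional requirement that the fixed link $C$ is contained in the regular level set $f^{-1}(3/2)$. Under this correspondence, critical points of $f$ come in $\tau$-pairs $\{p,\tau(p)\}$ with opposite indices and heights, and anti-equivariance forces no critical point to lie on the Heegaard surface itself.

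Given two such Morse-theoretic realizations of the given diagrams, I would connect them by a generic $1$-parameter family $(f_t,\xi_t)_{t\in[0,1]}$ of anti-equivariant data with $C \subset f_t^{-1}(3/2)$ throughout, and match each codimension-$1$ bifurcation with a real Heegaard move. Independent crossings of critical values contribute real isotopies of attaching curves; handleslide crossings between same-index critical points occur in $\tau$-symmetric pairs and yield real handleslides; standard birth-death events at points $p \notin C$ come paired with their mirrors at $\tau(p)$, producing a free stabilization; and degenerations localized along the fixed link $C$ correspond to fixed point stabilizations of one of the two signs.

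The main obstacle, requiring the most care, is the analysis of bifurcations along $C$. A naive Morse-theoretic birth-death on $C$ is obstructed: at a hypothetical fixed critical point, anti-equivariance forces the Hessian of $f$ to vanish along $T_p C$, so isolated Morse critical points on $C$ cannot occur. Fixed point stabilizations must therefore arise from a subtler family degeneration in which $f_t$ fails transversality to the level $3/2$ at an isolated point of $C$, producing a local $T^2$ connect-summand of the Heegaard surface whose real structure matches one of the model diagrams $F^\pm$ from \Cref{fig:s3_diagrams}. The sign of the stabilization is determined by the local twisting behavior of $f_t$ across $C$, equivalently by the induced $\pm 1$ change in the framing of $C$. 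Once this bifurcation list is verified to be exhaustive, the proposition follows by induction on the number of events along the path, each event being realized by the corresponding real Heegaard move.
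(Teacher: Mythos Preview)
Your approach via equivariant Cerf theory is genuinely different from the paper's and considerably more ambitious. The paper's proof is short and leverages existing results: first apply fixed point stabilizations to arrange that both diagrams induce the same framing of $C$; then invoke Nagase's theorem \cite{nagase} that any two real Heegaard \emph{splittings} inducing the same framing are related by free stabilizations and equivariant diffeomorphisms; finally, for a fixed splitting, use the standard fact from \cite[Proposition 2.2]{os_holodisks} that any two systems of alpha attaching circles are related by isotopies and handleslides, and upgrade these to real moves by transporting the beta curves via $\tau$. No Cerf theory is needed.

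Your plan would essentially re-derive Nagase's result as a byproduct, which is fine in principle, but the treatment of fixed point stabilizations has a real gap. You correctly observe that the anti-equivariance $f\circ\tau = 3-f$ forces $C \subset f^{-1}(3/2)$ and that no Morse critical point can lie on $C$ (the symmetry forces the Hessian to be degenerate there). However, your proposed mechanism---``$f_t$ fails transversality to the level $3/2$ at an isolated point of $C$''---is not coherent as stated: since $C$ carries no critical points, every point of $C$ is a regular point of $f_t$ for all $t$, so there is no transversality failure to localize on $C$. The framing of $C$ in $\Sigma_t = f_t^{-1}(3/2)$ is locally constant as long as $3/2$ remains a regular value, so any change in framing must come from a critical value crossing $3/2$ at a $\tau$-pair of points \emph{off} $C$, followed by an analysis of how the resulting surgery on $\Sigma_t$ interacts globally with $C$. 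Identifying this event precisely, showing it occurs generically in one-parameter anti-equivariant families, and verifying that it realizes exactly a positive or negative fixed point stabilization is the substantive missing content. The paper sidesteps this entirely by handling the framing discrepancy with fixed point stabilizations at the outset and deferring the framing-preserving case to Nagase.
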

\begin{proof}
    Let $(\cH, R)$ and $(\cH, R')$ be two real Heegaard diagrams for $(Y, \tau)$. Without loss of generality, we may assume that both of these diagrams induce the same framing of $C$, because any framing of $C$ can be realized by a sequence of fixed point stabilizations. According to Nagase \cite[Theorem 1]{nagase}, any two real Heegaard splittings for $(Y, \tau)$ which induce the same framing can be related by a sequence of free stabilizations and diffeomorphisms which conjugate the involutions on $Y$. In our definition of free stabilization, the feet of each added tube is attached in the same region of $\Sigma \smallsetminus (\bm \alpha \cup \bm \beta)$, though Nagase allows for more general free stabilizations, in which the attaching tubes may have have non-adjacent feet. The latter move can be decomposed into a sequence of the real Heegaard moves, as we have defined them. Indeed, when such a stabilization is performed, we choose a new pair of alpha curves; the first can be taken to be a core of the first tube, but the other alpha curve must pass over the second tube and connect its two feet. If the this alpha curves does not cross $C$, it is easy to see that such a generalized stabilization is equivalent to ours after a sequence of real handleslides and a diffeomorphism. If this path crosses $C$, we reduce to the first case by performing a sequence of positive and negative fixed point stabilizations and diffeomorphisms. See \Cref{fig:gen_stab} for this procedure. 
    
    Once we fix the real Heegaard splitting, \cite[Proposition 2.2]{os_holodisks} shows that any two sets of attaching alpha circles are related by isotopies and handleslides. We can upgrade these moves to real isotopies and handleslides by moving the beta  curves accordingly. 
\end{proof}

\begin{figure}[h]
\def\svgwidth{.8\linewidth}
\begingroup%
  \makeatletter%
  \providecommand\color[2][]{%
    \errmessage{(Inkscape) Color is used for the text in Inkscape, but the package 'color.sty' is not loaded}%
    \renewcommand\color[2][]{}%
  }%
  \providecommand\transparent[1]{%
    \errmessage{(Inkscape) Transparency is used (non-zero) for the text in Inkscape, but the package 'transparent.sty' is not loaded}%
    \renewcommand\transparent[1]{}%
  }%
  \providecommand\rotatebox[2]{#2}%
  \newcommand*\fsize{\dimexpr\f@size pt\relax}%
  \newcommand*\lineheight[1]{\fontsize{\fsize}{#1\fsize}\selectfont}%
  \ifx\svgwidth\undefined%
    \setlength{\unitlength}{691.65354331bp}%
    \ifx\svgscale\undefined%
      \relax%
    \else%
      \setlength{\unitlength}{\unitlength * \real{\svgscale}}%
    \fi%
  \else%
    \setlength{\unitlength}{\svgwidth}%
  \fi%
  \global\let\svgwidth\undefined%
  \global\let\svgscale\undefined%
  \makeatother%
  \begin{picture}(1,0.75)%
    \lineheight{1}%
    \setlength\tabcolsep{0pt}%
    \put(0,0){\includegraphics[width=\unitlength,page=1]{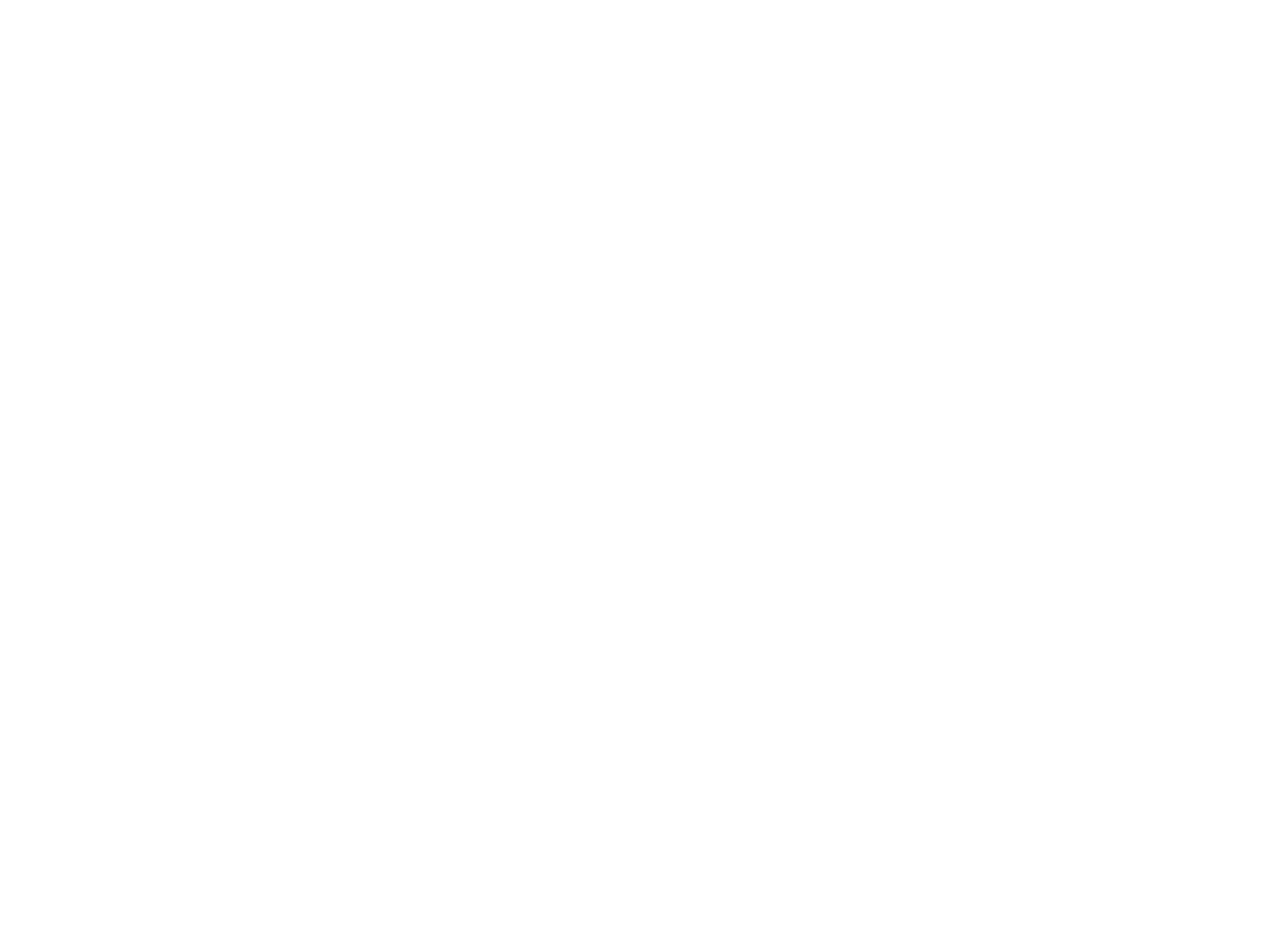}}%
    \put(-1.65215239,0.54673948){\color[rgb]{0,0,0}\makebox(0,0)[lt]{\smash{\begin{tabular}[t]{l}{(a)}\end{tabular}}}}%
    \put(-1.65865864,0.1043214){\color[rgb]{0,0,0}\makebox(0,0)[lt]{\smash{\begin{tabular}[t]{l}{(c)}\end{tabular}}}}%
    \put(-1.0904551,0.54673948){\color[rgb]{0,0,0}\makebox(0,0)[lt]{\smash{\begin{tabular}[t]{l}{(b)}\end{tabular}}}}%
    \put(-1.0904551,0.1043214){\color[rgb]{0,0,0}\makebox(0,0)[lt]{\smash{\begin{tabular}[t]{l}{(d)}\end{tabular}}}}%
    \put(0,0){\includegraphics[width=\unitlength,page=2]{gen_stab.pdf}}%
  \end{picture}%
\endgroup%

\caption{Top row: a free stabilization in which the curves running between the feet intersect once on $C$. This can be reduced to a pair of fixed point stabilizations by a real handleslide and a diffeomorphism. Bottom row: the curves connecting the feet of the tubes intersect several times along $C$; by applying a free stabilization followed by a real handleslide and diffeomorphism as in the configuration in the top row, we can reduce the number of intersections on $C$. Repeatedly using this trick, we can reduce to the standard free stabilization shown in \Cref{fig:stabilizations}.}
    \label{fig:gen_stab}
\end{figure}

\subsection{Multi-pointed real Heegaard diagrams}
As is typical in Heegaard Floer theory, we will work with pointed Heegaard diagrams. Inspired by \cite{os_linkinvts}, we allow a more general setting, in which there can be several basepoints and more than $g$ curves of each type.

\begin{defn}
    A \emph{multi-pointed real Heegaard diagram} for $(Y, \tau, \w)$ consists of the following data: 
    \begin{enumerate}
        \item A real Heegaard splitting for $Y$ into handlebodies $U \cup \tau(U)$ such that $\w \sub C$; 
        \item two collections of $m = g(\Sigma) + |\w| - 1$ disjoint simple, closed curves $\balpha = \{\alpha_1, \hdots, \alpha_m\}$ and $\bbeta = \{\beta_1, \hdots, \beta_m\}$ in $\Sigma$ which bound compressing disks in $U$ and $\tau(U)$ respectively, with the property that each component of $\Sigma \smallsetminus \balpha$ and $\Sigma \smallsetminus \bbeta$ contains exactly one basepoint;
        \item an involution $\tau: \Sigma \ra \Sigma$ which exchanges $\balpha$ and $\bbeta$. 
    \end{enumerate}
\end{defn}

The real Heegaard moves from Section~\ref{sec:moves} admit straightforward generalizations to the case of more curves. Furthermore, {\em pointed real Heegaard moves} are those real Heegaard moves supported in the complement of the basepoints.  

\begin{lem}\label{lem:pointed real Heegaard moves}
    Any two multi-pointed real Heegaard diagrams representing $(Y,\tau,\w)$ can be connected by a sequence of pointed real Heegaard moves.
\end{lem}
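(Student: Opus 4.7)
The plan is to mimic the proof of the analogous statement in classical multi-pointed Heegaard Floer theory (cf.\ \cite{os_linkinvts}), while being careful to keep all modifications equivariant. There are two steps: first, I would establish that any two multi-pointed real Heegaard diagrams for $(Y,\tau,\w)$ are connected by some sequence of real Heegaard moves (not necessarily disjoint from $\w$); second, I would show that every such move can be replaced by a sequence of moves supported in $\Sigma \smallsetminus \w$.

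For the first step, I would extend Proposition~\ref{prop:sequiv} to the multi-pointed setting by allowing, in addition to the real Heegaard moves already defined, \emph{real index-zero stabilizations}, in which a pair of small parallel curves (one $\alpha$, one $\beta$) are introduced in a neighborhood of a basepoint $w \in C$, together with its $\tau$-image on the opposite side. Via these additional moves, one may always reduce to a diagram with a single basepoint on each fixed component and a minimal number of curves, at which point Proposition~\ref{prop:sequiv} applies. Since both input diagrams share the same basepoint set $\w$, one can arrange that $\w$ is preserved throughout the resulting sequence of moves.

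For the second step, I would modify each move in the sequence to be supported away from $\w$. The key observation, which makes the equivariant version work as easily as the non-equivariant one, is that $\w \subset C$ consists of \emph{fixed points} of $\tau$. In particular, a region of $\Sigma \smallsetminus \boldsymbol\alpha$ containing a basepoint $w$ is sent by $\tau$ to the region of $\Sigma \smallsetminus \boldsymbol\beta$ containing that same $w$, and the distinguished curve $\alpha_k$ bounding the first region is sent to the distinguished curve $\beta_k = \tau(\alpha_k)$ bounding the second. Thus, whenever an isotopy of $\alpha_i$ would sweep across $w$, one can replace it by a real handleslide of $\alpha_i$ over $\alpha_k$ together with the simultaneous slide of $\beta_i$ over $\beta_k$, and the result is automatically a real Heegaard diagram. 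Handleslides whose supporting band crosses $\w$ can be rerouted by composing with additional real handleslides by the same argument. For stabilizations, one chooses the connect-sum locus disjoint from $\w$: for free stabilizations, pick $p \in \Sigma \smallsetminus (\nu(C)\cup \w)$ and use $\tau(p)$; for fixed point stabilizations, choose a point of $C \smallsetminus \w$.

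The main obstacle is really the bookkeeping for the first step: making precise the multi-pointed generalization of Proposition~\ref{prop:sequiv} and verifying that real index-zero stabilizations suffice to bridge diagrams with different basepoint counts. Once this reduction is in place, the second step is a direct equivariant translation of the standard finger-move argument, made transparent by the fact that $\w \subset C$ forces the symmetric adjustments on the $\beta$-side to avoid the basepoints automatically.
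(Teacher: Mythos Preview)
Your Step~2 is correct and matches the paper's argument: basepoint-crossing isotopies are traded for handleslides, and your observation that $\w\subset C$ forces the companion $\beta$-slide to avoid the same basepoint is exactly the point.

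Your Step~1, however, has a gap. The ``real index-zero stabilizations'' you propose change the basepoint set: introducing a small parallel $\alpha$/$\beta$ pair near $w\in C$ creates a new region of $\Sigma\smallsetminus\boldsymbol\alpha$ (and of $\Sigma\smallsetminus\boldsymbol\beta$) that must carry its own basepoint. Since the lemma demands a sequence of \emph{pointed} real Heegaard moves---by definition supported in $\Sigma\smallsetminus\w$ and hence preserving $\w$---these extra moves are not in the allowed list, and your assertion that ``one can arrange that $\w$ is preserved throughout'' is precisely what would need justification. Even granting the reduction (destabilize to a minimal basepoint set, apply Proposition~\ref{prop:sequiv}, then restabilize), the intermediate moves live on diagrams with the wrong $\w$, and you have not explained why they lift to pointed moves on the original multi-pointed diagrams.

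The paper avoids this detour entirely. The proof of Proposition~\ref{prop:sequiv} already goes through for arbitrary $m\ge g(\Sigma)$ without new moves: Nagase's theorem concerns the underlying real Heegaard \emph{splitting} and is indifferent to how many attaching curves are drawn on it, and once the splittings agree, any two systems of $m$ compressing curves cutting $\Sigma$ into $|\w|$ disks are related by isotopies and handleslides---this is the standard multi-pointed extension of \cite[Proposition~2.2]{os_holodisks}, as in \cite{os_linkinvts} or \cite{Juhasz_HolomorphicDisks_SuturedMflds}. So Step~1 is a one-line observation, not a reduction argument.
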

\begin{proof}
The proof of Proposition~\ref{prop:sequiv} extends to the case of arbitrary $m \geq g(\Sigma)$ to show that the  diagrams can be connected by a sequence of real Heegaard moves. We can easily arrange the handleslides and stabilizations to happen in the complement of the basepoints, by a small real isotopy. It remains to show that we can also avoid real isotopies that cross a basepoint. 
    Since the alpha and beta curves transform together, it suffices to show that any isotopy of an alpha curve that crosses a basepoint can be replaced by a sequence of handle slides in the complement of the basepoint. For the case of a single basepoint, this follows just as in \cite[Proposition 7.2]{os_holodisks} and for multiple basepoints from \cite[Proposition 3.3]{os_linkinvts} or \cite[Section 2]{Juhasz_HolomorphicDisks_SuturedMflds}. 
\end{proof}

When $(Y, \tau)$ is represented by a real Heegaard diagram, we will abuse notation and simply write $\tau$ (instead of $R$) also for the involution on the Heegaard surface. 

\begin{remark}
Of course, the moves above cannot move basepoints between different components of the fixed set. Hence, when $|\w| < |C|$, some components of the fixed set are necessarily distinguished. In the case that $|\w| = 1 < |C|$, it would be interesting to know whether the resulting real Heegaard Floer homologies are isomorphic. \end{remark}

\begin{remark}
It may also be possible to extend the theory to free actions, but due to the need to choose pairs of basepoints ($w$ and $\tau(w)$), the resulting theory would depend on the $\tau$-invariant knot determined by $w$ and $\tau(w)$.  In some cases, the basepoints can be chosen to lie in the same component of the Heegaard diagram, and so the associated knot is a local unknot. However, it is unclear to us if one can always choose the basepoints in this manner. 
\end{remark}

\subsection{Real Invariant Domains}
\label{sec:RID}
Given a multi-pointed Heegaard diagram $\cH = (\Sigma, \balpha, \bbeta, \w)$ for $Y$, the Heegaard Floer invariants are defined in terms of Lagrangian intersection Floer homology. One considers the symmetric product $\Sym^m(\Sigma)$ which contains two half-dimensional submanifolds $\Ta = \alpha_1 \times \hdots \times \alpha_m$ and $\Tb = \beta_1 \times\hdots\times \beta_m$ determined by $\cH$. By the work of Perutz \cite{perutz}, there is a symplectic form on $\Sym^m(\Sigma)$ with respect to which $\Ta$ and $\Tb$ are Lagrangians. (Perutz only considered the case where $m$ is the genus $g$ of the surface, but his proof applies in general.) The Heegaard Floer complex of $Y$ is defined to be $\CF_*(\Ta, \Tb)$ relative to some divisors determined by the basepoints. 

An involution of $\Sigma$ induces an involution of $M = \Sym^m(\Sigma)$. As suggested in the Introduction, the real Heegaard Floer homology of $(Y, \tau)$ is to be the Lagrangian Floer homology of $\Ta$ and $M^R$ in $M$. This will be a particular case of the construction from ~\Cref{sec:real_lf}.

Given a pair $\x, \y \in \Ta \cap M^R$, we are therefore interested in pseudo-holomorphic representatives of Whitney disks $\psi \in \Pi_2(\x, \y)$. Rather than work directly with classes in $\Pi_2(\x, \y)$, we instead work with their corresponding invariant classes $\phi=\cR(\psi) \in \pi_2^R(\x, \y)$; see the discussion at the end of Section~\ref{sec:setup}.

As is standard in Heegaard Floer theory, it is often more convenient to work with domains in $\Sigma$, rather than disks in $\Sym^m(\Sigma)$. Given a point $w \in \Sigma$, there is a map
\begin{align*}
    n_w: \pi_2(\x, \y) \ra \Z
\end{align*}
taking a class $\phi$ to the algebraic intersection number of $\phi$ with $\{w\} \times \Sym^{m-1}(\Sigma).$ More generally, given a collection of points $\w = \{w_1, \hdots, w_r\}$, there is a map $n_\w: \pi_2(\x, \y) \ra \Z^{r}$ given by $$n_\w(\phi) = (n_{w_1}(\phi), \hdots, n_{w_r}(\phi)).$$ Let $\cD_1, \hdots, \cD_\ell$ be the connected components of $\Sigma \smallsetminus \{\balpha \cup \bbeta\}$, and pick a point $p_i$ in $\cD_i$. Given a holomorphic strip $u: (\D, \del \D)  \ra (\Sym^m(\Sigma), \bT_\alpha \cup \bT_\beta)$, its associated \emph{domain} is the 2-chain $\cD(u) = \sum_i n_{p_i}(u) \cD_i$. In light of the correspondence with real invariant strips, to a holomorphic strip $u: (\D, \del \D)  \ra (\Sym^m(\Sigma), \bT_\alpha \cup M^R)$ we may associate the domain of the real invariant strip $\cR(u)$ to aid in visualization. 

A \emph{periodic domain} is a integral 2-chain on $\Sigma$ whose boundary is a linear combination of the alpha and beta circles, and whose multiplicity at all $w_i$ is zero. A Whitney disk $\phi \in \pi_2(\x, \x)$ represented by such a domain is called a \emph{periodic class}. The set of integral periodic domains is denoted by $\Pi_\Z$. If we use rational instead of integral coefficients, the resulting set is denoted $\Pi_\Q$. There are obvious analogues in the real setting. We let $\Pi_\Z^R$ be the subset of $\Pi_\Z$ which consists of real invariant periodic domains. We similarly define $\Pi_\Q^R \subset \Pi_\Q$. 

The spaces $\Pi_\Z$ and $\Pi_\Z^R$ are closely related to the topology of $Y$. There is a map 
\begin{align*}
    H: \Pi_\Z \ra H_2(Y \smallsetminus \w;\Z)
\end{align*}
given by capping off a periodic domain with the the compressing disks for the alpha and beta curves in its boundary. This is an isomorphism; see \cite[Proposition 2.15]{os_holodisks} and  \cite[Lemma 4.8]{zemke_graphcob}. Moreover, it is clear that real periodic domains are taken to $-\tau_*$-invariant elements of $H_2(Y\smallsetminus \w;\Z)$. In fact, we have the following.

\begin{lem}\label{lem:periodic_domains_iso}
    The map $H: \Pi_\Z^R \ra H_2(Y\smallsetminus \w; \Z)^{-\tau_*}$ is an isomorphism.
\end{lem}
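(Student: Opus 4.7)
My strategy is to upgrade the existing isomorphism $H: \Pi_\Z \to H_2(Y \setminus \w; \Z)$ from \cite[Proposition 2.15]{os_holodisks} and \cite[Lemma 4.8]{zemke_graphcob} to the equivariant setting and then restrict to the appropriate eigenspaces.

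First, I would verify that $H$ intertwines the chain pushforward $\tau_*$ on $\Pi_\Z$ with the homology pushforward $\tau_*$ on $H_2(Y \setminus \w; \Z)$. This is essentially automatic from the construction: a periodic domain $\cD$ is turned into a cycle by capping off the $\alpha$- and $\beta$-curves in $\partial \cD$ with compressing disks in $U_\alpha$ and $U_\beta$ respectively, and the involution on $Y$ carries $\Sigma$ to $\Sigma$ while exchanging the two handlebodies. Hence the cycle built from $\cD$ maps under $\tau$ to the cycle built from $\tau_*(\cD)$, which gives the equivariance $\tau_* \circ H = H \circ \tau_*$.

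Second, I would identify $\Pi_\Z^R$ with the $(-1)$-eigenspace of $\tau_*$ acting on $\Pi_\Z$. The subtle point is a sign: since $\tau$ preserves the orientation of $Y$ but swaps the handlebodies, it reverses the orientation of $\Sigma$. Thus if $\cD = \sum n_i \cD_i$ and $\sigma$ denotes the permutation of the regions of $\Sigma \setminus (\balpha \cup \bbeta)$ induced by $\tau$, then at the chain level $\tau_*(\cD) = -\sum n_i \cD_{\sigma(i)}$. On the other hand, a real invariant strip $u = R \circ u \circ \rho$ forces $n_p(u) = n_{\tau(p)}(u)$, that is, $n_i = n_{\sigma(i)}$. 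Combining these observations, $\Pi_\Z^R = \{\cD \in \Pi_\Z : \tau_*(\cD) = -\cD\}$.

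With equivariance and this identification in hand, the lemma follows formally: $H$ restricts to an isomorphism $\ker(\tau_* + \id) \to \ker(\tau_* + \id)$, and we have just identified these kernels with $\Pi_\Z^R$ and $H_2(Y \setminus \w; \Z)^{-\tau_*}$. The main obstacle is the sign bookkeeping in the second step --- specifically, tracing carefully how the orientation-reversing behavior of $\tau$ on $\Sigma$ forces real invariance at the level of strips (a set-theoretic condition on multiplicities) to match the algebraic condition $\tau_*(\cD) = -\cD$ on chains, and ensuring that this sign is compatible with the $-\tau_*$ convention on the homology side.
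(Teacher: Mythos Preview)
Your proposal is correct and follows essentially the same idea as the paper, only packaged more abstractly: where the paper checks surjectivity directly (given a $-\tau_*$-invariant class, realize it by a periodic domain and observe that the boundary determines the domain, so $-\tau_*$-invariance forces real invariance), you instead verify equivariance of $H$ once and then restrict the known isomorphism to $(-1)$-eigenspaces. The sign bookkeeping you flag is exactly the content of the paper's one-line observation that ``a periodic domain is determined by its boundary,'' and you have it right.
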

\begin{proof}
    This follows just as in the proof of \cite[Lemma 4.8]{zemke_graphcob}. Injectivity is immediate. Surjectivity follows from the observation is that $Y \smallsetminus \w$ is obtained from $\Sigma$ by attaching 2-handles along the alpha and beta curves. A 2-cycle $Z$ generically intersects the cocores transversely, and therefore can homotoped to a surface which is homologous to a 2-chain $P$ in $\Sigma$ capped off by the cores of the handles it spans. Since a periodic domain is determined by its boundary, the $-\tau_*$-invariance of its boundary forces $P$ to be a real invariant domain. 
\end{proof}

Given $\x$ and $\y$ in $\Ta \cap \Tb$, there is an obstruction to the existence of a disk from $\x$ to $\y$, as follows. Let $a$ be a path from $\x$ to $\y$ in $\Ta$ and $b$ a path from $\x$ to $\y$ in $\Tb$. Their difference is an element of $\pi_1(\Sym^m(\Sigma))\cong H_1(\Sigma)$. Define $\varepsilon(\x, \y)$ to be the image of this loop under the identification 
\begin{align}\label{eqn:homology_of_Y_from_Sigma}
    \dfrac{H_1(\Sym^m(\Sigma))}{H_1(\Ta) \oplus H_1(\Tb)} \cong \dfrac{H_1(\Sigma)}{[\alpha_1], \hdots, [\alpha_m], [\beta_1], \hdots, [\beta_m]} \cong H_1(Y).
\end{align}
This class is independent of the choice of paths $a$ and $b$ and vanishes if and only if $\pi_2(\x,\y)$ is non-empty. 

If $Y$ is equipped with an involution, the path $a$ from $\x$ to $\y$ determines a path $b$, by applying the involution. The difference $(a-b)$ represents a class in $H_1(Y;\Z)^{-\tau_*}$. In fact, we can say slightly more. Fix a $\tau$-equivariant CW-complex for $Y$. There is a transfer map  
\begin{align*}
    \Theta: C^*(Y;\Z) \ra C^*(Y/\tau;\Z),
\end{align*}
defined as follows: if $e$ is a cell of $Y$ and $\beta$ is a cochain, we define 
\begin{align*}
    \Theta(\beta)([e]) = \beta(e) + \beta(\tau(e)).
\end{align*}
Descending to cohomology, this map fits into a commutative diagram:
\begin{align*}
    \begin{tikzcd}[ampersand replacement = \&]
        H^*(Y;\Z) \ar[rr,"1+\tau^*"]\ar[dr,"\Theta"] \& \& H^*(Y;\Z)^{\tau^*} \\
        \& H^*(Y/\tau;\Z). \ar[ur,"\pi^*"]
    \end{tikzcd}
\end{align*}
In particular, $\ker(\Theta) \subseteq \ker(1 + \tau^*) = H^*(Y;\Z)^{-\tau_*}$, and as we shall see, $PD(\varepsilon(\x,\y))$ is actually an element of $\ker(\Theta) \subset H^2(Y;\Z)^{-\tau_*}$ (see \Cref{lem:ker_theta}). 

In the real case, there is a refinement of $\varepsilon(\x,\y)$, which obstructs the existence of a real invariant domain from $\x$ to $\y$. Given a (possibly noninvariant) class $\phi \in \pi_2(\x,\y)$, consider its domain $\cD$ as a 2-chain in $\Sigma$. Applying $\tau$ to $\cD$ yields a new domain $\tau(\cD)$. Since $\tau$ is an orientation reversing involution of $\Sigma$, the domain $\tau(\cD)$ should be viewed as going from $\tau(\y)$ to $\tau(\x)$;  since $\x$ and $\y$ were assumed to be invariant intersection points, $\tau(\cD)$ represents a class in $\pi_2(\y, \x)$. Hence, $\cD + \tau(\cD)$ is an invariant periodic domain starting at $\x$. By capping it off, we obtain an element $H(\cD + \tau(\cD)) \in H_2(Y;\Z)^{\tau_*}$. If we had chosen a different domain, $\cD'$, the difference
\begin{align*}
    H(\cD + \tau(\cD)) - H(\cD' + \tau(\cD')) = (1 + \tau_*)(H(\cD - \cD'))
\end{align*}
would be in the image of $(1 + \tau_*): H_2(Y; \Z) \to H_2(Y; \Z)$.  Hence, assuming $\varepsilon(\x, \y) =0$, we can associate to $\x$ and $\y$ a well-defined class
\begin{align*}
    \zeta(\x, \y) \in \dfrac{H_2(Y; \Z)^{\tau_*}}{\operatorname{im}(1 + \tau_*)},
\end{align*}
which does not depend on the choice of domain. Note that $\zeta(\x, \y) = 0$ if there is a real invariant domain connecting $\x$ and $\y$. Therefore, we may partition $(\Ta \cap \Tb)^R$ into equivalence classes, where $\x  \sim \y$ if and only if $\varepsilon(\x,\y) = 0$ and $\zeta(\x,\y) = 0$. 

\begin{remark}
See Section~\ref{sec:HDSpinC} for a definition of $\zeta(\x, \y)$ in the case when $\epsilon(\x, \y) \neq 0$.
\end{remark}

\begin{remark}
Here and later, when we write $\im(1+\tau_*)$, we mean the image of the map $(1+\tau_*)$ defined on $H_2(Y; \Z)$, not  on $H_2(Y; \Z)^{\tau_*}$. Similarly, after applying Poincar\'e duality, $\imt$ will mean the image of the map $(1+\tau^*)$ defined on $H^1(Y; \Z)$, not on $H^1(Y; \Z)^{\tau^*}$.
\end{remark}

\subsection{Real $\SpinC$-structures}
As in the monopole Floer setting, the real Heegaard Floer invariants split over real $\SpinC$-structures. In this section, we review the relevant definitions from \cite{li:HMR}. Then, following Turaev \cite{Turaev_Tors_Inv_SpinC}, we define a notion of real Euler structures, and show that these naturally correspond to real $\SpinC$-structures. 

\begin{defn}
    Let $(Y, \tau)$ be a real 3-manifold. A \emph{real vector bundle} over $(Y, \tau)$ is a complex vector bundle $E \ra Y$ equipped with an anti-linear involution $R: E \ra E$ which covers $\tau$. The data $(E, R)$ is called a \emph{real structure} on $(Y, \tau)$.
\end{defn}

Rather than work with pairs $(E, R)$, we will often pass to the associated $U(n)$-bundles; the involution determines a map $R:\Fr(E) \ra \overline{\Fr(E)}$ given by composition. Here, we need to work with the conjugate bundle $\overline{\Fr(E)}$, since the anti-complex linear map $R$ takes unitary frames $\C^n \xra{\simeq} E_y$ to anti-unitary frames $\C^n \xra{\simeq} E_y \xra{R_{y}} E_{\tau(y)}$. Compare to \cite[Section 2.2]{tian-wang}.

\begin{defn}\label{def:Real_SpinC_gauge_theory}
    Fix a $\SpinC$ structure, $\frs= (S,\rho)$ on $Y$, i.e. a Hermitian rank 2 bundle $S \ra Y$ together with a Clifford multiplication $\rho: TY \ra \End(S)$. A \emph{real structure compatible with $\frs$} is a real structure $R: S \ra S$ which is compatible with the Clifford multiplication, in the sense that the diagram
    \begin{align*}
        \begin{tikzcd}[ampersand replacement = \&]
            TY\ar[r, "\rho"] \ar[d,"d \tau"] \& \End(S) \ar[d,"\Phi_R"] \\
            TY \ar[r, "\rho"] \& \End(S) 
        \end{tikzcd}
    \end{align*}
    commutes; here, $R$ acts on $\End(S)$ by conjugation, $\Phi_R(f) = R \circ f \circ R$.
\end{defn}

Alternatively, we can define real $\SpinC$-structures in terms of principal bundles over $Y$. Recall that a $\SpinC$-structure on $Y$ can equivalently be defined to be a principal $\SpinC(3)$-bundle $P$ over $Y$ together a $U(1)$-equivariant covering $\phi: P \ra \Fr(Y)$, with respect to the map $\SpinC(3) \cong U(2) \ra U(2)/U(1) \cong SO(3)$. In particular, $P/U(1) \cong \Fr(Y)$, so we will often think of $\SpinC$-structures as pairs $(P, \eta)$, where $\eta: P/U(1) \cong \Fr(Y)$. To pass from the first definition to the second, we take $P = \Fr(S)$ and use the Clifford multiplication to identify $TY$ with the subbundle $\mathfrak{su}(S)$ of traceless, skew-adjoint endomorphisms of $S$. It follows that $\Fr(S)\ra \Fr(S)/U(1) \cong \Fr(Y)$ is a $\SpinC$ structure on $Y$ in the sense of the second definition. 

To define real $\SpinC$-structures in this language, we simply push the involution $R$ and its compatibility relations through the identification above. It is clear that the involution $R: S \ra S$ is equivalent to a bundle map
\begin{align*}
    I: \Fr(S) \ra \Fr(\overline{S}),
\end{align*}
covering $\tau$, where $\overline{S}$ is the conjugate bundle of $S$. The compatibility of $R$ with $\rho$ is equivalent to a compatibility of $I$ with the map induced by $d\tau$ on $\Fr(Y)$, i.e. we have a commutative diagram:
\begin{align*}
    \begin{tikzcd}[ampersand replacement = \&]
        \Fr(S)\ar[r, "I"] \ar[d] \&  \Fr(\overline{S}) \ar[d] \\
        \Fr(Y) \ar[r, "d\tau"] \& \Fr(Y).
    \end{tikzcd}
\end{align*}
Note that this is equivalent to saying that the $U(1)$-bundle $P \ra P/U(1) \cong \Fr(Y)$ has a real structure. 

By \cite[Lemma 3.5]{li:HMR}, a principal $U(1)$-bundle $L \ra Y$ admits a real structure precisely if $\Theta(c_1(L)) = 0$. Two real structures $(L_0, I_0)$ and $(L_1, I_1)$ are equivalent if there is an isomorphism $\phi: L_0 \ra L_1$ making the following diagram commute:
\begin{align*}
    \begin{tikzcd}[ampersand replacement = \&]
        L_0\ar[r, "I_0"] \ar[d,"\phi"] \&  L_0 \ar[d,"\phi"] \\
        L_1 \ar[r, "I_1"] \& L_1.
    \end{tikzcd}
\end{align*}
If $I_0$ and $I_1$ are two real structures on $L \ra Y$, they differ by a $\tau$-invariant map $\phi: Y \ra S^1$. Hence, equivalence classes of real structures on $L \ra Y$ are affinely given by
\begin{align}\label{eqn:count of real line bundles}
    H^1(Y;\Z)^{\tau^*}/\imt.
\end{align}
For our purposes, the following perspective is more useful. If $(L_0, I_0)$ and $(L_1, I_1)$ are real line bundles with $L_0 \cong L_1$, the obstruction to their equivalence can be represented by choosing any isomorphism $\phi: L_0 \ra L_1$, and comparing the maps $I_0$ and $\phi^{-1} \circ I_1 \circ \phi$, which differ by an element of $H^1(Y;\Z)^{\tau^*}$. The image of this class in $H^1(Y;\Z)^{\tau^*}/\imt$ does not depend on our choice of identification $\phi: L_0 \ra L_1$. If $\psi$ were a different isomorphism, then two obstructions differ by 
\begin{align*}
    (1 + \tau^*)[\phi - \psi] \in H^1(Y;\Z)^{\tau^*}.
\end{align*}
Hence, the obstruction to the equivalence of the two bundles lies in  $H^1(Y;\Z)^{\tau^*}/\imt$.

\subsection{Real Euler Structures}
\label{sec:RealEuler}
We start by recalling Turaev's interpretation of $\SpinC$-structures from \cite{Turaev_Tors_Inv_SpinC}. We say that two nonvanishing vector fields $v_0$ and $v_1$ on $Y$ are \emph{homologous} if $v_0|_{Y \smallsetminus B^3}$ and $v_1|_{Y \smallsetminus B^3}$ are homotopic. An Euler structure on a 3-manifold $Y$ is a homology class of nonvanishing vector fields on $Y$.  We will denote the homology class of $v$ by $[v]$ and will write $\Vec(Y)$ for the space of Euler structures on $Y$. The only obstruction to a homotopy between $v_0$ and $v_1$ in $Y \smallsetminus B^3$ is a class $[v_0-v_1] \in H^2(Y;\Z)$. This endows $\Vec(Y)$ with an action of $H^2(Y;\Z)$, and it is straightforward to see that this action is free and transitive. It follows that $\Vec(Y)$ is an affine space over $H^2(Y;\Z)$. Given a vector field $v$ on $Y$, we will write $\overline{v}$ for its reverse, given by $\overline{v}_x = - v_x$ at all $x \in Y$. 

Given a vector field $v$ on a real manifold $(Y, \tau)$, we can of course push $v$ forward by $d\tau$. Let $\tau^*(v)$ be the resulting  vector field. We say that a vector field $v$ on $(Y, \tau)$ is \emph{real} if $\tau^*(v) = \overline{v}$. 

A real vector field on $(Y, \tau)$ naturally determines a real $\SpinC$-structure rise to a complex line bundle on $Y$ equipped with a real structure, as follows. By choosing an orientation and a $\tau$-invariant Riemannian metric on $Y$, we may decompose $TY \cong \langle v \rangle \oplus \langle v \rangle^\perp$. The choices of orientation and metric make  $\langle v \rangle^\perp \ra Y$ a complex line bundle, and the restriction of $d\tau$ to $\langle v \rangle^\perp \ra Y$ is a complex anti-linear involution covering $\tau$. We will write $R_0$ for this restriction.

It is easy to see that if $v_0$ and $v_1$ are homologous then $\langle v_0 \rangle^\perp$ and $\langle v_1 \rangle^\perp$ are isomorphic. However, when $v_0$ and $v_1$ are real, $(\langle v_0 \rangle^\perp, R_0)$ and $(\langle v_1 \rangle^\perp, R_1)$ are not necessarily equivalent \emph{real} line bundles. Roughly, we would like to define \emph{real Euler structures} to be equivalence classes of real vector fields which determine equivalent real complex line bundles. 

\begin{lem}
    Every real 3-manifold with fixed set of codimension two admits a real vector field. Furthermore, for any real vector field $v$ and $[x] \in \ker(\Theta) \subset H^2(Y; \Z)$, the Euler structure $[v] + [x]$ can be represented by a real vector field whose restriction to the 1-skeleton of $Y$ is homotopic to $v$ though real vector fields.
\end{lem}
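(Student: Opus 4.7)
The plan is to construct a real $v$ first on the fixed locus $C$, extend it equivariantly to a tubular neighborhood, and then use obstruction theory to extend to the rest of $Y$; for the second statement, I would modify $v$ by an equivariant twist along a $\tau$-antiinvariant link representing $PD([x])$.

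For existence, note that $d\tau$ acts by $-1$ on the normal bundle $NC \to C$ and by $+1$ on $TC$, so a real $v$ must satisfy $v|_C \in NC$. Since $NC$ is a rank-two oriented bundle over a $1$-manifold it is trivial, and I pick any nowhere-zero section $v_0$. Using a $\tau$-invariant metric (obtained by averaging) and the exponential map, identify a $\tau$-equivariant tubular neighborhood of $C$ with $NC$, on which $\tau$ acts by $(c,w)\mapsto(c,-w)$; extend $v_0$ to this neighborhood by $v(c,w):=v_0(c)$, which is manifestly real. On $N:=Y\smallsetminus\mathring{\nu}(C)$ the involution acts freely, and $TN$ equipped with the anti-involution $w \mapsto -d\tau(w)$ descends to an oriented rank-$3$ real bundle $E\to X_0:=N/\tau$; real vector fields on $N$ extending $v|_{\partial\nu(C)}$ correspond to nonvanishing sections of $E$ extending the induced section on $\partial X_0$. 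The primary obstructions to such extension vanish because $\pi_0(S^2)=\pi_1(S^2)=0$; the only obstruction is the relative Euler number in $H^3(X_0,\partial X_0;\Z)$, and this vanishes because $\chi(N)=\chi(\partial N)/2=0$ (all boundary components of $N$ are tori) and in $3$-manifolds zeros of opposite sign can always be cancelled.

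For the second statement, recall the geometric realization of Turaev's $H^2(Y;\Z)$-action: given $\xi\in H^2(Y;\Z)$, a representative of $[v]+\xi$ is obtained by \emph{twisting} $v$ along an embedded link $K\subset Y$ with $[K]=PD(\xi)$, supported in a tubular neighborhood of $K$ and rotating $v$ through a full turn along the meridian. For $[x]\in\ker(\Theta)$, Poincar\'e duality (which intertwines the transfer $\Theta$ on cohomology with the pushforward $\pi_*$ on homology, as $\tau$ is orientation-preserving) identifies $PD([x])$ with an element of $\ker(\pi_*)\subseteq H_1(Y;\Z)$, where $\pi:Y\to Y/\tau$. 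Such classes are representable by $1$-cycles of the form $[K-\tau(K)]$ for an oriented embedded link $K\subset Y\smallsetminus(C\cup Y^{(1)})$ with $K\cap\tau(K)=\emptyset$: lift a $1$-chain representative in $Y/\tau$ bounding $\pi_*(\text{cycle})=0$ and split into fundamental domains. I would then perform the twist along $K$ and let $\tau$-antiequivariance dictate the compensating twist along $\tau(K)$; the resulting $v'$ is real, satisfies $[v']=[v]+PD([K-\tau(K)])=[v]+[x]$, and agrees with $v$ on $Y^{(1)}$, so the restrictions are trivially homotopic through real vector fields.

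The main obstacle is the final claim in Part 2: namely, identifying $\ker(\Theta)$ with $\ker(\pi_*)$ under Poincar\'e duality and showing that every class in $\ker(\pi_*)$ has a representative of the asserted antisymmetric form $[K-\tau(K)]$ with $K$ embedded, disjoint from $\tau(K)$, and avoiding both the fixed set $C$ and the $1$-skeleton. A related subtlety is the sign check in the equivariant twist: the modification near $\tau(K)$ forced by anti-equivariance must contribute $-PD[\tau(K)]$ (rather than $+PD[\tau(K)]$) to the change in Euler structure, so the total change is $PD[K-\tau(K)]=[x]$ rather than its $\tau_*$-invariant counterpart. This requires carefully tracking how $-d\tau$ acts on the normal framing used to define the twist.
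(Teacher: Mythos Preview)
Your existence argument (Part 1) is correct and in fact more detailed than the paper's, which simply says: fix a $\tau$-invariant cell structure, choose a real vector field on $C$, and extend over the free cells in pairs. Your obstruction-theoretic argument on the quotient $X_0 = N/\tau$ is a valid alternative; one small caution is that the quotient bundle $E$ need not be orientable (since $-d\tau$ is orientation-reversing on $TN$ when $\tau$ is orientation-preserving in odd dimension), so the relative Euler class lives in twisted cohomology. The zero-cancellation argument still goes through, but the cleanest justification is the one you gesture at: pull back to $N$, where the obstruction is the honest relative Euler number $\chi(N)=0$, and note that $\pi^*$ is injective on top cohomology.

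Part 2, however, has the gap you yourself flagged, and it is genuine. The classes representable as $[K-\tau(K)]$ are exactly the image of $1-\tau_*$ on $H_1(Y;\Z)$, whereas $PD(\ker\Theta)$ consists of classes killed by $\pi_*$, equivalently by $1+\tau_*$. The quotient $\ker(1+\tau_*)/\operatorname{im}(1-\tau_*)$ is the Tate group $\hat H^{-1}(\Z/2;H_1(Y))$, which need not vanish: already for $S^1\times S^2$ with $\tau$ reflecting both factors (the branched double cover of the two-component unlink, Example in the paper) one has $H_1=\Z$ with $\tau_*=-1$, so $\ker(1+\tau_*)=\Z$ and $\operatorname{im}(1-\tau_*)=2\Z$. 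Thus the generator of $H^2$ lies in $\ker\Theta$ but its Poincar\'e dual is \emph{not} of the form $[K-\tau(K)]$, and your twist construction cannot produce a real representative for $[v]+[x]$ in that case.

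The paper avoids this entirely by working at the cochain level rather than the homology level. One picks a cochain representative $x\in C^2(Y;\Z)$ and builds $u$ cell-by-cell: set $u=v$ on the $1$-skeleton, then extend over each $2$-cell $e_2$ so that the obstruction cochain satisfies $[v-u](e_2)=x(e_2)$. Reality forces $[v-u](\tau(e_2))=-x(e_2)$, which need not equal $x(\tau(e_2))$ on the nose; but since $[x]\in\ker\Theta$ means $\Theta(x)=\delta y$ for some $y\in C^1(Y/\tau)$, the discrepancy $x(e_2)+x(\tau(e_2))$ is a coboundary and hence does not affect the cohomology class $[v-u]=[x]$. This cochain-level flexibility is exactly what absorbs the $2$-torsion that blocks your geometric approach.
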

\begin{proof}
    To construct a real vector field, one can fix a $\tau$-invariant cell structure on $Y$, choose some real vector field on $C$, and extend over the free cells in pairs. (See Section~\ref{sec:HDSpinC} for an alternative construction using Morse theory.)

    To prove the second claim, assume $v$ is a given real vector field and let $x \in \ker(\Theta) \sub H^2(Y;\Z)$ be represented by $x \in C^2(Y;\Z)$. We will build a real vector field $u$ such that $[v - u] = [x]$. Define $u$ on the 1-skeleton of $Y$ to agree exactly with $v$. For a 2-cell $e_2$, we extend $u$ so that $[v - u](e_2) = x(e_2)$ on the chain level. For $u$ to be real, its extension over $\tau(e)$ is determined; indeed $[v-u](\tau(e_2)) = -x(e)$ (again, on the chain level). However, by assumption, $[x] \in \ker(\Theta)$, so there exists some $y \in C^1(Y/\tau;\Z)$ so that 
    \begin{align*}
        \Theta(x)([e_2]) = x(e_2) + x(\tau(e_2)) = \delta y([e_2]).
    \end{align*}
    Defining $\tilde{y} = y \circ \pi$, for $\pi: Y \ra Y/\tau$, we see that 
    \begin{align*}
        -x(e_2) = x(\tau(e_2)) + \delta \tilde{y}(e_2).
    \end{align*}
    Hence, up to a coboundary, $[v-u](\tau(e_2)) = x(\tau(e))$ as desired. Any extension of $u$ over the 3-skeleton is sufficient, as we only care about the homology class. Therefore, $u$ is a real vector field representing $[v] + [x]$.
\end{proof}

We say that an Euler structure \emph{admits a real structure} if it can be represented by a real vector field. According to the previous lemma, there is a well-defined action of $\ker(\Theta)$ on the set of Euler structures admitting a real structure.

\begin{lemma}\label{lem:ker_theta}
    The set of Euler structures on $(Y, \tau)$ which admit a real structure is a torsor over $\ker(\Theta) \subset H^2(Y; \Z)$.
\end{lemma}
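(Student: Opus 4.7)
Freeness of the $\ker(\Theta)$-action is inherited from the freeness of the standard $H^2(Y;\Z)$-action on $\Vec(Y)$, since $\ker(\Theta)$ is a subgroup of $H^2(Y;\Z)$. The content of the lemma is therefore \emph{transitivity}: given two real vector fields $v_0, v_1$, one must show that their difference $[v_1] - [v_0] \in H^2(Y;\Z)$ actually lies in $\ker(\Theta)$.

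To prove transitivity, I would work obstruction-theoretically with a $\tau$-equivariant smooth triangulation $\cT$ of $Y$ chosen so that: (i) $C$ is contained in $\cT^{(1)}$; (ii) each cell of dimension at least two is disjoint from its $\tau$-image, so such cells come in pairs $\{e, \tau(e)\}$; and (iii) within each pair, the orientation on $\tau(e)$ is taken to be the pushforward of the orientation on $e$, so $\tau \colon e \to \tau(e)$ is orientation-preserving. Such a triangulation is built from any $\tau$-invariant smooth triangulation by refining near $C$ and orienting higher cells coherently within each $\tau$-orbit. Next, by a preliminary $\tau$-equivariant homotopy --- carried out freely on paired 1-cells in $Y \smallsetminus C$ with the value on $\tau(e)$ determined by reality, and through sections of the $S^1$-bundle $S(N_C) \to C$ on 1-cells in $C$ --- I would arrange that $v_0 = v_1$ on $\cT^{(1)}$.

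Once $v_0 = v_1$ on the 1-skeleton, the primary obstruction to extending the equality over each 2-cell $e$ is an integer $\delta(e)$, namely the degree of the gluing map $S^2 \to S^2$ built from $v_0|_e$ on the upper hemisphere and $v_1|_e$ on the lower. The key computation is the reality-enforced identity $\delta(\tau(e)) = -\delta(e)$: after trivializing $TY|_e$ and using $d\tau$ to transport this to a trivialization of $TY|_{\tau(e)}$, the reality relation translates to $v_i|_{\tau(e)}(\tau(x)) = -v_i|_e(x)$; combined with the orientation-preservingness of $\tau|_e$ and the fact that the antipodal map on $S^2$ has degree $-1$, this shows that the gluing on $\tau(e)$ is the antipodal composition of the gluing on $e$. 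Hence $\Theta(\delta)([e]) = \delta(e) + \delta(\tau(e)) = 0$ at the cochain level, and so $[v_1] - [v_0] = [\delta] \in \ker(\Theta)$.

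The main obstacle I anticipate is the 1-skeleton arrangement step on $C$: because the winding numbers of $v_0|_{S^1}$ and $v_1|_{S^1}$ around a component $S^1 \subset C$ need not coincide, a strictly real 1-skeleton homotopy may be obstructed. My approach would be to allow a single non-real defect on one 1-cell per problematic component of $C$, and then verify that the resulting correction to $\Theta(\delta)$ is a coboundary in $C^*(Y/\tau;\Z)$, which does not affect the cohomological conclusion.
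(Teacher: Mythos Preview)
Your approach is correct and reaches the same conclusion, but the paper's argument is cleaner and avoids the obstacle you anticipated. Rather than arranging $v_0 = v_1$ on the $1$-skeleton and then computing $\delta(\tau(e)) = -\delta(e)$ cell by cell, the paper works entirely at the level of cohomology classes. It uses two algebraic identities for the obstruction class: $\tau^*[u-v] = [\tau^*u - \tau^*v]$ (since $\tau$ is a diffeomorphism) and $[u-v] = [\bar v - \bar u]$ (since the antipodal map on $S^2$ has degree $-1$). Combining these gives, for a fixed real $v_0$,
\[
\Theta([v-v_0]) = [v-v_0] + \tau^*[v-v_0] = [v - \tau^*\bar v] + [\tau^*\bar v_0 - v_0].
\]
The second term vanishes because $v_0$ is real, and the first vanishes precisely when $v$ is real. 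This immediately gives both directions of the torsor statement (the converse being supplied by the preceding lemma that $\ker(\Theta)$ acts on real Euler structures).

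The advantage of the paper's route is that it never touches the $1$-skeleton, so the winding-number obstruction on components of $C$ that worried you simply does not arise. Your route buys a more explicit cochain-level picture of why $\Theta(\delta)$ vanishes, but at the cost of the defect-and-coboundary bookkeeping you described; the paper trades that geometry for two lines of algebra with $\tau^*$ and the bar operation.
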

\begin{proof}
    Given $[u], [v] \in \Vec(Y)$, the primary obstruction to their homotopy in $Y \smallsetminus B^3$ is represented by a class $[u- v]$ in $C^2(Y;\Z)$ which assigns to a 2-cell $e \sub Y$ the degree of the map 
    \begin{align*}
        (u|_e \cup v|_{e}) \in \pi_2(S^2).
    \end{align*}
    Since $S^2$ is simply connected, we may assume $u$ and $v$ are constant on the 1-skeleton of $Y$, and therefore we assume that $(u|_e \cup v|_{e})$ factors through $S^2 \vee S^2$. It follows that $[u-v](e) = \deg(u|_e) - \deg(v|_e)$. Since $\tau$ is a diffeomorphism, it follows that $\tau^*([u- v]) = [\tau^*(u) - \tau^*(v)]$. 
    
    Assume that $v_0$ is a real vector field on $(Y, \tau)$. Consider the composition
    \begin{align*}
        \Vec(Y) \xra{[v] \mapsto [v-v_0]} H^2(Y;\Z) \xra{\Theta} H^2(Y/\tau;\Z).
    \end{align*}
    Let $[v] \in \Vec(Y)$. Observe that for a 2-cell $e \in C_2(Y;\Z)$, we have
    \begin{align*}
        \Theta([v-v_0])([e]) &= [v-v_0](e) + [\tau^*(v)-\tau^*(v_0)](e) \\
        &= [v-v_0](e) + [\tau^*(\overline{v_0})-\tau^*(\overline{v})](e) \\
        & = [v-\tau^*(\overline{v})](e) + [\tau^*(\overline{v_0})-v_0](e).\\
        & = [v-\tau^*(\overline{v})](e) + 0.
    \end{align*}
    Therefore, if $[v]$ is an Euler structure represented by a real vector field $v$, then the class $[v-v_0]$ is contained in the kernel of $\Theta$. Conversely, if $[v-v_0] \in \ker(\Theta)$, the previous lemma implies that $[v] = [v_0] + [v-v_0]$ can be represented by a real vector field. 
\end{proof}

\begin{definition}
\label{def:realEuler}
We say that two real vector fields $v_0$ and $v_1$ are \emph{real homologous} if they are homologous and the associated real complex line bundles $(\langle v_0 \rangle^\perp, R_0)$ and $(\langle v_1 \rangle^\perp, R_1)$ are equivalent. We define a \emph{real Euler structure on $(Y,\tau)$} to be a real homology class of real vector fields on $Y$ and we write $\RVec(Y, \tau)$ for the set of real Euler structures. 
\end{definition}

\begin{corollary}
\label{cor:classification}
Let $\Theta: H^2(Y; \Z) \to H^2(Y/\tau; \Z)$ be the transfer map. The set $\RVec(Y, \tau)$ of real Euler structures on $(Y, \tau)$ is in non-canonical bijection with the group
$$ \ker(\Theta) \oplus \bigl( H^1(Y;\Z)^{\tau^*}/\imt\bigr).$$
More precisely, the image of the restriction map
$$ \RVec(Y, \tau) \to \Vec(Y)$$
 is a torsor over $\ker(\Theta)$, and the preimage of any fixed element in that image is a torsor over $\bigl( H^1(Y;\Z)^{\tau^*}/\imt\bigr).$
\end{corollary}
\begin{proof}
According to \Cref{lem:ker_theta}, the set of Euler structures which admit real structures form a torsor over $\ker(\Theta)$ and, essentially by definition, the set of possible real Euler structures such a class can admit is a torsor over $H^1(Y;\Z)^{\tau^*}/\imt$;  see \Cref{eqn:count of real line bundles}.
\end{proof}

\begin{remark}
    We note that we do \emph{not} claim that the set $\RVec(Y, \tau)$ is a torsor over $\ker(\Theta) \oplus \bigl( H^1(Y;\Z)^{\tau^*}/\imt\bigr)$, as there may be extension problems. Indeed, there does not seem to be a natural action of $\ker(\Theta)$ on $\RVec(Y, \tau)$. The same caveat applies to the set of real $\SpinC$ structures, which is in natural bijection with $\RVec(Y, \tau)$; see Lemma~\ref{lem:equivreal} below.
\end{remark}

We can describe the real homology relation more intrinsically as follows. If $v_0$ and $v_1$ are real vector fields in the same homology class, there is an obstruction to their homotopy through real vector fields. As $v_0$ and $v_1$ are assumed to be homologous, there is some homotopy $v_t$ between $v_0$ and $v_1$ in $Y^3 \smallsetminus B^3$. Since $v_0$ and $v_1$ are real vector fields, the pushforward of $\overline{v_t}$ by $d\tau$ yields another homotopy between $v_0$ and $v_1$. The obstruction to a homotopy between $v_t$ and $\tau^*(\overline{v_t})$ over the 1-skeleton of $Y$ is a class $[v_t - \tau^* (\overline{v_t})] \in H^1(Y;\pi_2(S^2)).$ Clearly this class vanishes if the vector field $v_t$ is itself real for each $t$. Furthermore, this class is $\tau^*$-invariant:
\begin{align*}
    \tau^*([v_t - \tau^* (\overline{v_t})]) &= [\tau^*v_t - (\overline{v_t})] \\
    &= [-\tau^*(\overline{v_t}) + v_t]\\
    &= [v_t - \tau^*(\overline{v_t})].
\end{align*}
Of course, this class depends on our choice of homotopy; if $v_t'$ is another path between $v_0$ and $v_1$, we may obtain a different element $[v' - \tau^* (\overline{v'_t})] \in H^1(Y;\Z)^{\tau_*}$, but their difference 
\begin{align*}
    [v - \tau^* (\overline{v_t})] - [v' - \tau^* (\overline{v'_t})] & = (1 + \tau^*)([v - v'])
\end{align*}
lies in the image of $(1 + \tau^*)$. Hence, there is a well-defined obstruction, $\zeta(v_0, v_1)$, to the existence of a homotopy from $v_0$ to $v_1$ through real vector fields, which resides in $H^1(Y;\Z)^{\tau^*}/\im(1 + \tau^*).$ \\

To see the equivalence of these two perspectives, it is useful to consider explicitly how the group $H^1(Y;\Z)^{\tau^*}/\imt$ acts on the set of real structures. Let $v_0$ be a real vector field and let $[x]\in H^1(Y;\Z)^{\tau^*}$ be represented by the Poincar\'e dual of a $\tau$-invariant surface $Z$ in $Y$ ($[x]$ can be represented by a $\tau$-invariant map $Y \ra S^1$; take $Z$ to be the preimage of a regular value of this map). We will define $v_0 + [x]$ by choosing some homotopy of $v_0$ supported on $\nu(Z)$. Concretely, fix a $\tau$-equivariant CW structure for $Y \smallsetminus \nu(Z)$, and extend this to an equivariant CW structure for $Y$; use $[x]$ to choose a homotopy $v_t$ of $v_0$ on the 1-cells of this extension so that for each such 1-cell in the extension, we have that $[v_t - \tau^*(\overline{v_t})](e) = x(e)$. We only require that the homotopy $v_t$ be real at time $t = 0,1$. Extend $v_1$ over the remaining 2-cells in any way, provided that $v_0$ and $v_1$ are homotopic. By construction, the resulting vector fields are homologous, but the obstruction $\zeta(v_0, v_1)$ is nontrivial, and is Poincar\'e dual to $Z$. 

Consider the associated real complex line bundles $(\langle v_0 \rangle^\perp, R_0)$ and $(\langle v_1 \rangle^\perp, R_1)$. The homotopy $v_t$ from $v_0$ to $v_1$ determines an isomorphism $\langle v_0 \rangle^\perp \cong \langle v_1 \rangle^\perp$, allowing us to compare $R_0$ and $R_1$, giving an equivariant map $Y \ra S^1$. By construction, this map is supported on $\nu(Z)$ (as $v_t$ is constant outside $\nu(Z)$). Hence, the obstruction to the equivalence of these two real structures as well as the obstruction $\zeta(v_0, v_1)$ to a real homotopy agree in $H^1(Y;\Z)^{\tau^*}/\im(1 + \tau^*),$ as in this quotient, they lie in the same equivalence class as $\mathrm{PD}(Z)$. \\

There is a natural map 
\begin{align*}
    \mu: \Vec(Y, \tau) \ra \SpinC(Y, \tau),
\end{align*}
as in \cite{Turaev_Tors_Inv_SpinC}. We briefly review its construction following the treatment in \cite[Lemma 2.2]{Lipshitz_cylindrical}. Fix a Riemannian metric on $Y$. A non-vanishing vector field $v$ on $Y$ gives rise to a splitting $TY \cong \langle v\rangle \oplus \langle v\rangle^\perp$. This reduces the structure group of $TY$ from $SO(3)$ to $SO(1) \oplus SO(2)$, determining a principal $\SpinC(3)$-bundle
\begin{align*}
    \mu(v) := \Fr(v^\perp) \times_{U(1)} U(2)
\end{align*}
over $Y$ which covers $\Fr(Y)$. Hence, $\mu(v)$ is a $\SpinC$-structure on $Y$. The bundle $\mu(v) \ra Y$ is determined by its restriction to $Y \smallsetminus B^3$, and therefore, $\mu$ descends to a map $\Vec(Y) \ra \SpinC(Y)$. 

In the real case, we can upgrade this to a map
\begin{align*}
    \mu^R: \RVec(Y, \tau) \ra \RSpinC(Y, \tau),
\end{align*}
as follows. If $v$ is a real (non-vanishing) vector field, then $d\tau$ restricts to an involution $I(v)$ of the rank two real vector bundle $\langle v \rangle^\perp \ra Y$. As $v$ is real, this involution is complex anti-linear in the fibers, and therefore induces a structure on $\mu(v)$. In particular, the pair $$\mu^R(v) := (\mu(v), I(v))$$ is a real $\SpinC$-structure for $(Y,\tau)$. By definition of real Euler structures, this descends to a map $\RVec(Y, \tau) \ra \RSpinC(Y, \tau)$ as promised.

\begin{lem}
\label{lem:equivreal}
    The map 
    \begin{align*}
        \mu^R: \RVec(Y, \tau) \ra \RSpinC(Y, \tau)
    \end{align*}
    is a bijection, and intertwines the actions of $ H^1(Y;\Z)^{\tau^*}/\imt$. 
\end{lem}
\begin{proof}
We have a commutative diagram \begin{align*}
        \begin{tikzcd}[ampersand replacement = \&]
            \RVec(Y, \tau)\ar[r, "\mu^R"] \ar[d] \& \RSpinC(Y, \tau) \ar[d] \\
            \Vec(Y) \ar[r, "\mu"] \& \SpinC(Y).
        \end{tikzcd}
    \end{align*}   
     It follows from \cite{Turaev_Tors_Inv_SpinC,Lipshitz_cylindrical} that the bottom horizontal map $\mu$ is a $\ker(\Theta)$-equivariant bijection. Furthermore, if we look at either vertical map, the preimage of any element in its image is a torsor over $ H^1(Y;\Z)^{\tau^*}/\imt$. Thus, to show bijectivity of $\mu^R$ it suffices to prove that this map $H^1(Y;\Z)^{\tau^*}/\imt$-equivariant. 

    Assume $v_0$ and $v_1$ are homologous real vector fields. We saw that the obstruction to the existence of a real homotopy between $v_0$ and $v_1$ agrees with the obstruction to the equivalence of the real line bundles $(\langle v_0 \rangle^\perp, R_0)$ and $(\langle v_1 \rangle^\perp, R_1)$. However, the obstruction to the equivalence of the real line bundles $(\langle v_0 \rangle^\perp, R_0)$ and $(\langle v_1 \rangle^\perp, R_1)$ is identified with the obstruction to the equivalence of $\mu^R(v_0)$ and $\mu^R(v_1)$ under the isomorphism $H^1(Y;\Z) \xra{\pi^*} H^1(\Fr(Y);\Z)$. 
\end{proof}

\subsection{Real Heegaard diagrams and real $\SpinC$-structures}
\label{sec:HDSpinC}

Fix a real pointed Heegaard diagram $(\cH, \tau)$ for $(Y, \tau)$. In \cite[Section 2.6]{os_holodisks} and \cite[Section 3.1]{os_linkinvts}, Ozsv\'ath and Szab\'o define a function 
\begin{align*}
    \frs_\w: \Ta \cap \Tb \ra \SpinC(Y),
\end{align*}
 as follows. Using the diagram $\cH$, they construct a self-indexing Morse function $f: Y \ra [0,3]$ with $g$ critical points of index $1$ and $2$ and $|\w|$ critical points of index $0$ and $3$, such that its level set $F^{-1}(3/2)$ is the Heegaard surface. Each $\x \in  \bT_\alpha \cap \bT_\beta$ determines a $g$-tuple of trajectories for the gradient flow of $f$ connecting critical points of index $1$ and $2$ as well as $|\w|$ trajectories between critical points of index $0$ and $3$ such that each $w_i \in \w$ is contained in exactly one trajectory. On the complement of a neighborhood of these trajectories the gradient of $f$ is non-vanishing, and by the Poincar\'e-Hopf theorem, this this vector field can be modified near the flow lines to be nonvanishing. This specifies an Euler-structure, which in turn determines a $\SpinC$-structure in $Y$, which is denoted $\frs_\w(\x)$. The difference between two such $\SpinC$-structures is measured precisely by the class $\varepsilon$:
\begin{align*}
    \frs_\w(\y) - \frs_\w(\x) = PD([\varepsilon(\x, \y)]).
\end{align*}
Hence, the equivalence classes of intersection points determined by $\varepsilon(\x, \y)$ are exactly given by the associated $\SpinC$-structures.
    
In the real setting, there is a map
\begin{align*}
    \frs_\w^R: (\Ta \cap \Tb)^R \ra \RVec(Y) \cong \RSpinC(Y).
\end{align*}
Again, using the real Heegaard diagram $(\cH, \tau)$, we construct a self-indexing Morse function $f: Y \ra \R$, now with the additional property that $f\circ \tau = 3 - f$. (This can be done by first defining it in a neighborhood of the Heegaard surface, extending it to a handlebody, and then using reflection to specify it on the other handlebody.) The index 1 and 2 critical points of such Morse functions come in pairs, which are exchanged by the involution. Furthermore, the ascending manifold of an index 1 critical point $p$ is mapped to the descending manifold of the index 2 critical point $\tau(p)$. An intersection point $\x \in (\Ta \cap \Tb)^R$ determines a $m$-tuple of flows between critical points. Since $\x$ is an invariant intersection point, the associated flows are also preserved by the involution (though the orientations are reversed). Moreover, since $f \circ \tau = 3 - f$, we have that $df \circ d \tau = -df$. It follows that in the complement of a neighborhood of the flow lines determined by $\x$ and $\w$, the gradient $\nabla f$ is a nonvanishing anti-invariant vector field. Neighborhoods of the flow lines between index 1 and 2 critical points appear in pairs and we can extend $\nabla f$ to a nonvanishing vector field over these 3-balls in pairs. The neighborhoods of the chosen flow lines  between index 0 and 3 critical points intersect the fixed point set, and are diffeomorphic to the standard 3-ball in $\R^3$ rotating around the $x$-axis. Since the restriction of the gradient vector field to the boundary is degree zero, there is a homotopy $H: S^2 \times I \ra \R^3 \smallsetminus \{0\}$ to a constant vector field; we may assume this vector field is anti-invariant. By replacing $H$ with $\frac{1}{2}(H-d\tau \circ H)$, we may assume the homotopy is anti-invariant, and by perturbing $H$ equivariantly we may assume $H$ is nonzero. This defines an anti-invariant extension over all of $Y$, and therefore specifies a real Euler structure on $(Y, \tau)$, and hence a real $\SpinC$-structure $\frs_\w^R(\x)$. Given $\frs_\w^R(\x)$, we write $\frs_\w(\x)$ for the underlying $\SpinC$-structure. 

\begin{lem}\label{lem:change_RSpinC}
    Let $\frs_{\w}^R(\x)$ and $\frs_\w^R(\y)$ be two real $\SpinC$ structures. Then, 
    \begin{align*}
        \frs_{\w}(\y) - \frs_{\w}(\x) = \mathrm{PD}(\varepsilon(\x,\y)).
    \end{align*}
    Moreover, if $\varepsilon(\x,\y) = 0$, we have that 
    \begin{align*}
       \frs_\w^R(\y) - \frs_{\w}^R(\x) = \mathrm{PD}(\zeta(\x, \y)).
    \end{align*}
\end{lem}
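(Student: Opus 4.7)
The first formula, $\frs_\w(\y) - \frs_\w(\x) = \mathrm{PD}(\varepsilon(\x,\y))$, is the classical Ozsv\'ath--Szab\'o statement (see \cite{os_holodisks}, generalized to the multi-pointed setting in \cite{os_linkinvts}). It passes to our real setting without modification, because the underlying $\SpinC$ structure $\frs_\w(\x)$ depends only on the Morse-theoretic flow configuration and not on the real refinement. I would simply invoke this classical result, together with the observation (just above the lemma) that the construction of $\frs_\w^R(\x)$ recovers $\frs_\w(\x)$ on the level of underlying bundles.

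For the second equality, the hypothesis $\varepsilon(\x,\y)=0$ combined with the first part gives $\frs_\w(\y)=\frs_\w(\x)$, so the difference $\frs_\w^R(\y)-\frs_\w^R(\x)$ takes values in the torsor-fiber $H^1(Y;\Z)^{\tau^*}/(1+\tau^*)$. Since $\tau$ preserves the orientation of $Y$, Poincar\'e duality is $\tau$-equivariant and supplies the identification $H^1(Y;\Z)^{\tau^*}/(1+\tau^*) \cong H_2(Y;\Z)^{\tau_*}/(1+\tau_*)$ (with the sign convention dictated by Lemma~\ref{lem:periodic_domains_iso}). Under this identification, my goal is to show that the difference equals $\zeta(\x,\y)$.

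The plan is then to choose any (not necessarily invariant) class $\phi \in \pi_2(\x,\y)$ with domain $\cD \subset \Sigma$. Using the real self-indexing Morse function $f$ with $f\circ\tau = 3-f$ from the construction of $\frs_\w^R$, the domain $\cD$ specifies an isotopy from the flow-line tuple $\gamma_\x$ to the tuple $\gamma_\y$, and hence, after a generic extension across a ball, a homotopy $v_t$ of non-vanishing vector fields on $Y \smallsetminus B^3$ from $v_\x$ to $v_\y$. This homotopy is generically not real. Following the discussion in Section~\ref{sec:RealEuler}, the obstruction to making $v_t$ real is precisely the difference $\frs_\w^R(\y)-\frs_\w^R(\x)$, and is represented by the class $[v_t - \tau^*(\overline{v_t})] \in H^1(Y;\Z)^{\tau^*}/(1+\tau^*)$.

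The heart of the argument is to match this cohomological obstruction with $\mathrm{PD}(\zeta(\x,\y))$. The conjugate homotopy $\tau^*(\overline{v_t})$ corresponds, under the Morse-theoretic dictionary, to the reflected domain $\tau(\cD)$: the orientation reversal of $\tau$ on $\Sigma$, the swap of index-$1$ and index-$2$ critical points, and the reversal $\overline{\,\cdot\,}$ of $v$ all combine so that $\tau^*(\overline{v_t})$ is the homotopy induced by $\tau(\cD)$ in the correct direction. Hence the discrepancy $v_t - \tau^*(\overline{v_t})$ is captured, at the level of Heegaard-surface 2-chains, by the real invariant periodic domain $\cD + \tau(\cD)$; applying $H$ from Lemma~\ref{lem:periodic_domains_iso} and pushing into $H_2(Y;\Z)$ recovers $\zeta(\x,\y)$ by definition, and Poincar\'e duality yields the formula. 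The step I expect to be the main obstacle is the sign-careful identification, at the chain level, of the cochain $[v_t - \tau^*(\overline{v_t})]$ with the Poincar\'e dual of $\cD + \tau(\cD)$, since orientations on $\Sigma$ are reversed by $\tau$ while those on $Y$ are preserved, and one must reconcile the $-\tau_*$-invariance appearing in Lemma~\ref{lem:periodic_domains_iso} with the $\tau_*$-invariant quotient appearing in the definition of $\zeta$.
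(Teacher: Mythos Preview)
Your proposal is essentially correct and follows the same strategy as the paper: both localize the obstruction to the capped-off surface $Z = H(\cD + \tau(\cD))$ and identify the difference $\frs_\w^R(\y)-\frs_\w^R(\x)$ with $\mathrm{PD}([Z]) = \mathrm{PD}(\zeta(\x,\y))$. The only notable difference is which of the two equivalent descriptions of the real Euler obstruction from Section~\ref{sec:RealEuler} is used. The paper works with the line-bundle formulation: it chooses a homotopy between $v_\x$ and $v_\y$ (which it arranges to be supported near $Z$), takes the induced isomorphism $\Phi$ of frame bundles, and reads off the obstruction as the $\tau$-invariant map $g_\Phi\colon Y\to S^1$ comparing $I(v_\x)$ with $(\overline{\Phi})^{-1}\circ I(v_\y)\circ\Phi$; since $g_\Phi$ is supported near $Z$, the preimage of a regular value is $[Z]$. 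You instead use the intrinsic formulation $[v_t-\tau^*(\overline{v_t})]$ and argue that the conjugate homotopy corresponds to $\tau(\cD)$, so the discrepancy is $\cD+\tau(\cD)$. Section~\ref{sec:RealEuler} already shows these two viewpoints agree, so neither buys anything over the other; the paper's version is slightly more parallel to the Lipshitz argument it recalls for the first statement, while yours makes the role of $\cD$ versus $\tau(\cD)$ a bit more visible. The sign bookkeeping you flag as the main obstacle is indeed where the work lies in either version, and the paper handles it at the same level of detail you do.
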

\begin{proof}
    The first statement follows from \cite{Turaev_Tors_Inv_SpinC} and \cite{os_holodisks}. Though, to motivate the proof of the second statement, we recall the proof in \cite{Lipshitz_cylindrical}. 

    Let $v_\x$ and $v_\y$ be real vector fields inducing $\frs_{\w}^R(\x)$ and $\frs_{\w}^R(\y)$. Choose a map $A: Y \ra SO(3)$ which satisfies $v_\y = A v_\x$. The $\SpinC$-structures $\frs_{\w}(\x)$ and $\frs_{\w}(\y)$ are equivalent if and only if $A|_{Y\smallsetminus B^3}$ is homotopic to a map $Y \ra SO(2)$. This is detected by the homotopy class of the map 
    \begin{align}
        Y \smallsetminus B^3 \xra{A} SO(3) \ra SO(3)/SO(2) \cong S^2,
    \end{align}
    which represents the obstruction class $\frs_{\w}(\y) - \frs_{\w}(\x) \in H^2(Y\smallsetminus B^3;\Z) \cong  H^2(Y;\Z)$. Since $v_\x$ and $v_\y$ are identical away from the flow lines $\gamma_\x$ and $\gamma_\y$ through $\x$ and $\y$, and the map $h_A$ is homotopic to a Thom collapse map of a neighborhood of a smoothing of $\gamma_\x \cup \gamma_\y$. Hence, for a regular value $p \in S^2$, $[h_A^{-1}(p)] = \varepsilon(\x,\y)$ which is therefore Poincar\'e dual to $\frs_{\w}(\y) - \frs_{\w}(\x)$. 

    Now, assume that $\varepsilon(\x,\y) = 0$, so that the vector fields $v_\x$ and $v_\y$ are homologous. A choice of homotopy between them determines an isomorphism $\Phi$ between the associated frame bundles. The obstruction $\frs_\w^R(\y) - \frs_{\w}^R(\x)$ to the equivalence of the two real structures is given by comparing $I(v_\x)$ with $(\overline{\Phi})^{-1}\circ I(v_\y)\circ \Phi$, which determines a $\tau$-invariant map $g_\phi: Y \ra S^1$. Again, $v_\x$ and $v_\y$ are identical away from $\gamma_\x \cup \gamma_\y$, and moreover, we can take the homotopy between them to be supported in a neighborhood of a surface $Z$, obtained by capping off some periodic domain $\cD + \tau(\cD)$. But, as $\Phi$ is determined by the homotopy, $g_\Phi$ is also supported in a neighborhood of $Z$. Hence, just as in the usual case, for a regular value $p$ of $g_\Phi$, we have that $[g_\Phi^{-1}(p)] = [Z] \in H_2(Y;\Z)^{\tau_*}/\im(1+\tau_*)$ is Poincar\'e dual to both the obstruction $\frs_\w^R(\y) - \frs_{\w}^R(\x)$ to the equivalence of the real $\SpinC$-structures as well as to the obstruction $\zeta(v_\x,v_\y)$ to the existence of a real homotopy from $v_\x$ to $v_\y$. 
\end{proof}

Hence, elements of $(\Ta \cap \Tb)^R$ are partitioned according to real $\SpinC$-structures.

\subsection{Relative real $\SpinC$-structures}
\label{sec:relative}

In ordinary Heegaard Floer theory, two intersection points $\x$ and $\y$ are connected by a domain if and only if they are connected by a domain that avoids the basepoints $\w$. Indeed, we can always add or subtract copies of the components of $\Sigma \smallsetminus \bm \alpha$ to arrange so that the domain has multiplicity zero at all $w \in \w$.
In terms of $\SpinC$-structures, this is due to the fact that the natural map
\begin{equation}
\label{eq:spinrel}
     \SpinCrel(Y, \w) \to \SpinC(Y)
\end{equation}
is a bijection. Here, we denoted by $\SpinCrel(Y, \w)$ the space of relative real $\SpinC$ structures on $Y \smallsetminus \w$; that is, those that are fixed in a neighborhood of $\w$. (Our notation is inspired by that in \cite[Section 2.3]{os_knotinvts}.) The set $\SpinCrel(Y, \w)$ is affinely identified with 
$$H^2(Y\setminus \nbhd(\w), \del (Y\setminus \nbhd(\w)); \Z) \cong H_1(Y\smallsetminus \w; \Z),$$
whereas $\SpinC(Y)$ is affinely identified with $H^2(Y; \Z) \cong H_1(Y; \Z)$. The fact that \eqref{eq:spinrel} is a bijection is a consequence of the fact that
$$ H_1(Y\smallsetminus \w; \Z) \to H_1(Y; \Z)$$
is an isomorphism.

The situation is quite different in the real setting. We have a set of relative real $\SpinC$ structures $\RSpinCrel(Y, \tau,\w)$, but the natural map
\begin{equation}
\label{eq:Rspinrel}
     \RSpinCrel(Y, \tau, \w) \to \RSpinC(Y, \tau)
\end{equation}
is no longer injective.

\begin{lemma}
    The map \eqref{eq:Rspinrel} is surjective.
\end{lemma}

\begin{proof}
The discussion in Sections~\ref{sec:RID} through \ref{sec:RealEuler} 
can be adapted to the complement $Y \smallsetminus \w$. We have a set of relative real Euler structures $\RVecrel(Y, \tau, \w)$ (that are fixed in a neighborhood of $\w$), which is naturally identified with  $\RSpinCrel(Y, \tau, \w)$. An analogue of Corollary~\ref{cor:classification} shows that the space of relative $\SpinC$ structures that admit a (relative) real structure is still affine over the same $\ker(\Theta) \subset H^2(Y; \Z)$, but the space of relative real $\SpinC$ structures in the same relative $\SpinC$ structure is now affine over
$$\frac{H^1(Y\setminus \nbhd(\w), \del (Y\setminus \nbhd(\w)); \Z)^{\tau^*}}{\imt} \cong \frac{H_2(Y\smallsetminus \w; \Z)^{\tau_*}}{\imth}.$$
Thus, the map \eqref{eq:Rspinrel} is affinely identified with the following map induced by inclusion:
\begin{equation}
\label{eq:Rspinrel2}
     \frac{H_2(Y\smallsetminus \w; \Z)^{\tau_*}}{\imth} \to \frac{H_2(Y; \Z)^{\tau_*}}{\imth}. 
\end{equation}
On the left, by $\imth$ we mean the image of the map on $H_2(Y \smallsetminus \w; \Z)$, whereas on the right we mean the one on $H_2(Y; \Z)$.

To see that the map \eqref{eq:Rspinrel2} is surjective, it suffices to prove that
\begin{equation}
    \label{eq:Rspinrel3}
H_2(Y \smallsetminus \w;\Z)^{\tau_*} \to H_2(Y;\Z)^{\tau_*}
\end{equation}
is surjective. Given a $\tau_*$-invariant class $x \in H_2(Y;\Z)$, its Poincar\'e dual $\mathrm{PD}(x)$ can be realized as a $\tau$-invariant map $f_x:Y \ra S^1$; according to \cite{wasserman}, $f_x$ can be approximated by a smooth equivariant map, and we can represent $x$ by the preimage of a regular value. It follows that $x$ is represented by a surface $S$ satisfying $\tau(S) = S$. Furthermore, let us arrange so that $S$ is disjoint from $\w$. If $S \cap \w$ is nonempty, it is contained in $C$. Locally, such an intersection can modeled by the intersection of the $x$-axis with the $yz$-plane in $\R^3$, where $\tau$ acts by rotation around the $x$-axis; here, we make use of the fact that $S$ is $\tau$-invariant, rather than anti-invariant. From this picture, it is clear that we can perform an equivariant perturbation of $S$ to ensure it is disjoint from $\w$. Hence, $S$ also represents a class in $H_2(Y \smallsetminus \w;\Z)^{\tau_*}$, which maps onto $x$. 

It follows that the map \eqref{eq:Rspinrel3} is surjective, and hence so are \eqref{eq:Rspinrel2} and \eqref{eq:Rspinrel}. 
\end{proof}
\begin{example}
    To see that \eqref{eq:Rspinrel2} and hence \eqref{eq:Rspinrel} need not be injective, let $Y$ be a rational homology sphere equipped with two basepoints, $\w = \{w_1, w_2\}$. Then 
    \begin{align*}
        \frac{H_2(Y; \Z)^{\tau_*}}{\imth} = 0,
    \end{align*}
    but 
    \begin{align*}
        \frac{H_2(Y\smallsetminus \w; \Z)^{\tau_*}}{\imth} \cong \Z/2.
    \end{align*}
    Indeed, in this case, $H_2(Y\smallsetminus \w; \Z)^{\tau_*}$ is generated by the class $[\partial \nbhd(w_1)]$, on which $\tau_*$ acts by the identity. 
\end{example}
The discussion in Section~\ref{sec:HDSpinC} can be applied to $Y \smallsetminus \w$ instead of $Y$. The result is a decomposition of the real intersection points into more refined equivalence classes, according to whether they are connected by a real domain that avoids the basepoints. This is encoded in a map
\begin{align*}
    \frsrel_\w^R: (\Ta \cap \Tb)^R \ra  \RSpinCrel(Y, \tau, \w).
\end{align*}

\subsection{Real Admissibility} 

In order to obtain well-defined invariants in Heegaard Floer theory, one must work with \emph{admissible diagrams}. Given a $\SpinC$ structure $\frs \in \SpinC(Y)$, we say that a Heegaard diagram $\cH$ is \emph{strongly $\frs$-admissible} if for every $N > 0$ and each non-trivial periodic domain $P \in \Pi_\Z$, the inequality
\begin{align*}
    \langle c_1(\frs), H(P) \rangle = 2N \ge 0
\end{align*}
implies that $P$ has some multiplicity greater than $N$. Strong $\frs$-admissibility ensures that, for a fixed integer $j$, there are only finitely many classes $\phi \in \pi_2(\x, \y)$ which satisfy $\ind(\phi) = j$ and $\cD(\phi) \ge 0.$ 

There is also a weaker notion. A Heegaard diagram is \emph{weakly $\frs$-admissible} if for each nontrivial periodic domain $P$ with 
\begin{align*}
    \langle c_1(\frs), H(P) \rangle = 0 
\end{align*}
has positive and negative coefficients. Strong admissibility implies weak admissibility, so we only prove the following results for the former. 

\begin{lem}
    Suppose $\cH$ is a strongly $\frs$-admissible real Heegaard diagram, and $\x, \y \in (\Ta \cap \Tb)^R$ are such that $\frs^R_\w(\x) = \frs^R_\w(\y) = \frs$. Then, there are only finitely many classes $\phi \in \pi^R_2(\x, \y)$ with $\ind_R(\phi) = j$ and $\cR(\phi) \ge 0.$
\end{lem}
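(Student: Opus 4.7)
The plan is to reduce the real statement to the classical strong admissibility result recalled just before the lemma, via the forgetful map $\pi^R_2(\x,\y) \to \pi_2(\x,\y)$, $\phi \mapsto \tilde{\phi}$. The key observation is that a real invariant class and its image carry the same underlying 2-chain in $\Sigma$: both domains are computed by intersecting with the divisors $\{p_i\}\times\Sym^{m-1}(\Sigma)$, and this is insensitive to whether one imposes the $R$-invariance condition on the representative strip. Hence the positivity condition $\cD(\phi)\ge 0$ is equivalent to $\cD(\tilde\phi)\ge 0$, and it suffices to bound the number of possible $\tilde\phi$ and then check that the forgetful map has trivial fibers.

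First I would fix $\ind_R(\phi)=j$ and apply \Cref{prop:indR} to get
\begin{equation*}
    \ind(\tilde\phi) \;=\; 2j + \frac{\sigma(L,\x)-\sigma(L,\y)}{2},
\end{equation*}
a quantity depending only on the fixed data $(\x,\y,j)$. Since $\frs_\w(\x)=\frs_\w(\y)=\frs$ by hypothesis, the strong $\frs$-admissibility of $\cH$ tells us that only finitely many classes $\tilde\phi\in\pi_2(\x,\y)$ have $\cD(\tilde\phi)\ge 0$ and this fixed Maslov index. In particular, only finitely many domains can occur as $\cD(\phi)$.

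Next I would verify that the forgetful map is injective. When $\pi_2^R(\x,\y)$ is non-empty (otherwise the statement is trivial), it is a torsor over the group of real invariant periodic domains $\Pi^R_\Z$, and similarly $\pi_2(\x,\y)$ is a torsor over $\Pi_\Z$; the forgetful map is equivariant with respect to the natural inclusion $\Pi^R_\Z\hookrightarrow\Pi_\Z$. By \Cref{lem:periodic_domains_iso}, this inclusion is identified with the inclusion $H_2(Y\smallsetminus\w;\Z)^{-\tau_*}\hookrightarrow H_2(Y\smallsetminus\w;\Z)$, which is plainly injective. Thus the forgetful map is injective, and the finiteness established in the previous paragraph transfers to $\pi_2^R(\x,\y)$.

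I do not expect a serious obstacle here; the proof is essentially bookkeeping that combines the classical admissibility statement with the real index formula \eqref{eq:indR}. The only mildly delicate point is the identification of the two notions of domain, but as noted above both record the same multiplicities in $\Sigma\smallsetminus(\balpha\cup\bbeta)$.
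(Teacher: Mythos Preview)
Your proposal is correct and follows essentially the same route as the paper: both observe that a strongly $\frs$-admissible real diagram is in particular strongly $\frs$-admissible in the ordinary sense, use \Cref{prop:indR} to convert the constraint $\ind_R(\phi)=j$ into a fixed ordinary Maslov index, and then invoke the classical finiteness statement. The paper's proof is a one-sentence version of your argument; your added check that the forgetful map $\pi_2^R(\x,\y)\to\pi_2(\x,\y)$ is injective is a reasonable piece of bookkeeping (minor caveat: both sets are torsors over the periodic domains \emph{together with} $\Z[\Sigma]$, but the inclusion is still injective, so your conclusion stands).
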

\begin{proof}
    A strongly admissible real Heegaard diagram is admissible; since the real index is determined by the ordinary index using Proposition~\ref{prop:indR}, the result follows exactly as in the usual setting. See \cite{os_holodisks,os_linkinvts,zemke_graphcob} for the proof in that setting. 
\end{proof}

Every Heegaard diagram which realizes a $\SpinC$-structure $\frs$ is isotopic to a strongly $\frs$-admissible diagram. The way this is typically done is by winding the alpha curves; to obtain admissible real Heegaard diagrams, we will wind the alpha curves to obtained an admissible diagram, and then perform the symmetric winding of the beta curves, and show the resulting diagram is still admissible. 

Fix a real Heegaard diagram $\cH = (\Sigma, \balpha, \bbeta, \w)$. Following \cite{zemke_graphcob}, we fix some auxiliary data: let $\alpha_1, \hdots, \alpha_g \sub \balpha$ be a collection of alpha curves which span $H_1(Y; \Z)$. Let $\eta_1, \hdots, \eta_g$ be a collection of curves which are geometrically dual to $\alpha_1, \hdots, \alpha_g$. Define $\theta_1, \hdots, \theta_g$ by $\theta_i = \tau(\eta_i)$, which are geometrically dual to the beta curves. We note that it may be the case that $\theta_i = \eta_i$. 

The surface $\Sigma \smallsetminus(\alpha_1\cup\hdots\cup \alpha_g)$ is a connected, planar surface. Let $r = |\w| $ and choose a collection of arcs $\lambda_1, \hdots, \lambda_{r-1}$ in $\Sigma^R$ such that each $\lambda_i$ has boundary on two distinct basepoints of $\w$ such that each basepoint of $\w$ is the boundary of at least one $\lambda_i$ and $\lambda_1 \cup \hdots \cup \lambda_{r-1}$ is connected. Let $\rho_1 \cup \hdots \cup \rho_{r-1}$ be the images of these arcs under $\tau$.

Following \cite[Proposition 4.11]{zemke_graphcob}, given a real Heegaard diagram $(\cH, \tau)$ and positive integer $N$, we define a new diagram $(\cH_N, \tau)$ given by performing the following moves: 

\begin{enumerate}
    \item (Winding): Let $\eta_i^\pm$ and $\theta_i^\pm$ be two small parallel push offs of $\eta_i$ and $\theta_i$ respectively. Wind the alpha curves $N$ times positively around $\eta_i^+$ and $N$ times negatively around $\eta_i^-$; wind the beta curves $N$ times positively around $\theta_i^+$ and $N$ times negatively around $\theta_i^-$;   
    \item (Zigzagging): For each $\lambda_i$ choose a small rectangle $R_i$ which intersects $\lambda_i$ in a connected arc, intersects no other $\lambda_j$, and contains all intersections of $\lambda_i$ with the various attaching curves. The reflection $\tau(R_i)$ plays an analogous role for $\rho_i$. Zigzag each of the alpha curves along $\lambda_i$ inside $R_i$ and each of the beta curves along $\rho_i$ inside $\tau(R_i)$. 
\end{enumerate}
See \Cref{fig:wind_zigzag} for an illustration of the two moves.

We shall borrow some notation from \cite{zemke_graphcob}. Given a oriented path $\lambda$ in $Y$ connecting basepoints $w$ and $w'$, we may homotope it to an immersed arc in $\Sigma$ which intersects the alpha and beta curves transversely and avoids their intersection points. It may be that $w= w'$. Let $a_1, \hdots, a_k$ be an enumeration of the intersection points between $\lambda$ and $\bm \alpha.$ Given a class $\phi \in \pi_2(\x, \y)$, define 
\begin{align*}
    a(\lambda, \phi) = \sum_{i = 1}^k d_i^{\alpha, \lambda}(\phi), 
\end{align*}
where $d_i^{\alpha, \lambda}$ is the difference between the multiplicities of $\phi$ on the two sides of $a_i$, using the orientation of $\lambda$. Equivalently, if $\cD(\phi)$ is the domain corresponding to $\phi$, we may define $a(\phi) = \partial_\alpha D(\phi)$ and 
\begin{align}\label{eqn: arc-domain count}
    a(\lambda, \cD(\phi)) = a(\phi) \cdot \lambda.
\end{align}
We will make use of this notation again in \Cref{sec:def}.

\begin{lem}\label{lem:real admissibility}
    If $(\cH, \tau) = (\Sigma, \balpha, \bbeta, \w, \tau)$ is a real Heegaard diagram for $(Y, \tau)$ and $\frs \in \RSpinC(Y, \tau)$ is a fixed real $\SpinC$-structure, then $\cH$ is real-isotopic to a strongly $\frs$-admissible real Heegaard diagram. 
\end{lem}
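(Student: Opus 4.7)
My approach is to verify directly that, for $N$ sufficiently large (depending on $\frs$), the diagram $\cH_N$ produced by the symmetric winding and zigzagging construction described immediately before the lemma is strongly $\frs$-admissible. The construction is manifestly symmetric under $\tau$: the curves $\theta_i$ and $\rho_i$ are defined as $\tau(\eta_i)$ and $\tau(\lambda_i)$, and the parallel pushoffs, winding regions, and rectangles $R_i$ can be chosen to be carried to their counterparts by $\tau$. Hence the transformation $\cH \rightsquigarrow \cH_N$ is a sequence of real isotopies in the sense of \Cref{sec:moves}, and $\cH_N$ is again a real Heegaard diagram for $(Y,\tau,\w)$.

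It remains to establish strong admissibility. Let $P$ be a nontrivial periodic domain on $\cH_N$, and write $\partial P = \sum a_i \alpha_i + \sum b_i \beta_i$ (where the sum includes all attaching curves, not just those in $\balpha_0$). Since each component of $\Sigma\smallsetminus \balpha$ and $\Sigma\smallsetminus\bbeta$ contains a basepoint and $P$ has vanishing multiplicity at $\w$, the vanishing of $\partial P$ would force $P=0$. Thus some $a_i$ or $b_i$ is nonzero. The strategy is then to show that the multiplicities of $P$ in the winding and zigzag regions are forced to be large in absolute value, grow linearly in $N$, and include a multiplicity that is at least as positive as any prescribed $M$ with $\langle c_1(\frs), H(P)\rangle = 2M$.

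Concretely, in the annular winding region around $\eta_i^+$, the boundary $\partial P$ restricts to $a_i$ wraps of the wound alpha curve (crossing a small transversal arc $N$ times), so the multiplicities of $P$ in that annulus traverse an arithmetic progression with common difference $a_i$ and length $N$. The same analysis, applied symmetrically, gives progressions of difference $b_i$ across the beta winding annulus around $\theta_i^+$; the zigzag regions contribute analogous progressions with differences controlled by the boundary coefficients on the other alpha and beta curves. Consequently the spread $\max_R n_R(P) - \min_R n_R(P)$ grows at least like $N \cdot \max(|a_i|, |b_i|)$. Using the same argument as in \cite[Lemma 4.11]{zemke_graphcob} (or \cite[Lemma 5.4]{os_holodisks}), one verifies that the coefficients $a_i$ and $b_i$ are bounded by a constant depending only on $H(P)$ and not on $N$; combining this with the fact that $\langle c_1(\frs), H(P)\rangle = 2M$ only depends on the homology class, one selects $N$ large enough that no periodic domain with $\langle c_1(\frs), H(P)\rangle = 2M$ can avoid having a multiplicity $> M$.

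The main obstacle I anticipate is the degenerate case when $\eta_i$ is $\tau$-invariant (so $\theta_i = \eta_i$) or when some $\eta_i$ intersects $\theta_j$, causing the alpha and beta winding annuli to coincide or overlap. In those regions the slopes from alpha-winding and beta-winding must be added carefully rather than treated independently; nevertheless, even in the coincident case the multiplicities traverse a total progression of slope $a_i$ across the $N$ alpha crossings and then slope $b_i$ across the $N$ beta crossings, so the spread still scales like $N\cdot(|a_i|+|b_i|)$, and the conclusion follows. A small additional argument, exactly analogous to the one in \cite[Proposition 4.11]{zemke_graphcob}, handles the interaction with the zigzag regions and allows the multiplicity bound to be made positive rather than merely large in absolute value, completing the verification.
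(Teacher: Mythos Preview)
Your proposal is correct and takes essentially the same approach as the paper: both use the symmetric winding-and-zigzagging construction $\cH \rightsquigarrow \cH_N$ described just before the lemma and then verify strong $\frs$-admissibility by adapting \cite[Proposition 4.11]{zemke_graphcob} to the real setting. The paper phrases the verification via Zemke's $\frs$-renormalized rational periodic domains and a case split on $\|H(P)\|_\infty^Y$, whereas you argue directly with arithmetic progressions of multiplicities across the winding annuli and add an explicit discussion of the overlap case $\eta_i=\theta_i$; these are cosmetic differences in presentation of the same underlying argument.
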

\begin{proof}
    It suffices to show that $(\cH, \tau)$ is real-isotopic a diagram which is strongly admissible with respect to the underlying $\SpinC$-structure. In the usual setting, this is shown by winding the alpha-curves along the eta-curves and zig-zagging along the lambda arcs. In the real setting, we simply wind and zigzag the alpha and beta curves simultaneously. The proof of \cite[Proposition 4.11]{zemke_graphcob} adapts to our setting to show that the resulting Heegaard diagram is strongly admissible. We sketch the main idea for the reader's convenience. 

    Let $\Pi'_\Q$ and $\Pi'_{\Q, N}$ denote the space of rational $\frs$-renormalized periodic domains for $\cH$ and $\cH_N$, i.e., those domains of the form 
    \begin{align*}
        P - \dfrac{\langle c_1(\frs), H(P) \rangle}{2} \cdot [\Sigma],
    \end{align*}
    where $P$ is a rational periodic domain. Capping off periodic domains,  as in \Cref{lem:periodic_domains_iso}, gives rise to an identification between these two spaces $\cW_N: \Pi'_\Q \ra \Pi'_{\Q, N}$. Let $||\cdot||^{Y}_\infty$ be the $L^\infty$-norm on $H_2(Y;\Q)$. We can express 
    \begin{align*}
        PD(c_1(\frs)) = \sum_i a_i [\eta_i],
    \end{align*}
    and choose some $\epsilon > 0$ so that $\varepsilon < \frac{1}{2}$ and $\epsilon \cdot \sum_i |a_i| < \frac{1}{2}.$ If $||H(P)||^{Y}_\infty > \epsilon$,  one considers a point $x$ on $\eta_i$ as well as two nearby points $x^+$ and $x^-$ in the positive and negative winding region respectively. In this case, 
    \begin{align*}
        n_{x^+}(\cW_N(P)) = n_x(P) + N \cdot a(\eta_i, P), \quad n_{x^-}(\cW_N(P)) = n_x(P) - N \cdot a(\eta_i, P),
    \end{align*}
    and hence $\cW_N(P)$ has positive and negative multiplicities for large $N$. In the real setting, we simply assume $x$ is chosen away from the fixed set, as are $x^\pm$, and the same argument holds.
    
    If $||H(P)||^{Y}_\infty \le \epsilon$, consider an arc $\lambda_i$ connecting $w, w' \in \w$. Let $p^+$ and $p^-$ be nearby points in the winding region, as in \Cref{fig:wind_zigzag}. An argument similar to that in the first case shows that the regions containing $p^+$ and $p^-$ have multiplicities 
    \begin{align*}
        n_{p^+}(\cW_N(P)) &= -\langle c_1(\frs), H(P) \rangle + a(\lambda_i, \cW_N(P)), \\
        n_{p^-}(\cW_N(P)) &=-\langle c_1(\frs), H(P) \rangle - a(\lambda_i, \cW_N(P)).
    \end{align*}
    According to the assumptions on $||H(P)||^{Y}_\infty$, this guarantees $\cW_N(P)$ has positive and negative multiplicities. Again, this holds just as well in the real setting. 
\end{proof}

\begin{figure}
    \centering
    \includegraphics[width=0.75\linewidth]{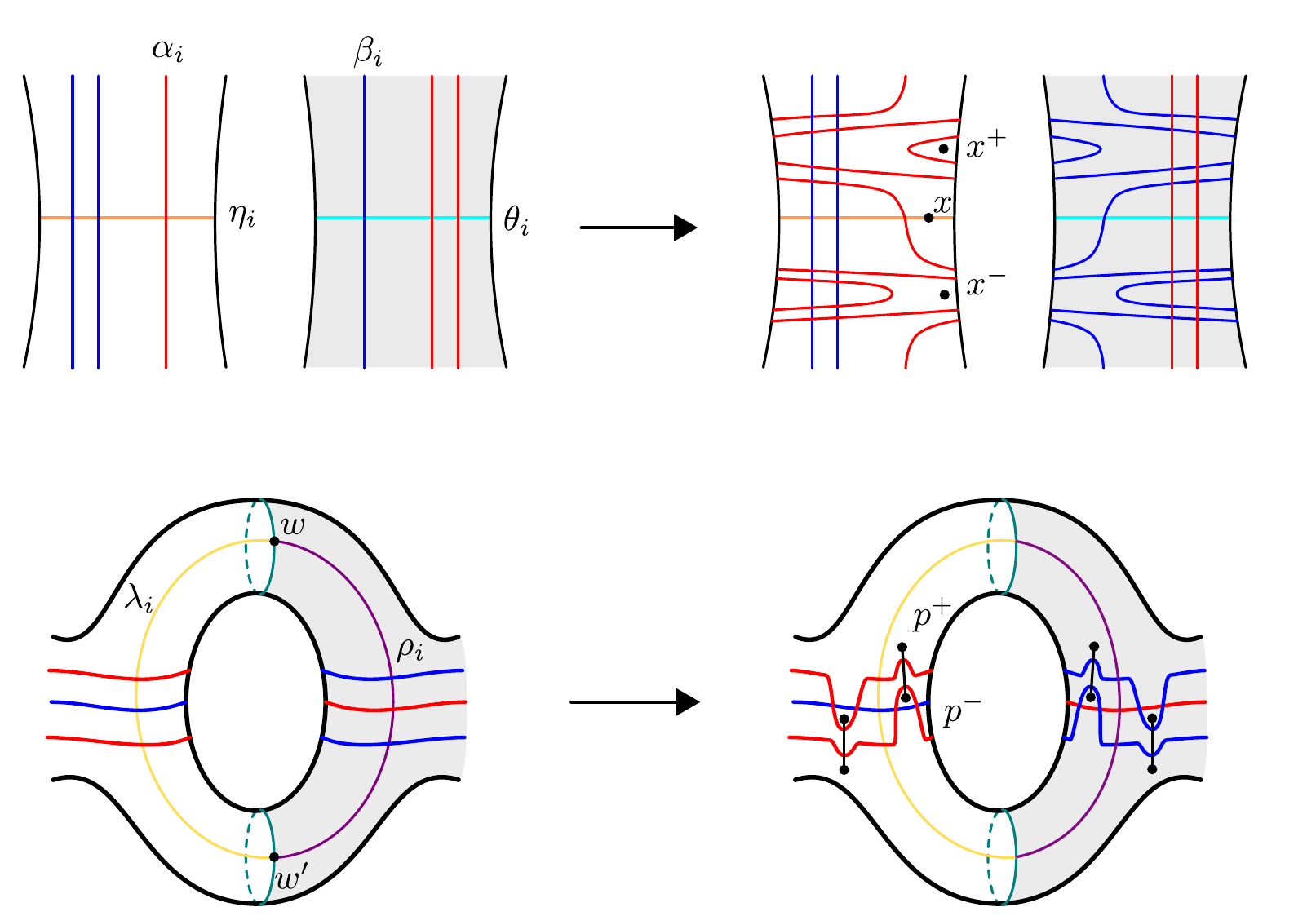}
    \caption{The winding and zig-zagging moves to achieve admissibility.}
    \label{fig:wind_zigzag}
\end{figure}

\begin{lem}\label{lem:isotopy of real admissible}
    Any two strongly admissible real Heegaard diagrams are related by a finite sequence of real pointed Heegaard moves through strongly admissible real Heegaard diagrams. 
\end{lem}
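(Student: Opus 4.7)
The plan is to adapt the standard argument from classical Heegaard Floer theory (compare \cite[Lemma 5.4]{os_holodisks}) to the real setting. By Lemma~\ref{lem:pointed real Heegaard moves}, any two strongly admissible real Heegaard diagrams for $(Y,\tau,\w)$ are already connected by \emph{some} sequence of real pointed Heegaard moves. The task is to refine this sequence so that every intermediate diagram is strongly admissible, which I will do move by move using the symmetric winding and zigzagging procedure from Lemma~\ref{lem:real admissibility}.

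Given admissible diagrams $\cH$ and $\cH'$ related by a single real pointed Heegaard move (a real isotopy, handleslide, or stabilization), the move can be localized so that it is supported in a symmetric region $R \sqcup \tau(R) \subset \Sigma$ disjoint from $\w$. Choosing dual curves $\eta_i$ and zigzag arcs $\lambda_i$ in the complement of $R \sqcup \tau(R)$, I apply Lemma~\ref{lem:real admissibility} to produce a wound version $\cH_N$ of $\cH$ that is very strongly admissible for sufficiently large $N$. The crucial point is that the multiplicity estimates in the proof of Lemma~\ref{lem:real admissibility} bound the multiplicities of winding and zigzag regions purely in terms of $||H(P)||^{Y}_\infty$ and $\langle c_1(\frs), H(P)\rangle$; these quantities depend only on the homology class of a periodic domain, and hence are preserved under any modification of the curves supported in $R \sqcup \tau(R)$. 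Performing the move symmetrically on $\cH_N$ therefore yields a path of admissible diagrams ending at a wound version $\cH'_N$ of $\cH'$, and since the winding itself is realized by a real isotopy, the passages from $\cH$ to $\cH_N$ and from $\cH'_N$ to $\cH'$ can be made admissible by the same estimates applied in reverse.

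The three types of real Heegaard moves fit into this framework as follows: real isotopies can be subdivided until each piece is supported in a small symmetric region; real handleslides are already supported in a neighborhood of the slide arc together with its reflection; and for real stabilizations, the newly introduced pair of curves contributes trivially to periodic domains (the space of periodic domains splits as a direct sum across the connected sum decomposition), so admissibility extends straightforwardly between un-stabilized and stabilized diagrams provided the winding is arranged in the non-stabilized summand.

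The main obstacle is verifying the \emph{uniformity} of the admissibility estimates along the continuous family of diagrams realizing a move, rather than just at its endpoints. This reduces to checking that the bounds in the proof of Lemma~\ref{lem:real admissibility} depend only on the homology classes of periodic domains and the choice of winding data, both of which are constant throughout a move supported in a fixed region disjoint from the winding and zigzag supports. A minor secondary obstacle is handling the degenerate case of Lemma~\ref{lem:real admissibility} in which $\eta_i$ coincides with $\theta_i = \tau(\eta_i)$, where $\alpha_i$ and $\beta_i$ are wound in opposite directions along the same curve; the estimates still go through, but the bookkeeping requires keeping track of the signed contributions on both sides of the winding region.
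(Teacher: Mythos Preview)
Your proposal is correct and follows essentially the same approach as the paper: invoke Lemma~\ref{lem:pointed real Heegaard moves} to obtain a sequence of real pointed moves, then apply the winding and zigzagging procedure of Lemma~\ref{lem:real admissibility} to force the intermediate diagrams to be strongly admissible. The paper's own proof is extremely terse (essentially one sentence), so your version is more a fleshing-out of the implicit details---localizing each move away from the winding support and noting that the admissibility estimates depend only on homology classes of periodic domains---than a genuinely different argument.
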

\begin{proof}
    Given a real $\SpinC$-structure $\frs^R$, we may obtain an $\frs$-strongly admissible real Heegaard diagram, $\cH_0$. If $\cH_1$ is another such diagram, there is some finite sequence of pointed real Heegaard moves by \Cref{lem:pointed real Heegaard moves} relating them; by applying the winding and zigzagging moves, we can guarantee that the intermediary diagrams are $\frs$-strongly admissible as well.  
\end{proof}
 
\section{Definition of the real Heegaard Floer homology}\label{sec:def}

In this section, we define the different versions of real Heegaard Floer invariants. 

\subsection{The symplectic manifold}
\label{sec:Sym}
Let $(\Sigma, \balpha, \bbeta, \w, \tau)$ be a multi-pointed real Heegaard diagram for $(Y, \tau)$. Let $m = g+r-1$ where $g$ is the genus of $\Sigma$ and $r = |\w|$. Recall that the involution $\tau$ on $\Sigma$ induces an involution $R$ on the symmetric product $M=\Sym^{m}(\Sigma)$:
$$R(\{z_1, \dots, z_{m}\})=\{\tau(z_1), \dots, \tau(z_{m})\}.$$

The fixed point set, $M^R$, has a natural stratification into strata of the form 
$$\Sym^k(\Sigma') \times \Sym^{m-2k}(C) = \bigl\{ \{ z_1, \tau(z_1), \dots, z_k, \tau(z_k), c_1, \dots, c_{m-2k} \} \mid z_i \in \Sigma-C, c_i \in C\bigr\},$$
for $k=0, \dots, \lfloor m/2 \rfloor$ and $\Sigma' = (\Sigma - C)/\tau$. Nevertheless, note that $M^R$ is a smooth manifold, being locally modeled on the real part of $\Sym^{m}(\C) \cong \C^{m}$, which is $\R^{m}$.

Following \Cref{sec:real_lf}, $M$ must be equipped with a symplectic form with respect to which $R$ is anti-symplectic. 

To do this, let us first choose an area form $dA$ on $\Sigma$ (consistent with the given orientation on $\Sigma$). Since $R$ reverses orientation, the form $-R^*(dA)$ is also an area form consistent with the given orientation on $\Sigma$. Hence, we can replace $dA$ with $dA - R^*(dA)$ and obtain an $R$-anti-invariant area form. Similarly, we can replace any Riemannian metric $g_\Sigma$ on $\Sigma$ with $g_\Sigma + R^*g_\Sigma$ to obtain $R$-invariant one. From the metric and the area form we get an $R$-anti-invariant complex structure $j$ on $\Sigma$, which induces one on $M=\Sym^m(\Sigma)$.

When $m=1$, we simply take the $R$-anti-invariant area form as the symplectic form on $\Sym^1(\Sigma) = \Sigma$. 

For $m > 1$, by the work of Perutz, $M$ can then be equipped with a K\"ahler form $\omega$, which agrees with the product symplectic form away from the diagonal. (See  \cite[Proposition 1.1]{perutz}, which is stated there for $g=m$; however, the underlying analysis in \cite[Section 7]{perutz} is done for general $m$.)  
There is an identification of $H^2(M; \Z)$ with  $H^0(\Sigma;\Z)\oplus \Lambda^2 H^1(\Sigma;\Z)$.  The cohomology class of Perutz's form is 
\[ [\omega] = \eta + \lambda \theta,\] 
where $\eta$ is the $\R$-cohomology class corresponding to $1 \in H^0(M;\Z)$, $\theta$ is a class in $\Lambda^2 H^1(\Sigma;\Z)$ (invariant under the action of the mapping class group), and $\lambda > 0$ is some small real number. Then $-R^* \omega$ is another K\"ahler form, satisfying $[-R^*\omega]=[\omega]$. Replacing $\omega$ with $(\omega - R^*\omega_\lambda)/2$, we can assume that $\omega$ is $R$-anti-invariant (and in the same cohomology class as before).

\begin{lemma}\label{lem:sym_monotone}
    The symplectic manifold $(\Sym^m(\Sigma), \omega)$ is spherically monotone.
\end{lemma}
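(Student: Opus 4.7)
My plan is to show that both $[\omega]$ and $c_1(T\Sym^m(\Sigma))$ restrict to $\pi_2(\Sym^m(\Sigma))$ as positive multiples of a single cohomology class, from which spherical monotonicity follows immediately. First I would dispose of the easy case $m = 1$: then $M = \Sigma$, so either $g \geq 1$ and $\pi_2(\Sigma) = 0$ (making the condition vacuous), or $g = 0$ and $\Sigma \cong \CP^1$ is obviously monotone. From now on assume $m \geq 2$.

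The main input will be Macdonald's classical computation of $H^*(\Sym^m(\Sigma); \Z)$. For $m \geq 2$, this gives a direct-sum decomposition
\[
H^2(\Sym^m(\Sigma); \Z) \cong \Z \cdot \eta \ \oplus \ \Lambda^2 H^1(\Sigma; \Z),
\]
where $\eta$ denotes the Poincar\'e dual of the divisor $V_p = \{p\} \times \Sym^{m-1}(\Sigma)$, in agreement with the notation already used above for $[\omega]$. The key observation is that every class in the second summand vanishes on $\pi_2(\Sym^m(\Sigma))$. Indeed, the Abel-Jacobi map $AJ \colon \Sym^m(\Sigma) \to \mathrm{Jac}(\Sigma)$ induces an isomorphism $AJ^* \colon H^1(\mathrm{Jac}(\Sigma); \Z) \xrightarrow{\cong} H^1(\Sym^m(\Sigma); \Z) \cong H^1(\Sigma; \Z)$, so the entire summand $\Lambda^2 H^1(\Sigma; \Z) \subset H^2(\Sym^m(\Sigma); \Z)$ is pulled back from $H^2(\mathrm{Jac}(\Sigma); \Z)$. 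Since $\mathrm{Jac}(\Sigma)$ is a $2g$-torus, hence aspherical, any class pulled back from it vanishes on $\pi_2$.

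With this in hand, the stated formula $[\omega] = \eta + \lambda \theta$ with $\theta \in \Lambda^2 H^1(\Sigma; \Z)$ immediately gives $[\omega]|_{\pi_2} = \eta|_{\pi_2}$; note that the $R$-averaging step preserves this conclusion, since it modifies only the $\Lambda^2 H^1$-part of $[\omega]$. For the Chern class I would invoke Macdonald's formula
\[
c_1(T\Sym^m(\Sigma)) = (m - g + 1)\, \eta + \kappa
\]
with $\kappa \in \Lambda^2 H^1(\Sigma; \Z)$, which yields $c_1|_{\pi_2} = (m - g + 1)\, \eta|_{\pi_2}$. Since $m = g + r - 1 \geq g$, the factor $m - g + 1$ is strictly positive, and I conclude
\[
[\omega]\bigl|_{\pi_2(\Sym^m(\Sigma))} = \frac{1}{m - g + 1} \cdot c_1(T\Sym^m(\Sigma))\bigl|_{\pi_2(\Sym^m(\Sigma))},
\]
which is spherical monotonicity with positive proportionality constant. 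The only step that is not fully self-contained is citing Macdonald's expression for $c_1$; I do not anticipate any real obstacle.
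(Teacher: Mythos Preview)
Your proposal is correct and follows essentially the same route as the paper: both use Macdonald's formula $c_1(TM) = (m-g+1)\eta - \theta$ together with the vanishing of the $\Lambda^2 H^1(\Sigma)$-part on $\pi_2$, concluding monotonicity with constant $1/(m-g+1)$. Your Abel--Jacobi justification for that vanishing is more explicit than the paper's one-line assertion, and one small remark: the $R$-averaging in fact leaves $[\omega]$ unchanged (the paper notes $[-R^*\omega]=[\omega]$), so your parenthetical about it modifying only the $\Lambda^2 H^1$-part is unnecessary.
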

\begin{proof}
     According to \cite{Macdonald}, we have $c_1(TM) = (m-g+1)\eta - \theta$. The class $\theta$ evaluates trivially on the generator $S\in \pi_2(M)$, whereas $\langle \eta, S \rangle = 1$. Hence, 
    \begin{align*}
        \omega|_{\pi_2(M)}= A \cdot c_1(TM)|_{\pi_2(M)}.
    \end{align*}
    for $A= 1/ (m-g+1)>0.$
    \end{proof}

\subsection{Real Invariant Domains and their Index}

Our primary computational tool will be a correspondence between strips between $\Ta$ and $M^R$ and real invariant strips between $\Ta$ and $\Tb$. Before proceeding to address the analytical aspects of the theory, we give some examples of real invariant domains to give the reader a sense of this correspondence.

Let us first discuss the difference between the real and classical Maslov index. According to \Cref{prop:indR}, for a class $\phi \in \pi_2^R(\x, \y)$, 
\begin{align*}
    \ind_R(\phi) = \frac{1}{2}\left(\ind(\tilde{\phi}) - \frac{\sigma(\Ta,\x) - \sigma(\Ta,\y)}{2} \right).
\end{align*}
The quantities $\sigma(\Ta,\x)$ and $\sigma(\Ta,\y)$ can be computed directly from the diagram. If $\x$ is an invariant intersection point, and $x \in \x$ is an intersection point between $\alpha_i$ and the fixed point set $C$, we define a quantity $\sigma(\alpha, x) \in \{\pm 1\}$ according to the cyclic order of the curves $C$, $\alpha_i$, and $\beta_i = \tau(\alpha_i)$ at $x$. See \Cref{fig:triple_index}. 

\begin{figure}[h]
\def\svgwidth{.8\linewidth}
\begingroup%
  \makeatletter%
  \providecommand\color[2][]{%
    \errmessage{(Inkscape) Color is used for the text in Inkscape, but the package 'color.sty' is not loaded}%
    \renewcommand\color[2][]{}%
  }%
  \providecommand\transparent[1]{%
    \errmessage{(Inkscape) Transparency is used (non-zero) for the text in Inkscape, but the package 'transparent.sty' is not loaded}%
    \renewcommand\transparent[1]{}%
  }%
  \providecommand\rotatebox[2]{#2}%
  \newcommand*\fsize{\dimexpr\f@size pt\relax}%
  \newcommand*\lineheight[1]{\fontsize{\fsize}{#1\fsize}\selectfont}%
  \ifx\svgwidth\undefined%
    \setlength{\unitlength}{623.62204724bp}%
    \ifx\svgscale\undefined%
      \relax%
    \else%
      \setlength{\unitlength}{\unitlength * \real{\svgscale}}%
    \fi%
  \else%
    \setlength{\unitlength}{\svgwidth}%
  \fi%
  \global\let\svgwidth\undefined%
  \global\let\svgscale\undefined%
  \makeatother%
  \begin{picture}(1,0.4)%
    \lineheight{1}%
    \setlength\tabcolsep{0pt}%
    \put(0,0){\includegraphics[width=\unitlength,page=1]{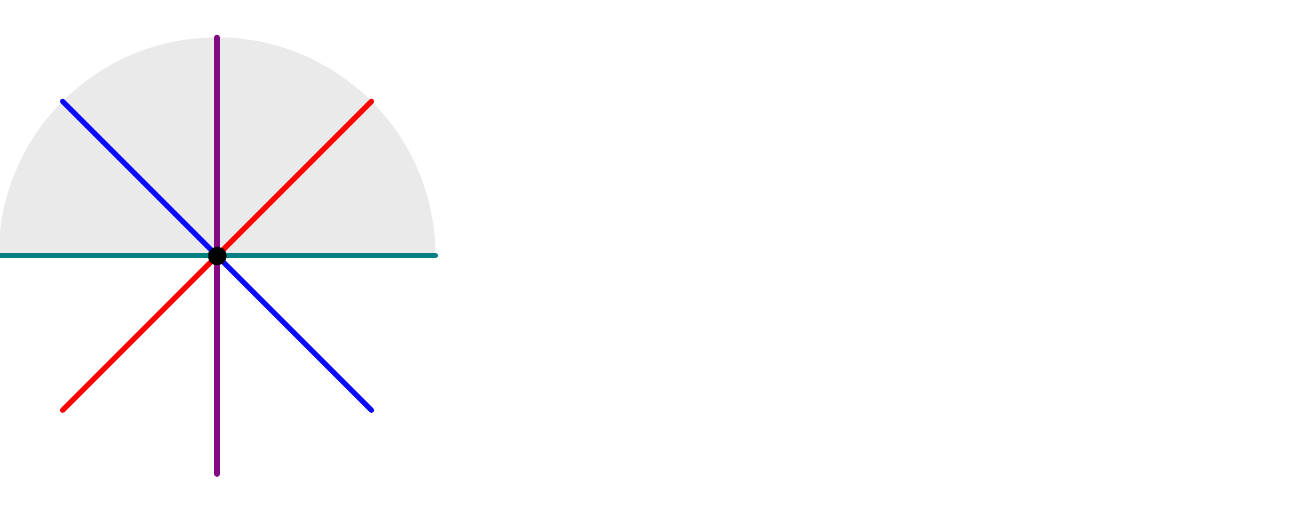}}%
    \put(0.28998479,0.32386783){\color[rgb]{0,0,0}\makebox(0,0)[lt]{\smash{\begin{tabular}[t]{l}{\small$T_x \alpha_i$}\end{tabular}}}}%
    \put(0.33047201,0.17194479){\color[rgb]{0,0,0}\makebox(0,0)[lt]{\smash{\begin{tabular}[t]{l}{\small$T_x C$}\end{tabular}}}}%
    \put(0.17030277,0.3811465){\color[rgb]{0,0,0}\makebox(0,0)[lt]{\smash{\begin{tabular}[t]{l}{\small$J\cdot T_x C$}\end{tabular}}}}%
    \put(0,0){\includegraphics[width=\unitlength,page=2]{triple_index.pdf}}%
    \put(0.58644402,0.32365264){\color[rgb]{0,0,0}\makebox(0,0)[lt]{\smash{\begin{tabular}[t]{l}{\small$T_x \alpha_i$}\end{tabular}}}}%
    \put(0.92628458,0.16889503){\color[rgb]{0,0,0}\makebox(0,0)[lt]{\smash{\begin{tabular}[t]{l}{\small$T_x C$}\end{tabular}}}}%
    \put(0.76972844,0.38105208){\color[rgb]{0,0,0}\makebox(0,0)[lt]{\smash{\begin{tabular}[t]{l}{\small$J\cdot T_x C$}\end{tabular}}}}%
  \end{picture}%
\endgroup%

\caption{Left: an intersection point $x$ with $\sigma(\alpha_i, x) = +1$; Right: an intersection point $x$ with $\sigma(\alpha_i, x) = -1$.}
    \label{fig:triple_index}
\end{figure}

\begin{lemma}\label{lem:local sigma computations}
    Let $\x \in (\Ta \cap \Tb)^R$ be an invariant intersection point. Then,
    \begin{align*}
        \sigma(\Ta, \x) = \sum_{x \in \x\cap C} \sigma(\alpha, x).
    \end{align*}
\end{lemma}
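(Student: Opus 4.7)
The plan is to use additivity of the Kashiwara--Wall triple Maslov index under orthogonal symplectic direct sums. Near $\x$ the symmetric-product singularities play no role because the $m$ components are distinct, so the local cover $\Sigma^m\to\Sym^m(\Sigma)$ identifies $T_\x M$ with $\bigoplus_{i=1}^m T_{x_i}\Sigma$. I would first verify that the three Lagrangians $T_\x\Ta$, $T_\x R(\Ta)=T_\x\Tb$, and $T_\x M^R$ all respect the coarser decomposition of $T_\x M$ indexed by the $\langle\tau\rangle$-orbits of the components of $\x$: a block $T_{c_j}\Sigma$ for each $c_j\in\x\cap C$, and a block $T_{x_i}\Sigma\oplus T_{\tau(x_i)}\Sigma$ for each free pair $\{x_i,\tau(x_i)\}$. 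The Lagrangian $T_\x\Ta$ splits as a direct sum of the tangent lines of the relevant $\alpha$-curves, $T_\x\Tb$ splits symmetrically using $\tau(\alpha_i)=\beta_i$, and $T_\x M^R$ restricts on a fixed block to $T_{c_j}C$ and on a free pair to the graph of $d\tau\colon T_{x_i}\Sigma\to T_{\tau(x_i)}\Sigma$. Since the K\"ahler form from Section~\ref{sec:Sym} is $R$-anti-invariant and agrees with the product form off the diagonal, the decomposition is symplectically orthogonal, so additivity of the triple index reduces $\sigma(\Ta,\x)$ to a sum of local block contributions.

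For a fixed-point block at $c_j$: the space $T_{c_j}\Sigma$ is two-dimensional, and the three Lagrangians are the lines $T_{c_j}\alpha_i$, $T_{c_j}\beta_i=d\tau(T_{c_j}\alpha_i)$, and $T_{c_j}C$. Since $d\tau$ acts as $+1$ on $T_{c_j}C$ and as $-1$ on $J\cdot T_{c_j}C$, writing $T_{c_j}\alpha_i$ as the graph of a scalar $f\colon T_{c_j}C\to J\cdot T_{c_j}C$ gives a contribution $\mathrm{sign}(f)\in\{\pm1\}$; a direct inspection shows that this sign is precisely $\sigma(\alpha,x)$ as drawn in Figure~\ref{fig:triple_index}, the two cases $f>0$ and $f<0$ corresponding to the two cyclic orderings of $T_{c_j}C$, $T_{c_j}\alpha_i$, $T_{c_j}\beta_i$.

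For a free pair block at $\{x_i,\tau(x_i)\}$: the restrictions of the three Lagrangians to $T_{x_i}\Sigma\oplus T_{\tau(x_i)}\Sigma$ are $\ell_1\oplus\ell_2$ (for $\Ta$), $d\tau(\ell_2)\oplus d\tau(\ell_1)$ (for $\Tb$), and the graph of $d\tau$ (for $M^R$), where $\ell_1,\ell_2$ are the relevant alpha-tangent lines at $x_i$ and $\tau(x_i)$. I would compute this block's triple index directly by expressing $\ell_1\oplus\ell_2$ as the graph of a symmetric map $F$ from the antidiagonal $T_\x M^R$ to $J\cdot T_\x M^R$; a short matrix calculation in a basis adapted to $\ell_1$ and $\phi^{-1}(\ell_2)$ (with $\phi=d\tau$) yields $\det F=-1$, so its signature is $0$. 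Thus every free pair contributes $0$, and summing with the fixed-point contributions gives $\sigma(\Ta,\x)=\sum_{x\in\x\cap C}\sigma(\alpha,x)$.

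The main obstacle is the pair-block vanishing. One might hope that the anti-symplectic swap $\iota(v_1,v_2)=(d\tau^{-1}(v_2),d\tau(v_1))$, which interchanges $L_\alpha$ with $L_\beta$ and fixes $L_R$ pointwise, would force the triple index to vanish by a symmetry argument, but combining anti-symplecticity of $\iota$ with antisymmetry of $s$ in its first two arguments only produces the tautology $s=-s$ twice, so the explicit computation appears unavoidable.
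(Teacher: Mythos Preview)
Your proposal is correct and follows essentially the same route as the paper: both decompose $T_\x M$ into blocks indexed by $\langle\tau\rangle$-orbits of the components of $\x$, identify each fixed-point block contribution as $\sigma(\alpha,x)\in\{\pm1\}$, and verify by an explicit $2\times2$ computation that each free-pair block has signature zero (the paper writes the map out as $u_1\mapsto -J(u_2)$, $u_2\mapsto -J(u_1)$, confirming your $\det F=-1$). The only cosmetic difference is that you phrase the reduction via additivity of the triple Maslov index under symplectic direct sum, whereas the paper simply writes the symmetric map $F$ as a block-diagonal matrix and uses additivity of signature.
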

\begin{proof}
    Fix a complex structure $\frak{j}$ on $\Sigma$ and let $J = \Sym^m(\frak{j})$. Near $\x$, there are local coordinates so that $T_\x\Ta$ is the graph of a symmetric linear function $F: T_\x M^R \ra J \cdot T_\x M^R$; then, $\sigma(\Ta,\x)$ can be computed as the signature of this linear map. Write $\x = (z_1, \tau(z_1), \hdots, z_k, \tau(z_k), c_1, \hdots, c_{m-2k})$. Near $\x$, we can model $\Sym^m(\Sigma)$ on $$\prod_{i=1}^k \Sym^2(\C) \times\prod_{i=1}^{m-2k} \Sym^1(\C),$$ and therefore, we can write $F$ as a block diagonal matrix $A_1 \oplus\hdots \oplus A_k\oplus B_1 \oplus \hdots \oplus B_{m-2k}$, where $A_i: \R^2 \ra J\cdot \R^2$ and $B_i: \R \ra J \cdot \R$, and $\sigma(\Ta, x)$ will be the sum of the signatures of these maps. For points $x \in \x \cap C$, it is clear that $T_\x \alpha_i$ is either the graph of $\id$ or $-\id$, so $\sigma(B_i) \in \{\pm 1\}$. Indeed, $\sigma(B_i)$ is precisely $\sigma(\alpha, x)$. Compare with \Cref{fig:triple_index}. 

    We must consider the remaining points of $\x$ in pairs. Take some $\{z, \tau(z)\} \in \x$ with $z \in \alpha_i$ and $\tau(z) \in \alpha_j$. Identity $T_{\{z, \tau(z)\}} \Sym^2(\Sigma)$ with $(\R \oplus J \cdot \R)^{\oplus 2}$ by fixing a basis $\{e_1, J(e_1), e_2, J(e_2)\}$ where $e_1$ and $e_2$ span $T_z\alpha_i$ and $T_{\tau(z)}\tau(\alpha_i)$ respectively. We make the following observations: 
    \begin{enumerate}
        \item $R(e_1) = - e_2$ and $R(J(e_1)) = J(e_2)$;
        \item $T_{z, \tau(z)}M^R$ is spanned by $u_1 = e_1-e_2$ and $u_2 = J(e_1 + e_2)$ (and therefore, $J \cdot T_{z, \tau(z)}M^R$ is spanned by $J(u_1)$ and $J(u_2)$);
        \item $T_{z, \tau(z)} (\alpha_i \times \alpha_j)$ is spanned by $e_1$ and $J(e_2)$, and so, in the $\{u_1, u_2, J(u_1), J(u_2)\}$ basis, $T_{z, \tau(z)} (\alpha_i \times \alpha_j)$ is spanned by $(u_1 - J(u_2))$ and $(u_2 - J(u_1))$.
    \end{enumerate}
    From these observations, it is clear that $T_{z, \tau(z)} \alpha_i \times \alpha_j$ is the graph of the linear map $T_{z, \tau(z)}M^R \ra J \cdot T_{z, \tau(z)}M^R$ given by:
    \begin{align*}
        u_1 \mapsto -J(u_2), \quad u_2 \mapsto - J(u_1).
    \end{align*}
    The signature of this linear transformation is zero. Hence, $\sigma(\Ta, \x) = \sum_{x \in \x\cap C} \sigma(\alpha, x)$ as claimed.
\end{proof}
The usual Maslov index can be computed combinatorially, due to work of Lipshitz \cite[Corollary 4.3]{Lipshitz_cylindrical}. Therefore, the previous lemma implies the real index can be computed combinatorially as well:

\begin{corollary}
For a class $\phi \in \pi_2^R(\x, \y)$ represented by a domain $\cD$, we have
\begin{align}\label{eqn:combinatorial real index}
    \ind_R(\phi) = \frac{1}{2}\left(n_\x(\cD) + n_\y(\cD) + e(\cD) - \frac{\sum_{x \in \x\cap C} \sigma(\alpha, x) - \sum_{y \in \y\cap C} \sigma(\alpha, y)}{2} \right).
\end{align}
where $e(\cD)$ is the Euler measure of $\cD$, and $n_\x(\cD)$, $n_\y(\cD)$ are the average vertex multiplicities. (See \cite[Section 4]{Lipshitz_cylindrical} for the definitions.)
\end{corollary}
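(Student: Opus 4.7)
The statement is a direct synthesis of three ingredients already established (or cited) in the excerpt, so the plan is essentially to assemble them rather than to introduce new ideas. First I would rearrange the real Maslov formula of \Cref{prop:indR},
\[
\ind(\tphi) = 2\ind_R(\phi) + \frac{\sigma(\Ta,\x) - \sigma(\Ta,\y)}{2},
\]
to express $\ind_R(\phi)$ as $\tfrac12\bigl(\ind(\tphi) - \tfrac12(\sigma(\Ta,\x) - \sigma(\Ta,\y))\bigr)$. This reduces the problem to identifying the two terms on the right-hand side combinatorially.

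For the first term, I would invoke Lipshitz's combinatorial formula for the ordinary Maslov index in Heegaard Floer theory (\cite[Corollary 4.3]{Lipshitz_cylindrical}), which asserts $\ind(\tphi) = e(\cD) + n_\x(\cD) + n_\y(\cD)$ whenever $\tphi$ is represented by the domain $\cD$. Here it is important that the class $\tphi \in \pi_2(\x,\y)$ obtained by forgetting the $R$-invariance of $\phi$ is represented by the same domain $\cD$ on $\Sigma$; this is immediate from the correspondence $\cR$ of \eqref{eqn:real_correspondence}, since a real invariant strip pulls back to the domain $\cD$ on $\Sigma$ when viewed as a strip between $\Ta$ and $\bT_\beta$. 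For the second term, \Cref{lem:local sigma computations} gives
\[
\sigma(\Ta,\x) = \sum_{x \in \x \cap C} \sigma(\alpha,x), \qquad \sigma(\Ta,\y) = \sum_{y \in \y \cap C} \sigma(\alpha,y).
\]

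Substituting both expressions into the rearranged formula yields \eqref{eqn:combinatorial real index}. There is no significant obstacle: the transversality and index-theoretic work is absorbed into \Cref{prop:indR}, and the local computation of the Kashiwara--Wall index has already been carried out in \Cref{lem:local sigma computations}. The only point to double-check is that Lipshitz's index formula, which is stated for the cylindrical reformulation of Heegaard Floer homology, applies verbatim in the multi-pointed setting we use; this is standard and follows, e.g., from the treatment in \cite{os_linkinvts}.
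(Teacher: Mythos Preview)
Your proposal is correct and matches the paper's own argument exactly: the corollary is stated immediately after \Cref{lem:local sigma computations} with the one-line justification that Lipshitz's combinatorial index formula together with that lemma (and implicitly \Cref{prop:indR}) yields the result. There is nothing to add.
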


\subsection{Real Invariant Domains}

\begin{figure}[h]
\def\svgwidth{.8\linewidth}
\begingroup%
  \makeatletter%
  \providecommand\color[2][]{%
    \errmessage{(Inkscape) Color is used for the text in Inkscape, but the package 'color.sty' is not loaded}%
    \renewcommand\color[2][]{}%
  }%
  \providecommand\transparent[1]{%
    \errmessage{(Inkscape) Transparency is used (non-zero) for the text in Inkscape, but the package 'transparent.sty' is not loaded}%
    \renewcommand\transparent[1]{}%
  }%
  \providecommand\rotatebox[2]{#2}%
  \newcommand*\fsize{\dimexpr\f@size pt\relax}%
  \newcommand*\lineheight[1]{\fontsize{\fsize}{#1\fsize}\selectfont}%
  \ifx\svgwidth\undefined%
    \setlength{\unitlength}{691.65354331bp}%
    \ifx\svgscale\undefined%
      \relax%
    \else%
      \setlength{\unitlength}{\unitlength * \real{\svgscale}}%
    \fi%
  \else%
    \setlength{\unitlength}{\svgwidth}%
  \fi%
  \global\let\svgwidth\undefined%
  \global\let\svgscale\undefined%
  \makeatother%
  \begin{picture}(1,0.75)%
    \lineheight{1}%
    \setlength\tabcolsep{0pt}%
    \put(0,0){\includegraphics[width=\unitlength,page=1]{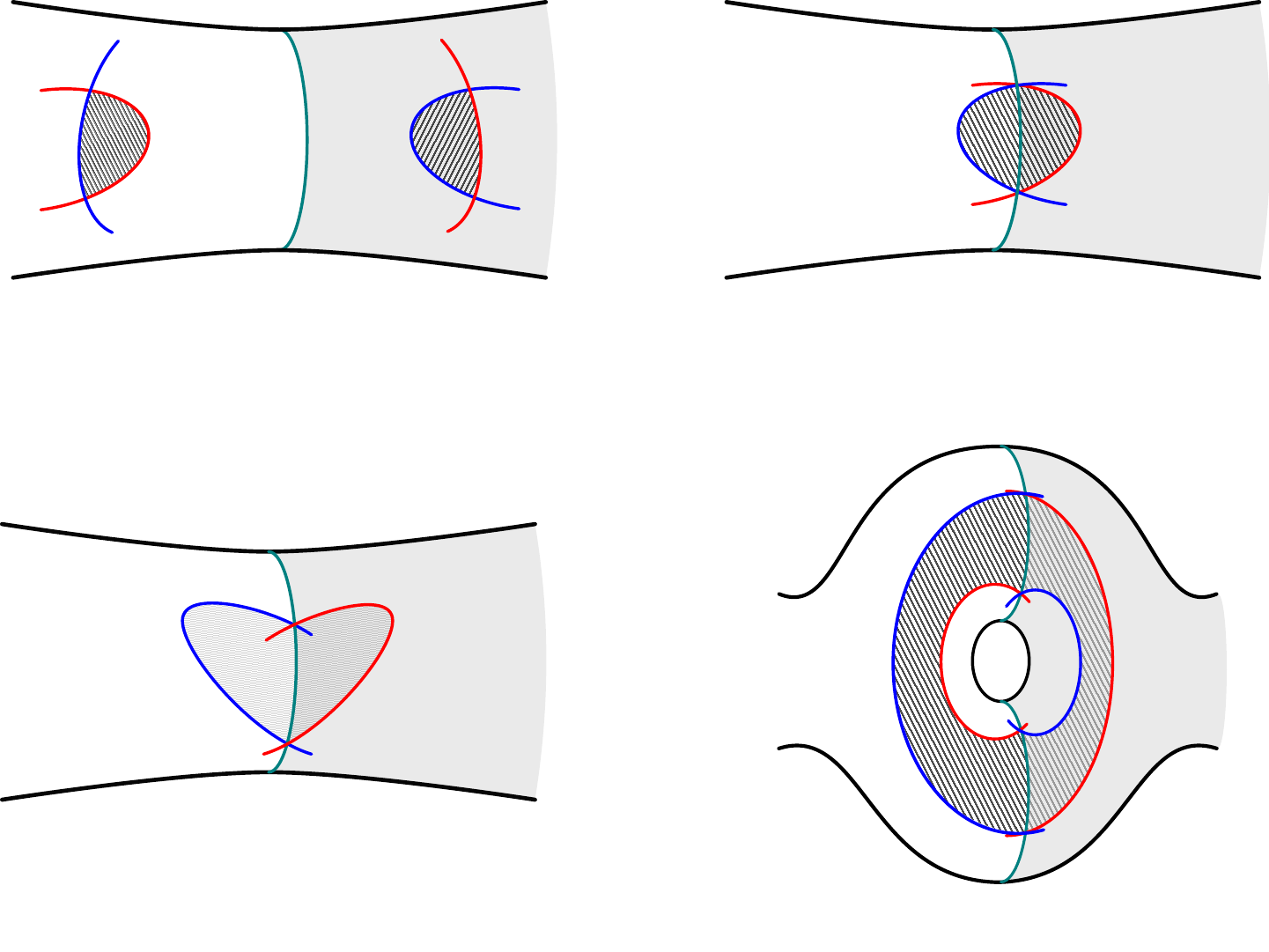}}%
    \put(0.21345992,0.45945937){\color[rgb]{0,0,0}\makebox(0,0)[lt]{\smash{\begin{tabular}[t]{l}{(a)}\end{tabular}}}}%
    \put(0.20695377,0.01704132){\color[rgb]{0,0,0}\makebox(0,0)[lt]{\smash{\begin{tabular}[t]{l}{(c)}\end{tabular}}}}%
    \put(0.77515724,0.45945937){\color[rgb]{0,0,0}\makebox(0,0)[lt]{\smash{\begin{tabular}[t]{l}{(b)}\end{tabular}}}}%
    \put(0.77515724,0.01704132){\color[rgb]{0,0,0}\makebox(0,0)[lt]{\smash{\begin{tabular}[t]{l}{(d)}\end{tabular}}}}%
  \end{picture}%
\endgroup%

\caption{Examples of real invariant domains.}
    \label{fig:domains}
\end{figure}

Compare the following examples to \Cref{fig:domains}.

\begin{example}
    Consider a (classical) domain $\cD_0$ from $\x$ to $\y$ which does not intersect the fixed set, $C$. We can define a real invariant domain $\cD=\cD_0-\tau(\cD_0)$  from $\x \times R(\x)$ to $\y \times R(\y)$. For instance, consider the case that $\cD_0$ is an index 1 bigon from $\x$ to $\y$ as in frame (a) of \Cref{fig:domains}; the Riemann mapping theorem implies there is a unique holomorphic representative up to translation in the domain. The doubled domain $\cD$ from $\x \times R(\x)$ to $\y \times R(\y)$ also admits a holomorphic representative, though the classical index is now two. The real index, however, is one, as the reparamentrizations of the two disks must agree in order to preserve the symmetry. In such cases, the real index is exactly half that of the ordinary index. This agrees with \Cref{eqn:combinatorial real index}. The average vertex multiplicities $n_\x(\cD)$ and $n_\y(\cD)$ are both $\frac{1}{2}$ and the Euler measure is 1. Altogether, they sum to the usual index, equal to $2$. The triple Maslov index is trivial, since neither $\x$ nor $\y$ intersect $C$.
\end{example}

\begin{example}
\label{ex:bigon}
    Now, let $\cD$ be a bigon between invariant intersection points $\x$ and $\y$ which intersects the fixed set $C$ as in frame (b) of \Cref{fig:domains}, so that the two interior angles are less than $180^\circ$. Then there is a unique holomorphic representative in the classical case as well as in the real case, since translation in the domain respects the involution. So, the two Maslov indices are both $1$. (This example was already discussed in the proof of Proposition~\ref{prop:indR}. Compare Figure~\ref{fig:bigon}.) Here the Maslov triple index is nontrivial; $\sigma(\alpha, x) = -1$ while $\sigma(\alpha, y) = 1$. Hence, $\ind_R(\phi) = \frac{1}{2}(1 - (-1)) = 1$. 
\end{example}

\begin{example}    
    Next, consider the case of a bigon as in the third frame of \Cref{fig:domains}, so that one of the interior angles is greater than $180^\circ$. Then, in the classical case, the expected dimension of the moduli space is 2, as there is 1-dimensional parameter of cuts which can be made to the disks. In the real case however, no cuts can be made while preserving the symmetry of the disk. Therefore, the real index is 1. The strip between $\bT_\alpha$ and $M^R$ can be seen quite explicitly -- it is the bigon with one half of its boundary on the alpha curve shown, and the other on $C$. No matter the interior angles of $\cD$, the bigon with one edge on $M^R$ always has interior angles less than $180^\circ$, so will have real index 1. Comparing with \Cref{eqn:combinatorial real index}, we note that the terms $\sigma(\alpha, x)$ and $\sigma(\alpha, y)$ are equal, and therefore cancel. 
\end{example}

\begin{example}\label{ex:annulus}
    Consider the annulus in the last frame of \Cref{fig:domains}. The domain $\cD$ will double cover the disk if and only if ratio of the angles spanned by the outer and inner alpha boundaries is 1. (Compare \cite[Lemma 9.3]{os_holodisks}.) In the present case, where all interior angles are less than $180^\circ$, no cuts can be made; therefore, for a generic choice of almost complex structure, this domain has no holomorphic representatives. But, in the symmetric real setting, this ratio must be preserved -- i.e., for a generic choice of \emph{symmetric} almost complex structure, there is a holomorphic representative! In this example, the classical index $\ind=0$ is actually smaller than the real index $\ind_R=1$. In particular, this example demonstrates that there need not exist any $J \in \cJ_R(\phi)$ for which transversality can be achieved for the space of ordinary pseudo-holomorphic curves and the space of invariant ones simultaneously. This is reflected in \Cref{eqn:combinatorial real index}, as the correction term coming from the triple Maslov indices is 2, so indeed, the real index is 1. Compare to Remark \ref{rem:et}.
\end{example}

Here are two final examples which demonstrate what sorts of ends we expect from our 1-dimensional moduli spaces.

\begin{example}
    Consider the genus 0 real Heegaard diagram with two basepoints on the fixed set, which is shown in \Cref{fig:disk_bubbles}. There are two invariant strips from $x$ to $y$ of real index 1 (one which covers $w_1$ and another which covers $w_2$). There are also two invariant strips from $y$ to $x$; the usual index is 3 for each of these strips (since they each have two vertices with $270^\circ$ angles). But, as we have seen, the real index is insensitive to the angles of vertices on the fixed set; each of these strips have real index 1. Indeed, in this case, we have $\partial^2 x = (U_1^2 + U_2^2)x$. 

    As is typical, terms of $\partial^2 x$ correspond to disk bubbles on $\Ta$. There are two obvious domain representing such classes, namely the two disks $A_1$ and $A_2$ with boundary $\alpha$. Such classes have real index 2 (and ordinary index 2 as well); the corresponding real invariant domains are $A_1 - \tau(A_1)$ and $A_2 - \tau(A_2)$, which each have ordinary index 4, and hence real index 2 by our formula. 

    We can also see bubbles on $M^R$ in this example. The fixed point set is a great circle $C$, and indeed the two hemisphere represent classes with boundary entirely in $M^R$. These disks have  index 2. The corresponding real invariant domain is the entire sphere, which has ordinary index 4, which is twice the real index 2.

    As we shall see, this behavior is typical: when the number of alpha curves exceeds the genus of the Heegaard surface, boundary degenerations on $\Ta$ do indeed appear and need not cancel in pairs. This is similar to the setting of Heegaard Floer homology with multiple basepoints \cite{os_linkinvts}. On the other hand, boundary degenerations on $M^R$ \emph{necessarily} come in pairs, and their contributions cancel. 
\end{example}

\begin{figure}[h]
\def\svgwidth{.25\linewidth}
\begingroup%
  \makeatletter%
  \providecommand\color[2][]{%
    \errmessage{(Inkscape) Color is used for the text in Inkscape, but the package 'color.sty' is not loaded}%
    \renewcommand\color[2][]{}%
  }%
  \providecommand\transparent[1]{%
    \errmessage{(Inkscape) Transparency is used (non-zero) for the text in Inkscape, but the package 'transparent.sty' is not loaded}%
    \renewcommand\transparent[1]{}%
  }%
  \providecommand\rotatebox[2]{#2}%
  \newcommand*\fsize{\dimexpr\f@size pt\relax}%
  \newcommand*\lineheight[1]{\fontsize{\fsize}{#1\fsize}\selectfont}%
  \ifx\svgwidth\undefined%
    \setlength{\unitlength}{204.09448819bp}%
    \ifx\svgscale\undefined%
      \relax%
    \else%
      \setlength{\unitlength}{\unitlength * \real{\svgscale}}%
    \fi%
  \else%
    \setlength{\unitlength}{\svgwidth}%
  \fi%
  \global\let\svgwidth\undefined%
  \global\let\svgscale\undefined%
  \makeatother%
  \begin{picture}(1,1)%
    \lineheight{1}%
    \setlength\tabcolsep{0pt}%
    \put(0,0){\includegraphics[width=\unitlength,page=1]{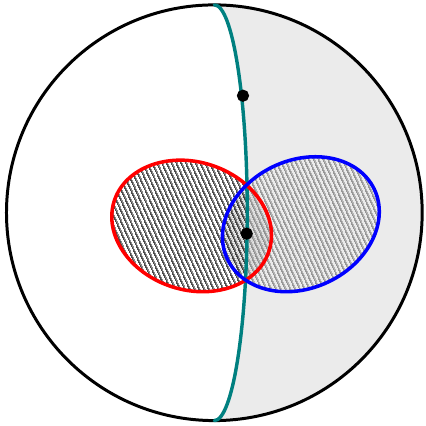}}%
    \put(0.58548085,0.65067114){\color[rgb]{0,0,0}\makebox(0,0)[lt]{\smash{\begin{tabular}[t]{l}{\small$\y$}\end{tabular}}}}%
    \put(0.58957624,0.24116389){\color[rgb]{0,0,0}\makebox(0,0)[lt]{\smash{\begin{tabular}[t]{l}{\small$\x$}\end{tabular}}}}%
  \end{picture}%
\endgroup%

\caption{A genus 0, real Heegaard diagram with domains representing disk bubbles. }
    \label{fig:disk_bubbles}
\end{figure}

\subsection{Bubbles}

For $\x \in \bT_\alpha \cap M^R$ and fixed $J =(J_t)\in \cJ_R$, define the moduli space of \emph{alpha-degenerate disks} by
\begin{align*}
    \cN_J(\x, \bT_\alpha) = \left\{ u: \R \times [0,\infty) \ra M \bigg | 
    \substack{u(s,0) \in \bT_\alpha,\\
    \lim_{s+it\ra \infty} u(s,t) = \x, \\
    \\
    \frac{\partial u}{\partial s} + J_0(u(s,t))\frac{\partial u}{\partial t} = 0
    } \right\}.
\end{align*}
Note that, unlike what we did for strips in \eqref{eq:floer}, here we just use the single almost complex structure $J_0$, for which the symmetry relation \eqref{eq:JR} imposes no particular constraint.

The moduli space $\cN_J(\x, M^R)$ of \emph{$M^R$-degenerate disks} is defined similarly, but using
the almost complex structure $J_{1/2}$, which must be $R$-anti-invariant according to \eqref{eq:JR}.

Let $\Pi_2(\x, \bT_\alpha)$ and $\Pi_2(\x, M^R)$ be the spaces of homotopy classes of (not necessarily holomorphic) disks satisfying the boundary conditions above. We will write $\cN_J(\phi)$ for the moduli space of holomorphic disks in the class $\phi$ for $\phi$ in either $\Pi_2(\x, \bT_\alpha)$ or $\Pi_2(\x, M^R)$. There is a two-dimensional automorphism group acting on these spaces, by dilation in the real direction and translation in the imaginary direction. Let $\widehat{\cN}_J(\phi)$ be the quotient space. 

\begin{lemma}\label{lem:disk_monotone}
     The Lagrangians $\bT_\alpha$ and $M^R$ are monotone on disks, i.e., Equation~\eqref{eq:monotonedisks} holds. In fact, $\bT_\alpha$ and $M^R$ have minimal Maslov numbers $2$ and $(m-g+1)$ respectively. 
\end{lemma}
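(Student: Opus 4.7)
The plan is to treat the two Lagrangians separately, using the spherical monotonicity of $M$ from \Cref{lem:sym_monotone} as the common starting point.

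For $\bT_\alpha$, this is the standard monotonicity fact underlying Heegaard Floer homology. Since $\bT_\alpha$ is a torus, $\pi_2(\bT_\alpha) = 0$, and the long exact sequence of the pair $(M, \bT_\alpha)$ presents $\pi_2(M, \bT_\alpha)$ as an extension of $\ker\bigl(\pi_1(\bT_\alpha) \to \pi_1(M) = H_1(\Sigma; \Z)\bigr)$ by $\pi_2(M) = \Z$. The kernel on the right is spanned by the $r - 1$ independent linear relations imposed on $[\alpha_1], \dots, [\alpha_m]$ by the components of $\Sigma \smallsetminus \bm\alpha$, so $\pi_2(M, \bT_\alpha)$ has a set of generators consisting of a sphere class together with $\alpha$-boundary degeneration classes, one for each component of $\Sigma \smallsetminus \bm\alpha$. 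A direct calculation via Lipshitz's Maslov index formula \cite[Corollary 4.3]{Lipshitz_cylindrical} shows that each generator has even Maslov index (consistent with $\bT_\alpha$ being orientable), with minimum exactly $2$, and that the ratio $[\omega]/\mu$ can be arranged to be constant across generators by tuning the Perutz parameter $\lambda$ in $[\omega] = \eta + \lambda \theta$. This monotonicity statement is essentially classical; compare \cite{os_holodisks, os_linkinvts}.

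For $M^R$, the key tool is the doubling construction. A disk $D\colon (D^2, \partial D^2) \to (M, M^R)$ is glued to $\bar D(z) := R(D(\bar z))$ along their common boundary to form a smooth sphere $S_D \subset M$. Two identities are at the heart of the argument:
\[ [\omega](S_D) = 2[\omega](D), \qquad \mu(D) = \langle c_1(TM), [S_D]\rangle, \]
the first being immediate from $R^*\omega = -\omega$, and the second the classical doubling formula for the Maslov index of a real Lagrangian (see, e.g., \cite{welschinger} and references therein). Combining these with the spherical monotonicity of $M$ from \Cref{lem:sym_monotone} yields a linear relation between $[\omega](D)$ and $\mu(D)$, establishing monotonicity on disks for $M^R$. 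Furthermore, $\mu(D)$ is always a multiple of the minimal Chern number of a sphere in $M$, which equals $m - g + 1$; realizing an explicit disk $D$ whose double represents the generator of $\pi_2(M) = \Z$ then shows the minimum Maslov number is exactly $m - g + 1$.

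The main technical hurdle is the Maslov doubling identity $\mu(D) = \langle c_1(TM), [S_D]\rangle$, which requires careful orientation bookkeeping along $\partial D \subset M^R$; the argument is standard but nontrivial. A secondary issue is reconciling the precise monotonicity constants for the two Lagrangians: by tuning $\lambda$ in Perutz's form and performing the antisymmetrization $(\omega - R^*\omega)/2$ without disturbing the relevant cohomology components, one can arrange compatible constants on both sides.
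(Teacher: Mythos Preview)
Your approach is essentially the paper's: for $\bT_\alpha$ you both identify the relevant disk classes with the components $A_i$ of $\Sigma\smallsetminus\bm\alpha$, each of Maslov index $2$ (the paper simply cites \cite[Lemma 5.4]{os_linkinvts} rather than going through Lipshitz's formula or the long exact sequence), and for $M^R$ you both double to spheres and read off the minimal Chern number $m-g+1$. Two minor remarks. The paper's proof explicitly only computes minimal Maslov numbers and does not verify the monotonicity clause of the statement; you try to address it, but the claim that tuning the Perutz parameter $\lambda$ in $[\omega]=\eta+\lambda\theta$ makes $[\omega]/\mu$ constant across the $A_i$ is not right---$\theta$ vanishes on $\pi_2(M)$ and does not distinguish the $A_i$, so the relevant freedom lies in the area form on $\Sigma$ (choosing it so that the regions $A_i$ have equal area), not in $\lambda$. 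For $M^R$, your direct appeal to the doubling identity $\mu(D)=\langle c_1(TM),[S_D]\rangle$ is slightly cleaner than the paper's route through the real index formula \eqref{eq:indR}, which is stated for strips and needs a mild reinterpretation to apply to $M^R$-boundary degenerations.
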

\begin{proof}
    We simply compute the minimal Maslov indices. Consider a class $\phi \in \Pi_2(\x, \bT_\alpha)$. Note that $\Pi_2(\x, \bT_\alpha)$  is the same as the set of homotopy classes of disks denoted $\pi_2(\x)$ in \cite[Section 3.7]{os_holodisks} and $\pi_2^{\alpha}(\x)$ in \cite[Section 5]{os_linkinvts}. The  domain on $\Sigma$ corresponding to $\phi$ must be of the form $\sum_i a_iA_i$ where $\Sigma \smallsetminus \balpha = \amalg_i A_i$ and $a_i$ are non-negative integers.  It follows from \cite[Lemma 5.4]{os_linkinvts} that the Maslov index of each $A_i$ is $2$, and therefore this is the minimum possible index for $\phi$, as long as $\phi \neq 0$.

    In the second case, we consider a class $\phi \in \Pi_2(\x, M^R)$. Such a disk has boundary contained entirely in $M^R$, therefore it can be doubled to obtain a sphere, which must be represented by a domain which is a multiple of $\Sigma$. Note that $\Sigma$ has classical index $2(m-g+1)$, and therefore its real index is $m-g+1$, by an application of the real index formula \eqref{eq:indR}. 
\end{proof}

In defining the real Heegaard Floer complexes in the next subsection, when studying $\del^2$ we will be interested in one-dimensional moduli spaces of real invariant strips (of index $2$) between $\Ta$ or $\Tb$; or, equivalently (by the correspondence from Section~\ref{sec:setup}), in one-dimensional moduli spaces of strips between $\Ta$ and $M^R$. In principle, the ends of such moduli spaces can contain trees formed out of broken strips, disk bubbles, and sphere bubbles. However, sphere bubbles only appear in codimension two (because the gluing parameter is in $\mathbb{C}^*$), so they are not relevant here. 

\begin{remark}
In \cite[Section 3.7]{os_holodisks}, Ozsv\'ath and Szab\'o eliminated sphere bubbles by appealing to a specific result about the symmetric product: their Lemma 3.13, which says that the set of complex structures $j$ on $\Sigma$ such that there exist $\Sym^g(j)$-holomorphic curves in $\Sym^g(\Sigma)$ through a given point is of codimension two. The analogue of this in the real setting is false: For $R$-anti-invariant complex structures on $\Sigma$, the corresponding set has codimension one. However, in our case, we do not use this kind of argument. In \cite{os_holodisks}, the theory was built for totally real tori using very special almost complex structures (equal to some $\Sym^g(j)$ on a large open set), whereas we use Perutz's set-up from \cite{perutz}, where the tori are Lagrangian and we have more flexibility in choosing the almost complex structure. In this context, we can use the standard arguments from the theory of gluing pseudo-holomorphic curves, where sphere bubbles appear in codimension two.  
\end{remark}

We will now focus on understanding disk bubbles. Those with boundary on $\Ta$ have minimal Maslov index $2$ by \Cref{lem:disk_monotone}. Therefore, the only ones that can contribute to $\del^2$ are those of index $2$ that are attached to a constant strip at an intersection point $\x$. Compare \cite[proof of Theorem 4.3]{os_holodisks}.

\begin{lemma}\label{lem:Talpha_degenerations}
    Let $\phi$ be a class in $\Pi_2(\x, \bT_\alpha)$ with real index 2. Then, 
    \begin{align*}
        \# \widehat{\cN}_J(\phi) \equiv \begin{cases}
            0 \mod 2 & \text{ if } m = g \\
            1 \mod 2 & \text{ if } m > g. \\
        \end{cases} 
    \end{align*}
\end{lemma}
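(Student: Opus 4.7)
The plan is to reduce the statement to a classical Heegaard Floer count, since the disks in question have boundary entirely on $\bT_\alpha$ (not touching $M^R$), and so do not interact with the involution $R$ in any essential way.

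First, I would classify the index-$2$ classes. Combining Lemma~\ref{lem:disk_monotone} with the combinatorial index formula, a class $\phi \in \Pi_2(\x, \bT_\alpha)$ of real index $2$ must have $\mathrm{ind}(\phi) = \mathrm{ind}_R(\phi) = 2$, and its domain on $\Sigma$ is a non-negative integer combination $\sum_i a_i A_i$ of the components of $\Sigma \smallsetminus \bm\alpha$ with $\mathrm{ind}(\phi) = 2 \sum_i a_i$ (the analogue of \cite[Lemma~5.4]{os_linkinvts}). Thus $\phi$ is represented by a single component $A$ with multiplicity one, and $\x$ is forced to consist of the corners of $A$.

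Next, I would observe that $\widehat{\cN}_J(\phi)$ is cut out using $J_0$ alone, with no $R$-invariance condition imposed. The mod-$2$ count $\#\widehat{\cN}_J(\phi)$ is therefore a purely classical Heegaard Floer count, independent of the real structure. A continuation argument inside the path-connected space of $\omega$-compatible almost complex structures on $\Sym^m(\Sigma)$ would identify our Perutz--K\"ahler mod-$2$ count with the one computed using the nearly-symmetric almost complex structures of \cite{os_holodisks,os_linkinvts}; sphere bubbles appear in codimension two, as already noted, so the count is stable along a generic path.

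With this reduction in place, the two cases follow from known results in ordinary Heegaard Floer theory. When $m = g$, the relevant component $A$ is a $2g$-punctured sphere, and the count vanishes modulo $2$; this is precisely the fact needed to show $\del^2 = 0$ in the singly-pointed theory (see \cite[Theorem~4.3]{os_holodisks}), and is usually established via a symmetry of the moduli space of degenerate disks. When $m > g$, the components $A_i$ of $\Sigma \smallsetminus \bm\alpha$ are simpler planar surfaces, and the count is $1$ modulo $2$; this is the standard mechanism that produces the $U_i$ operators in the differential of multi-pointed Heegaard Floer, as in \cite[Section~3.3]{os_linkinvts}.

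The main obstacle will be the second step: carefully justifying that our insistence on $R$-anti-invariant almost complex structures does not change the mod-$2$ count. The key point is that the constraint \eqref{eq:JR} only restricts $J_{1/2}$ (and, away from $t = 1/2$, couples $J_t$ to $J_{1-t}$), while the $\alpha$-boundary degenerations use only $J_0$; in particular, we have genuine freedom in choosing $J_0$ within the contractible space of $\omega$-compatible almost complex structures, and a generic path connects our choice to one coming from the existing literature.
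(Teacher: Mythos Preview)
Your approach is essentially the same as the paper's: observe that $\cN_J(\phi)$ is defined using only $J_0$, which is unconstrained by \eqref{eq:JR}, so the mod-$2$ count is the classical one and is invariant along generic paths of almost complex structures (sphere bubbles being codimension two); then invoke Ozsv\'ath--Szab\'o's results for the two cases. The paper's precise references are \cite[Proposition~3.65]{os_holodisks} for $m=g$ (a neck-stretching argument showing the moduli space is empty for a stretched $j$, not a symmetry argument) and \cite[Theorem~5.5]{os_linkinvts} for $m>g$ (this count contributes to the curvature in $\del^2$, not to the $U_i$ weights in $\del$ itself, so your parenthetical characterization is a bit off).
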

\begin{proof}
    In the case that $m = g$, \cite[Proposition 3.15]{os_holodisks} shows that for a sufficiently stretched out complex structure on $\Sigma$, the moduli space $\cN_J(\x, \bT_\alpha)$ is empty. Furthermore, the count of points in  $\cN_J(\x, \bT_\alpha)$ is independent of the choice of generic almost complex structure $J_0$. (Indeed, as we vary it in a one-dimensional family, we cannot encounter sphere bubbles, because those appear in codimension two.) It follows that the count is zero for a generic $J_0$,  which comes from a generic $J=(J_t) \in \cJ_R$. 
    
    When $m > g$, it follows from \cite[Theorem 5.5]{os_linkinvts} that positive classes of Maslov index 2 admit an odd number of holomorphic representatives. Hence, for a generic $J_0$ from $J \in \cJ_R$, the count is one.
\end{proof}

For disk bubbles with boundary on $M^R$, we have:
 
\begin{lemma}\label{lem:MR_degenerations}
If $\w = \{w_1, \dots, w_r\}$ is the collection of basepoints on the fixed set $C \subset \Sigma$, fix a sequence of non-negative integers $\n=(n_1, \dots, n_r)$.  Then, $$\sum_{\substack{\phi \in \Pi_2(\x, M^R) \\ \ind(\phi) = 2, \ n_\w(\phi) = \n}}\# \widehat{\cN}_J(\phi) \equiv 0 \mod{2}.$$
\end{lemma}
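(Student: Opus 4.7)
The strategy is to construct an involution $\iota$ on the relevant moduli space and show that its fixed-point locus is generically empty, whence the total count is even. Parametrize the disk so that the boundary marked point, where the bubble attaches to the strip at $\x$, sits at $-1 \in \partial D^2 \cap \mathbb{R}$, and define
\[ \iota(u)(z) := R\bigl(u(\bar z)\bigr). \]
Setting $t = 1/2$ in \eqref{eq:JR} gives $J_{1/2} \circ R_* = -R_* \circ J_{1/2}$, and a direct computation shows that $\iota(u)$ is again $J_{1/2}$-holomorphic; moreover $R(M^R) = M^R$, $R(\x) = \x$, and $\tau(w_i) = w_i$ for each $w_i \in C$, so the boundary, marked-point, and basepoint-multiplicity conditions are all preserved. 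Thus $\iota$ descends to an involution on
\[ \widehat{\cN} \ := \bigsqcup_{\substack{\phi \in \Pi_2(\x, M^R) \\ \ind(\phi) = 2,\ n_\w(\phi) = \n}} \widehat{\cN}_J(\phi), \]
and the sum in the lemma equals $\# \widehat{\cN}$ modulo $2$. Non-fixed orbits contribute $0$, so it suffices to rule out the fixed points, which are $R$-equivariant disks satisfying $u(z) = R(u(\bar z))$.

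Such an equivariant disk is determined by its restriction $u|_{H^+}$ to the upper half-disk $H^+$, which is itself a $J_{1/2}$-holomorphic disk whose boundary $\partial H^+$ (the diameter together with the upper semicircle of $\partial D^2$) lies entirely in $M^R$: the diameter because $u(x) = R(u(x))$ for $x \in \mathbb{R}$ forces $u(x) \in M^R$, and the semicircle by the original boundary condition. Doubling $u|_{H^+}$ across this $M^R$-boundary via $R$ then produces a $J_{1/2}$-holomorphic sphere $u^\vee : S^2 \to M$ passing through $\x$; holomorphicity across the reflection locus follows from a Schwarz reflection argument using that $R$ is anti-holomorphic with respect to $J_{1/2}$. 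Additivity of the Maslov index along the fold gives $\mu(u) = 2\mu(u|_{H^+})$, while the standard doubling identity gives $\mu(u|_{H^+}) = c_1(u^\vee)$, so $c_1(u^\vee) = 1$ when $\mu(u) = 2$. But $M$ is spherically monotone by \Cref{lem:sym_monotone}, and the unparametrized moduli space of $J_{1/2}$-holomorphic spheres of Chern number $1$ through a given point has virtual dimension $2 \cdot 1 - 4 = -2 < 0$, hence is generically empty.

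The main technical obstacle is achieving transversality for these spheres within the restricted class $\cJ_R$ of $R$-anti-invariant almost complex structures: admissible perturbations must satisfy $Y \circ R_* = -R_* \circ Y$, so we do not have complete freedom in varying $J_{1/2}$. The argument parallels the one in Section~\ref{subsec:transversality}: a non-constant $J$-holomorphic sphere cannot be contained entirely in the totally real submanifold $M^R$, so on a dense set of points $z \in S^2$ its image avoids $M^R$, and at such points the symmetrization of a locally supported perturbation $Y$ (via $Y \mapsto \tfrac{1}{2}(Y - R_* \circ Y \circ R_*)$) remains nonzero and locally supported, giving enough freedom to surject onto the cokernel of the linearized Cauchy--Riemann operator and thereby produce a generic $J_{1/2} \in \cJ_R$ for which no Chern-$1$ sphere through $\x$ exists.
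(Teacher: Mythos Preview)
Your strategy is the paper's: pair $M^R$-disks via the reflection $\iota(u)(z) = R(u(\bar z))$ and show the pairing is fixed-point-free. The paper dispatches fixed points in one line, asserting that $u$ and $R\circ u$ cannot coincide because $M^R$, being Lagrangian, contains no non-constant holomorphic disk. Your route through halving to a Maslov-$1$ disk and doubling to a Chern-$1$ sphere is a good deal more elaborate, and is arguably what is actually required to rule out fixed points on the \emph{unparametrized} moduli, where a fixed point only satisfies $u(\bar z) = R(u(z))$ and hence only the real diameter, not the whole image, is forced into $M^R$.

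There is, however, a gap in your transversality step. The sphere $u^\vee$ you construct is itself $R$-invariant (by construction $u^\vee(\bar z)=R(u^\vee(z))$), so a symmetrized perturbation $\tfrac12(Y - R_*YR_*)$ hits the linearized operator at both $z_0$ and $\bar z_0$; paired against a cokernel element in the $(-1)$-eigenspace of the induced $R$-action on sections, these two contributions cancel, and $R$-anti-invariant perturbations need not make the \emph{full} sphere moduli regular at $u^\vee$. The fix is to skip the sphere: the half-disk $v = u|_{H^+}$ is already a Maslov-$1$ element of $\cN_J(\x, M^R)$, living in an unparametrized moduli of virtual dimension $\mu - 2 = -1$. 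Your symmetrization argument does establish transversality for \emph{that} moduli, since at an injective interior point $z_0 \in H^+$ with $v(z_0) \notin M^R$ the reflection $R(v(z_0)) = u(\bar z_0)$ lies in $u(H^-)$, outside the image of $v$, so the symmetrized perturbation affects the operator only near $z_0$.
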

\begin{proof}
    Let $\phi$ be an element of $\Pi_2(\x, M^R)$ with $\ind(\phi)=2$ and $n_{\w}(\phi)=n$. Necessarily, the reflection $R(\phi)$ is in $ \Pi_2(\x, M^R)$ as well, and 
    satisfies $\ind(R(\phi))=2$ and $n_{\w}(R(\phi))=n$. Moreover, given a holomorphic representative $u$ for $\phi$, the reflection $R\circ u$ is a holomorphic representative for $R(\phi)$. Note that $u$ and $R \circ u$ cannot be the same (because $M^R$ is Lagrangian, so it does not contain a holomorphic disk.) Hence, the total mod 2 count is zero.
\end{proof}

\begin{remark}
    \label{rem:index1}
    Observe that, in the case $m=g$, the minimal Maslov number of $M^R$ is one. When looking at the ends of the one-dimensional moduli spaces of strips,  generically we may also have index 1 disk bubbles, which do not have to pass through any intersection point $\x$. Nevertheless, the counts of such disks (added over all $\phi$ with fixed $n_\w(\phi)$) are zero, by the same cancellation argument as in the proof of \Cref{lem:MR_degenerations}. 
\end{remark}

\subsection{Real Heegaard Floer Invariants}

We are now equipped to define the real Heegaard Floer (curved) complexes.

Given a collection of basepoints $\w = \{w_1, \hdots, w_r\}$ on $\Sigma$, we consider the  polynomial ring
\begin{align*}
    \F[U_\w] := \F[U_{w_1}, \hdots, U_{w_r}].
\end{align*}
Given a class $\phi \in \pi_2^R(\x,\y)$, we write 
\begin{align*}
    U_\w^{n_\w(\phi)} = U_{w_1}^{n_{w_1}(\phi)}\cdot \hdots \cdot U_{w_r}^{n_{w_r}(\phi)}.
\end{align*}
When $|\w| =1$, we just write $U$ for $U_{w_1}$, and $\F[U]$ for $\F[U_\w]$.

\begin{defn}\label{def: curved complexes}
    A \emph{curved chain complex} (or \emph{matrix factorization}) over a ring $\cR$ is an $\cR$-module $C$ equipped with an endomorphism $\partial: C \ra C$ satisfying
    \begin{align*}
        \partial^2 = \omega \cdot \id_C,
    \end{align*}
    for some $\omega \in \cR$, called the \emph{curvature constant}.

    A \emph{morphism of curved complexes} is an map $F: (C_1, \partial_1) \ra (C_2, \partial_2)$ satisfying 
    \begin{align*}
        \partial \circ F = F \circ\partial,
    \end{align*}
    and a chain homotopy between morphisms $F, G$ of curved complexes is a map $H: C_1\ra C_2$ so that 
    \begin{align*}
        F - G = \partial\circ H + H \circ \partial.
    \end{align*}
\end{defn}
In particular, just as in the case of regular chain complexes, we can consider curved chain complexes up to chain homotopy equivalence. 

\begin{defn}
    Let $(Y, \tau,\w)$ be a real pointed 3-manifold and let $\frs$ be a real $\SpinC$ structure on $(Y, \tau)$. Fix a real Heegaard diagram $\cH$ which is strongly $\frs$-admissible. Define $\CFR^-(\cH, \frs)$ to be the $\F[U_\w]$-module freely generated by elements $\x$ of $\bT_\alpha \cap M^R$ satisfying $\frs^\x_\w(\x) = \frs$ with differential
    \begin{align}\label{eqn:CFR_diff}
        \partial \x = \mathlarger{\sum}_{\substack{\phi \in \pi_2^R(\x, \y)\\ \ind_R(\phi) = 1}} \# \widehat{\cM}_R(\phi) \cdot U^{n_{\w}(\phi)}_\w \y,
    \end{align}
    extended linearly over $\F[U_\w]$. 
\end{defn}

\begin{proposition}
\label{prop:onew}
    When $|\w|= 1$, the pair $(\CFR^-(\cH, \frs), \partial)$ is a chain complex. 
\end{proposition}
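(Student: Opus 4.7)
The plan is to verify $\partial^2 = 0$ by the usual Floer-theoretic argument: for each pair $\x, \y \in (\bT_\alpha \cap M^R)$ and each class $\phi \in \pi_2^R(\x, \y)$ with $\indR(\phi) = 2$, examine the ends of the $1$-dimensional moduli space $\widehat{\cM}_R(\phi)$, and show that only strip-breaking contributes, so that the boundary counts match up with the coefficient of $\y$ in $\partial^2 \x$.

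First I would appeal to Gromov compactness for real invariant strips. Since $M=\Sym^m(\Sigma)$ with Perutz's form is spherically monotone (\Cref{lem:sym_monotone}) and the Lagrangians $\bT_\alpha$ and $M^R$ are monotone on disks (\Cref{lem:disk_monotone}), the ends of $\widehat{\cM}_R(\phi)$ decompose into trees of broken real invariant strips with possibly attached disk and sphere bubbles. Sphere bubbles appear only in codimension $2$ (the gluing parameter lives in $\mathbb{C}^*$), so they do not show up at the ends of a $1$-manifold for generic $J \in \JR$. Hence the ends consist of: (i) a pair of real invariant strips of real index $1$; (ii) a constant strip with a $J_0$-holomorphic disk bubble on $\bT_\alpha$ of index $2$ attached at some point; (iii) a constant strip with a $J_{1/2}$-holomorphic disk bubble on $M^R$ attached.

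The broken strip ends (i) contribute exactly the $\y$-coefficient of $\partial^2 \x$ (multiplied by the appropriate power of $U$). It remains to check that the bubble contributions (ii) and (iii) vanish mod $2$. Since $|\w|=1$, we have $m = g$. By \Cref{lem:Talpha_degenerations}, in the case $m = g$ the mod $2$ count of index $2$ disk bubbles on $\bT_\alpha$ through $\x$ is zero, so (ii) contributes nothing. For (iii), the minimal Maslov number of $M^R$ is $m-g+1 = 1$, so in principle both index $1$ and index $2$ bubbles on $M^R$ can appear; however, both types of bubbles cancel in pairs via the reflection $u \mapsto R \circ u$, as in \Cref{lem:MR_degenerations} and \Cref{rem:index1}. (The reflection is a fixed-point-free involution on the set of such bubbles because $M^R$ is Lagrangian, hence carries no nonconstant holomorphic disks with boundary on itself.) So (iii) also contributes $0 \pmod 2$.

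Finally, I would confirm that admissibility (which is part of the hypothesis, since $\cH$ is strongly $\frs$-admissible) ensures that for each fixed $\y$ and each power $U^n$ only finitely many classes $\phi \in \pi_2^R(\x,\y)$ with $\indR(\phi) = 1$ and $n_w(\phi) = n$ contribute to $\partial\x$, so that $\partial$ and $\partial^2$ are well-defined. The main obstacle here is the bubbling analysis: one must ensure that the standard codimension count for sphere bubbles carries over to the real setting, and that the pairwise cancellation of $M^R$-boundary degenerations in (iii) is genuine (including for the index $1$ strata, which a priori are of codimension $1$). Both of these follow from the fact that $R$ acts freely on the relevant bubble configurations, together with monotonicity preventing more complicated bubble trees from appearing in codimension $\leq 1$.
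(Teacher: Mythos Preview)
Your proposal is correct and follows essentially the same approach as the paper's proof: Gromov compactness, codimension-two sphere bubbles, vanishing of $\bT_\alpha$-bubbles via \Cref{lem:Talpha_degenerations} when $m=g$, and pairwise cancellation of $M^R$-bubbles via \Cref{lem:MR_degenerations} and \Cref{rem:index1}. The only minor wrinkle is that your description of case (iii) as a bubble attached to a \emph{constant} strip is not quite accurate for the index~$1$ $M^R$-bubbles (which attach to a nonconstant index~$1$ strip), but you address this correctly a few lines later; the paper also treats the $\x=\y$ case separately and records the doubling $U^{2n_w(\phi)}$ in preparation for \Cref{lem:curvature}, though this is not needed for the present statement.
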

\begin{proof}
    As the Heegaard diagram is strongly $\frs$-admissible, the sum appearing in \Cref{eqn:CFR_diff} is finite. Gromov compactness applies because, even though the minimal Maslov number of $M^R$ is one, disk bubbles of index one cancel out in pairs; compare Remark~\ref{rem:index1}. 

    To verify $\partial^2 = 0$, fix intersection points $\x$ and $\y$ in $\bT_\alpha\cap M^R$ as well as some $n \geq 0$. We consider the ends of
    \begin{align*}
        \coprod_{\{\phi \in \pi_2^R(\x,\y)| \ind_R(\phi) = 2, n_{w}(\phi) = n \}} \widehat{\cM}_R(\phi).
    \end{align*}
    
     If $\x \neq \y$, no spheres nor $\bT_\alpha$ boundary degenerations can appear by Lemmas \ref{lem:sym_monotone} and \ref{lem:disk_monotone}, since these carry index at least 2.  Boundary degenerations on $M^R$ cannot be ruled out by the same index considerations, but by \Cref{lem:MR_degenerations}, such classes come in pairs, and therefore will not contribute the to the total sum. Hence, the ends in this case are given by
     \begin{align*}
         \coprod_{\y \in (\bT_\alpha\cap \bT_\beta)^R} \coprod_{\{\phi_1, \phi_2| \phi_1 * \phi_2 = \phi \}}\widehat{\cM}_R(\phi_1)\times\widehat{\cM}_R(\phi_2),
     \end{align*}
     which is of course the $U^{n} \y$ component of $\partial^2 \x.$

     When $\x = \y$, there could be $\bT_\alpha$ boundary degenerations which contribute to $\del^2 \x$. Recall from Section~\ref{sec:setup} that we have a one-to-one correspondence between holomorphic strips with boundaries on $\Ta$ and $M^R$, and invariant holomorphic strips with boundaries on $\Ta$ and $\Tb$. In the latter interpretation, whenever a boundary degeneration on $\Ta$ contributes to $\del^2$, we also have a contribution from its reflection, which is a boundary degeneration on $\Tb$. Since $w \in C$, we have that the $\Tb$ degeneration has the same value of $n_w$ as that for the $\Ta$ degenration. Altogether, the contribution of disk bubbles to $\del^2\x$ consists of terms of the form
     \begin{equation}
     \label{eq:d2onew}
         \mathlarger{\sum}_{\{\phi \in \pi_2^R(\x, \bT_\alpha)| \ind(\phi) = 2\}}\# \widehat{\cN}(\phi) \cdot U^{2n_w(\phi)},
     \end{equation}
     where we see that the value of $n_w(\phi)$ in the exponent of $U$ got doubled. In any case, according to \Cref{lem:Talpha_degenerations}, when $r = 1$ (that is, $m=g$), we have $ \# \widehat{\cN}(\phi) \equiv 0 \!\!\mod 2$ so the expression \eqref{eq:d2onew} vanishes. 
\end{proof}

However, in the presence of additional basepoints, $\partial^2$ is nonzero. 

\begin{lemma}\label{lem:curvature}
    When $|\w| > 1$, $(\CFR^-(\cH, \frs), \partial)$ is a curved chain complex, with 
    \begin{align*}
        \partial^2 =\omega \cdot \id,
    \end{align*}
    where
    \begin{align*}
        \omega = U_{w_1}^2+ \hdots +U_{w_r}^2.
    \end{align*}
\end{lemma}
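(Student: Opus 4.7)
The plan is to revisit the end analysis of the $\ind_R=2$ moduli spaces carried out in the proof of \Cref{prop:onew}, tracking the new contribution from $\bT_\alpha$-boundary degenerations, which by \Cref{lem:Talpha_degenerations} are now nontrivial (as opposed to the $m=g$ case). Fix $\x,\y\in\bT_\alpha\cap M^R$ and a multiplicity vector $\n\in\Z_{\geq 0}^r$, and consider the one-dimensional moduli space
\begin{equation*}
    \cM \ = \ \coprod_{\substack{\phi\in\pi_2^R(\x,\y)\\ \ind_R(\phi)=2,\ n_\w(\phi)=\n}} \widehat{\cM}_R(\phi).
\end{equation*}
Strong admissibility and Gromov compactness provide a compactification whose codimension-one ends fall, exactly as in \Cref{prop:onew}, into four types: (i) broken invariant strips with factors of real index $1$; (ii) invariant configurations consisting of a symmetric $\bT_\alpha$ and $\bT_\beta$ bubble pair attached to an invariant strip; (iii) $R$-invariant sphere degenerations, corresponding in the quotient picture to $M^R$-boundary degenerations; and (iv) sphere bubbles, which appear only in codimension two. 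Source (iii) cancels in pairs by the $R$-reflection argument of \Cref{lem:MR_degenerations}, and (iv) is ignored, so counting ends modulo $2$ identifies the $U_\w^\n\y$-coefficient of $\partial^2\x$ with the net contribution of type (ii).

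Next I would observe that for $\x\neq\y$ the type-(ii) configurations vanish: the bubble must be attached to a constant strip, forcing the source and target of the invariant strip to coincide. Hence $\langle\partial^2\x,\,U_\w^\n\y\rangle=0$ whenever $\x\neq\y$. For $\x=\y$, I would classify the $\alpha$-bubbles. In the quotient picture, a $\bT_\alpha$-boundary degeneration at $\x$ is represented by a positive class in $\Pi_2(\x,\bT_\alpha)$ of Maslov index $2$; by \cite[Lemma 5.4]{os_linkinvts} combined with positivity, these are precisely the connected components $A_1,\dots,A_r$ of $\Sigma\smallsetminus\balpha$. Since the Heegaard diagram is multi-pointed, each $A_j$ contains a unique basepoint $w_j$, so $n_\w(A_j)=e_j$ is the $j$-th standard basis vector in $\Z^r$. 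Because $m=g+r-1>g$, \Cref{lem:Talpha_degenerations} gives $\#\widehat{\cN}_J(A_j)\equiv 1\pmod 2$ for every $j$.

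Finally I would track the $U$-exponents carefully. Under the correspondence $\cR$ of \eqref{eqn:real_correspondence}, a $\bT_\alpha$-disk in the class $A_j$ lifts to the symmetric invariant configuration $A_j\cup\tau(A_j)$. Since every basepoint lies on the fixed set $C$ and is therefore fixed by $\tau$, the invariant domain has multiplicity $2$ at $w_j$ and $0$ at every other basepoint, so its $n_\w$-value is $2e_j$. Combining the above gives
\begin{equation*}
    \partial^2\x \ = \ \sum_{j=1}^r \#\widehat{\cN}_J(A_j)\cdot U_\w^{2e_j}\,\x \ = \ \Bigl(\sum_{j=1}^r U_{w_j}^2\Bigr)\x \ = \ \omega\cdot\x,
\end{equation*}
as claimed. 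The main conceptual point, rather than a genuine obstacle, is the doubling of the $U$-exponents: each basepoint lies in the fixed locus $C$, so both halves of the doubled bubble $A_j\cup\tau(A_j)$ hit it, producing a curvature $\omega$ that is a sum of squares rather than a sum of linear terms in the $U_{w_j}$.
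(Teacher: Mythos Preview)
Your proof is correct and follows essentially the same approach as the paper: both extend the end analysis of \Cref{prop:onew}, identify the $\bT_\alpha$-degenerations with the components $A_j$ of $\Sigma\smallsetminus\balpha$, invoke \Cref{lem:Talpha_degenerations} for the mod $2$ count, and trace the doubling of the $U$-exponent to the fact that each $w_j\in C$ lies in both $A_j$ and $\tau(A_j)$.
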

\begin{proof}
    The proof of Proposition~\ref{prop:onew} extends to this case, with one difference: when $\x = \y$, we have $\Ta$ boundary degenerations which contribute to $\partial^2 \x$. Let $\Sigma \smallsetminus \bm{\alpha} = A_1 \cup \hdots \cup A_r$ where $A_j$ contains the basepoint $w_j$. These domains are precisely the classes of $\Pi_2(\x, \Ta)$ with index 2  and by \Cref{lem:Talpha_degenerations} we have  $\# \widehat{\cN}(A_j) \equiv 1 \!\!\mod{2}$ for each of these classes. 
    
    A boundary degeneration on $\Ta$ with domain $A_j$ is paired with a  boundary degeneration on $\Tb$ with domain $B_j = - \tau(A_j)$. Since $w_j \in C$, $w_j$ is the unique basepoint contained in $B_j$. Hence, $U^{n_\w(A_j + B_j)}_\w = U^2_{w_j}$. Therefore,
    \[
         \partial^2 \x  = \mathlarger{\sum}_{\{\phi \in \pi_2^R(\x, \bT_\alpha)| \ind_R(\phi) = 2\} }\# \widehat{\cN}(\phi) \cdot U^{2n_\w(\phi)}\x = (U_{w_1}^2+ \hdots +U_{w_r}^2)\x,
   \]
     as claimed.
\end{proof}

\begin{rem}
    In light of \Cref{lem:curvature}, when $|\w | > 1$ one can either simply work with curves complexes up to chain homotopy equivalence, or work over the quotient ring $\F[U_\w]/(\sum_i U_{w_i}^2).$ In the latter case we get a chain complex and we can take its homology. 
\end{rem}

As usual, there are several algebraic variations of the real Heegaard Floer groups. Let:
\begin{align*}
    \CFR^\infty(\cH, \frs) &:= \CFR^-(\cH, \frs)\otimes_{\F[U_\w]}\F[U_\w, U_\w^{-1}], \\ 
    \CFR^+(\cH, \frs) &:= \CFR^\infty(\cH, \frs)/\CFR^-(\cH, \frs),\\ \widehat{\CFR}(\cH,\frs) &:= \CFR^-(\cH, \frs)\otimes_{\F[U_\w]}\F[U_\w]/(U_\w).
\end{align*}
These groups are related by the usual long exact sequences \cite[Section 4]{os_holodisks}. 

In the case $|\w|=1$, the above groups form chain complexes, and we can take their homology. We obtain the different versions of {\em real Heegaard Floer homology}:
\begin{align*}
    \HFR^-(Y, \tau, w, \s), \quad \HFR^\infty(Y, \tau, w, \s), \quad \HFR^+(Y, \tau, w, \s), \quad \widehat{\HFR}(Y, \tau, w, \s).\end{align*}

For $\circ \in \{-, \infty, +, \hat{\phantom{o}} \}$, we let
$$ \HFR^\circ(Y, \tau, w) = \bigoplus_{\s \in \RSpinC(Y, \tau)} \HFR^\circ(Y, \tau, w, \s).$$

    When the fixed point set $C$ is connected and we have a single basepoint $w$, we may also drop $w$ from the notation and denote the real Heegaard Floer groups by $\HFR^\circ(Y, \tau, \s)$ and $\HFR^\circ(Y, \tau)$.

\begin{rem}
    When $|\w| > 1$, we still have that $\partial^2 = 0$ for the hat version, so we are free to take homology. (See the proof of \Cref{lem:curvature}.) Thus, $\HFRh(Y,\tau, \w,\s)$ is always well defined. Moreover, since to define the hat group we only use domains that avoid the basepoints, we have a refined decomposition according to the relative real $\SpinC$ structures introduced in Section~\ref{sec:relative}:
    $$ \HFRh(Y, \tau, \w) = \bigoplus_{\srel \in \RSpinCrel(Y, \tau, \w)} \HFRh(Y, \tau, \w, \srel).$$
\end{rem}

\subsection{Grading}\label{subsec:grading} As shown in \cite[Section 4.2.1]{os_holodisks}, the Heegaard Floer complexes $\CF^\circ(\Ta, \Tb, \frs)$ have a relative $\Z/\delta(\frs)$-grading, where 
\begin{align*}
    \delta(\frs) = \gcd_{\xi \in H_2(Y;\Z)} \langle c_1(\frs), \xi \rangle.
\end{align*}

According to \Cref{eqn:CFR_grading}, when $\delta(\frs) = 2N$, we obtain a relative $\Z/N$-grading on the real Floer complexes $\CFR^\circ(\cH, \s)$. In particular, for any $(Y, \tau)$ with $H^1(Y;\Z)^{-\tau^*} = 0$, there are no real invariant periodic domains and  the associated complexes come with a relative $\Z$-grading. 

As mentioned in Section~\ref{sec:gradings}, one way to equip a Lagrangian Floer complex $\CF(L_0, L_1)$ with an absolute $\Z/2$-grading is  by orienting $L_0$ and $L_1$. In our situation, this is usually not possible. Recall that
\begin{align*}
    M^R = \coprod_k \Sym^{k}(\Sigma/\tau) \times \Sym^{m-2k}(C).
\end{align*}
The space $\Sym^{j}(S^1)$ is a disk bundle over $S^1$ which is orientable if and only if $j$ is odd; see \cite{Morton}. Thus, $\Sym^{m-2k}(C)$  may  be non-orientable. Furthermore, the quotient $\Sigma/\tau$ may also be non-orientable. See \Cref{prop:constructing real HDs}.

In some situations, we can arrange to work with real Heegaard diagram such that $\Sigma/\tau$ is orientable. This can be done when the image of $C$ in the quotient is null-homologous (see \Cref{rem:orientable quotients}). After we remove  basepoints from each component of $C$, the Lagrangian $M^R$ will become orientable. 
Hence, we obtain another $\Z/2$-grading defined by a choice of orientations of the Lagrangians. This grading agrees with the relative Maslov grading when every real $\SpinC$-structure corresponds to a unique relative real $\SpinC$-structure (for example, this holds for branched covers of knots in $S^3)$; compare to \Cref{sec:relative} and \Cref{sec:euler}. In these cases, it follows from invariance that the $\Z/2$-grading obtained from orienting the Lagrangians is well-defined. This will allow us to compute the Euler characteristic of $\widehat{\HFR}(Y,\tau,\w, \frs)$ (where each component of $C$ contains at least one point of $\w$), which we do in Section~\ref{sec:euler}. In general, we expect that the $\Z/2$-grading determined by orienting the Lagrangians  depends not only on $(Y,\tau, \w)$, but also on the integral homology class of $[\Sigma/\tau]$.


\subsection{Homology Action} Let $\Omega(\Ta, \Tb)$ be the space of paths from $\Ta$ to $\Tb$ based at the constant path $\x \in \Ta \cap \Tb$. The Heegaard Floer complexes $\CF^\circ(\Ta, \Tb, \s)$ can be equipped with an action of $H^1(\Omega(\Ta, \Tb))$, by viewing a class $\phi \in \pi_2(\x, \y)$ as an arc in $\Omega(\Ta, \Tb)$ based at the constant paths $\x$ and $\y$ and defining
\begin{align*}
    A_\zeta(\x) = \mathlarger{\sum}_{\substack{\phi \in \pi_2(\x, \y)\\ \ind(\phi) = 1}} \zeta(\phi)\cdot \# \widehat{\cM}(\phi) \cdot U^{n_{\bm w}(\phi)} \y.
\end{align*}
Following \cite{ni_homology_actions,zemke_graphcob}, we may give a more concrete description of this action. Given a oriented, closed loop $\gamma$ in $Y$ representing a class in $H_1(Y;\Z)$, we may homotope it to an immersed curve in $\Sigma$ which intersects the alpha and beta curves transversely and avoids their intersection points. 
Recall that we defined a quantity $a(\gamma, \phi)$, which was a signed count of intersections between $\gamma$ and $\partial_\alpha(D(\phi))$; see Equation \eqref{eqn: arc-domain count}. This quantity is clearly additive, in the sense that if $\phi_1 \in \pi_2(\x, \y)$ and $\phi_2 \in \pi_2(\y, \z)$, then 
\begin{align*}
    a(\gamma, \phi_1* \phi_2) = a(\gamma, \phi_1) + a(\gamma,\phi_2).
\end{align*}
This yields a map
\begin{align*}
    a: H_1(Y;\Z) \ra H^1(\Omega(\Ta, \Tb)),
\end{align*}
given by representing classes in $H_1(Y;\Z)$ by curves in $\Sigma$ and defining $a([\gamma], -) \in H^1(\Omega(\Ta, \Tb))$ by 
\begin{align*}
 a([\gamma], \phi) = a(\phi) \cdot \gamma, 
\end{align*}
where $a(\phi)$ is the $\alpha$-boundary of $\phi$. Further, this map descends to the quotient $H_1(Y;\Z)/\tors$ \cite{ni_homology_actions,zemke_graphcob}. In fact, the elements of $H^1(\Omega(\Ta, \Tb))$ which act non-trivially on the Floer complexes are those in the image of this map.

In our setting, we have a map $\Omega(\Ta, M^R) \ra \Omega(\Ta, \Tb)$ given by applying our usual trick, taking a strip $\phi$ to the symmetric strip $\phi\cup R(\phi)$. This induces a map $H^1(\Omega(\Ta, \Tb)) \ra H^1(\Omega(\Ta, M^R))$. Hence, the composition 
\begin{align*}
    H_1(Y;\Z)^{-\tau^*}/\tors \ra H_1(Y;\Z)/\tors \xra{a} H^1(\Omega(\Ta, \Tb)) \ra H^1(\Omega(\Ta, M^R)),
\end{align*}
induces an action of $H_1(Y;\Z)^{-\tau^*}/\tors$ on $\CF^\circ(\Ta, M^R)$, defined by 
\begin{align}\label{eqn:homology action}
    A_\gamma(\x)= \sum_{\y \in \Ta \cap M^R} \sum_{\substack{\phi \in \pi_2^R(\x, \y)\\\ind_R(\phi) = 1}} a(\gamma, \phi)\cdot \# \widehat{\cM}_R(\phi) \cdot U^{n_\w(\phi)} \cdot \y.
\end{align}
This endomorphism is a chain map.  
\begin{lemma}
    Let $\cH = (\Sigma, \bm \alpha, \bm \beta, \w)$ be a real Heegaard diagram, and let $\gamma$ be an immersed closed curve in $\Sigma$ representing an element of $H_1(Y; \Z)^{-\tau_*}/\tors$. Then, 
    \begin{align*}
        A_\gamma \partial + \partial A_\gamma = 0.
    \end{align*}
    If $\eta$ is another such curve, 
    \begin{align*}
        A_{\gamma* \eta} = A_\gamma+A_\eta. 
    \end{align*}
\end{lemma}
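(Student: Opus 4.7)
The second claim is immediate: bilinearity of the intersection pairing on $\Sigma$ gives $a(\gamma \ast \eta, \phi) = a(\gamma, \phi) + a(\eta, \phi)$ for every $\phi \in \pi_2^R$, so plugging into the definition \eqref{eqn:homology action} yields $A_{\gamma \ast \eta} = A_\gamma + A_\eta$.

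For the first claim, the plan is a standard chain map argument from Floer theory, adapted to handle bubble degenerations. Additivity of the domain under concatenation gives $a(\gamma, \phi_1 \ast \phi_2) = a(\gamma, \phi_1) + a(\gamma, \phi_2)$, so one computes
\begin{align*}
(A_\gamma \partial + \partial A_\gamma)\x = \sum_{\z} \sum_{\substack{\phi \in \pi_2^R(\x, \z) \\ \ind_R(\phi) = 2}} a(\gamma, \phi) \Bigl( \sum_{\phi = \phi_1 \ast \phi_2} \#\widehat{\cM}_R(\phi_1)\, \#\widehat{\cM}_R(\phi_2) \Bigr) U_\w^{n_\w(\phi)}\, \z,
\end{align*}
in which the inner factor counts the broken-strip ends of the 1-dimensional moduli space $\widehat{\cM}_R(\phi)$. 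By Gromov compactness, the total codimension-one ends of $\widehat{\cM}_R(\phi)$ consist of broken invariant strips together with bubble degenerations; the latter are either paired $\bT_\alpha$--$\bT_\beta$ disk bubble configurations or $R$-invariant sphere bubbles in the invariant-strip picture (equivalently, $\bT_\alpha$- and $M^R$-degenerations in the $\bT_\alpha$/$M^R$ picture). Since the total end count is even mod $2$, it suffices to show that the $a(\gamma, \cdot)$-weighted bubble count vanishes mod $2$.

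For a paired $\bT_\alpha$--$\bT_\beta$ configuration of real index $2$, the combined domain is $A_j + B_j$ with $B_j = -\tau(A_j)$ and $A_j$ a region of $\Sigma \smallsetminus \balpha$; its $\alpha$-boundary is $\partial A_j$, which bounds the 2-chain $A_j$ in $\Sigma$ and hence is nullhomologous, so $a(\gamma, A_j + B_j) = \gamma \cdot \partial A_j = 0$. For $R$-invariant sphere bubbles, the underlying $\Sigma$-domain is a multiple of $[\Sigma]$, a closed cycle with no $\alpha$- or $\beta$-boundary, so $a(\gamma, \cdot)$ vanishes identically. Combining these gives $(A_\gamma \partial + \partial A_\gamma)\x = 0$.

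The main technical obstacle is the edge case $m = g$, where $M^R$ has minimal Maslov number $1$ and the codimension-one ends may also include index-$1$ sphere bubbles attached to index-$1$ strips. Even here the same vanishing mechanism applies: the sphere contribution factors through $a(\gamma, \text{sphere}) = 0$, while the residual strip contributions cancel in $\F$-coefficients via the $R$-pairing of sphere bubbles from Lemma~\ref{lem:MR_degenerations}, which identifies moduli space counts $\#\widehat{\cN}(\phi) = \#\widehat{\cN}(R(\phi))$ at equal values of $n_\w$.
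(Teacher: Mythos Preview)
Your proof is correct and follows essentially the same approach as the paper: analyze the ends of real index-$2$ moduli spaces, identifying broken strips with $(A_\gamma \partial + \partial A_\gamma)\x$ and showing bubble contributions vanish---for $\Ta$-degenerations because $a(\gamma, A_j + B_j) = \gamma \cdot \partial A_j = 0$ (the paper phrases this as ``$a(\gamma, A_w) = 0$ since $\gamma$ is closed''), and for $M^R$-degenerations via the $R$-pairing of Lemma~\ref{lem:MR_degenerations} and Remark~\ref{rem:index1}. The paper's proof is terser but the ingredients are identical.
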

\begin{proof}
    The first claim follows just as in \cite{ni_homology_actions,zemke_graphcob} by considering classes with index 2, and looking at the ends of the associated compactified moduli spaces. Things are complicated slightly by the presence of bubbles on $\Ta$, so we spell out the proof. For each index 2 class $\phi \in \pi_2^R(\x,\z)$, the quantity
    \begin{align*}
        \sum_{\substack{\phi_0 \in \pi_2^R(\x, \y),\phi_1 \in \pi_2^R(\y, \z)\\ \ind_R(\phi_0) = \ind_R(\phi_1) = 1 \\\phi =\phi_0*\phi_1}} (a(\gamma, \phi_0) + a(\gamma, \phi_1))\cdot \# \widehat{\cM}_R(\phi_0)\# \widehat{\cM}_R(\phi_1) \cdot U^{n_\w(\phi_0)+n_\w(\phi_1)}  \z.
    \end{align*}
    is zero, provided $\z \ne \x$, as disk bubbles on $\Ta$ cannot appear by index considerations. Bubbles on $M^R$ cancel in pairs. Summing over all real index 2 classes yields the $\z$ component of $(A_\gamma\partial + \partial A_\gamma)(\x)$.  
    
    In the case that $\z = \x$, disk bubbles may appear, and the $\x$ component of $(A_\gamma\partial + \partial A_\gamma)(\x)$ is given by 
    \begin{align*}
        \sum_{w \in \w} a(\gamma, A_w) \cdot \#\widehat{\cN}(A_w) \cdot U_w^2 \cdot \x,
    \end{align*}
    where $A_w$ is the region of $\Sigma \smallsetminus \balpha$ containing $w \in \w$. However, since $\gamma$ is a closed curve, the quantity $a(\gamma, A_w)$ is zero for all $w$. Hence, the sum above still vanishes.
    
    Additivity follows from the additivity of the quantity $a(\gamma, \phi)$. 
\end{proof}

\begin{lemma}
\label{lem:AA}
    Let $\gamma$ be an immersed curve in $\Sigma$ such that $[\gamma] = 0 \in H_1(Y;\Z)^{-\tau_*}/\tors$. Then, $A_\gamma \simeq 0$. Further, if $\eta$ is any class in $H_1(Y;\Z)^{-\tau_*}/\tors$, then $A_\eta^2 \simeq 0.$
\end{lemma}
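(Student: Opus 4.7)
The plan is to adapt the classical proofs of analogous statements for the homology action on Heegaard Floer homology, in the style of \cite{ni_homology_actions, zemke_graphcob}, to the real setting. The new features to be tracked are the pair-cancellation of $M^R$-boundary degenerations (\Cref{lem:MR_degenerations}) and, when $|\w|>1$, the curvature $\omega=\sum_i U_{w_i}^2$ produced by $\Ta$-boundary degenerations.

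For the first claim, I would first show that the chain-homotopy class of $A_\gamma$ depends only on $[\gamma]\in H_1(Y;\Z)^{-\tau_*}/\tors$ by producing explicit chain homotopies for the two elementary moves: (i) isotopy of $\gamma$ across a single point of $\balpha\cap\bbeta$, and (ii) handle-sliding $\gamma$ across an attaching circle (encoding the relations that make the alpha and beta curves trivial in $H_1(Y;\Z)$). Each homotopy counts real invariant strips weighted by an appropriate correction to the $\alpha$-boundary multiplicity, exactly as in the classical proofs; the identities $\partial H+H\partial=A_\gamma-A_{\gamma'}$ follow from examining the ends of the relevant one-dimensional moduli spaces, where $M^R$-bubbles cancel in pairs and $\Ta$-bubbles $A_w$ contribute with weight $a(\gamma,A_w)=0$ (see the next paragraph). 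Once this descent is established, $[\gamma]=0$ is represented by the empty curve, for which $A_\gamma=0$ on the nose.

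For the second claim, I would construct the chain homotopy directly. Set
\begin{align*}
    H(\x) = \sum_{\y}\sum_{\substack{\phi\in\pi_2^R(\x,\y)\\ \ind_R(\phi)=1}} \binom{a(\eta,\phi)}{2}\,\#\widehat{\cM}_R(\phi)\,U^{n_\w(\phi)}_\w\,\y.
\end{align*}
The binomial identity $\binom{a+b}{2}\equiv\binom{a}{2}+\binom{b}{2}+ab\pmod{2}$, together with additivity of $\phi\mapsto a(\eta,\phi)$ under splicing, gives
\begin{align*}
    (\partial H+H\partial)(\x) = A_\eta^2(\x) + \sum_\z \sum_{\substack{\phi\in\pi_2^R(\x,\z)\\\ind_R(\phi)=2}} \binom{a(\eta,\phi)}{2}\, N(\phi)\,U^{n_\w(\phi)}_\w\,\z,
\end{align*}
where $N(\phi)=\sum_{\phi_1*\phi_2=\phi}\#\widehat{\cM}_R(\phi_1)\#\widehat{\cM}_R(\phi_2)\in\F$ is the count of broken-strip ends of the 2-dimensional moduli $\widehat{\cM}_R(\phi)$. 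By Gromov compactness, $N(\phi)$ equals the mod-$2$ count of disk-bubble ends of $\widehat{\cM}_R(\phi)$. The $M^R$-bubbles cancel in pairs by \Cref{lem:MR_degenerations}, and on any $\Ta$-bubble with domain $A_w$ one has $a(\eta,A_w)=\partial A_w\cdot\eta=0$, since $\partial A_w$ bounds $A_w$ in $\Sigma$ and hence vanishes as a homology class there. Therefore $\binom{a(\eta,\phi)}{2}=0$ on all surviving terms, the error cancels, and $\partial H+H\partial=A_\eta^2$.

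The main technical obstacle is the careful book-keeping of 2-dimensional moduli ends in the multi-pointed, curved setting, where $\partial^2=\omega\cdot\id$ forces extra $\Ta$-bubble ends to appear and interact with the binomial identity. Fortunately, the uniform observation $a(\gamma,A_w)=0$ for closed 1-cycles $\gamma$ — which holds because $\partial A_w$ is itself a boundary in $\Sigma$ — handles both claims and the curved and uncurved cases together.
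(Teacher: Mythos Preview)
The paper's own proof is simply a citation to \cite[Lemmas 5.5, 5.6]{zemke_graphcob}, and your proposal spells out essentially that argument; in particular, your treatment of the second claim via the binomial weight $\binom{a(\eta,\phi)}{2}$ is the standard one and is correct in the real setting, with the bubble contributions vanishing exactly as you say.

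There is, however, a genuine gap in your first claim. Your moves (i) and (ii) only establish that $A_\gamma$ depends on $[\gamma]\in H_1(Y;\Z)$, not on its class modulo torsion. The hypothesis $[\gamma]=0\in H_1(Y;\Z)^{-\tau_*}/\tors$ means $[\gamma]$ is \emph{torsion}, not that it is zero in $H_1(Y;\Z)$, so it cannot in general be ``represented by the empty curve'' via isotopies and slides over attaching circles. The missing step (as in Zemke's Lemma~5.5) is this: if $[\gamma]$ is torsion in $H_1(Y;\Z)$, then for any periodic domain $P$ one has
\[
a(\gamma,P)=[\partial_\alpha P]\cdot[\gamma]=\langle H(P),[\gamma]\rangle=0,
\]
since torsion classes pair trivially with $H_2(Y;\Z)$. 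Hence $a(\gamma,\phi)$ depends only on the endpoints $\x,\y$ and not on $\phi$. Fixing a reference intersection point $\x_0$ and setting $f(\x)=a(\gamma,\phi_{\x_0,\x})$ for any $\phi_{\x_0,\x}\in\pi_2^R(\x_0,\x)$, the map $H(\x)=f(\x)\cdot\x$ furnishes the required homotopy $\partial H+H\partial=A_\gamma$, with the bubble terms again vanishing because $a(\gamma,A_w)=0$. Once this is added, your argument is complete and matches the paper's intended route.
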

\begin{proof}
    These results follow just as in \cite[Lemmas 5.5, 5.6]{zemke_graphcob}. 
\end{proof}



\section{Proof of invariance}\label{sec:invariance}
Invariance of $\CFR^\circ(Y, \tau, \w)$ follows much as in the standard case: we define maps associated to changes in the almost complex structure, real isotopies, real handleslides, and real stabilizations, and show that these operations induce homotopy equivalences between (possibly curved) complexes.

\subsection{Almost Complex Structures and Isotopies}
As $\CFR^\circ(Y, \tau, \s)$ is defined to be the usual Lagrangian Floer complex of the alpha torus with the fixed set, the standard proof of the independence of the choice of almost complex structure holds in our setting. 

\begin{prop}
\label{prop:invarianceJ}
    Let $(Y, \tau, \w, \s)$ be a real pointed 3-manifold with a $\frs$-strongly admissible real Heegaard diagram $(\Sigma, \bm \alpha, \bm \beta, \w, \tau)$. Then, the complex $\CFR^\circ_J(\Sigma, \bm \alpha, \bm \beta, \w, \s)$ is independent of the choice of almost complex structures $J \in \JR$, up to chain homotopy equivalence.
\end{prop}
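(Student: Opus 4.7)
The plan is to adapt the continuation map argument sketched in Lemma~\ref{lem:independence of acs} (real Lagrangian Floer invariance under choice of $J \in \cJ_R$) to the present Heegaard-theoretic setting, keeping track of the $\F[U_\w]$-module structure and of the curvature contributions that appear when $|\w| > 1$.

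Given two generic $J_0, J_1 \in \cJ_R$, pick a generic path $(J_s)_{s \in \R} \subset \cJ_R$ that agrees with $J_0$ for $s \ll 0$ and with $J_1$ for $s \gg 0$. Such a path exists because $\cJ_R$ is contractible. For each pair of invariant intersection points $\x, \y \in (\Ta \cap \Tb)^R$ and each class $\phi \in \pi_2^R(\x, \y)$, let $\cM^{J_s}_R(\phi)$ denote the moduli space of real invariant strips in the class $\phi$, solving Floer's equation with the $s$-dependent almost complex structure $J_s$; transversality holds generically, by the averaging argument of Section~\ref{subsec:transversality}. Since the path is non-autonomous, there is no $\R$-translation action to quotient by, and $\indR(\phi) = 0$ classes cut out zero-dimensional moduli spaces. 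Define
\[
\Phi_{J_s}(\x) = \sum_{\y} \sum_{\substack{\phi \in \pi_2^R(\x,\y) \\ \indR(\phi) = 0}} \#\cM^{J_s}_R(\phi) \cdot U_\w^{n_\w(\phi)} \cdot \y,
\]
and extend $\F[U_\w]$-linearly. Strong $\frs$-admissibility (which is preserved by the argument of Lemma~\ref{lem:isotopy of real admissible} and is unaffected by varying $J$) ensures the sum is finite in each $U_\w$-degree.

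The key computation is that $\Phi_{J_s}$ is a morphism of (curved) chain complexes, i.e. $\Phi_{J_s} \circ \partial_{J_0} + \partial_{J_1} \circ \Phi_{J_s} = 0$. This follows from counting the ends of the one-dimensional moduli spaces $\cM^{J_s}_R(\phi)$ with $\indR(\phi) = 1$. The possible ends are: strip breakings at either end of $\R$ (which give the two compositions above), and bubbles. Sphere bubbles are codimension $2$ by Lemma~\ref{lem:sym_monotone}. Disk bubbles on $M^R$ cancel in pairs because $R$ acts freely on the space of non-constant holomorphic disks with boundary on $M^R$, exactly as in Lemma~\ref{lem:MR_degenerations} and Remark~\ref{rem:index1}. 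Disk bubbles on $\Ta$ (which can appear only at $\x = \y$) contribute exactly the curvature term $\omega = \sum_i U_{w_i}^2$; since the curvature is central and the same on both sides, these cancel in the difference $\Phi_{J_s}\partial_{J_0} + \partial_{J_1} \Phi_{J_s}$. Hence $\Phi_{J_s}$ is a morphism of curved complexes, so descends to all four flavors $\circ \in \{-, \infty, +, \widehat{\phantom{o}}\}$.

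To promote $\Phi_{J_s}$ to a chain homotopy equivalence, I would follow the usual two-parameter construction: choose a generic two-parameter family $J_{s,T}$, $T \in [0,1]$, with $J_{s,0} = J_s \ast J_{1-s}$ (concatenation) and $J_{s,1}$ constant in $s$, and define
\[
H(\x) = \sum_{\y} \sum_{\substack{\phi \in \pi_2^R(\x,\y) \\ \indR(\phi) = -1}} \#\cM^{J_{s,T}}_R(\phi) \cdot U_\w^{n_\w(\phi)} \cdot \y.
\]
Examining the ends of the one-dimensional moduli spaces $\cM^{J_{s,T}}_R(\phi)$ for $\indR(\phi) = 0$ yields $\Phi_{J_s} \circ \Phi_{J_{1-s}} - \id = \partial H + H \partial$ (again using that $M^R$-bubbles cancel in pairs and $\Ta$-bubbles contribute the same curvature on both sides, so they cancel in the commutator). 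A symmetric argument handles the reverse composition. The main obstacle is the careful bookkeeping of the bubble contributions in the presence of multiple basepoints: one needs that the curvature picked up by $\Ta$-bubbles in the chain-map identity is exactly $\omega \cdot \id - \id \cdot \omega = 0$, and that the $M^R$-bubble cancellation is truly generic (which it is, by the free $R$-action on the moduli space of non-constant bubbles, using that $R$ sends a holomorphic disk to itself only if it is constant since $M^R$ is Lagrangian). Once these are in hand, invariance under $J$ follows for all four flavors of $\CFR^\circ$.
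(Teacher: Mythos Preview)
Your approach is exactly the paper's: the proof there is a one-line citation of Lemma~\ref{lem:independence of acs}, and you have simply unpacked that continuation-map argument with the $U_\w$-weights inserted. So the strategy is correct and identical.

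One imprecision worth correcting: in the chain-map identity $\Phi_{J_s}\partial_{J_0} + \partial_{J_1}\Phi_{J_s} = 0$ you are analyzing ends of moduli spaces $\cM^{J_s}_R(\phi)$ with $\indR(\phi) = 1$. A $\Ta$-bubble has index at least $2$ (Lemma~\ref{lem:disk_monotone}), so it would have to be attached to a main component of index $\leq -1$; in the non-autonomous problem such a component has expected dimension $\leq -1$ and is generically empty. Thus $\Ta$-bubbles simply do not appear in this count --- they do not ``contribute $\omega$ and then cancel''. The curvature term shows up only in the $\indR = 2$ analysis for $\partial^2$, not here. The same remark applies to your homotopy $H$: there you are looking at $\indR(\phi) = 0$ classes in a one-parameter family indexed by $T$, and again an index-$2$ $\Ta$-bubble would leave an index $\leq -2$ main component, which is generically empty even in the family. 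So your conclusions are right, but the mechanism ruling out $\Ta$-bubbles is index, not cancellation of curvature.
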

\begin{proof}
    This follows from \Cref{lem:independence of acs}. 
\end{proof}

The change of almost complex structure maps also commute with the $H_1(Y;\Z)^{-\tau_*}/\tors$-action. 

\begin{lem}
    For $\gamma \in H_1(Y;\Z)^{-\tau_*}/\tors$,
    \begin{align*}
        A_\gamma \circ \Phi_{J_s} \simeq \Phi_{J_s} \circ A_\gamma
    \end{align*}
    where $J_s$ is a path in $\JR$.
\end{lem}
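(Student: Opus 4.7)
The plan is to construct an explicit chain homotopy in the usual Floer-theoretic style, weighting the continuation moduli spaces by the homology coefficient $a(\gamma, \phi)$. Concretely, given a generic path $J_s$ in $\JR$, define
\begin{equation*}
    H_\gamma(\x) \;=\; \sum_{\y} \ \sum_{\substack{\phi \in \pi_2^R(\x,\y)\\ \ind_R(\phi)=0}} a(\gamma,\phi) \cdot \#\cM_R^{J_s}(\phi) \cdot U_\w^{n_\w(\phi)}\,\y,
\end{equation*}
where $\cM_R^{J_s}(\phi)$ is the moduli space of invariant strips with respect to the $2$-parameter family $J_s$, as in the proof of Lemma~\ref{lem:independence of acs}. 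The goal is to show that
\begin{equation*}
    A_\gamma \circ \Phi_{J_s} + \Phi_{J_s}\circ A_\gamma \;=\; \partial \circ H_\gamma + H_\gamma \circ \partial.
\end{equation*}

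First I would enumerate the ends of the $1$-dimensional moduli spaces $\cM_R^{J_s}(\phi)$ for $\phi \in \pi_2^R(\x,\z)$ with $\ind_R(\phi)=1$. Apart from the usual breakings into a $J_0$-holomorphic strip followed by a $J_s$-strip (giving $\Phi_{J_s}\circ \del$) and a $J_s$-strip followed by a $J_1$-strip (giving $\del \circ \Phi_{J_s}$), one can have sphere bubbles (which appear in codimension two and are irrelevant) and disk bubbles on $\Ta$ or on $M^R$. Weighting each end by the additive quantity $a(\gamma, \cdot)$ and using the splitting $a(\gamma, \phi_0 * \phi_1)=a(\gamma,\phi_0)+a(\gamma,\phi_1)$ reproduces the identity above, provided the bubble contributions vanish.

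The main point to check is precisely that the bubble contributions vanish after weighting by $a(\gamma, \cdot)$. For $M^R$-boundary degenerations, the involution $R$ pairs each class $\phi$ with a distinct $R(\phi)$ carrying the same $a(\gamma,\phi)$ and the same value of $n_\w$, so the pairs cancel mod $2$ exactly as in Lemma~\ref{lem:MR_degenerations}. For $\Ta$-boundary degenerations (which can appear only when $\x=\z$ and $|\w|>1$) each contributing class is a region $A_w\subset \Sigma\smallsetminus \bm\alpha$ containing a single basepoint $w\in\w$; but $\gamma$ is a \emph{closed} immersed curve, so the algebraic intersection $a(\gamma, A_w)=\del_\alpha A_w \cdot \gamma$ vanishes, killing the contribution just as in the proof that $A_\gamma$ commutes with $\del$. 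This is the step that will require the most care, but it is essentially bookkeeping once one recalls that $\Phi_{J_s}$ is defined by the same type of moduli spaces as $\del$, with disk bubbles handled exactly as in Lemmas~\ref{lem:Talpha_degenerations} and \ref{lem:MR_degenerations}.

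Finally, the fact that $H_\gamma$ is well-defined (i.e.\ the sum is finite) follows from strong $\frs$-admissibility, and its independence from the choice of representative curve $\gamma$ up to further chain homotopy can be handled by the same argument used in Lemma~\ref{lem:AA}. This gives the desired homotopy $A_\gamma \circ \Phi_{J_s} \simeq \Phi_{J_s}\circ A_\gamma$.
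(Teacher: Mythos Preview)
Your proposal is correct and follows essentially the same approach as the paper: define the chain homotopy $H_\gamma$ by weighting the continuation moduli spaces $\cM_R^{J_s}(\phi)$ with $a(\gamma,\phi)$, then examine the ends of the one-dimensional moduli spaces to obtain the homotopy identity. In fact, your sketch is more detailed than the paper's own proof, which simply asserts that the ends of $\overline{\cM_R^{J_s}(\phi)}$ for $\ind_R(\phi)=1$ correspond to the four terms and omits the explicit bubble discussion you supply.
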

\begin{proof}
    Keeping the notation from the proof of \Cref{lem:independence of acs}, define 
    \begin{align*}
        H_{J_s}(\x) = \sum_{\y} \sum_{\phi \in \pi_2(\x,\y), \ind_R(\phi) = 0} a(\phi, \gamma)\# \cM_R^{J_s}(\phi) U^{n_\w(\x)}\y.
    \end{align*}
    If $\phi \in \pi_2(\x, \y)$ is a strip with $\ind_R(\phi) = 1$, the compactified space $\overline{\cM_R^{J_s}(\phi)}$ has ends which correspond to the terms in $A_\gamma \circ \Phi_{J_s}$, $\Phi_{J_s} \circ A_\gamma$, $\partial_{J_1}\circ H_{J_s}$, and $H_{J_s}\circ \partial_{J_0}$. Hence, the sum of these maps is trivial.
\end{proof}

Invariance under isotopies of the attaching curves which do not introduce new intersection points follows from the independence of the almost complex structure just as in \cite[Theorem 7.3]{os_holodisks}. Hence, it suffices to consider real isotopies of the attaching circles which introduce canceling pairs of intersection points.  Let $\Psi_s$ be an exact Hamiltonian isotopy of the alpha curves which is supported in the complement of the basepoints. We can assume $\Psi_s$ depends on a parameter $s \in \R$ but is constant for $s \leq 0$ and also for $s \geq 1$. Of course, $\Psi_s$ induces an isotopy of $\bT_\beta$ as well. As usual, for $\x \in \bT_\alpha \cap M^R$ and $\y \in \Psi_1(\bT_\alpha) \cap M^R$, we consider homotopy classes of disks $\pi_2^{\Psi_s}(\x,\y)$ with \emph{dynamic boundary conditions}, which are homotopy classes of maps $u: \R \times [0,1] \ra \Sym^m(\Sigma)$ satisfying  
\begin{align*}
u(s,0) \in \Psi_s(\bT_\alpha),\ \ 
    u(s,1) \in M^R\\
    \lim_{s\ra -\infty} u(s,t) = \x,  \ \ 
    \lim_{s\ra\infty} u(s,t) = \y.
\end{align*}
Let $\cM^{\Psi_s}(\phi)$ be the moduli space of holomorphic strips in the class $\phi \in \pi_2^{\Psi_s}(\x,\y)$. Define 
\begin{align*}
    \Gamma_{\Psi_s}: \CFR^\circ(\Sigma,\bm \alpha, \bm \beta, \w, \s) \ra \CFR^\circ(\Sigma,\bm \alpha', \bm \beta',\w, \s)
\end{align*}
by 
\begin{align}\label{eqn:continuation_map}
    \Gamma_{\Psi_s}(\x) = 
    \mathlarger{\sum}_{\y}
    \mathlarger{\sum}_{\substack{\phi \in \pi_2^{\Psi_s}(\x, \y)\\ \mu(\phi) = 0}} \# \widehat{\cM}^{\Psi_s}(\phi) U^{n_{\bm w}(\phi)} \y,
    \end{align}
where $\mu(\phi)$ is the expected dimension of $\widehat{\cM}^{\Psi_s}(\phi)$. 

\begin{prop}
\label{prop:invariance isotopy}
    Let $(\Sigma, \bm \alpha, \bm \beta, \w)$ be a real Heegaard diagram for $(Y, \tau)$. Let  $(\Sigma, \bm \alpha', \bm \beta', \w)$ be the real Heegaard diagram obtained by applying an equivariant isotopy to $\alpha_i$ (and hence also to $\beta_i = \tau(\alpha_i)).$ Then, by composing maps $ \Gamma_{\Psi_s}$ we obtain a map
    \begin{align*}
    \Phi_{\alpha \ra \alpha'}^{\beta\ra \beta'}: \CFR^\circ(\Sigma,\bm \alpha, \bm \beta, \w, \s) \ra \CFR^\circ(\Sigma,\bm \alpha', \bm \beta',\w, \s)
\end{align*}
    which is a homotopy equivalence. Further, these maps commute with the $H_1(Y;\Z)^{-\tau_*}/\tors$-action.
\end{prop}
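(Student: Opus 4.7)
My plan is to first reduce the statement to the case of isotopies that introduce canceling pairs of intersection points, since isotopies not introducing new intersection points are handled by the change-of-almost-complex-structure argument of \Cref{prop:invarianceJ} (applied to a path of almost complex structures obtained by pushing forward a fixed one along the isotopy, symmetrized to lie in $\cJ_R$). For the remaining case, the definition \eqref{eqn:continuation_map} needs to be justified: transversality for moduli spaces of disks with dynamic boundary conditions should follow from an $R$-equivariant version of the argument in \Cref{subsec:transversality}, where we symmetrize variations of the perturbation as in equation \eqref{eq:Ycondition}. Because the isotopy $\Psi_s$ acts on $\Ta$ and $\Tb = R(\Ta)$ simultaneously, all intersection points $\x, \y$ appearing in \eqref{eqn:continuation_map} lie on $M^R$, and the correspondence of Section~\ref{sec:setup} between disks with one boundary on $M^R$ and $R$-symmetric disks with one boundary on $R(\Ta) = \Tb$ goes through verbatim.

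Next I would verify that $\Gamma_{\Psi_s}$ is a chain map. As usual, this amounts to examining the boundary of the one-dimensional moduli spaces $\cM^{\Psi_s}(\phi)$ of expected dimension $1$. The codimension-one degenerations are broken strips (contributing to $\Gamma_{\Psi_s}\circ \partial + \partial \circ \Gamma_{\Psi_s}$), plus possible disk bubbles. Sphere bubbles appear in codimension two by \Cref{lem:sym_monotone}, so they do not contribute. Bubbles on $M^R$ cancel in pairs as in \Cref{lem:MR_degenerations} because the reflection $R$ takes a bubble to another bubble with identical intersection data. Bubbles on $\Ta$ require a little more care: if $|\w|=1$ they cancel by \Cref{lem:Talpha_degenerations}; if $|\w|>1$ they contribute $\omega \cdot \id$ on both sides of the chain map identity, so they still cancel since $\Gamma_{\Psi_s}$ commutes with $U_\w$-multiplication. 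The same analysis shows $\Gamma_{\Psi_s}$ descends to the quotients defining $\CFR^\infty$, $\CFR^+$, and $\widehat{\CFR}$.

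To prove $\Gamma_{\Psi_s}$ is a homotopy equivalence, I would use the standard argument: the composition $\Gamma_{\Psi_{1-s}} \circ \Gamma_{\Psi_s}$ is homotopic to $\Gamma$ for the constant isotopy (via a generic homotopy of homotopies through equivariant Hamiltonian isotopies, counting index $-1$ moduli spaces to define the chain homotopy), and the latter is homotopic to the identity by \Cref{prop:invarianceJ}. The map $\Phi_{\alpha\to\alpha'}^{\beta\to\beta'}$ is then defined by composing a sequence of such continuation maps interleaved with changes of almost complex structure, any two such compositions being chain homotopic by a standard naturality argument.

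Finally, commutation with the $H_1(Y;\Z)^{-\tau_*}/\tors$-action is proved by the same moduli space technique as in Lemma~\ref{lem:AA}: for an invariant curve $\gamma$ in $\Sigma$, one defines a weighted continuation map
\begin{equation*}
    H_\gamma(\x) = \sum_{\y} \sum_{\substack{\phi \in \pi_2^{\Psi_s}(\x,\y)\\ \mu(\phi)=-1}} a(\gamma, \phi) \cdot \#\widehat{\cM}^{\Psi_s}(\phi) \cdot U_\w^{n_\w(\phi)}\, \y,
\end{equation*}
and examines ends of one-dimensional moduli spaces with dynamic boundary weighted by $a(\gamma, \cdot)$. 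Because $\gamma$ is a closed loop, $a(\gamma, A_{w_i}) = 0$ for the boundary-degeneration domains on $\Ta$, so those contributions vanish as in the proof of Lemma~\ref{lem:AA}; $M^R$-bubbles again cancel in reflection pairs. The remaining ends give the identity $A_\gamma \circ \Gamma_{\Psi_s} + \Gamma_{\Psi_s} \circ A_\gamma = \partial \circ H_\gamma + H_\gamma \circ \partial$.

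The main obstacle I anticipate is the equivariant transversality for the parametrized moduli spaces of strips with dynamic boundary conditions: one must simultaneously achieve transversality for the perturbation data and preserve the $R$-symmetry that makes the correspondence of \Cref{sec:setup} applicable. As in \Cref{subsec:transversality}, the remedy is to restrict variations to those satisfying \eqref{eq:Ycondition} and to average general perturbations with their $R$-conjugates, exploiting the dense set of somewhere-injective points on invariant strips; the standard Sard--Smale argument then gives the required generic transversality within the symmetric class of perturbations.
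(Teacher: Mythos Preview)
Your approach is essentially the same as the paper's: continuation maps for dynamic boundary conditions, Gromov compactness with bubble analysis via Lemmas~\ref{lem:Talpha_degenerations} and~\ref{lem:MR_degenerations}, the reverse isotopy as homotopy inverse, and a weighted continuation map for compatibility with $A_\gamma$.

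One small slip: in your definition of $H_\gamma$ the index should be $\mu(\phi)=0$, not $\mu(\phi)=-1$. The dynamic boundary already breaks the $\R$-translation, so the moduli space $\cM^{\Psi_s}(\phi)$ has dimension $\mu(\phi)$; for a zero-dimensional count you want $\mu(\phi)=0$, and the chain-homotopy identity comes from examining ends of the one-dimensional spaces with $\mu(\phi)=1$ weighted by $a(\gamma,\phi)$. (Compare the paper's homotopy $H_{J_s}$ in the lemma just before this proposition, which uses $\ind_R(\phi)=0$.) Also, your worry about $\Ta$-bubbles when $|\w|>1$ is unnecessary here: an index-$2$ bubble would leave a dynamic strip of index $-1$, which is generically empty, so no bubble contributions arise in the chain-map verification regardless of $|\w|$. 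Finally, the equivariant transversality concern is lighter than you suggest: since the paper works directly with strips between $\Psi_s(\Ta)$ and $M^R$, only ordinary transversality (for generic $J\in\cJ_R$) is needed, and that is already supplied by the argument of Section~\ref{subsec:transversality}.
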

\begin{proof}
    We show that the maps $\Gamma_{\Phi_s}$ are homotopy equivalences. The spaces $\cM^{\Phi_s}(\phi)$ have Gromov compactifications. The 1-dimensional moduli spaces have no ends in which disks or spheres bubble off by \Cref{lem:Talpha_degenerations} and \Cref{lem:MR_degenerations}. It follows that $\Gamma_{\Psi_s}$ is a chain map. The map $\Gamma_{\Psi_{1-s}}$ is a chain homotopy inverse for $\Gamma_{\Psi_{s}}$ by a standard argument. The claim follows. 

    Commutativity with $A_\gamma$ follows by an argument similar to that in the case of almost complex structures. 
\end{proof}

\subsection{Handleslides}

Rather than proving handleslides invariance by way of the triangle counting argument of \cite{os_holodisks}, we appeal to the work of Perutz. 

\begin{prop}
\label{prop:invariance hslide}
    Let $(\Sigma, \bm \alpha, \bm \beta, \w)$ be a strongly $\frs$-admissible real Heegaard diagram for $(Y, \tau)$ and let $(\Sigma, \bm \alpha', \bm \beta', \w)$ be the diagram obtained by performing a real handleslide. Then, there is a canonical map 
    \begin{align*}
    \Gamma_{\Psi_t}: \CFR^\circ(\Sigma,\bm \alpha, \bm \beta, \w, \s) \ra \CFR^\circ(\Sigma,\bm \alpha', \bm \beta',\w, \s)
    \end{align*}
    which is a homotopy equivalence.
\end{prop}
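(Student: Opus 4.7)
The strategy is to reduce handleslide invariance to an $R$-equivariant Hamiltonian isotopy of tori in $\Sym^m(\Sigma)$, bringing it within the scope of the continuation-map argument already used in \Cref{prop:invariance isotopy}. By Perutz's construction in the K\"ahler symmetric product \cite{perutz}, a handleslide of $\alpha_i$ over $\alpha_j$ can be realized by a compactly supported Hamiltonian isotopy $\phi_t$ of $\Sym^m(\Sigma)$ with $\phi_1(\Ta)=\Ta'$. Conjugating by $R$ simultaneously realizes the symmetric handleslide on the $\beta$-side as $R\circ \phi_t \circ R$, taking $\Tb$ to $\Tb'$; both families fix $M^R$ setwise. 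Consequently, in the $\CF_*(\Ta,M^R)$ model of the real Floer complex, the real handleslide simply Hamiltonian-isotopes one of the two Lagrangians while the other is held fixed.

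With this reduction, the map $\Phi_{\alpha\to\alpha'}^{\beta\to\beta'}$ is defined by counting dynamic-boundary strips of real index $0$, exactly as in \eqref{eqn:continuation_map}, using a generic path $J_s\in \cJ_R$ of symmetric almost complex structures that interpolates between those used on $\cH$ and $\cH'$. That $\Phi$ is a (possibly curved) chain map admitting a chain homotopy inverse given by the reverse isotopy is verified just as in \Cref{prop:invariance isotopy}: the ends of one-dimensional moduli spaces consist of broken strips---yielding the chain-map relation---together with boundary degenerations. Degenerations on $M^R$ cancel in pairs by \Cref{lem:MR_degenerations}, while degenerations on $\Ta$ contribute only the curvature term $\sum_i U_{w_i}^2$ by \Cref{lem:Talpha_degenerations}, which matches on both sides and hence cancels. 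Strong $\frs$-admissibility is preserved throughout the family via \Cref{lem:isotopy of real admissible}.

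The main subtlety, and the reason the proposition asserts canonicity, is that a handleslide admits many homotopically distinct Hamiltonian realizations in $\Sym^m(\Sigma)$; the induced map on homology must be shown to be independent of this choice and of the interpolating path $J_s\in \cJ_R$. Any two such choices of $R$-equivariant data are connected by a two-parameter family, and the standard homotopy-of-homotopies construction produces a chain homotopy between the corresponding continuation maps, with bubbling once again controlled by Lemmas~\ref{lem:MR_degenerations} and~\ref{lem:Talpha_degenerations}. Commutativity with the $H_1(Y;\Z)^{-\tau_*}/\tors$-action, and with the change-of-almost-complex-structure maps of \Cref{prop:invarianceJ}, follows from routine variants of the arguments already given for isotopy invariance.
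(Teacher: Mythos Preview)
Your overall strategy---realize the handleslide via Perutz's Hamiltonian isotopy of $\Ta$ in $\Sym^m(\Sigma)$ and then run a continuation map in $\CF_*(\Ta,M^R)$---is exactly the paper's approach. The gap is in how you justify that the continuation map is well-defined, i.e.\ that for each fixed $U$-power the sum in \eqref{eqn:continuation_map} is finite.

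The isotopies in \Cref{prop:invariance isotopy} come from isotopies of curves on $\Sigma$, so one has a family of Heegaard diagrams to which admissibility applies. Perutz's handleslide isotopy is different: it is a ``large'' Hamiltonian isotopy in the symmetric product that does not correspond to any family of diagrams on $\Sigma$, so your appeal to \Cref{lem:isotopy of real admissible} (``admissibility is preserved throughout the family'') has no meaning here---there is no intermediate diagram whose admissibility is in question. What one needs is an a priori energy/area bound on dynamic-boundary strips in terms of index and $n_\w$. The paper obtains this by degenerating such a strip to a nodal curve: in the half-picture a triangle glued to a disk, and in the real picture a rectangle with two monogons (cf.\ \Cref{fig:dynamic.pdf}). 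Finiteness then reduces to strong admissibility of the quadruple diagram $(\Sigma,\bm\alpha',\bm\alpha,\bm\beta,\bm\beta')$, which is established by the argument of \cite[Lemma~9.6]{os_holodisks} using that $\bm\alpha$ and $\bm\alpha'$ span the same subspace of $H_1(\Sigma;\Z)$ so the relevant $\delta H_1$ terms vanish.

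Your paragraph on canonicity (independence of the choice of Hamiltonian realization and of $J_s$) is a reasonable addition and is not spelled out in the paper, but the heart of the proof is the finiteness argument you are missing.
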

\begin{proof}
    According to \cite[Theorem 1.2]{perutz}, the symplectic form $\omega$ may chosen so that $\bT_\alpha$ and $\bT_{\alpha'}$ are Hamiltonian isotopic, via $\Psi_t$. (Perutz stated the theorem for the case $m=g$, but the proof works for any $m$.) Therefore, there is a continuation map 
    \begin{align*}
        \Gamma_{\Psi^t}: \CF^\circ(\bT_\alpha, M^R, \frs) \ra \CF^\circ(\bT_\alpha', M^R, \frs).
    \end{align*}
    It remains to show that this map is well-defined, i.e. that the $\omega$-area of the disks involved are controlled by the Maslov index and the intersection number with $\w \times \Sym^{m-k}(\Sigma)$. Given a strip $\phi_0$ with dynamic alpha boundary, Perutz considers a sequence of curves starting at $\phi_0$ and limiting to the nodal curve shown in \Cref{fig: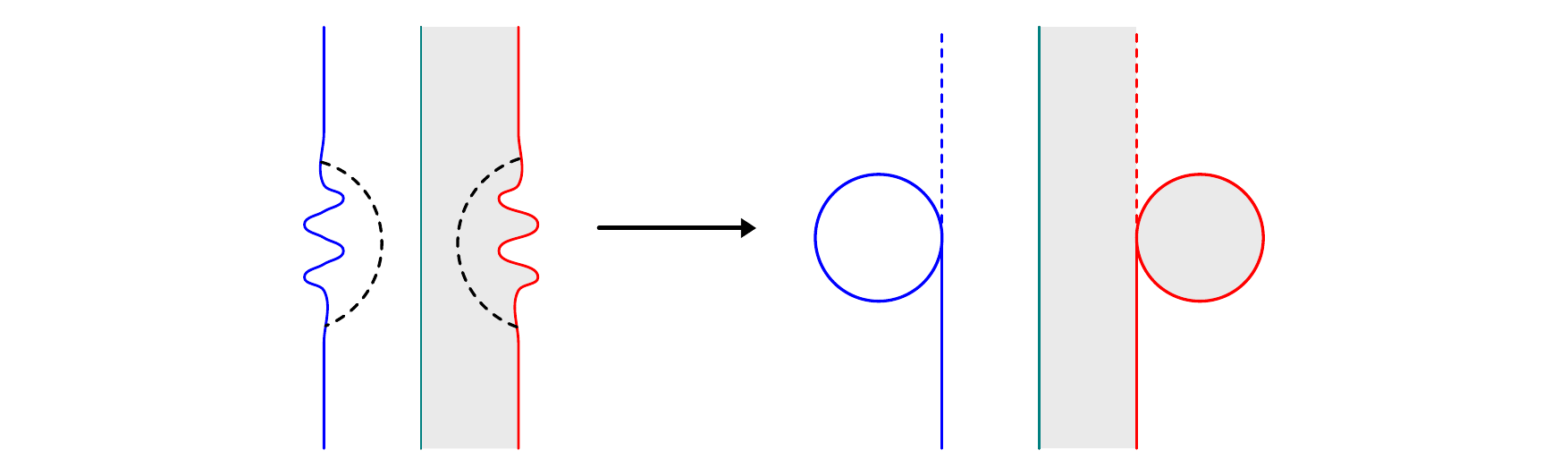}, which consists of a triangle $T$ and a disk $D$. By the arguments of \cite[Section 9.2]{os_holodisks}, the strong admissibility of the diagram guarantees that there are finitely many index 0 holomorphic triangles with given intersection with $\w \times \Sym^{m-k}(\Sigma)$; similarly for $D$. It follows the maps are well defined. 
    
    In the real setting, the triangle-disk configuration doubles to a nodal curve in which a pair of disks $D_0$ and $D_1$ split off from a rectangle $R$ (again, reference \Cref{fig:dynamic.pdf}.)  Admissibility of the quadruple diagram $(\Sigma, \bm \alpha', \bm \alpha, \bm \beta, \bm \beta')$ follows much like \cite[Lemma 9.6]{os_holodisks}. Admissibility of a quadruple diagram depends not on a single $\SpinC$ structure, but on a 
    \begin{align*}
        \delta H_1(Y_{\alpha', \beta}) + \delta H_1(Y_{\alpha, \beta'})
    \end{align*}
    orbit of $\SpinC$ structures. But, since $\bm \alpha$ and $\bm \alpha'$ span the same subspace of $H_1(\Sigma; \Z)$, every $(\bm \alpha', \bm \beta)$-periodic domain can be written as a sum of $(\bm \alpha', \bm \alpha)$ and $(\bm \alpha, \bm \beta)$ periodic domains, and hence $\delta H_1(Y_{\alpha', \beta}) = 0$. The symmetric argument shows that $\delta H_1(Y_{\alpha, \beta'}) = 0$. Therefore, these $\SpinC$ orbits are trivial. Consequently, we can always assume the handleslide quadruple diagram $(\Sigma, \bm \alpha', \bm \alpha, \bm \beta, \bm \beta')$ is strongly admissible. It then follows by the argument of Perutz that the continuation maps are indeed well-defined. \end{proof}

    \begin{figure}
        \centering
        \includegraphics[width=0.9\linewidth]{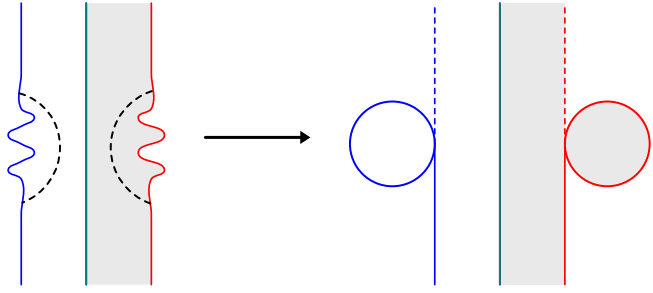}
        \caption{Left: A real invariant strip with dynamic boundary conditions. The dashed line indicates where the almost complex structure is pinched. Right:  A nodal invariant curve, consisting of a pair of monogons and a real invariant rectangle.}
        \label{fig:dynamic.pdf}
    \end{figure}

\subsection{Stabilizations}

At last, we turn to real stabilizations. 

\begin{prop}\label{prop:doublestabilization}
    Let $\cH$ be a strongly admissible real Heegaard diagram for $(Y, \tau, \s)$ and let $\cH'$ be the diagram obtained by a real stabilization. Then, there is a homotopy equivalence
    \begin{align*}
        \CFR^\circ(\cH, \s) \ra \CFR^\circ(\cH', \s). 
    \end{align*}
\end{prop}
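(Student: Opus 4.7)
The plan is to handle each of the three types of real stabilization separately. In each case the strategy mirrors the standard proof of stabilization invariance in Heegaard Floer theory (compare \cite[Section 10]{os_holodisks}), but adapted to the symmetric/real setting and built on Perutz's K\"ahler form from \cite{perutz}. The core idea is a $\tau$-equivariant neck-stretching argument near the connect-sum points, reducing the count of invariant holomorphic strips in the stabilized diagram to a product of counts: one in the original diagram and one in a local model determined by the piece glued in.

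First I would treat the free stabilization. The new torus pieces $E$ and $E'$ glued at $p$ and $\tau(p)$ each contribute a single intersection point $y_0 \in \alpha_0 \cap \beta_0$ and $y_1 \in \alpha_1 \cap \beta_1$, and since $\tau(y_0) = y_1$, the pair $\{y_0, y_1\}$ is $\tau$-invariant. I would define an $\F[U_{\w}]$-module map
\[
\Phi : \CFR^\circ(\cH, \s) \to \CFR^\circ(\cH', \s'), \qquad \x \longmapsto \x \cup \{y_0, y_1\},
\]
where $\s'$ is the real $\SpinC$ structure induced by $\s$ under the identification of stable real Euler structures from Section~\ref{sec:RealEuler}. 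Bijectivity on generators is clear from the description of $(\bT_{\alpha'} \cap M^{R'})$; I would then prove $\Phi$ is a chain isomorphism by stretching the almost complex structure along a pair of $\tau$-symmetric circles around $p$ and $\tau(p)$. Standard degeneration analysis in the symmetric product, combined with the fact that the local diagram on $T^2 \sqcup T^2$ admits only the constant invariant strip at $\{y_0, y_1\}$, shows that for sufficiently stretched symmetric $J$, every index-$1$ invariant strip in $\cH'$ comes from a unique index-$1$ invariant strip in $\cH$.

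For the positive and negative fixed-point stabilizations, the added piece $F^{\pm}$ is a genus-$1$ real Heegaard diagram for $S^3$ with a unique invariant intersection point $y^\pm \in \alpha_{\pm} \cap \beta_{\pm}$ lying on the fixed set $C_{\pm} \subset F^\pm$. I would set
\[
\Phi^\pm : \CFR^\circ(\cH, \s) \to \CFR^\circ(\cH', \s'), \qquad \x \longmapsto \x \cup \{y^\pm\},
\]
and again use a $\tau$-equivariant neck-stretching at the connect-sum point $c \in C$. By symmetry, the neck is stretched symmetrically, and the limiting broken configurations consist of an invariant holomorphic strip on the old side glued to a local invariant configuration on $F^\pm$. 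The required local computation is that $\CFR^\circ(F^\pm)$ is $\F[U]$ with zero differential, generated by $y^\pm$; this follows by a direct enumeration of invariant domains in the genus-one diagram, using the combinatorial real index formula \eqref{eqn:combinatorial real index} and the fact that the only candidate domains are annuli ruled out by the index count on $y^\pm$. Bubbles are controlled as in Proposition~\ref{prop:onew}: sphere bubbles occur in codimension $2$; $\bT_\alpha$-boundary degenerations only contribute to the curvature term and match on both sides of $\Phi^\pm$; $M^R$-boundary degenerations cancel in pairs by Lemma~\ref{lem:MR_degenerations}.

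The main technical obstacle is the neck-stretching analysis near a fixed point of the symmetric-product involution, since the standard cylindrical-neck model must be replaced by an $R$-equivariant model and the limiting buildings must be proved to be $R$-invariant broken curves with matching real indices. Once this $\tau$-equivariant gluing/degeneration dictionary is established, admissibility is preserved by a small equivariant perturbation (using the winding/zigzagging procedure of Lemma~\ref{lem:real admissibility} applied symmetrically), the maps $\Phi$ and $\Phi^\pm$ become chain maps that are bijections on generators with matching $\widehat{\cM}_R$-counts, and hence chain homotopy equivalences. Commutativity with the $H_1(Y;\Z)^{-\tau_*}/\tors$-action from \eqref{eqn:homology action} is automatic since the new curves have trivial intersection with any class represented in $\Sigma \setminus \nu(\{p, \tau(p), c\})$.
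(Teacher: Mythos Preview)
Your overall approach is the same as the paper's: both adapt the neck-stretching argument of \cite[Section 10]{os_holodisks} to the real setting, treating free and fixed-point stabilizations separately, and both observe that the $H_1$-action is unaffected. However, your description of the limiting configurations is inaccurate in a way that hides the actual mechanism. You write that after stretching, an invariant strip in $\cH'$ degenerates to ``an invariant strip on the old side glued to a local invariant configuration on $F^\pm$'' (or on $E \sqcup E'$), and you invoke only the fact that the local diagram has no nonconstant strips. This is not how the identification of moduli spaces works when the original strip has nonzero multiplicity at the connect-sum point. The limiting object is not a broken strip; rather, one must \emph{splice holomorphic spheres} into the curve at the preimages of $p$ (and $\tau(p)$). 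The paper makes this explicit: through any pair of points in $\Sym^2(E)$ there is a unique holomorphic sphere in the positive generator of $\pi_2(\Sym^2(E))$ (\cite[Lemma~10.3]{os_holodisks}), and one splices such spheres in symmetric pairs to build the map $\cM_R^J(\phi) \to \cM_R^{J(T)}(S\#\phi\#\tau S)$, then invokes \cite[Theorem~10.4]{os_holodisks} for the bijection. Your ``constant-strip-on-the-local-piece'' picture would only handle the case $n_p(\phi)=0$.

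A second point: the ``main technical obstacle'' you flag---an $R$-equivariant neck model at a fixed point---is not actually where the work lies. For fixed-point stabilization the paper simply inserts a single long neck at $p\in C$ and runs the same sphere-splicing argument; the symmetry is automatic because the neck and the spliced data are already $\tau$-invariant. No new equivariant gluing theory is needed beyond what \cite{os_holodisks} already provides.
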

\begin{proof}
The stabilization arguments from \cite{os_holodisks} extend to the real setting, as follows. 

First, we consider the case of a free stabilization. Let $p\in \Sigma \smallsetminus C$ and let $q = \tau(p)$. Let $E$ the standard genus 1 Heegaard diagram for $S^3$ with a single intersection point, $c$, and let $E'$ be the diagram obtained from $E$ by swapping the alpha and beta curve. Let $c'$ the point in $E'$ corresponding to $c$.

Let $J \in \JR$ be a path of symmetric almost complex structures on $\Sym^m(\Sigma)$ and let $J(T)$ be the path of almost complex structures obtained by inserting a pair of cylinders $S^1 \times [-T, T]$ connecting $\Sigma$ to $E$ and $\tau E$.
Fix $\x, \y \in \bT_\alpha \cap M^R$ as well as $\phi \in \pi_2^R(\x, \y)$ with $\ind_R(\phi) = 1$. We may assume that each $u \in \widehat{\cM}_R^J(\phi)$ of index one meets the connected sum points $p$ and $q$ transversely. By taking a product with the constant maps to $c$ and $c'$, we obtain a map into $E \times \Sym^m(\Sigma) \times \tau E  \sub \Sym^{m+1}(E \vee \Sigma \vee \tau E)$. By \cite[Lemma 10.3]{os_holodisks}, through each pair of points in $\Sym^2(E)$ there is a unique holomorphic sphere, $S$, in the positive generator of $\pi_2(\Sym^2(E))$. By splicing in pairs of such spheres, we obtain a map
\begin{align*}
    \D \ra \Sym^2(E)\times\Sym^{m-1}(\Sigma)\times \Sym^2(\tau E) \sub \Sym^{m+1}(E \vee \Sigma \vee \tau E),
\end{align*}
as in \cite[Section 10.2]{os_holodisks}. For large $T$, this map becomes close to being $J(T)$-holomorphic, and an application of the inverse function theorem produces a nearby $J$-holomorphic map in $\cM^{J(T)}_R(S\# \phi\# \tau S)$. By \cite[Theorem 10.4]{os_holodisks}, for sufficiently large $T$, there is an identification $\cM_R^{J}(\phi) \cong \cM_R^{J_s(T)}(S\# \phi\# \tau S)$. Hence, it follows that 
\begin{align*}
    \CFR^\circ_{J_s}(\cH,\s) \simeq \CFR^\circ_{J_s(T)}(\cH',\s).
\end{align*}
The theorem then follows from the independence of almost complex structure. 

The case of a fixed set stabilization is proven in the same manner by again fixing a symmetric almost complex structure, inserting a long neck at a point $p$ contained in the fixed set of $\tau$, and splicing in spheres. 

It is clear that stabilization does not affect the quantity $a(\phi, \gamma)$. Hence, the stabilization maps respect the action of $A_\gamma$. 
\end{proof}

\begin{corollary}
\label{cor:invariance}
The chain homotopy equivalence class of $\CFR^\circ(\cH,\s)$, as a $\F[U]\otimes (H_1(Y,\Z)^{-\tau_*}/\tors)$-module, is a well-defined invariant of $(Y, \tau, \w, \s)$.
\end{corollary}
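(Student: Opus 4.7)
The plan is to combine the individual invariance results already established in the section. The corollary packages Propositions~\ref{prop:invarianceJ}, \ref{prop:invariance isotopy}, \ref{prop:invariance hslide}, and~\ref{prop:doublestabilization} together with the fact that any two real Heegaard diagrams can be connected by a finite sequence of such moves, and that the action of $H_1(Y,\Z)^{-\tau_*}/\tors$ is compatible with all of them.

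First, I would fix two strongly $\s$-admissible real pointed Heegaard diagrams $\cH_0$ and $\cH_1$ for $(Y,\tau,\w,\s)$. By Lemma~\ref{lem:isotopy of real admissible}, they can be connected by a finite sequence of pointed real Heegaard moves through strongly $\s$-admissible real Heegaard diagrams. Each such move is either (i) a change of $\s$-admissible $J \in \JR$, (ii) a real isotopy of $(\bm\alpha,\bm\beta)$, (iii) a real handleslide, or (iv) a real (free or fixed-point) stabilization. Propositions~\ref{prop:invarianceJ}--\ref{prop:doublestabilization} assign to each move a chain homotopy equivalence of $\F[U]$-module (curved) complexes $\CFR^\circ(\cH_i,\s) \to \CFR^\circ(\cH_{i+1},\s)$. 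Composing yields a chain homotopy equivalence $\CFR^\circ(\cH_0,\s) \simeq \CFR^\circ(\cH_1,\s)$.

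To upgrade this to an equivalence of $\F[U]\otimes \Lambda^*(H_1(Y,\Z)^{-\tau_*}/\tors)$-modules, I would recall that the homology action $A_\gamma$ was defined in \eqref{eqn:homology action} for curves $\gamma$ representing classes in $H_1(Y,\Z)^{-\tau_*}/\tors$, and that by Lemma~\ref{lem:AA} it squares to zero up to chain homotopy, so it extends to an action of the exterior algebra. Each of Propositions~\ref{prop:invarianceJ}--\ref{prop:doublestabilization} was accompanied by a statement that the corresponding chain map intertwines the action of $A_\gamma$ up to chain homotopy (for change of complex structure, via a zero-index counting homotopy; for isotopies and handleslides, via the continuation-style homotopy of the same form; for stabilizations, because $a(\phi,\gamma)$ is unchanged under the neck-stretching/gluing identification of moduli spaces). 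Since a curve $\gamma \subset \Sigma$ representing a class in $H_1(Y,\Z)^{-\tau_*}/\tors$ can be transported across each move (for stabilizations, by choosing $\gamma$ disjoint from the connect-sum region), the composite chain homotopy equivalence intertwines the $A_\gamma$-actions up to homotopy.

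The main potential obstacle is verifying that the finite sequence of moves from Lemma~\ref{lem:isotopy of real admissible} can be chosen so that the various compatibilities (admissibility of intermediate diagrams, transportability of $\gamma$, commutativity with the bubble cancellation in $\partial^2 = \omega \cdot \mathrm{id}$) all hold simultaneously. However, by choosing $\gamma$ generically disjoint from the handleslide and stabilization regions, and invoking the fact that admissibility persists through the intermediate diagrams (Lemma~\ref{lem:isotopy of real admissible}), these issues are absorbed by the statements of Propositions~\ref{prop:invariance isotopy}--\ref{prop:doublestabilization}. Finally, independence from the initial choice of $(\bm\alpha,\bm\beta,\tau,J,\Psi_t)$ follows since any two sequences of moves connecting $\cH_0$ to $\cH_1$ induce homotopic chain maps: this is automatic at the level of the chain homotopy class, which is all that is claimed, so no additional argument is needed to conclude well-definedness of the $\F[U]\otimes \Lambda^*(H_1(Y,\Z)^{-\tau_*}/\tors)$-module structure up to chain homotopy equivalence.
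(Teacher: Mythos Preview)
Your proposal is correct and follows essentially the same approach as the paper: the paper's proof is a one-line ``Put together Lemma~\ref{lem:isotopy of real admissible} and Propositions~\ref{prop:invarianceJ}, \ref{prop:invariance isotopy}, \ref{prop:invariance hslide}, and \ref{prop:doublestabilization},'' and you have spelled out exactly this assembly. One small note: the corollary as stated asks only for the $H_1(Y,\Z)^{-\tau_*}/\tors$-action (not yet the exterior algebra), and the extension to $\Lambda^*$ via Lemma~\ref{lem:AA} is deferred to the proof of Theorem~\ref{thm:invariant} on homology, so you need not invoke it here.
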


\begin{proof}
Put together Lemma~\ref{lem:isotopy of real admissible} and Propositions~\ref{prop:invarianceJ}, \ref{prop:invariance isotopy}, \ref{prop:invariance hslide}, and \ref{prop:doublestabilization}. 
\end{proof}

\begin{proof}[Proof of Theorem~\ref{thm:invariant}] This
follows immediately from  Corollary~\ref{cor:invariance}, in the particular case when $\w$ consists of a single basepoint (so we can take homology). Note that, on homology,  Lemma~\ref{lem:AA} implies that the action of $H_1(Y,\Z)^{-\tau_*}/\tors$ extends to one of its exterior algebra.
\end{proof}

\section{Examples}\label{sec:exs}

In this section, we carry out a few computations. 

\begin{example}
    In \Cref{fig:s3_diagrams}, there are three real Heegaard diagrams for $(S^3, \tau)$. There is a single generator in all three examples, and therefore, the differential must be trivial. Hence, $\CFR^-(S^3, \tau) \cong \F[U]$ in its unique real $\SpinC$-structure.
\end{example}

\begin{figure}[h]
\def\svgwidth{.8\linewidth}
\begingroup%
  \makeatletter%
  \providecommand\color[2][]{%
    \errmessage{(Inkscape) Color is used for the text in Inkscape, but the package 'color.sty' is not loaded}%
    \renewcommand\color[2][]{}%
  }%
  \providecommand\transparent[1]{%
    \errmessage{(Inkscape) Transparency is used (non-zero) for the text in Inkscape, but the package 'transparent.sty' is not loaded}%
    \renewcommand\transparent[1]{}%
  }%
  \providecommand\rotatebox[2]{#2}%
  \newcommand*\fsize{\dimexpr\f@size pt\relax}%
  \newcommand*\lineheight[1]{\fontsize{\fsize}{#1\fsize}\selectfont}%
  \ifx\svgwidth\undefined%
    \setlength{\unitlength}{640.62992126bp}%
    \ifx\svgscale\undefined%
      \relax%
    \else%
      \setlength{\unitlength}{\unitlength * \real{\svgscale}}%
    \fi%
  \else%
    \setlength{\unitlength}{\svgwidth}%
  \fi%
  \global\let\svgwidth\undefined%
  \global\let\svgscale\undefined%
  \makeatother%
  \begin{picture}(1,0.39823009)%
    \lineheight{1}%
    \setlength\tabcolsep{0pt}%
    \put(0,0){\includegraphics[width=\unitlength,page=1]{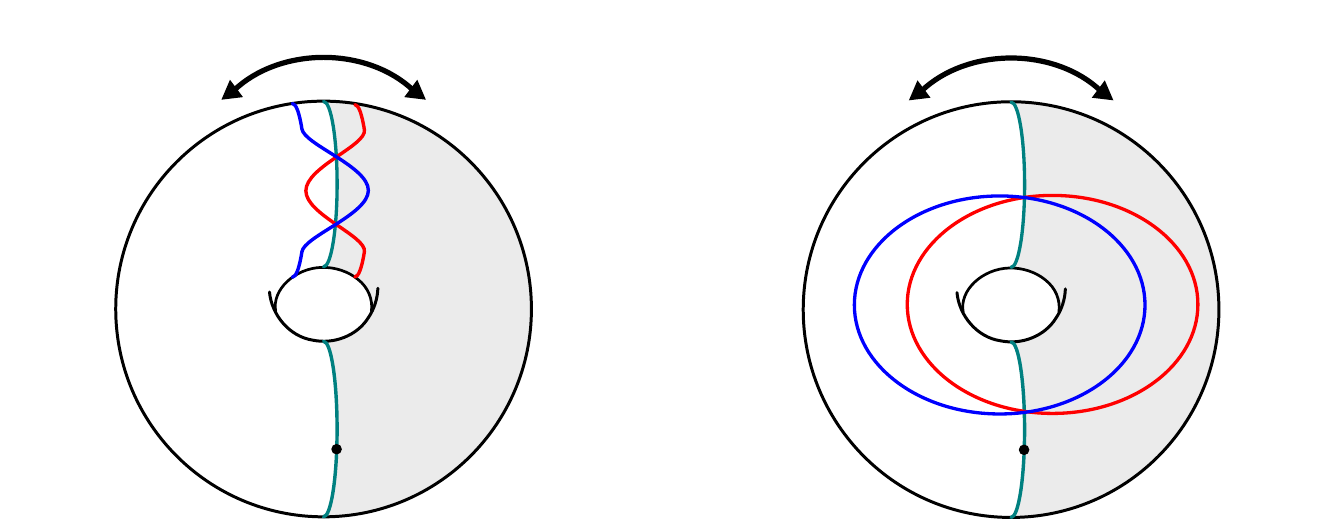}}%
    \put(0.23671165,0.36946679){\color[rgb]{0,0,0}\makebox(0,0)[lt]{\smash{\begin{tabular}[t]{l}{$\tau_1$}\end{tabular}}}}%
    \put(0.7518297,0.36946679){\color[rgb]{0,0,0}\makebox(0,0)[lt]{\smash{\begin{tabular}[t]{l}{$\tau_2$}\end{tabular}}}}%
  \end{picture}%
\endgroup%

\caption{Real Heegaard diagrams for two involutions on $S^1 \times S^2$.}
    \label{fig:ex_s1s2_unlink}
\end{figure}

\begin{example}
    Consider the Heegaard diagram for $S^1\times S^2$ shown in the first frame of \Cref{fig:ex_s1s2_unlink}. There is a involution $\tau_1$ on $\Sigma$ given by reflection through the horizontal plane; the fixed set of this involution is drawn in green. The sphere $\pt\times S^2$ is obtained by capping off the single periodic domain, and is reflected under $\tau$. Similarly, the fiber $S^1 \times \pt$ is also mapped onto itself by a reflection by $\tau$. Therefore, this real Heegaard diagram represents $S^1 \times S^2$ equipped with the involution which is reflection on both factors; equivalently, this real $S^1 \times S^2$ is obtained by taking the branched double cover of the two component unlink. The kernel of $\Theta$ is all of $H^2(S^1 \times S^2;\Z)$, and therefore every $\SpinC$-structure on $S^1 \times S^2$ admits a (unique) real structure. 

    Both intersection points between $\alpha$ and $\beta$ lie on the fixed set, and therefore represent classes in $\CFR^\circ(S^1 \times S^2, \tau_1, \frs_0)$, where $\frs_0$ is the torsion $\SpinC$-stucture. There are two real domains connecting $\x^+$ to $\x^-$, each with real index 1, and each having a single holomorphic representative. Hence, the differential on the Heegaard Floer complex is trivial, and we obtain 
    $$\HFR^-(S^1 \times S^2, \tau_1) \cong H_*(S^1) \otimes \F[U],$$ 
    in agreement with Li's calculation in the monopole setting \cite[Section 14.3]{li:HMR}.
    The other real $\SpinC$-structures can be realized by winding $\alpha$ and $\beta$ in opposite directions along a a curve isotopic to $S^1 \times \pt$. Just as in the classical case, 
    \begin{align*}
        \HFR^-(S^1 \times S^2;\frs_0 \pm nh) \cong \F[U]/(U^n-1),
    \end{align*}
    where $h$ is a generator for $H_2(S^1 \times S^2;\Z)$ and $n > 0$. Compare \cite[Section 14.3]{li:HMR}.
\end{example}

\begin{example}\label{ex: s1xs2 rotation}
    In the second frame of \Cref{fig:ex_s1s2_unlink} there is another Heegaard diagram for $S^1 \times S^2$, and we consider the same involution on $\Sigma$, reflection through the vertical plane. However, this is not the same involution of $S^1 \times S^2$. This involution is the identity on the $S^1$ factor and rotation by $\pi$ on $S^2$. The fixed set is now two copies of the $S^1$-fiber. We call this involution $\tau_2$. In this case, $H^2(S^1 \times S^2;\Z)^{-\tau^*} = 0$, and therefore only the torsion $\SpinC$-structure admits a real structure. In fact, it admits two, as $H^1(S^1\times S^2;\Z)^{\tau_*}/(1 + \tau_*) = \Z/2\Z$. The two intersection points in this diagram realize different real $\SpinC$-structures, and the obstruction $\zeta$ is visible as the $\tau_*$-invariant $S^2$ obtained by capping off the single periodic domain in this figure. There can be no real invariant domains, and hence $$\HFR^-(S^1 \times S^2, \tau_2, \frs) \cong \F[U]$$ in each real $\SpinC$-structure $\s$. We also remark that if one winds symmetrically to obtain admissibility for those $\SpinC$-structures which do not admit real structures, no new invariant intersection points are created. Compare \cite[Section 14.4]{li:HMR}.
\end{example}

\subsection{Branched double covers}
Of course, a very natural class of examples of real 3-manifolds comes from the branched double covers of knots (and links) in $S^3$. Note that for branched double covers of knots in $S^3$, every $\SpinC$-structure admits a unique real structure. As in \Cref{sec:rhd}, we can construct real Heegaard diagrams for $\Sigma_2(L)$ from free spanning surfaces for $L$. For links in $S^3$ (or more generally, for null-homologous links in other 3-manifolds), we can always work with orientable Seifert surfaces; cf. Remark~\ref{rem:orientable quotients}. We let $\tau =\tau_L$ be the branching involution.

\begin{example}\label{ex:torus}
Consider the torus knot $T(2, 2n+1)$, whose double branched cover is the lens space $L(2n+1, 1)$. This knot admits a genus 1 real Heegaard diagram, obtained by thickening up the $(2n+1)$-twist M\"obius band $M_{2n+1}$ in $S^3$. The resulting handlebody is the standard solid torus in $S^3$, and its complement $U$ is also a solid torus. We choose the usual beta curve which bounds a disk in the complement of the standard solid torus. To obtain a Heegaard diagram for $\Sigma_2(T(2, 2n+1))$, we apply the involution on $\partial \nu(M_{2n+1})$ to the beta curve to obtain the alpha curve. It is easy to see that this process produces the usual Heegaard diagram for $L(2n+1, 1)$. Every $\SpinC$-structure admits a single real $\SpinC$-structure. Hence, 
\begin{align*}
    \HFR^-(\Sigma_2(T(2, 2n+1)), \tau, \frs) \cong \HF^-(\Sigma_2(T(2, 2n+1)), \frs) \cong \F[U],
\end{align*}
for each real $\SpinC$-structure $\frs$.
\end{example}

\begin{figure}
    \centering
    \includegraphics[width=0.5\linewidth]{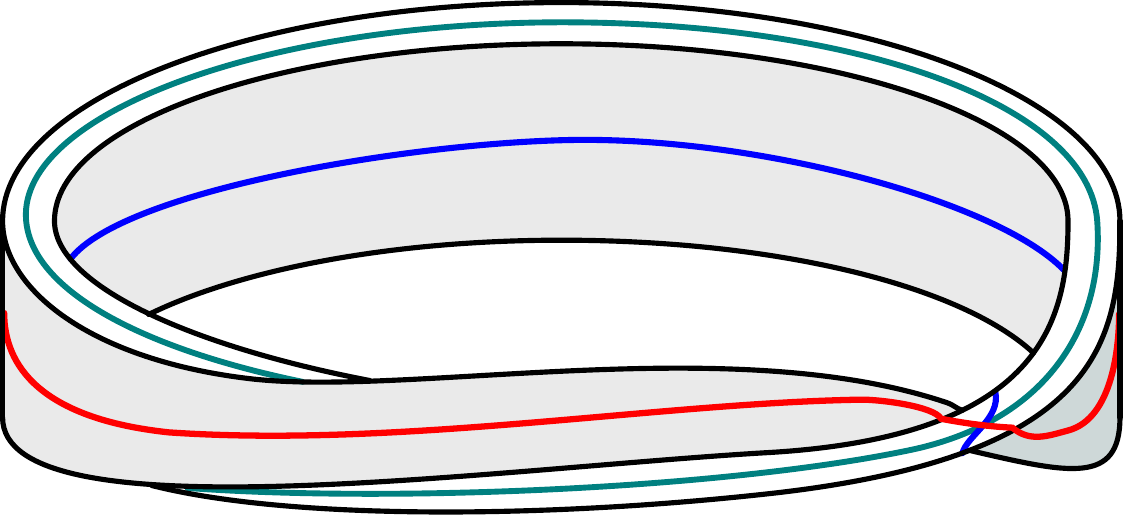}
    \caption{A real Heegaard diagram for $\Sigma_2(T_{2,1})\cong S^3$ built from a M\"obius band. The link $C$ is drawn in green. The diagrams for $\Sigma_2(T_{2,2n+1})\cong L(2n+1, 1)$ are similar; each twist of the M\"obius band introduces a new intersection point between $\alpha$ and $\beta$ on the fixed set.}
    \label{fig:mobius}
\end{figure}

\begin{example}
As a slightly more interesting example, consider the knot $K = 9_{46}$ shown in \Cref{fig:946}. By a computer calculation using the program \cite{bfh_python}, the branched double cover of $K$ is an L-space and has nine $\SpinC$-structures; each admits a single real structure. The knot $K$ has a genus 1, free Seifert surface $F$. A regular neighborhood of $F$ is isotopic to the standard genus 2 handlebody in $S^3$, and hence its complement is also a handlebody; to see this, simply twist the two central feet clockwise $3\pi$ radians, undoing the three twists shown in \Cref{fig:946}. Choose alpha curves which bound disks in the complement. By reversing this isotopy, we obtain the alpha curves shown on the right hand side of  \Cref{fig:946}. To obtain the beta curves, we apply $\tau$ to the alpa curves. 
\begin{figure}
    \center
    \includegraphics[width=0.8\linewidth]{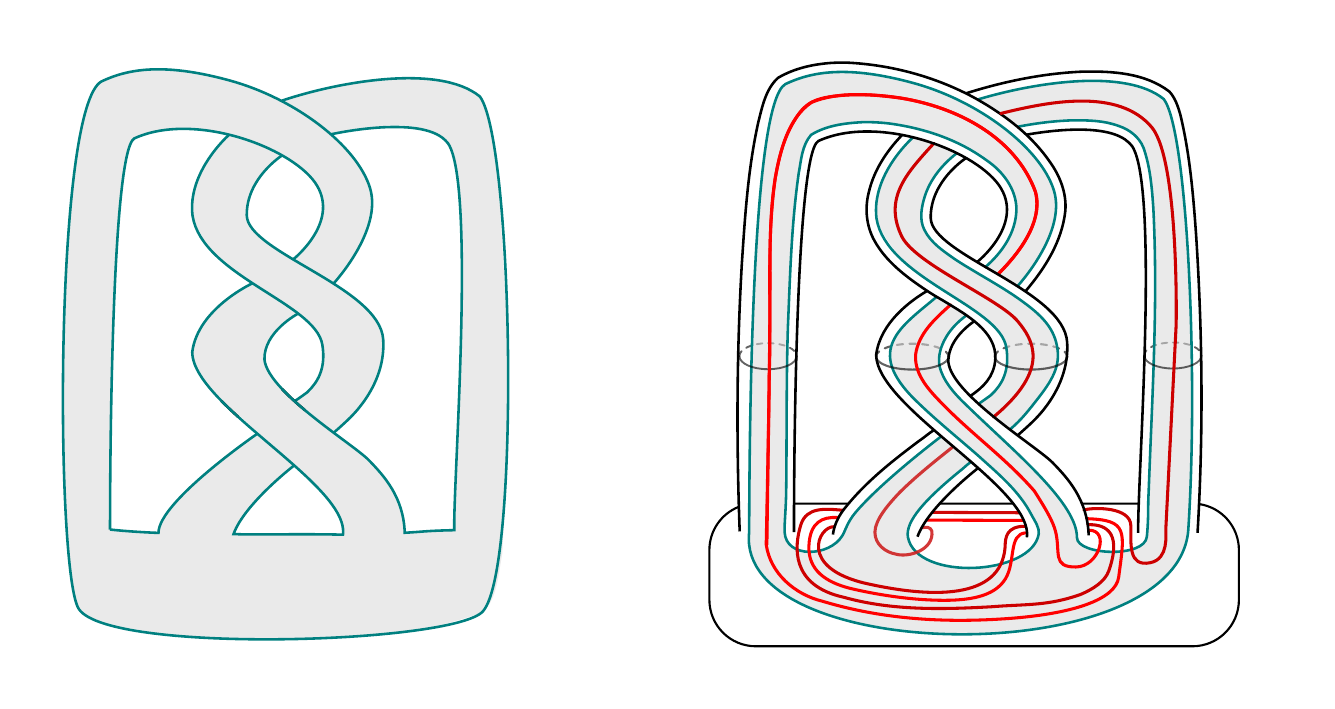}
    \caption{Building a real Heegaard diagram for the branched cover of the knot $9_{46}$ from a free Seifert surface. The two red curves bound disks in the complement of the thickened surface.}
    \label{fig:946}
\end{figure}
\begin{figure}[h]
\def\svgwidth{.8\linewidth}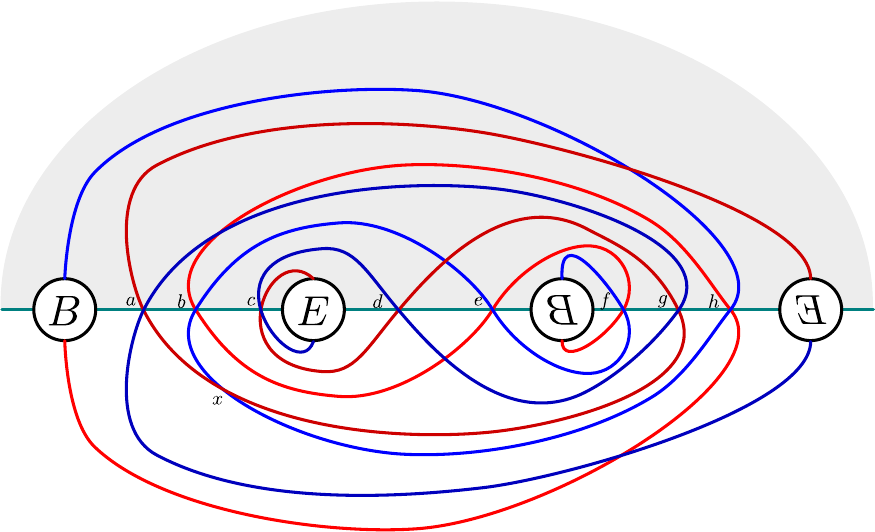
\caption{A real Heegaard diagram for the branched cover of the knot $9_{46}$.}
    \label{fig:946_planar}
\end{figure}

\Cref{fig:946_planar} is a planar representation of the resulting diagram; the intersection points between curves are labeled by letters and the regions are labeled by integers. We place the basepoint in the region $R_1$ and focus on the hat version; that is, we will only study holomorphic curves in domains that have multiplicity zero at $R_1$. 

The resulting complex has 18 generators:
\begin{align*}
ab, ae, af, ah, bc, bd, bg, ce, cf, ch,\\ de, df, dh, eg, fg, x\tau(x), y\tau(y), z\tau(z)   
\end{align*}
split according to real $\SpinC$ structures into
\begin{align*}
\{ab, \  gh,\ x\tau(x)\},\{ah,de, y\tau(y)\}, \{bg,\ cf, \ z\tau(z)\}, \\
\{af, \ bd, cd\}, \{ch, \ df, \ eg\}, \{bc\}, \{dh\},\{fg\}, \{ae\}
\end{align*} 

There are actually only a few domains of real index 1; most are bigons, rectangles, or annuli, and therefore have unique holomorphic representatives. A straightforward computation shows that there are nine real $\SpinC$-structures. There are no index 1 domains in or out of the generators 
\begin{align*}
    ae, ah, bc, dh, fg
\end{align*}
which therefore support the homology in five of the nine real $\SpinC$-structures. There are two rectangular index 1 domains into $x\tau(x)$, coming from $ab$ and $gh$. Hence, $ab + gh$ generates another summand. There is another rectangle from $y\tau(y)$ to $de$; this is the only index 1 domain interacting with these generators, so neither contribute to the homology. The last index 1 rectangle is $R_0$ and goes from $z\tau(z)$ to $cf$. There is another index 1 domain into $cf$:  an immersed annulus $A = R_5 + 2R_7 + R_8$ from $bg$ to $cf$. Whether this class admits a holomorphic representative or not, the homology is rank 1 in this real $\SpinC$-structure.

This leaves two summands for which we have not accounted. There are three index 1 annular domains, $R_0 + R_6$, $R_5 + R_7$, and $R_7 + R_8$, from $ch$ to $df$, $bd$ to $ce$, and $eg$ to $df$ respectively, which admit holomorphic representatives. Hence, $ch + eg$ generates the eighth summand. There is one last complicated domain from $af$ to $bd$ (the domain is $\Sigma + R_5+R_7-R_0-R_1$).  Whether or not this domain admits a holomorphic representative, the rank of homology in the last real $\SpinC$-structure is 1.

Therefore, we have that
\begin{align*}
    \widehat{\HFR}(\Sigma_2(K), \tau, \frs) \cong \F
\end{align*}
for each real $\SpinC$-structure $\frs$. 

While there are more complicated domains which do cover the basepoint, $\HFR^-(\Sigma_2(K), \tau, \frs)$ is determined algebraically by $\widehat{\HFR}(\Sigma_2(K), \tau, \frs)$; indeed, $\HFR^-(\Sigma_2(K), \tau, \frs) \cong \F[U]$ for each real $\SpinC$-structure. 
\end{example}

\begin{remark}
We conjecture that $\HFh(Y,\s)$ is  related to $\HFRh(Y, \tau, \s)$ by a localization spectral sequence, similar to those in \cite{seidel-smith}, \cite{hendricks-covers}, \cite{LM-covers}. If that were the case, then we would have  $\widehat{\HFR}(\Sigma_2(K),\tau, \frs) \cong \F$ whenever $\Sigma_2(K)$ is an L-space.
\end{remark}

\section{The Euler characteristic}\label{sec:euler}

When $L_0$ and $L_1$ are oriented Lagrangians in a symplectic manifold $M$, the Floer group $\CF(L_0, L_1)$ inherits a $\Z/2$-grading, and its Euler characteristic is given by the algebraic intersection of these Lagrangians in $M$ (see \Cref{subsec:grading}.) Recall from Section~\ref{subsec:grading} that in real Heegaard Floer theory, the Lagrangian $M^R$ is not always orientable. Nevertheless, we can ensure orientability if we just consider the hat theory and assume the following:
\begin{itemize}
\item the image of $C$ in the quotient $Y/\tau$ is null-homologous, and
\item every component of $C=Y^{\tau}$ contains at least one basepoint. (For simplicity, we will assume that it contains exactly one.)
\end{itemize}
In that case, we explain below how to compute the grading and the Euler characteristic explicitly.

\subsection{Signs of intersection points}
 Let $L$ be a null-homologous link in a 3-manifold $X$.  Let $Y$ be a branched double cover of $X$ along $L$ and write $\tau$ for the branching involution. By the construction in \Cref{sec: real HD}, the real manifold $(Y, \tau)$ can be represented by a real Heegaard diagram $(\cH, \tau, \w) = (\Sigma, \bm{\alpha},\bm{\beta},\w,\tau)$ whose quotient $\Sigma' = \Sigma/\tau$ is an orientable surface with boundary. We identify $L$ with its lift $C \subset Y$.
 

Since we assume that $\w$ contains exactly one basepoint on each component of $C$, there are $$m = 2h+2(l-1)$$ alpha curves, where $h$ is the genus of $\Sigma'$ and $l$ is the number of components of $C$. Further, as we are interested in the hat theory, we remove the divisor $\w \times \Sym^{2g-1}(\Sigma)$ from $M$; we denote its complement by $\hM$. 
  
\begin{lemma}
\label{lem:MRo}
Under the assumptions above, the fixed set $\hMR=M^R \cap \hM$ is orientable.  
\end{lemma}

\begin{proof}
The description of $M^R$ in Section~\ref{sec:Sym} gives a decomposition of $\hMR$ into strata of the form
\begin{equation}
\label{eq:stratum}
 \Sym^k(\Sigma') \times \Sym^{m-2k}(\hC),
 \end{equation}
where $\hC = C \setminus \w$ is a collection of intervals.  To orient $\hMR$, we need to orient these strata in a compatible way.

We start by orienting $\Sigma'$. Since $\tau$ is orientation reversing, the orientation on $\Sigma$ does not determine one on $\Sigma' = (\Sigma\smallsetminus C)/\tau$. Rather, we choose an arbitrary orientation on $\Sigma'$ and then orient $\hC$ as its boundary (using the ``outward normal first'' convention). Further, the orientation on $\Sigma'$ induces one on the symmetric products $\Sym^k(\Sigma')$.

Let us also pick an order of the components of $C$. Then, the symmetric products $\Sym^{m-2k}(\hC)$ get an induced orientation from that on $\hC$ and the order, as follows. Note that $\Sym^{m-2k}(\hC)$ is a manifold with corners, and it suffices to orient its interior. A point in its interior is a tuple
$$ c=(c_1, \dots, c_{m-2k})$$
where all $c_j$ are distinct. We require that the points  $c_1, \dots,  c_{m-2k}$ appear on $C$ in this order, as we go on the components of $C$ in the chosen order, and on each component we go from the basepoint $w$ to itself following the orientation of $\hC$. We choose disjoint intervals $I_j \subset \hC$ around the $c_j$; the orientation of $\hC$ restricts to orientations on each of these intervals. A neighborhood of $c$ in $\Sym^{m-2k}(\hC)$ is identified with the product $I_1 \times \dots \times I_{m-2k}$. We equip it with the product orientation multiplied by the sign $$(-1)^{h+l-k-1}.$$

We have oriented $\Sym^k(\Sigma')$ and $\Sym^{m-2k}(\hC).$ By taking their product orientation, we orient each stratum \eqref{eq:stratum}. We need to check that these orientations are compatible along boundaries, and hence glue to give an orientation on $\hMR$. It suffices to check pairwise compatibility along codimension one parts of the boundary. Consider gluing $\Sym^k(\Sigma') \times \Sym^{m-2k}(\hat{C})$ to $\Sym^{k-1}(\Sigma') \times \Sym^{m-2(k-1)}(\hat{C})$. Two points of $\hC$ coalesce to glue to a limit of points $(z, \tau(z))$, with $z \in \Sigma'$ approaching $\hC$. The local model is $\Sym^2([0,1])$, which we identify with the triangle
$$\{(x, y) \in [0,1]^2 \mid x \leq y\}.$$
The orientations on $\Sigma'$ and $\hC$ are related by the ``outward normal first'' convention, so near a point $(x, x)$ on the diagonal we have the clockwise orientation coming from $\Sigma'$. On $\Sym^2([0,1])$ we start with the natural counterclockwise orientation. However, on each stratum we need to multiply the orientation with a sign, and these signs are different for the two strata we are gluing: $(-1)^{h+l-k-1}$ versus $(-1)^{h+l-(k-1)-1}$. We deduce that the orientations are compatible. 
\end{proof}

\begin{remark}
Pulling back the orientation on $\Sigma'$ to its double cover $\Sigma \smallsetminus C$ splits the latter into two disjoint parts: $\Sigma^+$, where the orientation pulled back from $\Sigma'$ agrees with the one on $\Sigma$, and $\Sigma^-=\tau(\Sigma^+)$, where it disagrees. In particular, $\Sigma \smallsetminus C$ is disconnected.
\end{remark}

Let us also orient the other Lagrangian $\Ta$. For this, choose an ordering of the alpha curves $\alpha_1, \dots, \alpha_{m}$, and orient the curves $\alpha_i$ arbitrarily. This determines an orientation of their product. The involution $\tau$ then determines an ordering of the beta curves as well as an orientation, hence this also induces an orientation on $\Tb$.

Let $\x$ be an element in $\Ta \cap M^R$. Hence, $\x$ is contained in one of the strata of $\hMR$ of the form
$$\x=\{ z_1, \tau(z_1), \dots, z_k, \tau(z_k), c_1, \dots, c_{m-2k} \} \in \Sym^k(\Sigma') \times \Sym^{m-2k}(\hC),$$
where
$$ z_i \in \alpha_{r(i)} \cap \beta_{\sigma(r(i))} \cap \Sigma^+,\ \ \tau(z_i) \in \alpha_{\sigma(r(i))} \cap \beta_{r(i)} \cap \Sigma^-, \ \ i=1, \dots, k$$
and 
$$c_j \in \alpha_{s(j)} \cap \beta_{s(j)}, \ j=1, \dots, m-2k. $$
Here, $\sigma$ is an involution on the set $\{1, 2, \dots, m\}$ with fixed points $s(1), \dots, s(m-2k)$. The 2-cycles of $\sigma$ are $(r(i), \sigma(r(i))$, for $i=1, \dots, k$, for $r: \{1, \dots, k\} \to \{1, \dots, m-2k\}$ an injection. Note that, as a permutation, $\sigma$ has sign $(-1)^k$. We require that the points  $c_1, \dots,  c_{m-2k}$ appear on $\hC$ in order as specified in the proof of Lemma~\ref{lem:MRo}: we first arrange them according to the order of the components of $C$, and then on each component we follow the orientation of $\hC$.

For each $i=1, \dots, k$, the projections of the oriented curves  $\alpha_{r(i)}$ and $\beta_{\sigma(r(i))}$ to $\Sigma'$ intersect at $[z_i]$ with a sign $\epsilon(z_i) \in \{\pm 1\}$; here, we use here the orientation of $\Sigma'$. (Alternatively, we can think of $\epsilon(z_i)$ as the sign of intersection between $\alpha_{r(i)}$ and $\beta_{\sigma(r(i))}$ in $\Sigma^+$.) Similarly, for each $j=1, \dots, m-2k$, the oriented curves $\alpha_{s(j)}$ and $C$ intersect at $c_j$ with a sign $\epsilon(c_j)$, using the orientation on $\Sigma$.

Let also $\epsilon(r, \sigma, s)$ be the sign of the following permutation of $\{1, \dots, m\}$:
$$(r(1), \sigma(r(1)), r(2), \sigma(r(2)), \dots, r(k), \sigma(r(k)), s(1), \dots, s(m-2k)).$$

Combining all of this, we compute the sign of an intersection point of $\Ta\cap M^R$ as follows.
\begin{proposition}
The sign of $\x$, as an intersection point between $\Ta$ and $\hMR$, is given by the formula
\begin{equation}
    \label{eq:signs}
 \sgn(\x) :=    \epsilon(r, \sigma, s) \cdot \prod_{i=1}^k \epsilon(z_i) \cdot \prod_{j=1}^{m-2k} \epsilon(c_j). 
 \end{equation}
\end{proposition}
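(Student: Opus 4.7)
The plan is to reduce the sign computation at $\x$ to a product of purely local signs (one for each $z_i$-pair and one for each $c_j$), and then to absorb the cost of reordering the factors into the combinatorial sign $\epsilon(r,\sigma,s)$. By definition, $\sgn(\x)$ compares the orientation of $T_\x \hM$ with the direct sum of the chosen orientations of $T_\x \Ta$ and $T_\x \hMR$; since transversality holds stratum-by-stratum, this comparison splits as a product over a neighborhood decomposition of $\x$ in $M$.

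First, I would write a local model near $\x$. Choose small disjoint neighborhoods $U_i \subset \Sigma$ of $z_i$ and $\tau(z_i)$ (paired by $\tau$), and small disjoint neighborhoods $V_j \subset \Sigma$ of $c_j$. Because the constituent points of $\x$ are pairwise distinct, a neighborhood of $\x$ in $M$ is diffeomorphic to
\[
\prod_{i=1}^{k} \bigl( U_i \times \tau(U_i) \bigr) \times \prod_{j=1}^{2(h-k)} V_j,
\]
and under this identification $\Ta$, $\hMR$ and $M$ all split as products of the obvious local factors. Thus the sign of $\x$ is the product of the local signs coming from each factor, up to the sign of the permutation needed to pass from the ordering on $T_\x M$ inherited from the global orientations to the ordering used in the local product decomposition. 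This reordering is precisely captured by $\epsilon(r,\sigma,s)$, which reshuffles $(\alpha_1,\dots,\alpha_{2h})$ into $(\alpha_{r(1)},\alpha_{\sigma(r(1))},\dots,\alpha_{r(k)},\alpha_{\sigma(r(k))},\alpha_{s(1)},\dots,\alpha_{s(2(h-k))})$; the same permutation governs the reordering of both $T_\x \Ta$ and $T_\x \hMR$, and one checks it contributes exactly once.

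Next I would compute each local factor. For a fixed point $c_j$, the factor $V_j$ is a copy of $\C$, the local $\hMR$-factor is the arc $C \cap V_j$ (oriented as $\partial \Sigma'$, i.e.\ by the outward-normal-first convention from the chosen orientation of $\Sigma'$), and the local $\Ta$-factor is the arc $\alpha_{s(j)} \cap V_j$. The sign contributed is then $\pm 1$ depending on whether the oriented basis $(T_{c_j}\alpha_{s(j)}, T_{c_j} C)$ agrees with the orientation of $\Sigma$ at $c_j$; this is by definition $\epsilon(c_j)$. For a pair $(z_i, \tau(z_i))$, the factor $U_i \times \tau(U_i) \cong \C^2$ carries: the Lagrangian $(\alpha_{r(i)} \cap U_i) \times (\alpha_{\sigma(r(i))} \cap \tau(U_i))$ as the $\Ta$-factor; and the anti-diagonal $\{(z, \tau(z)) \mid z \in U_i\}$, i.e.\ the graph of $\tau$, as the $\hMR$-factor. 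The latter is oriented via the orientation pulled back from $\Sigma'$ through the projection $U_i \to \Sigma'$. A direct computation in $\R^4 = T_{z_i}\Sigma \oplus T_{\tau(z_i)}\Sigma$ then shows that the local sign equals the sign of the intersection $\alpha_{r(i)} \pitchfork \beta_{\sigma(r(i))} = \alpha_{r(i)} \pitchfork \tau(\alpha_{\sigma(r(i))})$ inside $\Sigma^+ \subset \Sigma$, which, projected to $\Sigma'$, is exactly $\epsilon(z_i)$. Crucially, the factors of $-1$ coming from $\tau$ being orientation-reversing on the second summand are cancelled by the fact that we use the orientation of $\Sigma'$ rather than of $\Sigma$ on the $\hMR$-factor.

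Finally, I would assemble the pieces. Multiplying the local signs over the $k$ paired factors and the $2(h-k)$ fixed factors produces $\prod_i \epsilon(z_i) \cdot \prod_j \epsilon(c_j)$, and multiplying by the reordering sign $\epsilon(r,\sigma,s)$ yields the claimed formula. The main obstacle is the second step: carefully matching the orientation on $\hMR$, which is defined through its stratification using an auxiliary orientation on $\Sigma'$, with the local orientation on the graph of $\tau$ in $U_i \times \tau(U_i)$. Getting all sign conventions aligned so that the various factors of $-1$ produced by $\tau^* dA = -dA$ and by the ``outward normal first'' convention on $\partial \Sigma'$ cancel as expected is the only delicate point; everything else is bookkeeping.
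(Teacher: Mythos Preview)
Your proposal is correct and follows essentially the same approach as the paper: decompose a neighborhood of $\x$ into local factors, compute the local intersection sign at each $c_j$ and each pair $\{z_i,\tau(z_i)\}$ to obtain $\prod_i \epsilon(z_i)\cdot\prod_j \epsilon(c_j)$, and then correct by the permutation sign $\epsilon(r,\sigma,s)$ coming from the ordering of the alpha curves. The paper's proof is terser but structurally identical; your version simply spells out more carefully the local models and the orientation conventions.
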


\begin{proof}
We need to compare the direct sum orientation on $T_{\x}\Ta \oplus T_{\x}\hMR$ with the orientation on $T_\x \Sym^m(\Sigma)$ coming from $\Sigma$.

The orientation on $T_{\x}\Ta$ differs from the one on the direct sum
\begin{equation}
    \label{eq:alphao}
 \bigoplus_{i=1}^k T_{\{z_i, \tau(z_i)\}} (\alpha_{r(i)}\times\alpha_{\sigma(r(i))}) \oplus  \bigoplus_{i=1}^{m-2k} T_{c_i} \alpha_{s(i)}
 \end{equation}
by a sign of $\epsilon(r, \sigma, s)$, which comes from reordering the alpha curves. 

The orientation on $T_{\x}\hMR$ differs from the one on the direct sum
\begin{equation}
    \label{eq:mor}
\bigoplus_{i=1}^k T_{\{z_i, \tau(z_i)\}} \Sym^1(\Sigma') \oplus \bigoplus_{i=1}^{m-2k}T_{c_i} C
\end{equation}
by a sign of $(-1)^{h+l-k-1}$, which was introduced in the proof of Lemma~\ref{lem:MRo}.

Putting these together, we find that the orientation on $T_{\x}\Ta \oplus T_{\x}\hMR$ differs from that on
\begin{equation}
    \label{eq:alphamoro}\bigoplus_{i=1}^k \Bigl( T_{\{z_i, \tau(z_i)\}} (\alpha_{r(i)}\times\alpha_{\sigma(r(i))}) \oplus T_{\{z_i, \tau(z_i)\}} \Sym^1(\Sigma') \Bigr) \oplus \bigoplus_{i=1}^{m-2k} \Bigl( T_{c_i} \alpha_{s(i)} \oplus T_{c_i} C\Bigr)
    \end{equation}
by a sign of $\epsilon(r, \sigma, s)$. Indeed, the other sign $(-1)^{h+l-k-1}$ 
cancels with the one coming from interleaving the $m-2k$ terms of the form  $T_{c_i} \alpha_{s(i)}$ with those of the form $T_{c_i} C$:
$$ (-1)^{(m-2k)(m-2k-1)/2} = (-1)^{h+l-k-1}.$$
The other re-orderings needed to go from the direct sum of \eqref{eq:alphao} and \eqref{eq:mor} to \eqref{eq:alphamoro} produce positive signs, because they involve two-dimensional vector spaces.

We are left to compare \eqref{eq:alphamoro} with the orientation on $T_\x \Sym^m(\Sigma)$, i.e., to the direct sum orientation on 
$$\bigoplus_{i=1}^k \Bigl( T_{\{z_i, \tau(z_i)\}}  \Sym^2(\Sigma) \Bigr) \oplus \bigoplus_{i=1}^{m-2k}  T_{c_i} \Sigma .
  $$

At a point $\{z_i, \tau(z_i)\}$, comparing the orientation on $T_{\{z_i, \tau(z_i)\}} (\alpha_{r(i)}\times\alpha_{\sigma(r(i))}) \times T_{\{z_i, \tau(z_i)\}} \Sym^1(\Sigma')$ with that on $T_{\{z_i, \tau(z_i)\}} \Sym^2(\Sigma)$ yields the sign $\epsilon(z_i) $. 
     Similarly, at a point $c_i \in \x \cap C$, the orientation on $T_{c_i} \alpha_{s(i)}\times T_{c_i} C$ differs from the one on $T_{c_i}\Sigma$ by the sign $\epsilon(c_j)$.  

     Overall, we get the expression \eqref{eq:signs}.
\end{proof}

\subsection{Two relative gradings}
In light of the previous section, the group $\HFRh(\cH, \tau, \w,\frs)$ can be equipped with a relative $\Z/2$-grading in two ways. On the one hand, we can compare signs of intersection points between $\Ta$ and $\hMR$:
\begin{align*}
    \widehat{\gr}(\x, \y) = \sgn(\x)\cdot \sgn(\y).
\end{align*}

On the other hand, if $\frs_\w^R(\x) = \frs_\w^R(\y) \in \RSpinC(Y, \tau)$ and $\delta(\s_\w(\x))$ is divisible by $4$ (cf. \Cref{subsec:grading}),  there is also a relative Maslov grading on $\CFR^-(\cH, \tau, \w, \frs)$, given by
\begin{align*}
    \mu^R(\x, \y) = \mu^R(\phi) - n_\w(\phi) \mod 2,
\end{align*}
for $\phi \in \pi_2^R(\x,\y)$. This descends to a relative grading on $\HFRh(\cH, \tau, \w, \frs)$. 

 First, we emphasize that it is not clear that $\widehat{\gr}$ is independent of the choice of diagram. To define $\widehat{\gr}$, we must choose a real Heegaard surface with orientable quotient. However, we might not be able to connect any two such real Heegaard splittings through a sequence of splittings with this property. In fact, we expect that there is a dependence on the homology class $[\Sigma'] \in H_2(Y/\tau, L; \Z)$.


Second, as noted in \Cref{sec:relative}, the natural map
\begin{equation}
     \RSpinCrel(Y, \tau, \w) \to \RSpinC(Y, \tau)
\end{equation}
need not be injective. Thus, not every pair of intersection points $\x$ and $\y$ which determine the same real $\SpinC$-structure can be connected by a real invariant domain that misses $\w$. The real Maslov index does not give an intrinsic way to compare intersection points in different relative real $\SpinC$-structures. 

\begin{remark}
    \label{rem:oneC}
When $C$ has a single connected component, these two relative gradings do agree, because the map 
\begin{equation}
     \RSpinCrel(Y, \tau, w) \to \RSpinC(Y, \tau)
\end{equation}
is an isomorphism (see \Cref{sec:relative}). In this case, it follows from the invariance of $\HFRh$ that $\grhat$ is independent of the choice of diagram. We can then make use of the fact that $\grhat$ is defined on all of $\HFRh(Y, \tau, \w)$, rather than on $\HFRh(Y, \tau, \w, \frs)$ for particular real $\SpinC$ structures.
\end{remark}
 
In general, we have two candidate Euler characteristics for the theory. For $\s \in \RSpinC(Y, \tau)$, we may define the Euler characteristic of the hat theory by
\begin{equation}
\label{eq:chiHFR}
\widehat{\chi}(\widehat{\HFR}(\cH, \tau, \w, \s)) := \sum_{\substack{\x \in \Ta \cap M^R \\ \s_\w^R(\x) =\s}} \sgn(\x).
\end{equation}
This is well-defined only up to a sign, because it depends on the chosen orientations of $\Ta$ and $\hMR$. (More precisely, it  depends on the resulting orientation on their product $\Ta \times \hMR$.) Alternatively, we may define
\begin{equation}
\label{eq:chiHFR}
\chi(\widehat{\HFR}(\cH, \tau, \w, \s)) := \sum_{\substack{\x \in \Ta \cap M^R \\ \s_\w^R(\x) =\s}} (-1)^{\mu^R(\x,\x_0)},
\end{equation}
for a fixed intersection point $\x_0$ realizing $\s$. This is, of course, also only well-defined up to sign. In general, these Euler characteristics need not agree, as the following example demonstrates.

\begin{example}\label{ex: different euler characteristics}
    Consider again the real Heegaard diagram for $(\cH, \tau) = (S^1 \times S^2, \tau)$, where $\tau = \id \times \mathrm{rot}$, from \Cref{ex: s1xs2 rotation}. The real manifold $(S^1 \times S^2, \tau)$ has two real $\SpinC$-structures, $\frs_1$ and $\frs_2$. Add a second basepoint together with a symmetric pair of alpha and beta curves as in \Cref{fig:ex_relative_spinC}. Then $\HFRh(\cH, \tau, \w)$ is four-dimensional, generated by $xy$, $xz$, $vy$, and $vz$. Here, $xy$ and $xz$ are in a real $\SpinC$-structure $\frs_1$, whereas $vy$ and $vz$ are in the other real $\SpinC$-structure $\frs_2$. All four are in different relative real $\SpinC$-structures. 

    There is a real invariant bigon of index one from $xz$ to $xy$, covering a single basepoint. Hence, 
    \begin{align*}
        \mu^R(xz, xy) = 0 \mod 2,
    \end{align*}
    and therefore $\chi(\widehat{\HFR}(\cH, \tau, \w, \s_1)) = 2$. Similarly, $\chi(\widehat{\HFR}(\cH, \tau, \w, \s_2)) = 2$. 

    Things are more interesting when computing $\widehat{\chi}$. The signs of $vy$ and $vz$ disagree (since $\ep(y)$ and $\ep(z)$ disagree), and therefore $\widehat{\chi}(\widehat{\HFR}(\cH, \tau, \w, \s_2)) = 0$. However, the signs of $xy$ and $xz$ agree. Indeed, while $\ep(y)$ and $\ep(z)$ disagree, the signs of the permutation $\ep(r, \sigma, s)$ for the two intersection points disagree as well: This is because on the component if $C$ containing $x, y,$ and $z$, the point $x$ separates $y$ and $z$ when one traverses the fixed point set starting at the basepoint.  Therefore, in this case we have  $\widehat{\chi}(\widehat{\HFR}(\cH, \tau, \w, \s_2)) = 2$.
\end{example}

\begin{figure}[h]
\def\svgwidth{.7\linewidth}
\begingroup%
  \makeatletter%
  \providecommand\color[2][]{%
    \errmessage{(Inkscape) Color is used for the text in Inkscape, but the package 'color.sty' is not loaded}%
    \renewcommand\color[2][]{}%
  }%
  \providecommand\transparent[1]{%
    \errmessage{(Inkscape) Transparency is used (non-zero) for the text in Inkscape, but the package 'transparent.sty' is not loaded}%
    \renewcommand\transparent[1]{}%
  }%
  \providecommand\rotatebox[2]{#2}%
  \newcommand*\fsize{\dimexpr\f@size pt\relax}%
  \newcommand*\lineheight[1]{\fontsize{\fsize}{#1\fsize}\selectfont}%
  \ifx\svgwidth\undefined%
    \setlength{\unitlength}{566.92913386bp}%
    \ifx\svgscale\undefined%
      \relax%
    \else%
      \setlength{\unitlength}{\unitlength * \real{\svgscale}}%
    \fi%
  \else%
    \setlength{\unitlength}{\svgwidth}%
  \fi%
  \global\let\svgwidth\undefined%
  \global\let\svgscale\undefined%
  \makeatother%
  \begin{picture}(1,0.55)%
    \lineheight{1}%
    \setlength\tabcolsep{0pt}%
    \put(0,0){\includegraphics[width=\unitlength,page=1]{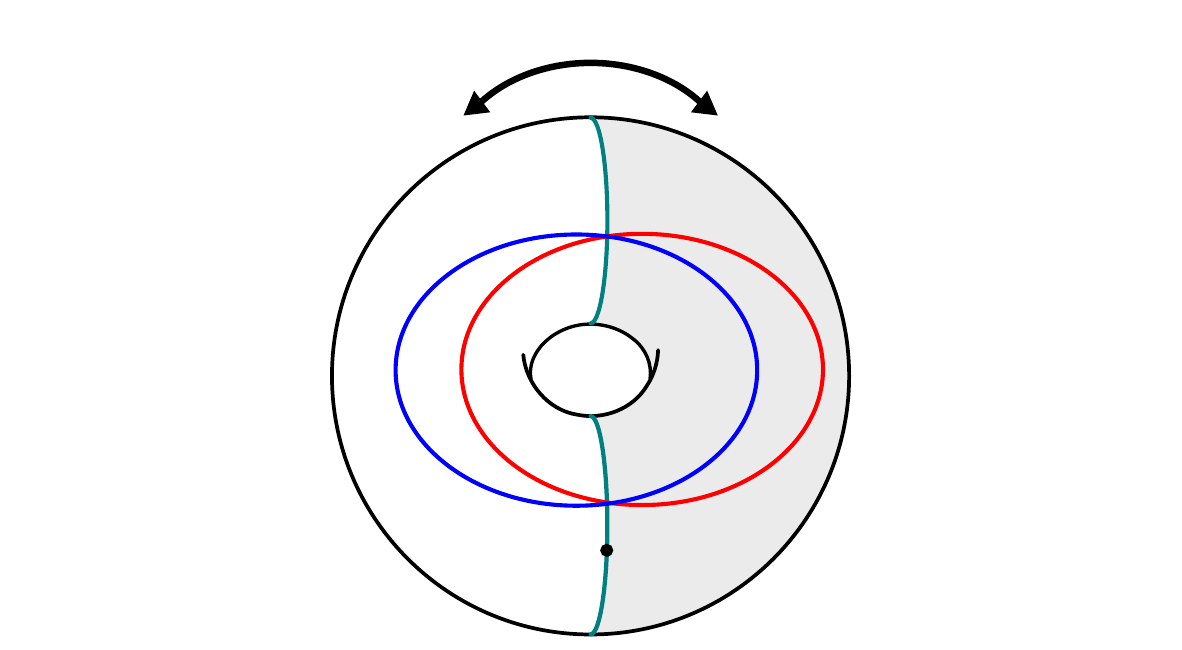}}%
    \put(0.49194553,0.51743837){\color[rgb]{0,0,0}\makebox(0,0)[lt]{\smash{\begin{tabular}[t]{l}{$\tau$}\end{tabular}}}}%
    \put(0,0){\includegraphics[width=\unitlength,page=2]{ex_relative_spinC.pdf}}%
  \end{picture}%
\endgroup%

\caption{A doubly pointed real Heegaard diagram for $(S^1 \times S^2, \tau)$.}
    \label{fig:ex_relative_spinC}
\end{figure}

As noted above, $\grhat$ has the advantage that is defined on all of $\HFRh(\cH, \tau, \w)$, rather than on $\HFRh(\cH, \tau, \w, \frs)$ for particular real $\SpinC$ structures. It follows that we can also sum over all intersection points, to obtain a ``total'' Euler characteristic. Again, this is well-defined up to a sign. 

\subsection{Double branched covers}
Let us specialize the above discussion to the case of double branched covers over knots $K \subset S^3$. Since in this case each $\SpinC$-structure on $\Sigma_2(K)$ has a unique real $\SpinC$ structure, we will not distinguish between the two concepts. Further, as noted in Remark~\ref{rem:oneC}, since we are working with knots rather than links, the two relative $\Z/2$-gradings on $\HFRh$ agree. Hence, the two notions of Euler characteristic ($\chi$ and $\chihat$) also agree.

Following the notation from the Introduction, for $\s \in \SpinC(\Sigma_2(K))$ we define
 $$ \chi_\s(K):= \chi(\widehat{\HFR}(\Sigma_2(K), \tau_K, \s)).$$

Because $\chi_\s(K)$ is a signed count of intersection points, and non-invariant intersection points come in pairs, the parity of $\chi_\s(K)$ is the same as that of $\chi(\widehat{\HF}(\Sigma_2(K),  \s))$, which is $1$ by \cite[Proposition 5.1]{os_properties_apps}. Thus, $\chi_\s(K)$ is an odd integer (so far, only defined up to sign).

 Let 
$$\chitot(K) := \sum_\s \chi_\s(K).$$
This is also odd, because the number of $\SpinC$ structures is  $\det(K)$, which is odd. Let us fix the sign of $\chitot(K)$ to be positive. This fixes an orientation on $\Ta \times \hMR$, and therefore fixes the signs of $\chi_\s(K) \in 2\Z+1$ for all $\s$.

We have written some computer code to compute $\chi_\s(K)$ using the formulas \eqref{eq:signs} and \eqref{eq:chiHFR}; see \cite{HFR_python}. Roughly, given a representation of $K$ as the closure of a braid, $B$, there is a canonical Seifert surface $\Sigma_B$. This surface is simple enough that the intersections between the alpha and beta curves on the associated real Heegaard diagram (as well as their signs of intersection) are determined by the braid $B$. It is then straightforward to compute the quantity \eqref{eq:signs} for each generator for the real Floer complex. 
Note that the genus of the resulting Heegaard splitting is twice that of the surface $\Sigma_B$, so $|\Ta \cap M^R|$ (and hence the computing time) grows rather quickly. 

We computed the Euler characteristic for all knots with up to $8$ crossings, as well as some with more crossings (see \cite{HFR_python} for a full list). A sample is shown in Table~\ref{tab:1}. 

\begin{table}[t]
    \centering

\renewcommand{\arraystretch}{1.3}
\setlength{\tabcolsep}{8pt}

\begin{tabular}{|c|c|c|c|}
\hline
Knot & $\det(K)$ & $\chitot(K)$ & $\chi_\frs(K)$ \\ 
\hline

\text{unknot} & $1$ & $1$ & [1] \\
$3_1$ & $3$ & $1$ & $[1, -1, 1]$ \\
$4_1$ & $5$ & $3$ & $[1, 1, -1, 1, 1]$ \\
$5_1$ & $5$ & $1$ & $[-1, 1, 1, 1, -1]$ \\
$5_2$ & $7$ & $3$ & $[1, 1, -1, 1, -1, 1, 1]$ \\
$6_1$ & $9$ & $5$ & $[1, 1, 1, -1, 1, -1, 1, 1, 1]$ \\
$6_2$ & $11$ & $1$ & $[1, 1, -1, 1, -1, -1, -1, 1, -1, 1, 1]$ \\
$6_3$ & $13$ & $3$ & $[1, -1, 1, 1, 1, -1, -1, -1, 1, 1, 1, -1, 1]$ \\
$7_1$ & $7$ & $1$ & $[-1, 1, 1, -1, 1, 1, -1]$ \\
$7_2$ & $11$ & $5$ & $[1, -1, 1, 1, 1, -1, 1, 1, 1, -1, 1]$ \\
$7_3$ & $13$ & $1$ & $[-1,1,1,1,-1,-1,1,-1,-1,1,1,1,-1]$ \\
$7_4$ & $15$ & $7$ & $[1,1,-1,1,-1,1,1,1,1,1,-1,1,-1,1,1]$ \\
$7_5$ & $17$ & $1$ & $[1,-1,1,1,1-1,-1,-1,1,-1,-1,-1,1,1,1,-1,1]$ \\
$7_6$ & $19$ & $5$ & $[-1, 1, -1, 1, 1, -1, 1, 1, 1, -1, 1, 1, 1, -1, 1, 1, -1, 1, -1]$ \\
$7_7$ & $21$ & $7$ & $[1, 1, -1, 1, 1, 1, 1, -1, 1, -1, -1, -1, 1, -1, 1, 1, 1, 1, -1, 1, 1]$ \\
$8_1$ & $13$ & $7$ & $[1, 1, 1, 1, -1, 1, -1, 1, -1, 1, 1, 1, 1]$ \\
$8_2$ & $17$ & $3$ & $[-1, -1, 1, -1, 1, 1, 1, 1, -1, 1, 1, 1, 1, -1, 1, -1, -1]$ \\
$8_3$ & $17$ & $9$ & $[1, 1, 1, -1, 1, -1, 1, 1, 1, 1, 1, -1, 1, -1, 1, 1, 1]$ \\
$8_4$ & $19$ & $19$ & $[1,1,-1,1,1,1,-1,1,-1,-1,-1,1,-1,-1,-1,1,-1,1,1,1,-1,1,1]$ \\
$8_5$ & $21$ & $1$ & $[1, -1, 1, 1, -1, -1, 1, -1, -1, 1, 1, 1, -1, -1, 1, -1, -1, 1, 1, -1, 1]$ \\
$8_6$ & $23$ & $3$ & $[1, 1,-1,1,1,1,-1, 1,-1,  -1,  -1,  1,-1, -1,  -1,1, -1,1,1,1, -1, 1,1]$ \\
$8_7$ & $23$ & $1$ & $[1, -1, 1, 1, 1, 1, -1, -1, -1, 1, -1, -1, -1, 1, -1, -1, -1, 1, 1, 1, 1, -1, 1]$ \\
$9_1$ & $9$ & $1$ & $[1, -1, -1, 1, 1, 1, -1, -1, 1]$ \\
$10_{152}$ & $11$ & $1$ & $[-1,-1,1, 1,-1,3,-1,1,1,-1,-1]$ \\
$12n_{121}$ & $1$ & $3$ & $[3]$ \\
$12n_{242}$ & $1$ & $3$ & $[3]$ \\
\hline
\end{tabular}

    \caption{Values of $\chitot$ and $\chi_\s$ for some small knots. The middle entry in the list of $\chi_\s$ corresponds to the spin structure.}
    \label{tab:1}
    
\end{table}

For a $\SpinC$-rational homology sphere $(Y, \frs)$, $\HFh(Y, \frs)$ always has Euler characteristic equal to one. This is notably false in the case of real Seiberg-Witten theory \cite{miyazawa,li:spectral}, and also for our theory. The smallest knot we found for which  $\chi_\s(K) \neq \pm 1$ for some $\s$ was $10_{152}$. We verified as well that for the pretzel knot $P(-2,3,7) = 12n_{242}$ we have $\chitot (P(-2,3,7)) = 3$, in agreement with the computation from Seiberg-Witten theory \cite{miyazawa}. A similar knot is $12n_{121}$, whose double branched cover is the Brieskorn sphere $\Sigma(2,3,11)$; just like $P(-2,3,7)$, this knot has determinant one and Heegaard Floer degree equal to $3$.

\begin{rem}
    In classical setting, $\chi(\HFh(Y, \frs))$ is independent of the $\SpinC$-structure $\frs$. This is because if $w$ and $w'$ are two basepoints which can be connected by a path which only intersects $\beta_i$ once, $\frs_w(\x)$ and $\frs_{w'}(\x)$ differ by the Poincar\'e dual of a curve $\beta_i^*$ which is geometrically dual to $\beta_i$. Hence, $\HFh(Y, \frs)$ and $\HFh(Y, \frs+\mathrm{PD(\beta_i^*}))$ can be calculated by the same intersection points. In our situation, we have less freedom, as we can only drag the basepoint along $C$; moreover, since $\alpha_i \cap C = \beta_i\cap C$, the basepoint necessarily crosses the curves in pairs, and so changes the $\SpinC$-structure by $\mathrm{PD(\alpha_i^* + \beta_i^*})$. This explains the dependence of $\chi_\s$ on $\s$.
\end{rem}

\begin{rem}
    It follows from \Cref{ex:torus} that $|\chi_\frs(T_{2,2n+1})| = 1$ for every real $\SpinC$-structure. However, the diagrams we used there have non-orientable quotients. Therefore, even though the real Floer complex is computable, it is not clear how to pin down the signs, nor how to compute $\chitot(T_{2,2n+1})$ from this diagram. 
\end{rem}

\bibliographystyle{amsalpha}
\bibliography{mathbib}
\end{document}